\def \N{\mathbb{N}}
\def \R{\mathbb{R}}
\newtheorem{theorem}{Theorem}[section]
\newtheorem{lemma}[theorem]{Lemma}
\newtheorem{proposition}[theorem]{Proposition}
\newtheorem{definition}[theorem]{Definition}
\newtheorem{remark}[theorem]{Remark}
\title[]{$N$-player games and mean field games of moderate interactions}
\author[]{Franco Flandoli  \and Maddalena Ghio \and Giulia Livieri}
\thanks{M.~Ghio and G.~Livieri acknowledge the financial support of UniCredit Bank R$\&$D group through the \textit{Dynamical and Information Research Institute} at the Scuola Normale Superiore. All authors thank Prof. Fausto Gozzi (LUISS Guido Carli), Prof. Luciano Campi (University of Milan) and Prof. Markus Fisher (University of Padova) for useful suggestions.}
\address{F.~Flandoli and M.~Ghio and G.~Livieri: Scuola Normale Superiore, Piazza dei Cavalieri 7, 56126 Pisa, Italy}
\email{\href{franco.flandoli@sns.it}{franco.flandoli@sns.it}}
\email{\href{maddalena.ghio@sns.it}{maddalena.ghio@sns.it}}
\email{\href{giulia.livieri@sns.it}{giulia.livieri@sns.it}}
\date{\today}
\numberwithin{equation}{section}
\begin{document}

\begin{abstract}
We study the asymptotic organization among many optimizing individuals interacting in a suitable ``moderate" way. We justify this limiting game by proving that its solution provides approximate Nash equilibria for large but finite player games. This proof depends upon the derivation  of a law of large numbers for the empirical processes in the limit as the number of players tends to infinity. Because it is of independent interest, we prove this result in full detail. We characterize the solutions of the limiting game via a verification argument.
\end{abstract}

\maketitle
\begin{small}
\noindent \textbf{Keywords}: interacting populations, moderate interaction, optimal control, mean-field type game.\\
\noindent \textbf{AMS}: 49N90, 60G09, 60H30, 60K35.\\
\end{small}
\section{Introduction}
\indent The theory of Mean Field Games (MFGs, henceforth) began with the pioneering works of \cite{lasry2007mean} and \cite{huang2006large} to describe the asymptotic organization among a large population of optimizing individuals interacting with each other in a mean-field way and subject to constraints of economic or energetic type. The mean-field interaction enables to reduce the analysis to a control problem for one single representative player, interacting with, and evolving in, the environment created by the aggregation of the other individuals. Intuitively, the system's symmetries will force the players to obey a form of law of large numbers and satisfy a propagation of chaos phenomenon as the size of the population grows. The literature on MFGs is rapidly growing and the application of MFG theory is catching on in areas as diverse as Economics, Biology, Physics, and Machine Learning; hence, it is impossible to give an exhaustive account of the activity on the topic. For this reason, we refer the reader to the lecture notes by \cite{cardaliaguet2012notes} and the two-volume monograph by \cite{carmona2018probabilistic} for a comprehensive presentation of the MFG theory and its applications; the first reference presents the theory from an analytic perspective, whereas the second one from a probabilistic point of view.\\
\indent However, in many practical situations (e.g., in evacuation planning and crowd management at mass gatherings), it stands to reason that a single person interacts only with the few people in the surrounding environment, i.e., each individual has her/his space. A possible mathematical way to describe this type of interaction is through an appropriate rescaling of a given reference function $V$, where $V$ is a sufficiently regular probability density function; see, e.g.,  \cite{oelschlager1985law} and \cite{morale2005interacting}. Denoting by $x$ and $y$ the positions of two individuals (out of a population of $N$) in a $d$-dimensional space, then their interaction can be modelled by:
\begin{equation*}\label{eq:interaction_1}
N^{-1} V^N(x-y),
\end{equation*}
where 
\begin{equation}\label{eq:interaction_2}
V^N(z) = N^{\beta} V(N^{\beta/d}z).
\end{equation}
The parameter $\beta \in (0,1)$ describes how $V$ is rescaled for the total number $N$ of individuals and expresses the so-called \textit{moderate} interaction among the individuals; see \cite{oelschlager1985law}. On the other hand, $\beta = 0$ expresses an interaction of \textit{mean-field} type, whereas $\beta = 1$ generates the so-called \textit{nearest-neighbour} interaction. This paper aims to analyze the asymptotic organization among many optimizing individuals moderately interacting with each other. To the best of our knowledge, the study of this type of asymptotic organization has been performed only in
 \cite{aurell2018mean} and \cite{cardaliaguet2017convergence}. In the former work, authors introduced models for crowd motion, although in a more simplified setting. Indeed, they account for the moderate interaction among the individuals in the cost functional only, \textcolor{black}{although they consider that the position of each pedestrian (in a crowd of $N$ pedestrians) belongs to $\mathbb{R}^{d}$.} Also, in \cite{cardaliaguet2017convergence} only the payoff of a player depends in an increasingly singular way on the players which are very close to her/him. In addition, to avoid issues related to boundary conditions or problems at infinity, in the latter work data are assumed periodic in space. \textcolor{black}{The fact that data are assumed periodic in space and (mostly) that the moderate interaction enters only in the cost functional has a consequence in proving the existence and uniqueness of solutions of the Partial Differential Equation (PDE) MFG system associated with our model; see the discussion here below in the introduction and Section \ref{sec:meanfieldgame}.}\\
\indent\textit{The model.}\,\,The motion of a single-player $X^{N,i}_t$, $t \in [0, T]$, in a population of $N$ individuals is assumed to be modelled as 
\begin{equation}\label{eq:privatestate_evolution}
\begin{split}
X_t^{N, i} = \textcolor{black}{X_0}^{N,i} &+ \int_{0}^{t}\Bigg(\alpha^{N,i}(s) + b\Big(X_s^{N,i}, \frac{1}{N} \sum_{j = 1}^{N} V^{N}(X_s^{N,i}-X_s^{N, j})\Big)\Bigg)\,ds \\
							   &+ W_t^{N, i},\quad t \in [0, T],\quad i \in \left\{1,\ldots,N \right\}.
\end{split}
\end{equation}
Here, $\bm{\alpha}^{N} \doteq (\alpha^{N,1},\ldots, \alpha^{N,N})$ is a vector of strategies that we will specify below, $b$ is a given deterministic function and $W^{N,1}, \ldots, W^{N,N}$ are independent $d$-dimensional Wiener processes defined on some filtered probability space $(\Omega, \mathcal{F}, (\mathcal{F}_t)_{t \in [0,T]}, \mathbb{P})$. We will denote by $\mathbf{X}^{N}_{\textcolor{black}{t}} \doteq (X^{N,1}_{\textcolor{black}{t}}, \ldots, X^{N, N}_{\textcolor{black}{t}})$ the vector of the positions \textcolor{black}{at time $t$} of the $N$ individuals. In addition, $\textcolor{black}{X_0}^{N,i}$, $i = 1, \ldots, N$, are $\mathbb{R}^{d}$-valued independent and identically distributed (\textit{i.i.d}) random variables, \textcolor{black}{ independent of the Wiener processes,} such that $\textcolor{black}{X_0}^{N,i}\overset{d}{\sim} \xi$ (notice that $``\overset{d}{\sim}"$ stands for ``distributed as") where \textcolor{black}{$\xi$ is an auxiliary random variable with} law $\mu_0$ with density $p_0$, i.e. $\mu_0$ is absolutely continuous with respect to (\textit{w.r.t}) the Lebesgue measure. Eq.~\eqref{eq:privatestate_evolution} says that each individual $i$ partially controls its velocity through her/his strategy $\alpha^{N, i}$. However, the velocity depends on her/his position and on the other individuals' in a neighbourhood of $X^{N,i}$. Indeed, the functions $V^{N}(\,\cdot\,)$ (see Eq.~ \eqref{eq:interaction_2}) are mollifiers (see Appendix \ref{app:usefulLemmas} for a precise definition) describing the intermediate regime between the mean-field and the nearest-neighbour interaction. For large $N$ they have a relatively small support and therefore the individual $i$ interacts, via the term $V^{N}(X_s^{N,i}-X_s^{N,j})$, only with few players, indexed by $j$, in a neighbourhood of $X_s^{N,i}$. In particular, the rate of convergence to zero of the support of $V^{N}$ will be such that the number of players $i$ is still very large, in the limit as $N$ tends to infinity, but very small compared to the full population size $N$. It is worth mentioning that it is also possible to let a common disturbance affect all the individuals \citep{huang2006large}\textcolor{black}{, commonly referred to in the MFGs literature as \textit{common noise}; we refer to the second volume by \cite{carmona2018probabilistic} for an overview of this theory}. The common disturbance could be used -- as also pointed out by \cite{aurell2018mean} -- to model an evacuation during, for instance, a fire or a earthquake. We leave, however, the study of this case for future research.\\
\noindent Each player acts to minimize her/his own expected costs according to a given functional over a finite time horizon $[0, T]$. More precisely, player $i$ evaluates a strategy vector $\bm{\alpha}^{N}$ according to the following cost functional
\begin{equation}\label{eq:functional_to_minimize}
J^{N}_{i}(\bm{\alpha}^{N}) \doteq \mathbb{E}\left[ \int_{0}^{T}\Bigg(\frac{1}{2}|\alpha^{N,i}(s)|^2 + f\Big(X_s^{N,i},   \frac{1}{N} \sum_{j = 1}^{N} V^{N}(X_s^{N,i}-X_s^{N, j})\Big)\Bigg)\,ds + g(X_T^{N,i})\right],
\end{equation}
where $\bm{X}^{N}_{\textcolor{black}{t}}$ is the solution of Eq.~\eqref{eq:privatestate_evolution} under $\bm{\alpha}^{N}$. Notice that the cost coefficients $f$ and $g$ are the same for all players. The cost functional $J_{i}^{N}(\bm{\alpha}^{N})$ can be interpreted practically in the following way; see, also, \cite{aurell2018mean}. The first term penalizes the usage of energy, the second term, instead, the trajectories passing through densely crowded areas.  Finally, the final cost $g(\,\cdot\,)$ penalizes deviation from specific target regions. More details on the setting with all the technical assumptions will be given in the next sections.\\
\indent For the class of games just introduced, we focus on the construction of approximate Nash equilibria \citep{lasry2007mean} for the game with a finite number of individuals (i.e.,\,for the $N$-player game) via the solution of the corresponding control problem for one single representative player (i.e.,\,through the solution of the corresponding MFG). Hereafter, we will use the words ``intermediate interactions" and ``moderate interactions" interchangeably.\\ \indent Our main contributions are as follows:
\begin{itemize}
\item We introduce the limit model corresponding to the above $N$-player games as $N$ tends to infinity, namely the MFG of moderate interaction. We formulate both the 
PDE approach to MFGs with moderate interaction and the stochastic formulation; see Definition \ref{def:mfgsystemsolution} and Definition \ref{def:mfgsolutionstoc}, respectively.
\item We prove that the PDE system (or the equivalent mild formulation; see  Lemma \ref{lem:equiv_mild}) admits a solution \textcolor{black}{$(0, \infty)$; see Theorem \ref{th:globalExistence}. Also, we prove that the same system admits a unique solution for $T$ sufficiently small; see Theorem \ref{th:localExistenceUniqueness}}.
\item We prove the existence of a solution in the feedback form to the MFG of moderate interaction; see Theorem \ref{th:VerificationTheorem}. 
 \item We derive, in the limit as the number of different processes in Eq.~\eqref{eq:privatestate_evolution} tends to infinity, law of large numbers for the empirical processes, and we characterize the limit dynamics; see Theorem \ref{th:oelschlager85New}.
\item We prove that any feedback solution of the MFG induces a sequence of approximate Nash equilibria for the $N$-player games with approximation error tending to zero as $N$ tends to infinity; see Theorem \ref{th:Nash_equilibria_MFG}.
\end{itemize}
\indent \textcolor{black}{The MFG system of PDEs associated with our model takes the form of a backward Hamilton-Jacobi equation coupled with a forward Kolmogorov equation. In particular, it is a second-order MFG system with \textit{local} coupling or of \textit{local} type. Many authors have studied this type of system in the last years; see \cite{lasry2006jeux, lasry2007mean, porretta2015weak, gomes2016regularity, cardaliaguet2020introduction}. However, the framework in these works deviates from ours' for two main reasons. First, the authors consider that the state space is the $d$-dimensional torus $\mathbb{T}^{d}$ and not all the space $\mathbb{R}^{d}$. Second, and most importantly, they do not consider dependence on the local density of measure in the dynamics; see the term $b(x, p(t, x))$ in the first equation in Eq.~\eqref{eq:pdesystem}. We prove\footnote{\textcolor{black}{The authors warmly thank one of the two anonymous Referees for her/his suggestion to look at the Hopf-Cole reduction, to prove global in time existence, because of the quadratic structure of our Hamiltonian.}} the existence of solutions of the PDE MFG system for any $T >0$ via the Brouwer-Schauder fixed point theorem. Instead, we will not be able to prove the uniqueness of such solutions under the standard monotonicity assumption for any $T > 0$ but only for small $T$ via the contraction principle, the difficulty arising precisely from the dependence on the local density in the dynamics.}\\
\indent The proof of the existence of a MFG solution is based on a verification argument. We identify the unique solution of the PDE system of the MFG with moderate interaction with the feedback control solution of the MFG in its stochastic formulation.  In our case, the value function of the representative player is not  ``regular enough", and so, in order to apply  It\^{o} formula, some work based on standard mollification arguments will be needed; see Appendix \ref{app:hjandfpmild}, Subsection \ref{subsec:proofVerificationTheorem}.\\
\indent The proof of Theorem \ref{th:oelschlager85New} on the characterization of the limit dynamics of the empirical processes is one of the main achievements of this work. It represents a version of the superb result of \cite{oelschlager1985law} on the study of the macroscopic limit of moderately interacting diffusion particles. Contrary to us, \cite{oelschlager1985law} does not assume the absolute continuity of $\mu_0$ with respect to the Lebesgue measure. Admittedly, this would be an additional technicality that would not add to the present work's conceptual advancements. On the other hand, we can show the validity of Theorem \ref{th:oelschlager85New} under a more general assumption on the SDE drift in Eq.~\eqref{eq:privatestate_evolution}. In \cite{oelschlager1985law} a more strict Lipschitz condition on the drift (see Eq.~(1.5) in his work) is imposed; this condition is used to prove the uniqueness of the solution of a certain (deterministic) equation that characterizes the limit dynamics of the empirical processes. We believe that this paper's assumptions lead to a much more comprehensive understanding of the problem at hand. Because it is of independent interest, we will devote the entire Section \ref{sec:moderate} to the proof of the propagation of chaos result.\\
\indent The proof of Theorem \ref{th:Nash_equilibria_MFG} of approximate Nash equilibria is based on weak convergence arguments and controlled martingale problems, whose use has a longstanding tradition; \textcolor{black}{see, for instance, \cite{funaki1984certain}, \cite{oelschlager1984martingale},  \cite{huang2006large}, as well \cite{carmona2018probabilistic}, Section 6.1 of the second volume}. However, contrary to those works, we have to study the passage to the many player (particle) limit in the presence of a deviating player, which destroys the prelimit systems' symmetry. We will use an argument based on relaxed controls.\\
\indent\textit{Structure of the paper}\,\, The rest of this paper is organized as follows. Section \ref{sec:preliminaries} introduces some terminology and notation and sets the main assumptions on the dynamics and on the cost functionals. Section \ref{sec:N_player} describes the setting of $N$-player games with moderate interaction, while Section \ref{sec:meanfieldgame} introduces the corresponding MFG.  In Section \ref{sec:moderate}, one of the main results, namely the derivation of a law of large numbers for the empirical processes, is stated and proved. Section \ref{sec:nash} contains the result on the construction of approximate Nash equilibria for the $N$-player game from a solution of the limit problem. The technical results used in the paper are all gathered in the Appendix, including the aforementioned \textcolor{black}{existence} and uniqueness result for the PDE system and the proof of the existence of a MFG solution in Appendix \ref{app:hjandfpmild}, and bounds on H\"{o}lder-type semi-norm to prove the results of Section \ref{sec:moderate} in Appendix \ref{app:holder_heat_bound} and Appendix \ref{app:holder_heat_bound_1}.

\section{Preliminaries and assumptions}\label{sec:preliminaries}
\noindent Let $d \in \mathbb{N}$ be the dimension of the space of private state and of the noise.  We equip the spaces $\mathbb{R}^{d}$, $d \in \mathbb{N}$, with the standard Euclidean norm, which will be denoted by $|\,\cdot\,|$.  Instead $T > 0$ is the finite time horizon.\\
\indent For $\mathcal{S}$ Polish space we let $\mathcal{P}(\mathcal{S})$ denote the space of probability measures on $\mathcal{B}(\mathcal{S})$, the Borel sets of $\mathcal{S}$. For $s \in \mathcal{S}$ we let $\delta_s$ indicate the Dirac measure concentrated in $s$. If $\mathcal{P}(\mathcal{S})$ is equipped with the topology of weak convergence of probability measures, then $\mathcal{P}(\mathcal{S})$ is a Polish space. \textcolor{black}{In particular, $\text{C}([0,T] ; \mathcal{P}(\mathcal{S}))$ denotes the space of continuous flow of measures.}\\
\indent\textcolor{black}{We set $\mathcal{X} \doteq \text{C} ([0,T] ; \mathbb{R}^{d})$ and we equip it with the topology of uniform convergence; the space $\mathcal{X}$ with this topology is a Polish space. Given $N \in \mathbb{N}$, we will use the usual identification of $\mathcal{X}^{N} = \times^{N} \mathcal{X}$ with the space $ \text{C} ([0,T] ; \mathbb{R}^{d\cdot N})$;  $\mathcal{X}^{N}$ is equipped with the topology of uniform convergence.  For $\ell \in \mathbb{R}_{+}$, we denote by $\text{C}_b^{\ell}(\mathbb{R}^{d} ; \mathbb{R}^{d})$ the set of $\mathbb{R}^{d}$-valued functions on $\mathbb{R}^{d}$ with bounded $\ell$-th derivative, and by $\text{C}_c^{\ell}(\mathbb{R}^{d}; \mathbb{R}^{d})$ the set of $\mathbb{R}^{d}$-valued functions on $\mathbb{R}^{d}$ with compact support and continuous $\ell$-th derivative. We will use simply $\text{C}_b(\mathbb{R}^{d})$, $\text{C}_b^{\ell}(\mathbb{R}^{d})$ and $\text{C}_c^{\ell}(\mathbb{R}^{d})$ when the functions are real-valued. Moreover, $\text{C}^{\ell}([0,T]; \text{C}_b(\mathbb{R}^{d}))$ denotes the space of $\text{C}_b(\mathbb{R}^{d})$-valued functions on $[0,T]$ with continuous $\ell$-th derivative; analogous definitions hold if $\text{C}_b(\mathbb{R}^{d})$ is replaced with either $\text{C}_b^{\ell}(\mathbb{R}^{d})$ or $\text{C}_c^{\ell}(\mathbb{R}^{d})$.\\
\indent Similarly, we denote by $\text{C}([0,T] \times \mathbb{R}^{d}; \mathbb{R}^{d})$ the set of $\mathbb{R}^{d}$-valued continuous functions on $[0,T] \times \mathbb{R}^{d}$ and with $\text{C}^{1, 2}([0,T] \times \mathbb{R}^{d}; \mathbb{R}^{d})$ the set of $\mathbb{R}^{d}$-valued continuous functions on $[0,T] \times \mathbb{R}^{d}$ with continuous first (resp.~second) derivative with respect to the time (resp.~space); analogous definitions (cfr. the characterizations in the previous paragraph) hold for the spaces $\text{C}_b^{1, 2}([0,T] \times \mathbb{R}^{d}; \mathbb{R}^{d})$, $\text{C}_c^{1, 2}([0,T] \times \mathbb{R}^{d}; \mathbb{R}^{d})$. Again, we will use simply $\text{C}([0,T] \times \mathbb{R}^{d})$, $\text{C}^{1,2}([0,T] \times \mathbb{R}^{d})$, $\text{C}_b^{1, 2}([0,T] \times \mathbb{R}^{d})$, $\text{C}_c^{1, 2}([0,T] \times \mathbb{R}^{d})$ when the functions are real-valued. In particular, notice that $\text{C}([0,T];\text{C}_b(\mathbb{R}^{d})) \subset \text{C}_b([0,T]\times\mathbb{R}^{d})$.}
\indent As usual, $\nabla$ and $\Delta$ denote the gradient and the Laplacian operator, respectively. Finally, for the sake of simplicity, we write $i \in [[N]]$ in place of $i = 1, \ldots, N$.\\\\
\indent Now let
\begin{equation*}
\begin{split}
& b\,:\,\mathbb{R}^{d} \times \mathbb{R}_{+} \rightarrow \mathbb{R}^{d},\\
& f\,:\,\mathbb{R}^{d} \times \mathbb{R}_{+} \rightarrow \mathbb{R},\quad\quad g\,:\,\mathbb{R}^{d} \rightarrow \mathbb{R}.
\end{split}
\end{equation*}
\noindent The function $b$ will denote the drift, while $f$ and $g$ will quantify the running and the terminal costs, respectively. Let us make the following assumptions:
\begin{itemize}
\item[(H1)] $b$ and $f$ are Borel measurable functions, continuous and such that there exist two constants $C, L > 0$ for which it holds that 
\begin{equation*}
\begin{split}
& |b(x, p)| + |f(x, p)| \leq C,\\
& |b(x,p) - b(y,q)| + |f(x,p) - f(y,q)| \leq L(|x-y| + |p-q|)
\end{split}
\end{equation*}
for all $x, y \in \mathbb{R}^d$, $p, q \in \mathbb{R}_{+}$.
\item[(H2)] $g$ is a Borel measurable function such that $g, \partial_{x_i} g \in \text{C}_b(\mathbb{R}^{d})$, $i \in [[d]]$.
\item[(H3)] For each $N\in\mathbb{N}$, for some $\beta \in (0, 1/2)$ and some $V \in \text{C}_{c}^{1}(\mathbb{R}^{d}) \cap \mathcal{P}(\mathbb{R}^{d})$ we have
\begin{equation}\label{eq:Vn}
V^{N}(x) \doteq N^{\beta} V(N^{\frac{\beta}{d}} x),\quad x \in \mathbb{R}^{\textcolor{black}{d}},
\end{equation}
\textcolor{black}{where, we remind, $\text{C}_{c}^{1}(\mathbb{R}^{d})$ is the space of continuous functions on $\mathbb{R}^d$ with compact support and continuous first derivatives, while $\mathcal{P}(\mathbb{R}^{d})$ denotes the probability measures on $\mathbb{R}^d$. In particular, $\text{C}_{c}^{1}(\mathbb{R}^{d}) \cap \mathcal{P}(\mathbb{R}^{d})$ denotes the set of probability measures with a density that has compact support and that is differentiable.}
\item[(H4)]  \textcolor{black}{The law} $\mu_0 \in \mathcal{P}(\mathbb{R}^{d})$ \textcolor{black}{is} absolutely continuous  with respect to the Lebesgue measure on $\mathbb{R}^{d}$ and with density $p_0 \in \text{C}_{b}(\mathbb{R}^{d})$ satisfying the following condition:
$$\int_{\mathbb{R}^{d}} e^{\lambda |x|} p_0(x)\,dx < \infty $$
for all $\lambda > 0$. 
\end{itemize}
\section{N-player games}\label{sec:N_player}
\noindent Let $N \in \mathbb{N}$ be the number of players. Denote by $X^{N,i}_t$ the private state of player $i$ at time $t \in [0,T]$. The evolution of the players' state depends on the strategies they choose and on the initial distribution of states, which we indicate by $\mu^{N}_0$ (thus, $\mu^{N}_0 \in \mathcal{P}(\mathbb{R}^{N \times d})$). We assume that $\mu^{N}_0$ can be factorized and that for each $\mu_0$ hypothesis \text{(H4)} is in force. \textcolor{black}{Here, we consider players using feedback strategies with full state information, i.e. strategies $\alpha_t^{N,i} = \alpha(t, \bm{X}_t^{N})$ where $\alpha \in \text{C}_b([0,T] \times \mathbb{R}^{d\cdot N} ; \mathbb{R}^{d})$  that are uniformly bounded by some constant $C>0$. Thus, let $\mathcal{A}_{C}^{N, 1, fb}$ denote the set of all these individual strategies. A vector $\textcolor{black}{\bm{\alpha}^{N}\doteq}(\alpha^{N,1},\ldots,\alpha^{N,N})$ of individual strategies is called a strategy vector or strategy profile. \textcolor{black}{We denote with $\mathcal{A}_C^{N, fb}$ the set of all vectors $\bm{\alpha}^{N}$ of feedback strategies for the $N$-player game that are uniformly bounded by some constant $C>0$.}  Given a  vector of $N$-player feedback strategies $\bm{\alpha}^{N}$, consider the system of equations}
\begin{equation}\label{eq:privatestate_evolution_N_player}
\begin{split}
X_t^{N, i} = \textcolor{black}{X_0}^{N, i} &+ \int_{0}^{t}\Bigg(\alpha(s, \textcolor{black}{\bm{X}_s^{N}}) + b\Big(X_s^{N,i}, \frac{1}{N} \sum_{j = 1}^{N} V^{N}(X_s^{N,i}-X_s^{N, j})\Big)\Bigg)\,ds \\
							   &+ W_t^{N, i},\quad t \in [0, T],\, i \in \textcolor{black}{[[N]]},
\end{split}
\end{equation}
where $\bm{X}^{N}_{\textcolor{black}{t}} = (X^{N,1}_{\textcolor{black}{t}}, \ldots, X^{N,N}_{\textcolor{black}{t}})$ and $W^{N,1}, \ldots, W^{N,N}$ are independent Wiener processes defined on some filtered probability space $(\Omega, \mathcal{F}, (\mathcal{F}_t), \mathbb{P})$ \textcolor{black}{satisfying the usual conditions. The initial conditions $X_0^{N, i}$ are \textit{i.i.d.} $\mathcal{F}_0$-measurable random variables, each with law $\mu_0 \in \mathcal{P}(\mathbb{R}^{d})$ and independent of the Wiener processes,} the functions $V^{N}(\,\cdot\,)$ are mollifiers (see hypothesis (H3)) through which we obtain the interaction of moderate type among the players. A solution of Eq.~\eqref{eq:privatestate_evolution_N_player} under $\bm{\alpha}^{N}$ with initial distribution $\mu_0^{N}$ is a triple $((\Omega, \mathcal{F}, (\mathcal{F}_t), \mathbb{P}), \bm{W}^{N}, \bm{X}^{N})$ where $(\Omega, \mathcal{F}, (\mathcal{F}_t), \mathbb{P})$ is a filtered probability space satisfying the usual hypotheses, $\bm{W}^{N} = (W^{N,1},\ldots, W^{N,N})$ a vector of independent $d$-dimensional $(\mathcal{F}_t)$-Wiener processes, and $\bm{X}^N = (X^{N,1}, \ldots, X^{N,N})$ a vector of continuous $\mathbb{R}^{d}$-valued $(\mathcal{F}_t)$-adapted processes such that Eq.~\eqref{eq:privatestate_evolution_N_player} holds $\mathbb{P}$-almost surely with strategy vector $\bm{\alpha}^N$ and $\mathbb{P} \circ (\bm{\textcolor{black}{X_0}}^{N})^{-1} = \mu_0^{N}$, \textcolor{black}{each $X_0^{N, i}$ for $i\in[[N]]$ being independent of the Wiener processes}. The \textit{i}-th player evaluates a (feedback) strategy vector $\bm{\alpha}^{N}$ according to the cost functional
\begin{equation}\label{eq:cost_functional_N_player}
\begin{split}
J^{N}_{i}(\bm{\alpha}^{N}) \doteq \mathbb{E}\Bigg[ \int_{0}^{T}\Bigg(\frac{1}{2}|\alpha(s, \textcolor{black}{\bm{X}_s^{N}})|^2 + f\Big(X_s^{N,i},   \frac{1}{N} \sum_{j = 1}^{N} V^{N}(X_s^{N,i}  -X_s^{N, j})\Big)\Bigg)\, &ds \\
																					&+ g(X_T^{N,i})\Bigg],
\end{split}
\end{equation}
where $\bm{X}^{N}_{\textcolor{black}{t}} = (X^{N,1}_{\textcolor{black}{t}}, \ldots, X^{N,N}_{\textcolor{black}{t}})$ and $((\Omega, \mathcal{F}, (\mathcal{F}_t), \mathbb{P}), \bm{W}^{N}, \bm{X}^{N})$ is a solution of Eq.~\eqref{eq:privatestate_evolution_N_player}  under $\mu_0^N$. The cost functional is well defined thanks to the hypothesis \text{(H1)}.\\
\indent Given a strategy vector \textcolor{black}{$\bm{\alpha}^{N}\in \mathcal{A}^{N, fb}_C$} and an individual strategy \textcolor{black}{$\beta\in \mathcal{A}^{N, 1, fb}_C$}, let \textcolor{black}{$[\bm{\alpha}^{N, -i}, \beta]\in \mathcal{A}^{N, fb}_C$} indicate the strategy vector that is obtained from $\bm{\alpha}^N$ by replacing $\alpha^{N,i}$, the strategy of player $i$, with $\beta$. The correct interpretation of optimization of the cost functional $J_i^{N}(\bm{\alpha}^{N})$ in Eq.~\eqref{eq:cost_functional_N_player} -- classical in game theory -- would be the concept of \textit{Nash Equilibrium}. In the case of a large number of players, our goal will be to prove the validity of a weaker \textcolor{black}{equilibrium} concept, that is the concept of \textit{$\varepsilon$-Nash equilibrium}, introduced in the theory of MFGs.
\begin{definition}[$\varepsilon$-Nash equilibria]
Let $\varepsilon \geq 0$. A strategy vector $\bm{\alpha}^{N}$ is called an $\varepsilon$-Nash equilibrium for the $N$-player game if for every $i \in [[N]]$
\begin{equation}
J^{N}_i(\bm{\alpha}) \leq J^{N}_{i}([\bm{\alpha}^{N, -i}, \beta]) + \varepsilon,
\end{equation}
for all \textcolor{black}{\textbf{admissible}} single player strategies $\beta$, \textcolor{black}{i.e., strategies that belong to $\mathcal{A}_{C}^{N, 1, fb}$}.
\end{definition}
\indent\textcolor{black}{If $\bm{\alpha}^{N}$ is an $\varepsilon$-Nash equilibrium with $\varepsilon = 0$, then $\bm{\alpha}^{N}$ is called \textit{Nash equilibrium}.}\\
\textcolor{black}{In our framework, we consider strategy vectors $\bm{\alpha}^N$ belonging to $\mathcal{A}^{N, fb}_C$, where we will later in the work fix the constant $C$ to be equal to $K\left( T,b,f,p_{0},g\right)$ defined in Eq.\eqref{eq:basic_const}. We say that a single player strategy $\beta$ is admissible (i.e. it is an admissible deviation from equilibrium) for a player $i\in[[N]]$ if it belongs to $\mathcal{A}^{N, 1, fb}_C$ where the constant $C$ is intended to be fixed.}
\section{Mean field games}\label{sec:meanfieldgame}
\noindent Let $T>0$ be the finite time horizon and $b, f, p_0, g$ as in Section \ref{sec:preliminaries}. Let us introduce the PDE approach to MFGs with moderate interaction via the following coupled system of backward Hamilton-Jacobi Bellman equation and Kolmogorov forward equation, called \textit{PDE system}:
\begin{equation}\label{eq:pdesystem}
\begin{cases}
-\partial _{t}u-\frac{1}{2}\Delta u-b(x,p(t,x))\cdot \nabla u+\frac{1}{2}\left\vert \nabla u\right\vert ^{2}=f(x,p(t,x)),\quad (t,x)\in [0,T)\times \mathbb{R}^{d},\\ 
\partial _{t}p-\frac{1}{2}\Delta p+\text{div}{[p(t,x)(-\nabla u(t,x)+b(x,p(t,x)))]}=0,\quad\quad (t,x)\in (0,T]\times \mathbb{R}^{d},\\ 
p(0,\,\cdot\,)=p_{0}(\,\cdot\,)\quad x\in \mathbb{R}^{d},\quad u(T,\,\cdot\,)=g(\,\cdot\,),\quad\quad\quad\quad\quad\quad\quad\,\, x\in \mathbb{R}^{d},
\end{cases}
\end{equation}
for all $(x,p) \in \mathbb{R}^{d}\times \mathbb{R}_{+}$. Precisely, the first equation of the PDE system is the Hamilton-Jacobi Bellman equation with a quadratic cost for the value function $u$ of the representative player. Instead, the second one is the Kolmogorov forward equation for the density $p(t,\,\cdot\,)$ of the representative player. \textcolor{black}{As said in the introduction, the PDE MFG system is of local type with the dependence on the local density $p(t, x)$ appearing both on the dynamics, via the term $b(x, p(t, x))$, and on the running cost, via the term $f(x, p(t, x))$. In addition, the state space is $\mathbb{R}^{d}$.}\\
\indent The notion of solution we consider for the PDE system is the one in Definition \ref{def:mfgsystemsolution} below, where we let $\mathcal{A}$ denote the following operator:
\begin{equation}\label{eq:operator}
\mathcal{A} \doteq \partial_t - \frac{1}{2}\Delta.
\end{equation}
\begin{definition}[MFG solution, PDE formulation]\label{def:mfgsystemsolution}
A weak solution of the PDE system is a pair $(u, p)$ such that:
\begin{itemize}
\item[\textit{(i)}] \textcolor{black}{$u$, $\partial_i u$ and $p \in \text{C}_b([0,T] \times \mathbb{R}^{d})$ for all $i \in [[\,d\,]]$};  
\item[\textit{(ii)}] for all \textcolor{black}{$\varphi, \psi \in \text{C}^{1,2}_{c}([0,T] \times \mathbb{R}^{d})$} and all $t \in [0, T]$ the following two equations
\begin{equation}\label{eq:mfgsystem_eq1}
\begin{split}
\left\langle u\left( t \right) ,\varphi \left( t\right) \right\rangle&- \left\langle g,\varphi \left( T\right) \right\rangle +\int_{t}^{T}\left\langle
u\left( s\right) ,\mathcal{A} \varphi \left(s\right) \right\rangle ds \\
&=\int_{t}^{T}\left\langle b(\,\cdot\,,p(s))\cdot \nabla u\left( s\right) -\frac{1}{2}\left\vert \nabla u\left( s\right) \right\vert ^{2}+f(\,\cdot\,,p(s)),\varphi \left(s\right) \right\rangle ds,
\end{split}
\end{equation}
\begin{equation}\label{eq:mfgsystem_eq2}
\begin{split}
\left\langle p\left( t\right) ,\psi \left( t\right) \right\rangle &-\left\langle p_{0},\psi \left( 0\right) \right\rangle
-\int_{0}^{t}\left\langle u\left( s\right) ,\mathcal{A}\psi \left( s\right) \right\rangle ds \\
&= \int_{0}^{t}\left\langle {p(s)(-\nabla u(s)+b(\,\cdot\,,p(s))),\nabla }\psi\left( s\right) \right\rangle ds.
\end{split}
\end{equation}
hold.
\end{itemize}
\end{definition}
\vspace{5mm}

\noindent We now state and prove that under the regularity condition \textit{(i)} in Definition \ref{def:mfgsystemsolution} the system in Eqs.~\eqref{eq:mfgsystem_eq1}--\eqref{eq:mfgsystem_eq2} admits an equivalent \textit{mild formulation}. To this end, set \textcolor{black}{$G(t, x-y)$}  the density of $x + W_t$, where $W_t$ is a standard blackian motion, $t \in [0, T]$ and $x, y \in \mathbb{R}^{d}$, and introduce the notation $\mathcal{P}_t$ for the associated semi-group,
\begin{equation}\label{eq:heat_kernel}
(\mathcal{P}_t h)(x)\doteq \int_{\mathbb{R}^{d}} G(t, x-y) h(y)\,dy,
\end{equation}
defined on functions $h \in \text{C}_b(\mathbb{R}^{d})$. By taking, for all $t \in [0,T]$, in the Eqs.~\eqref{eq:mfgsystem_eq1} and \eqref{eq:mfgsystem_eq2} the functions \textcolor{black}{$\varphi( t )$} and \textcolor{black}{$\psi( t )$} as the function $y \mapsto G(t, x-y)\,h(y)$, with $x$ a given parameter, one can show the equivalence between the weak formulations of Eq.~\eqref{eq:mfgsystem_eq1} and \eqref{eq:mfgsystem_eq2} and the following mild formulation. This is the content of the following lemma.
\begin{lemma}
\label{lem:equiv_mild}
\noindent Let $(u, p)$ a pair with the regularity of point \textit{(i)} in Definition \ref{def:mfgsystemsolution}. Then \textit{(ii)} in the same definition is equivalent to the validity, for all $t\in \left[ 0,T\right] $, of the following system:
\begin{equation}\label{eq:mfgsystemmild_eq1}
\begin{split}
&u\left( t\right) =\mathcal{P}_{T-t}g-\int_{t}^{T}\mathcal{P}_{s-t}\left(b\left(\,\cdot\,,p\left( s\right) \right) \cdot \nabla u\left( s\right) 
-\frac{1}{2}\left\vert \nabla u\left( s\right) \right\vert ^{2}+f(\,\cdot\,,p(s))\right) ds\\
\end{split}
\end{equation}
and
\begin{equation}\label{eq:mfgsystemmild_eq2}
\begin{split}
&p\left( t\right) =\mathcal{P}_{t}p_{0}-\int_{0}^{t}\nabla \mathcal{P}_{t-s}\left( p\left( s\right) \left( \nabla u\left( s\right) -{b(\,\cdot\,,p(s))}\right) \right) ds,\\
\end{split}
\end{equation}
where in the last integral we understand that 
\begin{equation}\label{eq:mfgsystemmild_eq3}
\left( \nabla \mathcal{P}_{t-s}h\right) \left( x\right) =\int_{\mathbb{R}%
^{d}}\nabla _{x}G\left( t-s,x-y\right) h\left( y\right) dy.
\end{equation}
A solution of this integral system with the regularity of point \textit{(i)} in Definition \eqref{def:mfgsystemsolution} is called a \textbf{mild solution}.
\end{lemma}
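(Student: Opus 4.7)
I would follow the hint given just before the statement: plug heat-kernel test functions into \eqref{eq:mfgsystem_eq1}--\eqref{eq:mfgsystem_eq2}, exploit the identity $\mathcal{A}G=0$ on $(0,\infty)\times\mathbb{R}^{d}$, and recognize the semigroup $\mathcal{P}_{\tau}$ in the remaining pairings via $\int_{\mathbb{R}^{d}}G(\tau,x-y)\,h(y)\,dy=(\mathcal{P}_{\tau}h)(x)$ (the heat kernel being symmetric in its spatial argument). The only non-routine point is that $y\mapsto G(\tau,\cdot-x)$ is not compactly supported in space, so it must be truncated by a smooth cutoff $\chi_{R}\in\text{C}^{\infty}_{c}(\mathbb{R}^{d})$, equal to $1$ on the ball $B_{R}$, supported in $B_{2R}$, with $\|\nabla\chi_{R}\|_{\infty},\|\Delta\chi_{R}\|_{\infty}$ uniformly bounded in $R$.

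\textbf{Direction \eqref{eq:mfgsystem_eq1} $\Rightarrow$ \eqref{eq:mfgsystemmild_eq1}.} Fix $(t,x)\in[0,T]\times\mathbb{R}^{d}$ and $\epsilon>0$. Take $\varphi(s,y)=G(s-t+\epsilon,y-x)\,\chi_{R}(y)$ for $s\in[t,T]$; this is in $\text{C}^{1,2}_{c}$ and, since $G$ solves the heat equation, a direct computation gives $\mathcal{A}\varphi=-\nabla G\cdot\nabla\chi_{R}-\tfrac{1}{2}G\,\Delta\chi_{R}$, a remainder supported in the annulus $\{R\le|y|\le 2R\}$. Substituting into \eqref{eq:mfgsystem_eq1}, the three natural pairings become, respectively, $\int u(t,y)G(\epsilon,y-x)\chi_{R}\,dy$, $\int g(y)G(T-t+\epsilon,y-x)\chi_{R}\,dy$, and the $\chi_{R}$-truncated $\int_{t}^{T}(\mathcal{P}_{s-t+\epsilon}[\,b(\cdot,p(s))\cdot\nabla u(s)-\tfrac{1}{2}|\nabla u(s)|^{2}+f(\cdot,p(s))\,])(x)\,ds$. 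Letting $R\to\infty$, the $\mathcal{A}\varphi$-remainder is controlled by $(\|\nabla\chi_{R}\|_{\infty}+\|\Delta\chi_{R}\|_{\infty})\|u\|_{\infty}$ times the tail integral of $G$ outside $B_{R}$, which vanishes by Gaussian decay (using boundedness of $u$ from \emph{(i)}); dominated convergence then removes $\chi_{R}$ from the remaining terms. Finally $\epsilon\to 0^{+}$ uses continuity of $u(t,\cdot)$ and $g$ and the approximation-of-identity property of $G(\epsilon,\cdot)$, producing exactly \eqref{eq:mfgsystemmild_eq1}.

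\textbf{Direction \eqref{eq:mfgsystem_eq2} $\Rightarrow$ \eqref{eq:mfgsystemmild_eq2}.} Analogously, for $s\in[0,t]$ take $\psi(s,y)=G(t-s+\epsilon,x-y)\,\chi_{R}(y)$. The new ingredient is the divergence pairing on the right of \eqref{eq:mfgsystem_eq2}: using $\nabla_{y}G(\tau,x-y)=-\nabla_{x}G(\tau,x-y)$, one pulls the spatial derivative outside the $y$-integral and produces exactly $(\nabla\mathcal{P}_{t-s+\epsilon})(p(\nabla u-b))(x)$ in the sense of \eqref{eq:mfgsystemmild_eq3}. The $\mathcal{A}\psi$-term again contributes only $\chi_{R}$-remainders that vanish as $R\to\infty$, after which $\epsilon\to 0^{+}$ gives \eqref{eq:mfgsystemmild_eq2}.

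\textbf{Converse and main obstacle.} The reverse implication is obtained by running these computations backwards: given \eqref{eq:mfgsystemmild_eq1}--\eqref{eq:mfgsystemmild_eq2}, one tests against arbitrary $\varphi,\psi\in\text{C}^{1,2}_{c}([0,T]\times\mathbb{R}^{d})$, applies Fubini to swap the time and space integrals, and uses self-adjointness of $\mathcal{P}_{\tau}$ together with $\partial_{\tau}\mathcal{P}_{\tau}=\tfrac{1}{2}\Delta\mathcal{P}_{\tau}$ to reassemble the operator $\mathcal{A}$. The main technical obstacle throughout is the cutoff-tail estimate controlling the $\mathcal{A}\varphi$- and $\mathcal{A}\psi$-remainders as $R\to\infty$; once this is in place, the rest is standard semi-group calculus and dominated convergence.
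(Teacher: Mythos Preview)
Your proposal is correct and follows essentially the same strategy as the paper: both insert heat-kernel test functions into the weak formulation so that the $\mathcal{A}$-term vanishes, then use symmetry of $G$ (equivalently, self-adjointness of $\mathcal{P}_{\tau}$) to move the semigroup onto the other factor. The only cosmetic difference is that the paper tests against $\varphi^{(t)}(s,\cdot)=\mathcal{P}_{s-t}\phi$ for a generic $\phi$ and invokes arbitrariness of $\phi$ at the end, whereas you test directly against $G(s-t+\epsilon,\cdot-x)\chi_{R}$ and pass to the limit $R\to\infty$, $\epsilon\to 0$; your treatment of the non-compact support via the cutoff $\chi_{R}$ is in fact more explicit than the paper's sketch, which glosses over this point.
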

\begin{proof}
See Appendix \ref{app:hjandfpmild}, Subsection \ref{subsec:equiv_mild}, where we give a sketch of the (less classical) proof for the backward equation \eqref{eq:mfgsystemmild_eq1}.
\end{proof}

\textcolor{black}{\\
\noindent Now, we prove that there exists $(u, p)$ weak solution (cfr. Definition~\ref{def:mfgsystemsolution}) of the PDE MFG system \ref{eq:pdesystem} in $(0, \infty)$. In order to do so, we use the Hopf-Cole transform for quadratic Hamiltonians (see, e.g. Remark 1.13 in \cite{cardaliaguet2020introduction}) and we consider the following auxiliary system
\begin{equation}\label{eq:pdesystem_hopf_cole}
\begin{cases}
\partial_t w + \frac{1}{2} \Delta w + b(x,p(t,x))\cdot \nabla w = w\,f(x,p(t,x)),\quad\quad\,\, (t,x)\in [0,T)\times \mathbb{R}^{d}, \\ 
\partial _{t}p-\frac{1}{2}\Delta p+\text{div}{\left[p(t,x)\left(\frac{\nabla w}{w} + b(x, p(t, x))\right)\right]}=0,\quad\quad (t,x)\in (0,T]\times \mathbb{R}^{d}, \\ 
p(0,\,\cdot\,)=p_{0}(\,\cdot\,)\quad x\in \mathbb{R}^{d},\quad w(T,\,\cdot\,)=\exp(-g(\,\cdot\,)),\quad\quad\quad\quad\quad\quad\,\,\,\, x\in \mathbb{R}^{d}.
\end{cases}
\end{equation}
Notice that if $(w, p)$ is a weak solution of the previous system such that $p, w, \partial_i w \in \text{C}_b([0,T]\times\mathbb{R}^{d})$, $i \in [[d]]$, then $w(t, x) \geq e^{-(\|g \|_{\infty} + T \|f \|_{\infty})}$ by strong maximum principle. Therefore, the ratio $\frac{\nabla w}{w} \in \text{C}_b([0,T] \times \mathbb{R}^{d} ; \mathbb{R}^{d})$ with a bound that depends only on the infinity norms of the coefficients; precisely:
\begin{equation}
\left\| \frac{\nabla w}{w}\right\|_{\infty} \leq C_w(g, f, b, T).
\end{equation}
This observation justifies the following definition, analogous to Definition \ref{def:mfgsystemsolution}.
\begin{definition}[MFG solution, PDE formulation - I]\label{def:mfgsystemsolutionHopf}
Let $p_{0} \in \text{C}_{b}\left( \mathbb{R}^{d}\right)$ a given probability density and $g\in \text{C}_{b}\left( \mathbb{R}^{d}\right)$, also given. A weak solution of the PDE system \eqref{eq:pdesystem_hopf_cole} is a pair $(w, p)$ such that $w, \partial_i w$ and $p \in \text{C}_{b}\left( \left[ 0,T\right] \times \mathbb{R}^{d}\right)$ for all $i \in [[ d ]]$, $w\left( t,x\right) \geq e^{-\left( \left\Vert g\right\Vert _{\infty }+T\left\Vert f\right\Vert _{\infty }\right) }$ and the system is satisfied in the weak sense as in Definition \ref{def:mfgsystemsolution}.
\end{definition}
\noindent In particular, the weak formulation in Definition \ref{def:mfgsystemsolutionHopf} is equivalent to the validity, for all $t \in [0,T]$, of the following system
\begin{equation}\label{eq:mfgsystemmild_1_eq1}
\begin{split}
w\left( t\right) =\mathcal{P}_{T-t}\exp \left( -g\right) -\int_{t}^{T}%
\mathcal{P}_{s-t}\left( b\left( \cdot ,p\left( s\right) \right) \cdot \nabla
w\left( s\right) -w\left( s\right) f\left( \cdot ,p\left( s\right) \right)
\right) ds
\end{split}
\end{equation}
and
\begin{equation}\label{eq:mfgsystemmild_2_eq2}
\begin{split}
p\left( t\right)  =\mathcal{P}_{t}p_{0}+\int_{0}^{t}\nabla \mathcal{P}%
_{t-s}\left( p\left( s\right) \left( \frac{\nabla w\left( s\right) }{w\left(
s\right) }+b\left( \cdot ,p\left( s\right) \right) \right) \right) ds,
\end{split}
\end{equation}
where the quantity $\nabla \mathcal{P}_{t-s}$ is defined in Lemma \ref{lem:equiv_mild}, Eq.~\eqref{eq:mfgsystemmild_eq3}. The proof of such equivalence is the same as in Lemma \ref{lem:equiv_mild} and we decide to omit it for the sake of space.\\
\noindent To prove global existence of weak solutions, we need the following additional assumption on $p_0$:
\begin{itemize}
\item[(H5)] There exists a continuous function $\rho :\mathbb{R}^{d}\rightarrow \left( 0,\infty \right) $ such that 
\begin{equation*}
\lim_{\left\Vert x\right\Vert \rightarrow \infty }\rho \left( x\right) =0\quad\text{and}\quad p_{0}\left( x\right) \leq \rho \left( x\right) 
\end{equation*}
for all $x\in \mathbb{R}^{d}$. Moreover $p_{0}\in \text{C}_{b}^{\alpha}(\mathbb{R}^{d})$ for some $\alpha >0$ and $\rho ^{-1}\in
\text{C}^{2}\left( \mathbb{R}^{d}\right) $ with $\left\Vert \Delta \rho ^{-1}\right\Vert _{\infty }+\left\Vert \nabla \rho^{-1}\right\Vert _{\infty }<\infty$.
\end{itemize}
\noindent Notice that the latter assumption on $\rho^{-1}$ is not restrictive. Indeed, smoothness of $\rho^{-1}$ can be obtained by regularization and the bounds on $\left\Vert \Delta \rho ^{-1}\right\Vert _{\infty }$ and $\left\Vert \nabla \rho ^{-1}\right\Vert _{\infty }$ are true if $\rho $ decays slowly, monotonically and radially, which can always be assumed without loss of generality. We are now ready to prove the existence of a weak solution of the PDE system \eqref{eq:pdesystem_hopf_cole}; this is the content of the following theorem, whose proof  is relatively standard but some new details -- up to our knowledge -- are due to the fact that the space is $\mathbb{R}^{d}$ instead of a bounded set.
\begin{theorem}\label{th:globalExistence}
There exists a weak solution $\left(
w,p\right) $ on $\left[ 0,T\right] $ of system \eqref{eq:pdesystem_hopf_cole}. Moreover, the pair
\begin{equation*}
\left( u,p\right) \doteq \left( -\log w,p\right) 
\end{equation*}%
is a weak solution of the system \eqref{eq:pdesystem}.
\end{theorem}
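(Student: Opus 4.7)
The plan is to apply the Schauder fixed-point theorem to a map $\Phi$ that sends a candidate density flow $p$ to a new flow $\tilde{p}$, by first solving the now-linear backward mild equation \eqref{eq:mfgsystemmild_1_eq1} for $w = w_{p}$, and then using the drift $V_{p} = \nabla w_{p}/w_{p} + b(\cdot, p)$ to solve the forward mild equation \eqref{eq:mfgsystemmild_2_eq2} for $\tilde{p} = \Phi(p)$. Because the Hopf-Cole reduction linearizes the HJB equation, existence of $w_{p}$ for each fixed $p$ is a linear problem and the whole nonlinearity of the MFG is compressed into a single fixed point in $p$.

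For the backward step, I would solve \eqref{eq:mfgsystemmild_1_eq1} either by Banach contraction on $\text{C}_{b}([0,T]\times\mathbb{R}^{d})$ over short time intervals (then patch), or via Feynman-Kac: $w_{p}(t,x) = \E_{t,x}[\exp(-g(X_{T}) - \int_{t}^{T} f(X_{s}, p(s, X_{s}))\,ds)]$, where $X$ has drift $b(\cdot, p)$ and unit diffusion. Either route yields the two-sided bound $e^{-(\|g\|_{\infty} + T\|f\|_{\infty})} \le w_{p} \le e^{\|g\|_{\infty} + T\|f\|_{\infty}}$. Differentiating the mild equation, using $\|\nabla \mathcal{P}_{s} h\|_{\infty} \le Cs^{-1/2}\|h\|_{\infty}$, and invoking (H2) so that $\nabla \mathcal{P}_{T-t}e^{-g} = \mathcal{P}_{T-t}\nabla e^{-g}$ is bounded, a singular Grönwall argument produces a bound on $\|\nabla w_{p}\|_{\infty}$ depending only on the data. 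Hence $V_{p}$ is uniformly bounded on $[0,T]\times\mathbb{R}^{d}$ by a constant $M$ independent of $p \in K$.

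For the forward step and invariance, define
\[
K = \bigl\{ p \in \text{C}([0,T]; \text{C}_{b}(\mathbb{R}^{d})) : 0 \le p(t,x) \le C_{0}\rho(x),\ p \text{ equicontinuous on } [0,T]\times\mathbb{R}^{d} \bigr\}
\]
with $C_{0}$ to be fixed. Convexity is immediate, and compactness in $\text{C}([0,T]\times\mathbb{R}^{d})$ follows from Arzelà-Ascoli together with the vanishing of $\rho$ at infinity provided by (H5). The decisive step is the pointwise tail bound $\tilde{p}(t,x) \le C_{0}\rho(x)$: setting $q := \tilde{p}\rho^{-1}$ and using $\rho^{-1} \in \text{C}^{2}$ with $\|\nabla \rho^{-1}\|_{\infty} + \|\Delta \rho^{-1}\|_{\infty} < \infty$, the equation $\partial_{t}\tilde{p} = \frac{1}{2}\Delta\tilde{p} - \mathrm{div}(\tilde{p}V_{p})$ transforms into a linear parabolic equation for $q$ with bounded coefficients and initial datum $q(0,\cdot) = p_{0}\rho^{-1} \le 1$ from (H5); the maximum principle (applied to smooth approximations of $V_{p}$ and passed to the limit) yields $\|q\|_{\infty} \le e^{C_{1}T}$, and choosing $C_{0} = e^{C_{1}T}$ closes invariance. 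Equicontinuity of $\tilde{p}$ on $[0,T]\times\mathbb{R}^{d}$ follows from the mild formulation via standard heat-kernel estimates since $V_{p}$ and $\tilde{p}$ are uniformly bounded.

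Continuity of $\Phi$ on $K$ follows by passing to the limit in the Feynman-Kac formula (for $w_{p_{n}} \to w_{p}$) and in the gradient mild equation (for $\nabla w_{p_{n}} \to \nabla w_{p}$) using dominated convergence and continuity of $b, f$, combined with the uniform $\rho$-tail bound available in $K$; then $V_{p_{n}} \to V_{p}$ locally uniformly and a final Grönwall bootstrap on the forward mild equation gives $\Phi(p_{n}) \to \Phi(p)$. Schauder's theorem then produces $p^{*} = \Phi(p^{*})$, and the pair $(w^{*}, p^{*}) = (w_{p^{*}}, p^{*})$ is a weak solution of \eqref{eq:pdesystem_hopf_cole}. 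Since $w^{*}$ is uniformly bounded away from zero, $u^{*} := -\log w^{*}$ inherits the regularity required by Definition \ref{def:mfgsystemsolution}, and a direct chain-rule computation (the converse of the Hopf-Cole reduction used to derive \eqref{eq:pdesystem_hopf_cole}) shows that $(u^{*}, p^{*})$ satisfies \eqref{eq:pdesystem} weakly. The hardest part will be the weighted tail estimate in the second step: the unboundedness of $\mathbb{R}^{d}$ rules out soft compactness arguments, which is precisely why (H5) was imposed, and closing the maximum principle for $q$ starting from a merely mild solution requires a careful smoothing-and-passage-to-the-limit procedure.
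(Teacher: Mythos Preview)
Your proposal is correct and follows essentially the same route as the paper: Schauder fixed point applied to the map $p \mapsto \Phi(p)$ obtained by first solving the linear backward equation for $w_p$ (with the same two-sided bounds and gradient estimate you describe) and then the linear forward equation with drift $\nabla w_p/w_p + b(\cdot,p)$, with the key compactness ingredient being the weighted tail estimate $\Phi(p) \lesssim \rho$ enabled by (H5).

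The organizational differences are minor but worth noting. The paper takes $K$ to be simply a ball $\{\|p\|_\infty \le C_2\}$ in $\text{C}_b([0,T]\times\mathbb{R}^d)$ and then proves that $\Phi(K)$ is relatively compact, via an ad-hoc Ascoli--Arzel\`a variant on $\mathbb{R}^d$ (requiring uniform H\"older regularity in $x$, H\"older continuity in $t$, and the pointwise bound $|\Phi(p)| \le C_3\rho$); you instead build equicontinuity and the $\rho$-tail bound directly into the definition of $K$ and verify invariance $\Phi(K)\subset K$. Both are legitimate applications of Schauder. For the tail estimate on $q = \rho^{-1}\Phi(p)$, the paper derives a mild equation for $q$ and closes via a generalized Gr\"onwall inequality, whereas you appeal to the maximum principle on smooth approximations; again both work, and the paper's route avoids the smoothing/limit step you flag as delicate. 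One small point to be careful about in your formulation: the equicontinuity modulus defining $K$ must be chosen \emph{after} establishing the a-priori estimates on $\Phi(p)$ (which depend only on $\|p\|_\infty \le C_0\|\rho\|_\infty$), so that invariance closes; and the equicontinuity in $x$ near $t=0$ for $\mathcal{P}_t p_0$ uses $p_0 \in \text{C}_b^\alpha$ from (H5), which you do not mention explicitly.
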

\begin{proof}
See Appendix \ref{app:hjandfpmild}, Subsection \ref{subsec:global_existence}.
\end{proof}}

\noindent Now, we prove that the system \eqref{eq:pdesystem} admits a unique solution for $T$ sufficiently small via the contraction principle; indeed, the following theorem holds.
\begin{theorem}[Local well posedness]\label{th:localExistenceUniqueness}
There exists a unique weak (or mild) solution of the MFG system \eqref{eq:mfgsystemmild_eq1}-\eqref{eq:mfgsystemmild_eq2}, for $T$ sufficiently small.
\end{theorem}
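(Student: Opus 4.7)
I would prove local well-posedness by a Banach fixed-point argument on the mild formulation \eqref{eq:mfgsystemmild_eq1}--\eqref{eq:mfgsystemmild_eq2}. Define the Banach space
\begin{equation*}
X_T \doteq \{(u,p) \in \text{C}_b([0,T]\times\mathbb{R}^d)^2 : \nabla u \in \text{C}_b([0,T]\times\mathbb{R}^d;\mathbb{R}^d)\}
\end{equation*}
equipped with the norm $\|(u,p)\|_{X_T} \doteq \|u\|_\infty + \|\nabla u\|_\infty + \|p\|_\infty$, and let $\Phi : X_T \to X_T$ be the map sending $(u,p)$ to the pair of right-hand sides of \eqref{eq:mfgsystemmild_eq1}--\eqref{eq:mfgsystemmild_eq2}. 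Mild solutions (in the sense of Lemma \ref{lem:equiv_mild}) are precisely the fixed points of $\Phi$.

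\textbf{Stability of a ball.} Using (H1)--(H2), I would exploit the heat-semigroup bounds $\|\mathcal{P}_\tau h\|_\infty \leq \|h\|_\infty$, $\|\nabla \mathcal{P}_\tau g\|_\infty = \|\mathcal{P}_\tau \nabla g\|_\infty \leq \|\nabla g\|_\infty$, together with the standard estimate
\begin{equation*}
\|\nabla \mathcal{P}_\tau h\|_\infty \leq \frac{C_d}{\sqrt{\tau}}\|h\|_\infty, \qquad h\in\text{C}_b(\mathbb{R}^d),\ \tau>0,
\end{equation*}
and the observation $\||\nabla u|^2\|_\infty \leq R\|\nabla u\|_\infty$ valid on a ball $B_R \subset X_T$. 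A direct computation shows that if $R$ is chosen in terms of $\|g\|_\infty, \|\nabla g\|_\infty, \|p_0\|_\infty$ and the sup-norms of $b,f$ given by (H1), then $\Phi(B_R)\subset B_R$ whenever $T$ is small enough, because every contribution beyond the data comes multiplied by a factor $T$ or $\sqrt{T}$.

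\textbf{Contraction.} For $(u_i,p_i)\in B_R$, $i=1,2$, the nonlinear differences are handled via the telescoping identities
\begin{equation*}
b(\cdot,p_1)\cdot\nabla u_1 - b(\cdot,p_2)\cdot\nabla u_2 = b(\cdot,p_1)\cdot(\nabla u_1-\nabla u_2)+(b(\cdot,p_1)-b(\cdot,p_2))\cdot\nabla u_2,
\end{equation*}
\begin{equation*}
|\nabla u_1|^2 - |\nabla u_2|^2 = (\nabla u_1 + \nabla u_2)\cdot(\nabla u_1 - \nabla u_2),
\end{equation*}
together with the analogous decomposition for $p_i(\nabla u_i - b(\cdot,p_i))$ appearing in the $p$-equation, and the Lipschitz dependence of $b,f$ on $p$ from (H1). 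Combined with the heat-kernel estimates above, this yields
\begin{equation*}
\|\Phi(u_1,p_1)-\Phi(u_2,p_2)\|_{X_T}\leq C_R(T+\sqrt{T})\,\|(u_1,p_1)-(u_2,p_2)\|_{X_T},
\end{equation*}
which is a strict contraction for $T$ sufficiently small; Banach's theorem then gives a unique fixed point in $B_R$.

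\textbf{Main obstacle.} The delicate point is that the contraction argument only gives uniqueness within the chosen ball $B_R$, whereas the theorem claims uniqueness among all weak solutions in the sense of Definition \ref{def:mfgsystemsolution}. I would close this gap via \emph{a priori} bounds depending only on the data and $T$: the Hopf--Cole transform underlying Theorem \ref{th:globalExistence} furnishes $\|u\|_\infty\leq\|g\|_\infty+T\|f\|_\infty$ and $\|\nabla u\|_\infty\leq C_w(g,f,b,T)$, while a weakly-singular Gr\"onwall (Henry-type) inequality applied to \eqref{eq:mfgsystemmild_eq2} bounds $\|p\|_\infty$ in terms of the previous quantities. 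For $T$ small these bounds are uniform, so $R$ can be enlarged once and for all to contain every solution, and the uniqueness inside $B_R$ becomes global uniqueness.
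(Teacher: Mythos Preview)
Your approach is essentially the same as the paper's: a Banach fixed-point argument on the mild formulation, using the heat-semigroup estimates $\|\mathcal{P}_\tau h\|_\infty\leq\|h\|_\infty$ and $\|\nabla\mathcal{P}_\tau h\|_\infty\leq C_d\tau^{-1/2}\|h\|_\infty$, with the quadratic and bilinear terms split exactly as you indicate. The only cosmetic difference is that the paper works with the pair $(p,\theta)$ where $\theta\doteq\nabla u$ (dropping $\|u\|_\infty$ from the norm, since $u$ is recovered a posteriori from \eqref{eq:mfgsystemmild_eq1} once $\nabla u$ is known), and arrives at the same estimate $\|\Gamma(p,\theta)-\Gamma(p',\theta')\|_{T,\infty}\leq \widetilde C\sqrt{T}\,(\|(p,\theta)\|+\|(p',\theta')\|)\,\|(p,\theta)-(p',\theta')\|$ before restricting to a ball $\Lambda_{T,R}$.

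Your ``Main obstacle'' paragraph is well taken and in fact goes beyond what the paper writes explicitly: the paper's proof stops at uniqueness within $\Lambda_{T,R}$ and does not spell out why every weak solution in the sense of Definition~\ref{def:mfgsystemsolution} must lie in that ball. Your remedy---pulling the a~priori bounds $\|\nabla u\|_\infty\leq C_w(g,f,b,T)$ from the Hopf--Cole computation behind Theorem~\ref{th:globalExistence}, then bounding $\|p\|_\infty$ via a weakly-singular Gr\"onwall on \eqref{eq:mfgsystemmild_eq2}---is exactly the right way to close this gap, and makes the uniqueness statement genuinely global.
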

\begin{proof}
See Appendix \ref{app:hjandfpmild}, Subsection \ref{subsec:local_existence_uniqueness}.
\end{proof}

\noindent Next, let $T>0$ indicate (as before) the finite time horizon, and let $b, f, p_0, g$ as in Section \ref{sec:preliminaries}. If the PDE system in Eq.~\eqref{eq:pdesystem} has a unique weak (or mild) solution $(u, p)$, then we denote by $K(T, b, f, p_0, g)$ the following constant:
\begin{equation}\label{eq:basic_const}
K\left( T,b,f,p_{0},g\right)  \doteq \sup_{t\in \left[ 0,T\right] ,x\in \mathbb{R}%
^{d}}\left\vert \nabla u\left( t,x\right) \right\vert .
\end{equation}

\subsection{Feedback MFG with given density}\label{subsec:feedback}
We started the section by formulating the PDE approach to MFGs of moderate interaction. Here, instead, we introduce the corresponding stochastic (\textcolor{black}{feedback first and open-loop in the next subsection}) formulation. \\
\textcolor{black}{Let $K>0$. In order to make precise our definition of (feedback) MFG solution, we introduce the following notation:}
\begin{itemize}
\item[\textit{(i)}] \textcolor{black}{We denote by $\mathcal{A}^{fb}_K$ the set of feedback controls for the MFG, which is defined as the set of functions \textcolor{black}{$\alpha \in \text{C}_b([0,T] \times \mathbb{R}^{d} ; \mathbb{R}^{d})$} bounded by $K$.}
\item[\textit{(ii)}]\textcolor{black}{Next, given the function $p$ as in Definition \ref{def:mfgsystemsolution}, given an admissible control $\alpha\in\mathcal{A}^{fb}_K$, we consider the equation}
\begin{equation}
\label{eq:mfgstate_fb}
\textcolor{black}{X_t = X_0 + \int_{0}^{t} (\alpha(s,X_s) + b(X_s, p(s, X_s)))\,ds + W_t,\quad t \in [0, T],}
\end{equation}
\textcolor{black}{where $X_0$ is  a $\mathcal{F}_0$-measurable random variable distributed as $\mu_0$ having density $p_0$ while $W$ is a $d$-dimensional Wiener process defined on some filtered probability space $(\Omega, \mathcal{F}, (\mathcal{F}_t), \mathbb{P})$.}
\item[\textit{(iii)}]\textcolor{black}{Finally, we consider the following cost functional}
\begin{equation*}
\textcolor{black}{J(\alpha) \doteq \mathbb{E}\left[\int_{0}^{T} \frac{1}{2}|\alpha(s,X_s)|^2 + f(X_s, p(s, X_s))\,ds + g(X_T)\right]}
\end{equation*}
\textcolor{black}{and we say that $\alpha^{*}\in \mathcal{A}^{fb}_{K}$ is an optimal control if it is a minimizer of $J$ over $\mathcal{A}^{fb}_{K}$, i.e. if $J(\alpha^{*}) = \inf_{\alpha \in \mathcal{A}^{fb}_K} J(\alpha)$.} 
\end{itemize}

\noindent \textcolor{black}{The notion of solution we will consider in the feedback case is then the following:}
{\color{black}{
\begin{definition}[MFG solution, stochastic \textbf{feedback} formulation]\label{def:mfgsolutionstocFB}
Let $T>0$ be the finite time horizon and $b, f, p_0, g$ as in (H1)-(H2) and (H4); see Section \ref{sec:preliminaries}. 
Then a \textbf{feedback MFG solution} for bound $K>0$ is a pair $(\alpha^*,p)$ such that:
\begin{itemize}
\item[\textit{(i)}] $p\in C_b([0,T]\times \mathbb{R}^d)$ and $\alpha^* \in \mathcal{A}^{fb}_K$;
\item[\textit{(ii)}] Given $p\in C_b([0,T]\times \mathbb{R}^d)$, $\alpha^* \in \mathcal{A}^{fb}_K$ is an optimal control for the cost functional $J(\cdot)$ (in the sense of item (iii) above);
\item[\textit{(iii)}] For any weak solution $(\Omega, \mathcal{F}, (\mathcal{F}_t), \mathbb{P},X,W)$  of Eq.\eqref{eq:mfgstate_fb}, $X_t$ has law $\mu_t$ with density $p(t,\cdot)$ for every $t\in[0,T]$.
\end{itemize}
\end{definition}
\noindent Assume that the MFG system in Eq.~\eqref{eq:pdesystem} has a unique weak solution $(u, p)$ and let $K$ be any constant such that
\begin{equation*}
\textcolor{black}{K \geq K(T, b, f, p_0, g),}
\end{equation*}
where $K(T, b, f, p_0, g)$ is the constant in Eq.~\eqref{eq:basic_const}. 
From an operative point of view, in order to find a (feedback) MFG solution in the sense of Definition \ref{def:mfgsolutionstocFB}, we look for an optimal control $\alpha^*\in \mathcal{A}^{fb}_K$ such that, given $p\in C_b([0,T]\times \mathbb{R}^d)$ and given any weak solution $(\Omega, \mathcal{F}, (\mathcal{F}_t), \mathbb{P},X^*,W)$  of Eq.\eqref{eq:mfgstate_fb} (controlled by $\alpha^*$ and with density $p$ appearing in the drift), the law of $X^*_t$ has density $p^*\in C_b([0,T]\times \mathbb{R}^d)$ such that $p^*\equiv p$.\\
}}

\noindent \textcolor{black}{Given the \textbf{environment} $(\Omega, \mathcal{F}, (\mathcal{F}_t), \mathbb{P},W, p)$,~i.e. a filtered probability space with Wiener process $W$ and with a given distribution of players specified by its density function $p$, where $p$ is as in Definition \ref{def:mfgsystemsolution}, we notice that} path-wise uniqueness and existence of a strong solution of Eq.~\eqref{eq:mfgstate_fb} is provided by \cite{veretennikov1981strong}. \textcolor{black}{Then, we define the unique solution $X$ of Eq.\eqref{eq:mfgstate_fb} in the given environment $(\Omega, \mathcal{F}, (\mathcal{F}_t), \mathbb{P},W, p)$ and with $\alpha \doteq -\nabla u$, to be the \textbf{state} of the PDE system in Eq.~\eqref{eq:pdesystem} in the given environment with density $p$. Nevertheless, we decide to introduce and work with weak solutions in view of the approximation result of Section \ref{sec:nash}, where we exploit weak convergence of the laws of the $N$-player system and provide a stochastic representation of the limiting dynamics by means of the martingale problem of Stroock and Varadhan \citep{stroock2007multidimensional}}. 

\subsection{Open-loop MFG with given density}\label{subsec:openloop}
\textcolor{black}{We now introduce a more general notion of control, that of open-loop control, together with what we intend with a solution of the MFG in open-loop form.}\\
\textcolor{black}{Let $K>0$. In order to make precise our definition of (open-loop) MFG solution, we introduce the following notation:}
\begin{itemize}
\item[\textit{(i)}] \textcolor{black}{We denote by $\mathcal{A}_{K}$ the set of admissible open-loop controls for the MFG, which is defined as the set of tuples $(\Omega, \mathcal{F}, (\mathcal{F}_t), \mathbb{P}, X, W, \alpha)$ where $\alpha = (\alpha(t))_{t \in [0,T]}$ is $\mathcal{F}_t$-progressively measurable, continuous and bounded by $K$ a.s. for all $t\in[0,T]$, while $(\Omega, \mathcal{F}, (\mathcal{F}_t), \mathbb{P}, X, W)$ is a weak solution of}
\begin{equation}\label{eq:mfgstate}
\textcolor{black}{X_t = \textcolor{black}{X_0} + \int_{0}^{t} (\alpha(s) + b(X_s, p(s, X_s)))\,ds + W_t,\quad t \in [0, T]}
\end{equation}
\textcolor{black}{where $X_0 \overset{d}{\sim} \mu_0$, having density $p_0$, is independent of the $\mathcal{F}_t$-Wiener process $W$.}
\textcolor{black}{For the sake of brevity and where no confusion is possible we will denote a control for the MFG simply with $\alpha$, in place of the full tuple.}
\item[\textit{(ii)}] \textcolor{black}{We consider the following cost functional}
\begin{equation}\label{eq:admissible_MFG}
\textcolor{black}{J(\alpha) \doteq \mathbb{E}\left[\int_{0}^{T} \frac{1}{2}|\alpha(s)|^2 + f(X_s, p(s, X_s))\,ds + g(X_T)\right]}
\end{equation}
\textcolor{black}{and we say that $\alpha^{*} \doteq (\alpha^{*}(t))_{t \in [0, T]} \in \mathcal{A}_{K}$ is an optimal control if it is a minimizer of $J$ over $\mathcal{A}_{K}$, i.e. if $J(\alpha^{*}) = \inf_{\alpha \in \mathcal{A}_K} J(\alpha)$. }
\end{itemize}
\textcolor{black}{Thereafter, we will denote by $\mathbf{OC}$ the just-introduced optimal control problem. The notion of solution we will consider in the open-loop case is then the following:}
{\color{black}{
\begin{definition}[MFG solution, stochastic \textcolor{black}{\textbf{open-loop}} formulation]\label{def:mfgsolutionstoc}
Let $T>0$ be the finite time horizon and $b, f, p_0, g$ as in (H1)-(H2) and (H4); see Section \ref{sec:preliminaries}. 
Then a \textbf{open-loop MFG solution} for bound $K>0$ is a pair $(\alpha^*,p)$ such that:
\begin{itemize}
\item[\textit{(i)}] $p\in C_b([0,T]\times \mathbb{R}^d)$ and $\alpha^* \in \mathcal{A}_K$, $\alpha^*$ standing for the full tuple:
\[(\Omega, \mathcal{F}, (\mathcal{F}_t), \mathbb{P}, X, W, \alpha^*);\]
\item[\textit{(ii)}] Given $p\in C_b([0,T]\times \mathbb{R}^d)$, $\alpha^* \in \mathcal{A}_K$ is an optimal control for problem $\mathbf{OC}$ (in the sense of item (ii) above);
\item[\textit{(iii)}] $(\Omega, \mathcal{F}, (\mathcal{F}_t), \mathbb{P},X,W)$ is a weak solution of Eq.\eqref{eq:mfgstate} such that $X_t$ has law $\mu_t$ with density $p(t,\cdot)$ for every $t\in[0,T]$.
\end{itemize}
\end{definition}
}
}

\noindent \textcolor{black}{As for the feedback case, given the \textbf{environment} $(\Omega, \mathcal{F}, (\mathcal{F}_t), \mathbb{P},W, p)$ where $p$ is as in Definition \ref{def:mfgsystemsolution}, given an admissible control $\alpha \in \mathcal{A}_K$, we notice that path-wise uniqueness and existence of a strong solution of Eq.~\eqref{eq:mfgstate} is provided by \cite{veretennikov1981strong} but we will continue working with weak solutions in view of the approximation result of Section \ref{sec:nash}}.\\
\indent\textcolor{black}{We point out that feedback controls induce stochastic open-loop controls so, as a consequence, the computation of the infimum of $J(\alpha)$ over the class of stochastic open-loop controls would, in principle, lead to a lower value with respect to performing the same computation over the set of stochastic feedback controls. However, thanks to Proposition 2.6 in \cite{karoui}, the two minimization problems are equivalent from the point of view of the value function.}\\

\noindent We state now the main result of this section, \textit{the Verification Theorem}, which gives an optimal control for $\mathbf{OC}$. \textcolor{black}{In particular, we are going to show that   $\alpha^{\ast}$ is the optimal feedback \textcolor{black}{control}, namely the optimal \textcolor{black}{strategy} to play at time $t$ \textcolor{black}{for a given state $x$}.}

\begin{theorem}[Verification Theorem]\label{th:VerificationTheorem} Consider the PDE system in Eq.~\eqref{eq:pdesystem} and let $(u, p)$ be a weak (or mild) solution. Consider the optimal control problem $\mathbf{OC}$ as in Definition \ref{def:mfgsolutionstoc}-(iii) and set $\alpha^{*}(t) = \alpha^{*}(t, x) \doteq -\nabla u(t, x)$. Then,
\begin{itemize}
\item[\textit{(i)}] $\alpha^{*}$ is an optimal control for $\mathbf{OC}$;
\item[\textit{(ii)}] \textcolor{black}{for any weak solution $(\Omega, \mathcal{F}, (\mathcal{F}_t), \mathbb{P},X^*,W)$ of Eq.~\eqref{eq:mfgstate} with $\alpha(s) = \alpha^{*}(s, X_s^{*})$, the state $X^{*}_t$ has law $\mu^*_t$ with density $p(t,\,\cdot\,)$ for every $t \in [0, T]$.}
\end{itemize}
\end{theorem}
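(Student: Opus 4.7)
The plan is a classical verification argument. For any admissible $\alpha \in \mathcal{A}_K$ with state $X$ solving~\eqref{eq:mfgstate}, I would apply It\^o's formula to $u(t, X_t)$, use the HJB equation in~\eqref{eq:pdesystem} to eliminate the drift terms, and conclude by recognising the perfect square
\begin{equation*}
J(\alpha) = \mathbb{E}[u(0, X_0)] + \tfrac{1}{2}\,\mathbb{E}\!\left[\int_0^T |\alpha(s) + \nabla u(s, X_s)|^2 \, ds\right].
\end{equation*}
This identity immediately identifies $\alpha^*(t, x) = -\nabla u(t, x)$ as the unique (a.s.) minimizer over $\mathcal{A}_K$; the candidate is admissible since $\|\nabla u\|_\infty \leq K(T, b, f, p_0, g) \leq K$ by~\eqref{eq:basic_const}, and weak existence of $X^*$ under the bounded drift $-\nabla u + b(\cdot, p)$ is provided by \cite{veretennikov1981strong}.

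The real obstacle is that Definition~\ref{def:mfgsystemsolution}(i) only guarantees $u, \partial_i u \in \text{C}_b([0, T] \times \mathbb{R}^d)$, so It\^o's formula cannot be applied to $u$ directly. As anticipated in the paper's introduction, the remedy is the standard spatial mollification $u^{\varepsilon}(t, \cdot) \doteq u(t, \cdot) \star \rho_{\varepsilon}$ for a smooth compactly supported $\rho_{\varepsilon}$. Using the mild formulation~\eqref{eq:mfgsystemmild_eq1} together with the smoothing properties of the heat semigroup $\mathcal{P}_t$, $u^{\varepsilon}$ is smooth in $x$, regular enough in $t$ to apply It\^o, and satisfies
\begin{equation*}
\partial_t u^{\varepsilon} + \tfrac{1}{2}\Delta u^{\varepsilon} + b(\cdot, p) \cdot \nabla u^{\varepsilon} - \tfrac{1}{2}|\nabla u^{\varepsilon}|^2 + f(\cdot, p) = R_{\varepsilon},
\end{equation*}
where the residual $R_{\varepsilon}$ records the commutators between $\star \rho_{\varepsilon}$ and the nonlinear terms $b(\cdot, p) \cdot \nabla u$ and $|\nabla u|^2$, and vanishes locally uniformly as $\varepsilon \to 0$ thanks to continuity of $b, f, p$ and boundedness of $\nabla u$. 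Applying It\^o to $u^{\varepsilon}(t, X_t)$, taking expectation (the stochastic integral drops since $\nabla u^{\varepsilon}$ is bounded), and passing to the limit by dominated convergence yields the identity of the first paragraph and proves (i). The crux is to verify that $\mathbb{E}\int_0^T R_{\varepsilon}(s, X_s)\, ds \to 0$, which requires care but no regularity beyond (H1)--(H4) and Definition~\ref{def:mfgsystemsolution}(i).

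For (ii), apply It\^o to arbitrary test functions $\psi \in \text{C}^{1,2}_c([0, T] \times \mathbb{R}^d)$ and the state $X^*$; the resulting identity shows that the flow of laws $\mu^*_t \doteq \mathcal{L}(X^*_t)$ solves, in the space of probability measures, the linear Fokker--Planck equation
\begin{equation*}
\partial_t q - \tfrac{1}{2}\Delta q + \operatorname{div}\!\Bigl[q\, (-\nabla u + b(\cdot, p))\Bigr] = 0, \quad q(0, \cdot) = p_0,
\end{equation*}
which is precisely the second line of~\eqref{eq:pdesystem}, already satisfied by $p$ with the same initial datum. Uniqueness of weak measure solutions to this linear equation with bounded continuous drift and nondegenerate diffusion then forces $\mu^*_t = p(t, \cdot)\, dx$ for every $t \in [0, T]$, proving (ii).
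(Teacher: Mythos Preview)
Your proposal is correct and follows essentially the same route as the paper: spatial mollification $u_\varepsilon=\theta_\varepsilon\ast u$ to justify It\^o's formula (the paper's Appendix~\ref{subsec:proofVerificationTheorem} carries out exactly the residual-commutator limit you sketch), followed by It\^o on test functions plus uniqueness for the linear Fokker--Planck equation for part~(ii). The only cosmetic differences are that you package the verification step as the perfect-square identity $J(\alpha)=\mathbb{E}[u(0,X_0)]+\tfrac12\mathbb{E}\!\int_0^T|\alpha+\nabla u|^2\,ds$ rather than the inequality $\mathbb{E}[g(X_T^\alpha)]\geq\mathbb{E}[u(0,X_0^\alpha)]-\mathbb{E}\!\int_0^T(\tfrac12|\alpha|^2+f)\,ds$ used in the paper, and that the paper cites its Theorem~\ref{th:localExistenceUniqueness} for the Fokker--Planck uniqueness where you invoke the (equivalent) linear result directly.
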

\begin{proof}
Let $\alpha \in \mathcal{A}_K$ and $X^{\alpha} \doteq (X_t^{\alpha})_{t \in [0,T]}$ the solution of Eq.~\eqref{eq:mfgstate} controlled by $\alpha$. Besides, let $X_t^{*}$ as in Definition \ref{eq:admissible_MFG}-\textit{(ii)}, i.e., 
$$
X_t^{*} = \textcolor{black}{X_0} + \int_{0}^{t} (-\nabla u(s, X_s^{*}) + b(X_s^{*}, p(s, X_s^{*})))\,ds + W_t.
$$
\textcolor{black}{Notice that, thanks to boundedness of the drift, the previous equation admits both a weak solution and, in any given environment $(\Omega, \mathcal{F}, (\mathcal{F}_t), \mathbb{P},W, p)$, }a strong solution \textcolor{black}{that} is path-wise unique \citep{veretennikov1981strong}.\\
\indent \textit{Proof of (i)}.\quad Heuristically, should the function $u \in \text{C}^{1,2}([0,T] \times \mathbb{R}^{d})$, then we could apply It\^{o} formula to $u(t, X_t^{\alpha})$ and obtain (in expectation)
\begin{equation}\label{eq:euristic}
\begin{split}
\mathbb{E}[g(  & X_T^{\alpha})] = \mathbb{E}[u(T, X_T^{\alpha})]\\
					   &=\mathbb{E}\left[u(0, X_0^{\alpha}) + \int_{0}^{T}\left( \alpha(s) \cdot \nabla u(s, X_s^{\alpha}) - \frac{1}{2}| \nabla u(s, X_s^{\alpha})|^2 - f(X_s^{\alpha}, p(s, X_s^{\alpha})\right)\,ds\right],\\
\end{split}
\end{equation}
where we use the fact that the function $u$ satisfies the first equation of the PDE system in Eq.~\eqref{eq:pdesystem}, which implies
\begin{equation*}
\begin{split}
\mathbb{E}[g(  & X_T^{\alpha})] \geq \mathbb{E}\left[u(0, X_0^{\alpha}) + \int_{0}^{T}\left(- \frac{1}{2}| \alpha(s) |^2 - f(X_s^{\alpha}, p(s, X_s^{\alpha})\right)\,ds\right].
\end{split}
\end{equation*}
Hence for any admissible control $\alpha$ we would have $J(\alpha) \geq \mathbb{E}[u(0, X_0^{\alpha})]$. In particular, the above inequality becomes an equality for $\alpha(s) = \alpha^{*}(s,x) = -\nabla u(s,x)$, i.e. $J(\alpha^{*}) = \inf_{\alpha} J(\alpha) = \mathbb{E}[u(0, X_0^{*})]$. This would prove that $\alpha^{*}$ is an optimal control for $\mathbf{OC}$.\\
\noindent However, the function $u$ is not  ``regular enough" to apply  It\^{o} formula and some work is needed to adapt the heuristic argument to $u$. Given the technicality of this part and being it based on standard mollification arguments, we decide to move the required computations in Appendix \ref{app:hjandfpmild}, Subsection \ref{subsec:proofVerificationTheorem}.\\
\indent\textit{Proof of (ii).}\quad Now, let $\mu^{*}_t$ be the law of $X_t^{*}$ and let $\varphi \in \text{C}_b^{2}(\mathbb{R}^{d})$ be a test function. By It\^{o} formula, 
\begin{equation*}
\begin{split}
\varphi(X_t^{*}) =  \varphi(\textcolor{black}{X_0}) &+ \int_{0}^{t} \nabla \varphi(X_s^{*}) \cdot (-\nabla u(s, X_s^{*}) + b(X_s^{*}, p(s, X_s^{*})))\,ds\\
				   &+ \int_{0}^{t} \nabla \varphi(X_s^{*})\,dW_s + \frac{1}{2} \int_{0}^{t} \Delta  \varphi(X_s^{*})\,ds.
\end{split}
\end{equation*}
Hence, taking expectations on both sides, we have 
\begin{equation*}
\begin{split}
\left\langle \mu_t^{*}, \varphi(\,\cdot\,) \right\rangle & = \left\langle p_0 ,\varphi(\,\cdot\,)\right\rangle + \int_{0}^{t}  \left\langle \mu_s^{*}, \nabla \varphi(\,\cdot\,)  \cdot (-\nabla u(s, \,\cdot\,) + b(\,\cdot\,,p(s,\,\cdot\,)))\right\rangle \,ds \\
&+ \frac{1}{2} \int_{0}^{t} \left\langle \mu_s^{*}, \Delta \varphi(\,\cdot\,)\right\rangle \,ds.
\end{split}
\end{equation*}
Theorem \ref{th:localExistenceUniqueness} guarantees that this equation has a unique weak (or mild) solution $\mu_t$ with density $p(t,\,\cdot\,)$; hence $\mu$ and $\mu^*$ coincide and $\mu_t^{*}$ has density $p(t,\,\cdot\,)$ for every $t \in [0, T]$. This concludes the proof.
\end{proof}

\section{Moderately interacting particles}\label{sec:moderate} 
\noindent Let $N \in \mathbb{N}$ be the number of players and denote by $X_t^{N,i}$ the private state of player $i$ at time $t$, $t \in \left[0, T\right]$. In this section, we assume that the evolution of the players' states is given by Eq.~\eqref{eq:privatestate_evolution_N_player} and, as said, we consider players using feedback strategies, i.e. $\alpha^{N,i}(s) = \alpha(s, \textcolor{black}{\mathbf{X}}_s^{N})$ with $\alpha$ sufficiently smooth. \textcolor{black}{In particular, we will assume -- with the natural identification --  that \textcolor{black}{$\alpha \in \text{C}_b([0,T] \times \mathbb{R}^{d\cdot N} ; \mathbb{R}^{d})$}.} Besides, $b$, $V^{N}$ and $\textcolor{black}{X_0}^{N, i}$, $i \in [[ N ]]$, satisfy the hypotheses $\text{(H1)}$, $\text{(H3)}$ and $\text{(H4)}$ in Section \ref{sec:preliminaries}. Before proceeding, notice that the function
\begin{equation*}
F : [0,T] \times \mathbb{R}^{d\cdot N} \rightarrow \mathbb{R}^{d\cdot N}
\end{equation*}
defined component-wise as  
\begin{equation}\label{eq:function_F}
F_i(t, x_1, \ldots, x_N) \doteq \alpha(t, x_i) + b\Bigg(x_i, \frac{1}{N}\sum_{j = 1}^{N} V^{N}(x_i - x_j)\Bigg)
\end{equation}
is continuous and bounded. Since the blackian motion $\bm{W}^{N}_t \in \mathbb{R}^{d\cdot N}$ in Eq.~\eqref{eq:privatestate_evolution_N_player} is non-degenerate, \textcolor{black}{both} existence of \textcolor{black}{a weak solution and existence of} a pathwise unique strong solution \textcolor{black}{in any given environment $((\Omega_{N}, \mathcal{F}_{N}, (\mathcal{F}_t^{N}), \mathbb{P}^N), \bm{W}^{N}, V)$, where now in the $N$-player case the interaction among players is prescribed by $V$,} holds for this system \citep{veretennikov1981strong}. Let $S^{N}_t$ be the empirical measure on $\mathbb{R}^{d}$ of the players' private states, that is, 
\begin{equation}\label{eq:empirical_measure}
S_t^{N}(B) \doteq \frac{1}{N} \sum_{i = 1}^{N} \delta_{X_t^{N,i}}(B),\quad B \in \mathcal{B}(\mathbb{R}^{d}),\,\,t\in [0,T].
\end{equation}
\textcolor{black}{$S^N=(S_t^{N})$ is a continuous stochastic process with values in $\mathcal{P}(\mathbb{R}^{d})$; hence it can be seen as a random variable
with values in $\text{C}([0,T];\mathcal{P}(\mathbb{R}^{d}))$ (notice that for the sake of notation we do not put the explicit dependence on $\omega \in \Omega$ in these definitions). Therefore, $\mathcal{L}(S_{t}^{N})\in \mathcal{P}(\mathcal{P}(\mathbb{R}^{d}))$ and $\mathcal{L}(S^{N})\in \mathcal{P}(\text{C}([0,T];\mathcal{P}( \mathbb{R}^{d})))$, respectively.}\\\\
\noindent The main goal of this section is the characterization of the convergence of the laws $(\mathcal{L}(S^{N}))_{N\in \mathbb{N}}$ in $\mathcal{P}(\text{C}([0,T];\mathcal{P}(\mathbb{R}^{d})))$. This characterization result is the content of Theorem \ref{th:oelschlager85New} here below.
\begin{theorem}[Moderately interacting particles]\label{th:oelschlager85New}
\citep[cfr.][Theorem 1]{oelschlager1985law} Grant $\text{(H1)}$ and $\text{(H3)}-\text{(H4)}$. Let 
\textcolor{black}{$\alpha \in \text{C}_b([0,T] \times \mathbb{R}^{d \times N} ; \mathbb{R}^{d})$} be given. Then,
\begin{itemize}
\item[\textit{(i)}] the sequence of laws $(\mathcal{L}(S^{N}))_{N\in \mathbb{N}}$ converges
weakly in $\mathcal{P}(C([0,T];\mathcal{P}(\mathbb{R}^{d})))$ to $\delta
_{\mu }\in \mathcal{P}(C([0,T];\mathcal{P}(\mathbb{R}^{d})))$ for a
\textcolor{black}{flow of probability measures} $\mu \in C([0,T];\mathcal{P}(\mathbb{R}^{d}))$; hence
also $S^{N}$ converges in probability to $\mu $;
\item[\textit{(ii)}] for each $t\in \lbrack 0,T]$, $\mu _{t}$ is absolutely continuous with
respect to the Lebesgue measure on $\mathbb{R}^{d}$, with density $p(t,\,\cdot
\,)$; the \textcolor{black}{flow of density functions} satisfies%
\begin{equation*}
p \in \textcolor{black}{\text{C}_b([0,T] \times \mathbb{R}^{d})}
\end{equation*}%
and it is the unique solution in this space of the equation 
\begin{equation}\label{eq:deterministic}
p\left( t\right) =\mathcal{P}_{t}p_0 +\int_{0}^{t}\nabla \mathcal{P}%
_{t-s}\left( p\left( s\right) \left( \alpha \left( s\right) +{b(\,\cdot\,,p(s))}%
\right) \right) ds.
\end{equation}
\end{itemize}
\end{theorem}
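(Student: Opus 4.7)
The plan is to follow the classical four-step martingale/empirical-measure route adapted to the moderate regime: first, establish tightness of $(\mathcal{L}(S^{N}))_{N\in \mathbb{N}}$ in $\mathcal{P}(\text{C}([0,T];\mathcal{P}(\mathbb{R}^{d})))$; second, identify any weak limit point as a Dirac mass on a flow $\mu$ that solves the mild equation \eqref{eq:deterministic}; third, show that each $\mu_{t}$ admits a bounded continuous density $p(t,\,\cdot\,)$; and fourth, prove uniqueness of \eqref{eq:deterministic} in $\text{C}_{b}([0,T]\times \mathbb{R}^{d})$, so that convergence in probability of $S^{N}$ to the deterministic flow $\mu$ follows automatically.

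For tightness, since $\alpha$ and $b$ are uniformly bounded by (H1), the drift $F_{i}$ of Eq.~\eqref{eq:function_F} is bounded uniformly in $N$. Together with the exchangeability of the particles, Burkholder-Davis-Gundy gives uniform H\"older-in-time estimates on each tagged particle $X^{N,i}$, and the exponential moment condition in (H4) propagates to give uniform tightness of the marginals $\mathcal{L}(X^{N,i}_{t})$. Tightness of $S^{N}$ in $\text{C}([0,T];\mathcal{P}(\mathbb{R}^{d}))$ then follows from an Aldous-type criterion applied to the real-valued processes $t \mapsto \langle S^{N}_{t},\varphi \rangle$ for $\varphi$ ranging over a countable convergence-determining family in $\text{C}_{b}(\mathbb{R}^{d})$.

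The heart of the argument, and the main technical obstacle, is the identification step: one must pass to the limit in the interaction term $V^{N}*S^{N}_{s}(x)$, which cannot be done by testing against a fixed continuous function since $V^{N}$ concentrates as $N \to \infty$. The strategy is to exploit the Brownian smoothing of the dynamics: applying It\^{o}'s formula to $\varphi(X^{N,i}_{t})$, averaging over $i$, and choosing $\varphi(\,\cdot\,) = G(\varepsilon, x-\,\cdot\,)$ for a parameter $\varepsilon = \varepsilon(N) \to 0$ at a carefully tuned rate, produces a representation of the smoothed empirical measure $(\mathcal{P}_{\varepsilon}S^{N}_{t})(x)$ that can be compared to $V^{N}*S^{N}_{t}(x)$. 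The H\"older-type semi-norm bounds for the heat kernel developed in Appendix \ref{app:holder_heat_bound} and Appendix \ref{app:holder_heat_bound_1} are exactly what is needed to control $\|\mathcal{P}_{\varepsilon}S^{N}_{t} - V^{N}*S^{N}_{t}\|_{\infty}$ uniformly in $N$, and the restriction $\beta \in (0,1/2)$ in (H3) is precisely what matches the rescaled bandwidth $N^{-\beta/d}$ of $V^{N}$ with the diffusive scale $\sqrt{\varepsilon}$. Once this comparison is in hand, one infers that every weak limit $\mu$ admits a bounded continuous density $p$, and the Lipschitz assumption on $b$ in (H1) then allows the averaged It\^{o} identity to pass to the limit into \eqref{eq:deterministic}.

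For uniqueness, given two solutions $p_{1},p_{2}\in \text{C}_{b}([0,T]\times \mathbb{R}^{d})$ with the same initial datum, I would subtract their mild formulations and combine the Lipschitz continuity of $b$ from (H1), the boundedness of $\alpha$, and the standard $L^{\infty}$-bound $\|\nabla \mathcal{P}_{t-s}h\|_{\infty} \le C(t-s)^{-1/2}\|h\|_{\infty}$ to obtain
\begin{equation*}
\|p_{1}(t)-p_{2}(t)\|_{\infty} \le C\int_{0}^{t} (t-s)^{-1/2}\|p_{1}(s)-p_{2}(s)\|_{\infty}\,ds,
\end{equation*}
from which a Gronwall lemma for weakly singular kernels concludes. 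This is exactly the step at which the authors' more general Lipschitz assumption (H1) suffices, whereas the stronger Lipschitz condition on the drift in \cite{oelschlager1985law} is not needed. Uniqueness then upgrades subsequential weak convergence to full weak convergence of $(\mathcal{L}(S^{N}))$ to $\delta_{\mu}$, and since the limit is deterministic, convergence in probability of $S^{N}$ to $\mu$ follows, closing both (i) and (ii).
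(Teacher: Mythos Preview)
Your four-step outline (tightness, identification, density regularity, uniqueness) matches the paper's architecture, and your tightness and uniqueness arguments are essentially those of the paper. The divergence is in the identification step, and there your description does not reflect what the paper actually does.

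You propose smoothing $S^{N}$ by the heat kernel at a tuned scale $\varepsilon=\varepsilon(N)\to 0$ and then comparing $\mathcal{P}_{\varepsilon(N)}S^{N}_{t}$ with $V^{N}*S^{N}_{t}$ in sup norm. The paper never introduces a coupled scale. It works with $p^{N}\doteq V^{N}*S^{N}$ \emph{directly}: testing the It\^o identity for $S^{N}$ against $\mathcal{P}_{t-s}(V^{N,-}*h)$ produces a closed mild equation for $p^{N}$ (Lemma~\ref{lem:mild_form_empirical_density}), from which the a priori estimate $\sup_{N}\mathbb{E}\bigl[\|p^{N}(t)\|_{\gamma}^{q}\bigr]\le C$ is extracted (Lemma~\ref{lem:hoelder_estim_empirical_density}). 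The hard ingredient is the H\"older bound on the stochastic convolution $M^{N}_{t}(x)=\int_{0}^{t}N^{-1}\sum_{i}\mathcal{P}_{t-s}\nabla V^{N}(x-X^{N,i}_{s})\,dW^{N,i}_{s}$ (Lemma~\ref{lem:estimate_martingale}); this is where $\beta<1/2$ enters, balancing $\epsilon_{N}^{-d-\delta}=N^{\beta(1+\delta/d)}$ against the martingale factor $N^{-1/2}$, and where the Appendix~\ref{app:holder_heat_bound} machinery (Sobolev embedding on $\mathbb{R}^{d}$ via Lemma~\ref{lem:suff_hoelder}) is actually used --- not to compare two different smoothings as you suggest.

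With the uniform H\"older bound on $p^{N}$ in hand, Lemma~\ref{lem:upgrade} upgrades any weak limit of $S^{N}$ to a flow with density $p\in\text{C}_{b}$. For the nonlinear term $\langle S^{N}_{s},b(\cdot,V^{N}*S^{N}_{s})\cdot\nabla\varphi\rangle$, the paper regularises by a \emph{fixed} $\mathcal{P}_{\delta}$ to obtain a functional $\Phi_{\varphi,\delta}$ continuous on $\text{C}([0,T];\mathcal{P}(\mathbb{R}^{d}))$, and takes the double limit $N\to\infty$ then $\delta\to 0$ (Proposition~\ref{prop:identification}); the H\"older bound on $p^{N}$ is again what makes the $\delta$-layer removable. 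Your tuned-$\varepsilon(N)$ route, as written, presupposes control of $\|\mathcal{P}_{\varepsilon(N)}S^{N}_{t}-V^{N}*S^{N}_{t}\|_{\infty}$, but $S^{N}_{t}$ is atomic, so such a comparison is vacuous without an a priori regularity estimate on the mollified object --- which is precisely the content of Lemmas~\ref{lem:estimate_martingale}--\ref{lem:hoelder_estim_empirical_density} that your sketch omits.
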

\noindent The proof of the previous theorem is divided into four parts. The
first one is the tightness of the sequence of laws $(\mathcal{L}(S^{N}%
))_{N\in\mathbb{N}}$ in $\mathcal{P}(\text{C}([0,T];\mathcal{P}(\mathbb{R}%
^{d}))$; see Subsection \ref{subsec:tightness}. The second one is the
collection of estimates on $V^{N}\ast S_{t}^{N}$; see Subsection
\ref{subsec:empirical}. The third one is the characterization of the limits:
all the possible limits are a random solutions of the deterministic equation in
Eq.~\eqref{eq:deterministic}, with the required regularity; see Subsection
\ref{subsec:proof_oelschlager}. The fourth one is the proof of the uniqueness
of solutions of this deterministic equation.

\subsection{Tightness of the empirical measure}
\label{subsec:tightness} \noindent On $\mathcal{P}(\mathbb{R}^{d})$ the weak
topology is generated by the following complete metric:
\[
d_{w}(\mu,\nu)\doteq\sup_{f\in\text{Lip}_{1}(\mathbb{R}^{d})\cap
\text{C}_{b}(\mathbb{R}^{d})}\left(  \langle\mu,f\rangle-\langle\nu,f\rangle\right)
.
\]
We refer to \cite{oelschlager1985law}, Page 285, and
\cite{dudley1966convergence}, Theorem 18, for a complete proof of the previous result. Also, we consider the regularized empirical measures
\[
\left(  V^{N}\ast S_{t}^{N}\right)  (x)=\int_{\mathbb{R}^{d}}V^{N}%
(x-y)S_{t}^{N}(dy).
\]

\noindent In particular, these are probability densities, because they are non-negative functions with

\[
\int_{\mathbb{R}^{d}}\left(  V^{N}\ast S_{t}^{N}\right)  (x)dx=\int%
_{\mathbb{R}^{d}}\left(  \int_{\mathbb{R}^{d}}V^{N}(x-y)dx\right)  S_{t}%
^{N}(dy)=1.
\]

\noindent \textcolor{black}{Therefore, we consider the probability measure with
density $V^{N}\ast S_{t}^{N}$ as a random time-dependent element of
$\mathcal{P}(\mathbb{R}^{d})$} (for each $t$ and a.s.\,on the probability
space). In the next lemma, when we mention the laws $(\mathcal{L}(V^{N}\ast
S^{N}))_{N\in\mathbb{N}}$ on $\mathcal{P}(C([0,T];\mathcal{P}(\mathbb{R}%
^{d})))$, we adopt this interpretation.

\begin{lemma}
[Tightness]\label{lem:tightness}The laws $(\mathcal{L}(S^{N}))_{N\in
\mathbb{N}}$ are tight in $\mathcal{P}(\text{C}([0,T];\mathcal{P}(\mathbb{R}^{d})))$.
Similarly, the laws $(\mathcal{L}(V^{N}\ast S^{N}))_{N\in\mathbb{N}}$ are
tight in $\mathcal{P}(\text{C}([0,T];\mathcal{P}(\mathbb{R}^{d})))$.
\end{lemma}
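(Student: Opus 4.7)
The plan is to apply a standard two-condition tightness criterion (in the spirit of Mitoma--Jakubowski) for continuous $\mathcal{P}(\mathbb{R}^{d})$-valued processes: tightness of the laws in $\text{C}([0,T];\mathcal{P}(\mathbb{R}^{d}))$ follows from (a) a \emph{compact containment condition} --- for every $\eps > 0$ there is a compact $K_{\eps}\subset\mathcal{P}(\mathbb{R}^{d})$ with $\mathbb{P}(S_{t}^{N}\in K_{\eps}\,\forall t\in[0,T])\geq 1-\eps$ uniformly in $N$ --- together with (b) tightness in $\text{C}([0,T];\mathbb{R})$ of each real projection $t\mapsto\langle S_{t}^{N},f\rangle$ for $f$ in a countable determining family (e.g.\ a dense subset of $\text{C}_{c}^{2}(\mathbb{R}^{d})$). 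Continuity of the limit paths is automatic because the projections have continuous paths (with moduli controlled independently of $N$).

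For (a), the boundedness of $\alpha$ and of $b$, via (H1) and the standing assumption $\|\alpha\|_{\infty}<\infty$, gives
\begin{equation*}
\sup_{t\leq T}|X_{t}^{N,i}|\leq |X_{0}^{N,i}|+CT+\sup_{t\leq T}|W_{t}^{N,i}|,
\end{equation*}
so the exponential moment hypothesis in (H4) together with Gaussian tails yield $\sup_{N,i}\mathbb{E}[\exp(\lambda\sup_{t\leq T}|X_{t}^{N,i}|)]<\infty$ for every $\lambda>0$. Setting $\varphi_{R}(x)=(|x|-R)_{+}$ (or a smoothed variant), this implies $\sup_{N}\mathbb{E}[\sup_{t\leq T}\langle S_{t}^{N},\varphi_{R}\rangle]\to 0$ as $R\to\infty$, and a diagonal Markov-inequality argument produces, for each $\eps$, a set $K_{\eps}=\{\mu:\langle\mu,\varphi_{R_{k}}\rangle\leq\eta_{k}\,\forall k\}$ which is compact in $\mathcal{P}(\mathbb{R}^{d})$ by Prokhorov and contains the entire path $(S_{t}^{N})_{t\leq T}$ with probability at least $1-\eps$.

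For (b), fix $f\in\text{C}_{c}^{2}(\mathbb{R}^{d})$. It\^{o}'s formula applied to $f(X_{t}^{N,i})$, averaged over $i$, yields
\begin{equation*}
\langle S_{t}^{N},f\rangle=\langle S_{0}^{N},f\rangle+\int_{0}^{t}\langle S_{s}^{N},L_{s}^{N}f\rangle\,ds+M_{t}^{N,f},
\end{equation*}
where $L_{s}^{N}f(x)\doteq\nabla f(x)\cdot(\alpha(s,\mathbf{X}_{s}^{N})+b(x,(V^{N}\ast S_{s}^{N})(x)))+\tfrac{1}{2}\Delta f(x)$ is uniformly bounded by (H1) and the bound on $\alpha$, and $M^{N,f}$ is a continuous martingale with $[M^{N,f}]_{T}\leq T\|\nabla f\|_{\infty}^{2}/N$. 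The drift is Lipschitz in $t$ with constant independent of $N$, and Burkholder--Davis--Gundy yields $\mathbb{E}[|M_{t}^{N,f}-M_{s}^{N,f}|^{4}]\lesssim |t-s|^{2}/N^{2}$. Kolmogorov's continuity criterion then gives tightness of $\langle S^{N},f\rangle$ in $\text{C}([0,T];\mathbb{R})$.

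For the regularized empirical measures I use the identity $\langle V^{N}\ast S_{t}^{N},f\rangle=\langle S_{t}^{N},\check{V}^{N}\ast f\rangle$ with $\check{V}^{N}(x)=V^{N}(-x)$, and the crucial observation that for $f\in\text{C}_{c}^{\infty}(\mathbb{R}^{d})$ one has $\partial^{k}(\check{V}^{N}\ast f)=\check{V}^{N}\ast\partial^{k}f$, so $\|\partial^{k}(\check{V}^{N}\ast f)\|_{\infty}\leq\|\partial^{k}f\|_{\infty}$ \emph{uniformly in $N$}. Hence the very same It\^{o}/Kolmogorov argument applies with $\check{V}^{N}\ast f$ in place of $f$, giving (b) for $V^{N}\ast S^{N}$. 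Compact containment is inherited from that of $S^{N}$: since $\mathrm{supp}\,V^{N}$ shrinks to $\{0\}$, we have $(V^{N}\ast\mu)(|x|>R+1)\leq\mu(|x|>R)$ for $N$ large, so any compact $K_{\eps}$ containing $S_{t}^{N}$ with high probability is mapped into a tight, hence relatively compact, family of measures containing $V^{N}\ast S_{t}^{N}$. The hard part is precisely the \emph{pathwise} compact containment: one must control $\sup_{t\leq T}\langle S_{t}^{N},\varphi_{R}\rangle$ rather than each marginal separately, which is why the exponential moment bound on the supremum $\sup_{t\leq T}|X_{t}^{N,i}|$ (a direct consequence of (H1) and (H4)) is essential --- the remainder of the argument is standard interacting-particle technology.
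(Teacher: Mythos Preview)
Your proof is correct, but the route differs from the paper's. The paper does not invoke a Mitoma--Jakubowski projection criterion; instead it works directly with the bounded-Lipschitz metric $d_{w}$ on $\mathcal{P}(\mathbb{R}^{d})$ and verifies a Kolmogorov--Chentsov type condition (Problem 2.4.11 in Karatzas--Shreve): it checks $\sup_{N}\mathbb{E}[\sup_{t}\langle S_{t}^{N},|x|\rangle]<\infty$ for compact containment, and then estimates $\mathbb{E}[d_{w}(S_{t}^{N},S_{s}^{N})^{p}]$ in one line using only that $|f(X_{t}^{N,i})-f(X_{s}^{N,i})|\leq|X_{t}^{N,i}-X_{s}^{N,i}|$ for $f\in\mathrm{Lip}_{1}$, together with BDG. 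No It\^{o} formula, no semimartingale decomposition of the projections, and only first moments (not exponential moments) are used. For $V^{N}\ast S^{N}$, the paper observes, exactly as you do, that $V^{N,-}\ast f$ inherits the Lipschitz-1 bound, and that $\int|x|(V^{N}\ast S_{t}^{N})(x)\,dx\leq\langle S_{t}^{N},|y|\rangle+R$, thereby reducing both conditions to those already proved for $S^{N}$.

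Your approach buys modularity --- it is the standard interacting-particle machinery and extends painlessly to settings where the metric on the state space is less explicit --- at the cost of invoking a somewhat heavier abstract criterion and requiring a word on why tightness of the projections in $\text{C}([0,T];\mathbb{R})$ plus compact containment actually yields tightness in $\text{C}([0,T];\mathcal{P}(\mathbb{R}^{d}))$ rather than merely in $D$. The paper's approach is shorter and entirely self-contained, exploiting the concrete form of $d_{w}$ to bypass the projection step altogether.
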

\begin{proof}
\indent\textit{Part 1.} Recall that the initial conditions  $\textcolor{black}{X_0}^{N, i}$, $i \in [[N]]$, admit a density $p_0$ which is integrable. Therefore,
\[
\mathbb{E}\left[  \int_{\mathbb{R}^{d}} |x| S_{0}^{N}(dx)\right]  \leq C
\]
for some constant $C>0$, uniformly in $N \in\mathbb{N}$. To establish the tightness in  $\text{C}([0,T];\mathcal{P}(\mathbb{R}^{d}))$, we have to show \citep[see, for instance][Problem 2.4.11]{karatzas1998blackian} that the following two conditions are satisfied:
\begin{itemize}
\item[\textit{(i)}] $\mathbb{E}\left[  \sup_{t\in\lbrack0,T]}\int%
_{\mathbb{R}^{d}}|x|S_{t}^{N}(dx)\right]  \leq C$, $t\in\lbrack0,T]$,
\item[\textit{(ii)}] $\mathbb{E}\left[  d_{w}(S_{t}^{N},S_{s}^{N})^{p}\right]
\leq C|t-s|^{1+\epsilon}$, $t,s\in\lbrack0,T]$
\end{itemize}
for some constants $C>0$, $p\geq2$ and $\epsilon>0$. In order to verify
$\mathit{(i)}$, we compute
\[
\int_{\mathbb{R}^{d}}|x|S_{t}^{N}(dx)=\frac{1}{N}\sum_{i=1}^{N}|X_{t}^{N,i}|,
\]
\noindent where
\[
|X_{t}^{N,i}|\leq|\textcolor{black}{X_0}^{N,i}|+\int_{0}^{t}|\alpha\left(  s,X_{s}^{N,i}\right)
+b\Big(X_{s}^{N,i},\frac{1}{N}\sum_{j=1}^{N}V^{N}(X_{s}^{N,i}-X_{s}%
^{N,j})\Big)|ds+|W_{t}^{N,i}|.
\]
Hence,
\begin{equation*}
\Vert X^{N,i}\Vert_{\infty,t}\leq|\textcolor{black}{X_0}^{N,i}|+CT+\Vert W^{i}\Vert_{\infty
,t}\label{eq:estimate_particles},
\end{equation*}
which implies
\[
\mathbb{E}\left[  \Vert X^{N,i}\Vert_{\infty,T}\right]  \leq\mathbb{E}\left[
|\textcolor{black}{X_0}^{N,i}|\right]  +CT+C_{T}^{W}(d),
\]
\noindent where we use the boundedness (uniformly in $N$) of $\alpha$, $b$ and
$\mathbb{E}\left[  |\textcolor{black}{X_0}^{N,i}|\right]  $; the quantity $C_{T}^{W}(d)$ only
depends on $T$ and $d$. As regards $(ii)$, instead,
\begin{equation*}
\begin{split}
\mathbb{E}\left[  d_{w}(S_{t}^{N},S_{s}^{N})^{p}\right]   &  \leq
\mathbb{E}\left[  \sup_{f}\left\vert \frac{1}{N}\sum_{i=1}^{N}(f(X_{t}%
^{N,i})-f(X_{s}^{N,i}))\right\vert ^{p}\right]  \\
&  \leq\mathbb{E}\left[  \sup_{f}\frac{1}{N}\sum_{i=1}^{N}\left\vert
f(X_{t}^{N,i})-f(X_{s}^{N,i})\right\vert ^{p}\right]  \\
&  \leq\mathbb{E}\left[  \frac{1}{N}\sum_{i=1}^{N}\left\vert X_{t}^{N,i}%
-X_{s}^{N,i}\right\vert ^{p}\right]  \\
&  \leq C(|t-s|^{p}+|t-s|^{\frac{p}{2}}),\\
&
\end{split}
\end{equation*}
\noindent where we apply Jensen's inequality, the $1$-Lipschitz continuity of
$f$, boundedness of $\alpha$ and $b$ and Burkholder-Davis-Gundy inequality,
respectively. To conclude it suffices to choose $p>2$.\\
\indent\textit{Part 2.} To prove the statement for the random \textcolor{black}{flow of} probability measures $V^{N}\ast S_{t}^{N}$, let us first notice that, denoting by $R>0$ a real
number such that the support of $V$ is included in $B_{R}(0)$, the open ball of radius $R$ around the origin, for all $y\in\mathbb{R}^{d}$ we have
\[
\int_{\mathbb{R}^{d}}\left\vert x\right\vert V^{N}\left(  x-y\right)
dx=\int_{\mathbb{R}^{d}}\left\vert z+y\right\vert V^{N}\left(  z\right)
dz\leq\sup_{\left\vert w\right\vert \leq R}\left\vert w+y\right\vert
\leq\left\vert y\right\vert +R
\]
and thus
\begin{align*}
\int_{\mathbb{R}^{d}}\left\vert x\right\vert \left(  V^{N}\ast S_{t}%
^{N}\right)  \left(  x\right)  dx  & =\int_{\mathbb{R}^{d}}\left\vert
x\right\vert \left(  \int_{\mathbb{R}^{d}}V^{N}\left(  x-y\right)  S_{t}%
^{N}\left(  dy\right)  \right)  dx\\
& =\int_{\mathbb{R}^{d}}\left(  \int_{\mathbb{R}^{d}}\left\vert x\right\vert
V^{N}\left(  x-y\right)  dx\right)  S_{t}^{N}\left(  dy\right)  \\
& \leq\int_{\mathbb{R}^{d}}\left\vert y\right\vert S_{t}^{N}\left(  dy\right)
+R.
\end{align*}
We conclude by going back to the previous estimate without the mollifier. Moreover, denoted
$V^{N,-}\left(  x\right)  =V^{N}\left(  -x\right)  $, if $f$ has Lipschitz
constant less or equal to one, then
\begin{align*}
& \left\vert \left(  V^{N,-}\ast f\right)  \left(  x\right)  -\left(
V^{N,-}\ast f\right)  \left(  y\right)  \right\vert \\
& =\left\vert \int_{\mathbb{R}^{d}}V^{N}\left(  x^{\prime}-x\right)  f\left(
x^{\prime}\right)  dx^{\prime}-\int_{\mathbb{R}^{d}}V^{N}\left(  x^{\prime
}-y\right)  f\left(  x^{\prime}\right)  dx^{\prime}\right\vert \\
& =\left\vert \int_{\mathbb{R}^{d}}V^{N}\left(  z\right)  f\left(  z+x\right)
dz-\int_{\mathbb{R}^{d}}V^{N}\left(  z\right)  f\left(  z+y\right)
dz\right\vert \\
& \leq\int_{\mathbb{R}^{d}}V^{N}\left(  z\right)  \left\vert f\left(
z+x\right)  -f\left(  z+y\right)  \right\vert dz\leq\left\vert x-y\right\vert
\int_{\mathbb{R}^{d}}V^{N}\left(  z\right)  dz=\left\vert x-y\right\vert
\end{align*}
namely $V^{N,-}\ast f$ has also Lipschitz constant less or equal to one.
Therefore,
\begin{align*}
& \left\vert \left\langle V^{N}\ast S_{t}^{N},f\right\rangle -\left\langle
V^{N}\ast S_{s}^{N},f\right\rangle \right\vert \\
& =\left\vert \left\langle S_{t}^{N},V^{N,-}\ast f\right\rangle -\left\langle
S_{s}^{N},V^{N,-}\ast f\right\rangle \right\vert \\
& \leq\frac{1}{N}\sum_{i=1}^{N}\left\vert \left(  V^{N,-}\ast f\right)
(  X_{t}^{N,i} )  -\left(  V^{N,-}\ast f\right)  \left(  X_{s}%
^{N,i}\right)  \right\vert \\
& \leq\frac{1}{N}\sum_{i=1}^{N}\left\vert X_{t}^{N,i}-X_{s}^{N,i}\right\vert,
\end{align*}
and we are again led back to the previous estimate without the mollifier. 
\end{proof}
\subsection{Estimates on mollified empirical measures}\label{subsec:empirical} 
\noindent In this subsection we obtain estimates on mollified empirical measures. More precisely, we first prove that the empirical measure $S_t^{N}$ satisfies the following identity for a test function $\varphi \in \text{C}_b^{1,2}([0, T] \times \mathbb{R}^{d})$:
\begin{equation*}
\begin{split}
\left\langle S_{t}^{N}, \varphi \left(t,\,\cdot\,\right) \right\rangle & = \left\langle
S_{0}^{N}, \varphi\left(0,\,\cdot\,\right) \right\rangle \\
&+\int_{0}^{t}\left(
\left\langle S_{s}^{N}, \frac{\partial \varphi}{\partial s} \left(s,\,\cdot\,\right) +\frac{1}{2}\Delta
\varphi \left(s,\,\cdot\,\right) \right\rangle +\left\langle S_{s}^{N},\alpha \left(
s \right) \cdot \nabla \varphi \left(s, \cdot \right) \right\rangle \right) ds\\
&+\int_{0}^{t}\left\langle S_{s}^{N},b\left(\,\cdot\,,\left( V^{N}\ast
S_{s}^{N}\right) \left(\,\cdot\,\right) \right) \cdot \nabla \varphi \left(s,\,\cdot\,\right)
\right\rangle ds+M_{t}^{N, \varphi},
\end{split}
\end{equation*}
where $M_{t}^{N, \varphi}$ is a martingale to be defined below. Then, in Lemma \ref{lem:mild_form_empirical_density} we obtain an identity in mild form for the \textit{empirical density}; the latter is defined as any
convolution of the empirical measure with a smooth mollifier. In our paper, we work with the following particular convolution:
\begin{equation}\label{eq:empirical_density}
p^{N}(t,x) \doteq \left( V^{N}\ast S_{t}^{N}\right) (x) = \int_{\mathbb{R}
^{d}}V^{N}(x-y)S_{t}^{N}(dy) = \frac{1}{N}\sum_{i=1}^{N}V^{N}(x-X_{t}^{N,i}), 
\end{equation}
\noindent where $t \in [0, T]$ and $x \in \mathbb{R}^{d}$. Then, in Lemma \ref{lem:estimate_martingale} we derive H\"{o}lder-type semi-norm bound for the martingale $M_t^{N,\varphi}$, and in Lemma  \ref{lem:hoelder_estim_empirical_density}, instead, H\"{o}lder-type semi-norm bound for the empirical density \eqref{eq:empirical_density}. In particular, we will see that in order to understand the limit of $(\mathcal{L}(S^{N}))_{N \in \mathbb{N}}$ it is crucial to study rigorously the regularity properties of $p^{N}$ that remain stable in the limit as $N$ tends to infinity.\\
\indent First, we obtain the identity for the empirical measure. Let $\varphi \in \text{C}_b^{1,2}([0, T] \times \mathbb{R}^{d})$ be a test function. By It\^{o} formula,
\begin{equation*}
\begin{split}
d\left\langle S_{t}^{N},\varphi \left( t,\,\cdot\,\right) \right\rangle & =\frac{1}{N}%
\sum_{i=1}^{N}d\varphi  \left( t,X_{t}^{N,i}\right) \\
&=\frac{1}{N}\sum_{i=1}^{N}\frac{\partial \varphi}{\partial t} \Big( t,X_{t}^{N,i}  \Big) dt  +\frac{1}{N}\sum_{i=1}^{N}\nabla \varphi \left( t,X_{t}^{N,i}\right) \cdot
\alpha(t,X_{t}^{N,i}) dt \\
&+\frac{1}{N}\sum_{i=1}^{N}\nabla \varphi \left(t,  X_{t}^{N,i}\right) \cdot
b  \Big(X_{t}^{N,i},\frac{1}{N}\sum_{j=1}^{N}V^{N}(X_{t}^{N,i}-X_{t}^{N,j}) \Big)dt \\
&+\frac{1}{N}\sum_{i=1}^{N}\nabla \varphi\Big(t, X_{t}^{N,i} \Big) \cdot
dW_{t}^{N, i}+\frac{1}{2N}\sum_{i=1}^{N} \Delta \varphi\Big(t, X_{t}^{N,i} \Big) dt\\
&= \left\langle S_t^{N}, \frac{\partial \varphi}{\partial t}(t,\,\cdot\,) \right\rangle + \left\langle S_{t}^{N},\alpha ( t ) \cdot \nabla \varphi(t,\,\cdot\,)
\right\rangle dt \\
&+\left\langle S_{t}^{N},b(\,\cdot\,, \left( V^{N}\ast S_{t}^{N}\right) \left(
\,\cdot\,\right)) \cdot \nabla \varphi(t,\,\cdot\,) \right\rangle dt \\
&+\frac{1}{N}\sum_{i=1}^{N}\nabla \varphi \left(t, X_{t}^{N,i}\right) \cdot
dW_{t}^{N, i}+\frac{1}{2}\left\langle S_{t}^{N},\Delta \varphi(t,\,\cdot\,)\right\rangle dt.
\end{split}
\end{equation*}
In particular, the previous expression can be rewritten in integral form as:
\begin{equation}\label{eq:identity_empirical_meas}
\begin{split}
\left\langle S_{t}^{N}, \varphi \left(t,\,\cdot\,\right) \right\rangle & = \left\langle
S_{0}^{N}, \varphi\left(0,\,\cdot\,\right) \right\rangle \\
&+\int_{0}^{t}\left(
\left\langle S_{s}^{N}, \frac{\partial \varphi}{\partial s} \left(s,\,\cdot\,\right) +\frac{1}{2}\Delta
\varphi \left(s,\,\cdot\,\right) \right\rangle +\left\langle S_{s}^{N},\alpha \left(
s \right) \cdot \nabla \varphi \left(s, \cdot \right) \right\rangle \right) ds\\
&+\int_{0}^{t}\left\langle S_{s}^{N},b\left(\,\cdot\,,\left( V^{N}\ast
S_{s}^{N}\right) \left(\,\cdot\,\right) \right) \cdot \nabla \varphi \left(s,\,\cdot\,\right)
\right\rangle ds+M_{t}^{\varphi ,N},
\end{split}
\end{equation}
where $M_{t}^{N, \varphi}$ is the martingale
\begin{equation}\label{eq:martingale_generic_test_function}
M_{t}^{N, \varphi}=\int_{0}^{t}\frac{1}{N}\sum_{i=1}^{N}\nabla \varphi \left(
s,X_{s}^{N,i}\right) \cdot dW_{s}^{N, i}.
\end{equation}
\indent Second, we obtain the identity in mild form for the empirical density. \textcolor{black}{Henceforth, we will use the classical notational conventions used in the semigroups theory \citep[see][]{pazy2012semigroups}. Sometimes, it may happen that we will indicate the explicit dependence on the state variable to clarify the results; see e.g. the second integral in the lemma here below.}

\begin{lemma}\label{lem:mild_form_empirical_density}
\textcolor{black}{Let $p^{N}$ as in Eq.~\eqref{eq:empirical_density}}. Grant assumptions of Theorem \ref{th:oelschlager85New}. Then,
\begin{equation*}
\begin{split}
p^{N}(t) =\mathcal{P}_{t}p^{N}(0) & +\int_{0}^{t}\nabla \mathcal{P}%
_{t-s}\left( V^{N}\ast \left( \alpha(s)\, S_{s}^{N}\right) \right) ds \\ 
															&+ \int_{0}^{t}\nabla \mathcal{P}_{t-s}\left( V^{N}\ast \left( b\left(\,\cdot\,, p^{N}(s,\,\cdot\,)\right) S_{s}^{N}\right) \right) ds+M_{t}^{N}(\,\cdot\,)
\end{split}
\end{equation*}
where
\begin{equation}\label{eq:martingale_to_be_used}
M_{t}^{N}(\,\cdot\,) = \int_{0}^{t}\frac{1}{N}\sum_{i=1}^{N}\mathcal{P}_{t-s}\nabla
V^{N}\left(\,\cdot\, -X_{s}^{N,i}\right) dW_{s}^{N, i}.
\end{equation}
\end{lemma}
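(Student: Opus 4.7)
The plan is to deduce the mild identity from the semimartingale representation \eqref{eq:identity_empirical_meas} of $\langle S_t^N,\varphi(t,\cdot)\rangle$, by inserting an $x$-parametrised test function engineered so that the $(\partial_s+\tfrac{1}{2}\Delta)$-contribution vanishes identically. Fix $t\in[0,T]$ and $x\in\mathbb{R}^{d}$ as parameters, and set
\[
\varphi^{t,x}(s,y)\doteq \bigl(\mathcal{P}_{t-s}[V^N(x-\cdot)]\bigr)(y)=\int_{\mathbb{R}^{d}}G(t-s,y-z)\,V^N(x-z)\,dz,\quad (s,y)\in[0,t]\times\mathbb{R}^{d}.
\]
Because $V^N\in \text{C}_c^1(\mathbb{R}^{d})$ and $G$ is a smoothing Gaussian kernel, $\varphi^{t,x}\in \text{C}_b^{1,2}([0,t]\times\mathbb{R}^{d})$ (extended trivially to $[0,T]$ if needed); it carries the terminal datum $\varphi^{t,x}(t,y)=V^N(x-y)$ and satisfies the backward heat equation $\partial_s\varphi^{t,x}+\tfrac{1}{2}\Delta_y\varphi^{t,x}=0$.

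Plugging $\varphi^{t,x}$ into \eqref{eq:identity_empirical_meas} kills the $(\partial_s+\tfrac{1}{2}\Delta)$-term, identifies the left-hand side with $\langle S_t^N,V^N(x-\cdot)\rangle=p^N(t,x)$, and turns the initial term $\langle S_0^N,\varphi^{t,x}(0,\cdot)\rangle$ into $(\mathcal{P}_t p^N(0))(x)$ after an elementary change of variables that exploits the symmetry $G(t,\cdot)=G(t,-\cdot)$ and Fubini. For the two drift integrals, I would combine the identity $\nabla_y\varphi^{t,x}(s,y)=\mathcal{P}_{t-s}[\nabla_y V^N(x-\cdot)](y)$ with a single integration by parts in $z$ (legal by the compact support of $V^N$) to rewrite $\int_0^t\langle S_s^N,\alpha(s)\cdot\nabla_y\varphi^{t,x}(s,\cdot)\rangle\,ds$ as $\int_0^t\nabla\mathcal{P}_{t-s}[V^N\ast(\alpha(s)S_s^N)](x)\,ds$ in the sense of \eqref{eq:mfgsystemmild_eq3}, and analogously for the $b$-term.

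The martingale part $M_t^{N,\varphi^{t,x}}$ produced by \eqref{eq:martingale_generic_test_function} equals $\int_0^t\frac{1}{N}\sum_i\nabla_y\varphi^{t,x}(s,X_s^{N,i})\cdot dW_s^{N,i}$, which, upon pushing the gradient through the semigroup (using the translation relation $\mathcal{P}_{t-s}[\nabla V^N(\cdot-X_s^{N,i})](x)$ together with the evenness of $G$), matches exactly $M_t^N(x)$ of \eqref{eq:martingale_to_be_used}. Piecing the four contributions together yields the claimed mild identity pointwise in $x$; joint continuity in $x$ of every summand (standard for heat-kernel convolutions against compactly supported $\text{C}^1$ data) then promotes the equality to one between continuous functions on $[0,T]\times\mathbb{R}^{d}$.

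The main technical obstacle I expect is a careful bookkeeping of the signs produced by integrations by parts in $z$, by translations moving the reference point from $y$ to $x$, and by the oddness of $\nabla G$; all of them must be aligned with the sign convention of $\nabla\mathcal{P}_{t-s}$ in \eqref{eq:mfgsystemmild_eq3}. These manipulations are legitimate thanks to the $\text{C}_c^1$-regularity of $V^N$ and the Gaussian decay of $G$ and $\nabla G$, and the stochastic step is handled by a classical stochastic Fubini argument for the $dW_s^{N,i}$-integrals.
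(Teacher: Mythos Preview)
Your approach is essentially the paper's: both plug a heat-propagated test function into the semimartingale identity \eqref{eq:identity_empirical_meas} so that the $(\partial_s+\tfrac12\Delta)$-term vanishes, then rearrange via Fubini/stochastic Fubini and the self-adjointness of $\mathcal{P}_{t-s}$. The only difference is that the paper inserts an extra smooth test function, taking $\varphi^{(t)}(s,\cdot)=\mathcal{P}_{t-s}(V^{N,-}\ast h)$ with $h\in\text{C}_b^2(\mathbb{R}^d)$ and concluding by arbitrariness of $h$; this has the small technical advantage that $V^{N,-}\ast h\in\text{C}^2$, so the test function is genuinely $\text{C}_b^{1,2}$ up to and including $s=t$, whereas your $\varphi^{t,x}(t,\cdot)=V^N(x-\cdot)$ is only $\text{C}^1$ under (H3) and would need a limiting argument at the endpoint.
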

\begin{proof}
\noindent\textcolor{black}{For the reader convenience, let us first recall the definition of $\mathcal{P}_{t}$; cfr. Eq.~\eqref{eq:heat_kernel}. If we set $G(t, x-y)$ the density of $x + W_t$, where $W_t$ is a standard blackian motion, $t \in [0, T]$ and $x, y \in \mathbb{R}^{d}$, then $\mathcal{P}_t$ is defined on functions $h \in \text{C}_b(\mathbb{R}^{d})$ as}
\begin{equation*}
(\mathcal{P}_t h)(x)\doteq \int_{\mathbb{R}^{d}} G(t, x-y) h(y)\,dy,
\end{equation*}
\noindent Now, consider for a given $t \in [0, T]$ the identity in Eq.~\eqref{eq:identity_empirical_meas} with the following choice
\begin{equation*}
\varphi^{(t)} \left(s, x\right) =\left(\mathcal{P}_{t-s}(V^{N,-}\ast h)\right) \left( x\right),\quad s \in [0, t],
\end{equation*}
with $h\in \text{C}_{b}^{2}(\mathbb{R}^{d})$ and $V^{N,-}\left( x\right)
\doteq V^{N}\left( -x\right)$. Recall that the convolution commutes and hence $\mathcal{P}_t (V^{N,-}\ast h) = (V^{N,-}\ast \mathcal{P}_{t}h)$. Besides, it holds that $\nabla \mathcal{P}_{t}(V^{N,-}\ast h) = (V^{N,-} \ast \nabla \mathcal{P}_{t}h)$. Therefore, 
\begin{equation*}
\begin{split}
\left\langle V^{N}\ast S_{t}^{N},h\right\rangle & =\left\langle V^{N}\ast
S_{0}^{N},\mathcal{P}_{t}h\right\rangle -\int_{0}^{t}\left\langle V^{N}\ast
\left( \alpha \left( s \right) S_{s}^{N}\right) ,\nabla \mathcal{P}_{t-s}h\right\rangle ds \\
&+\int_{0}^{t}\left\langle V^{N}\ast \left( b\left(\,\cdot\,,\left( V^{N}\ast
S_{s}^{N}\right) \left(\,\cdot\,\right) \right) S_{s}^{N}\right) ,\nabla \mathcal{P}_{t-s}h\right\rangle ds \\
&+\int_{0}^{t}\frac{1}{N}\sum_{i=1}^{N}V^{N,-}\ast \nabla \left( \mathcal{P}
_{t-s}h\right) \left( X_{s}^{N,i}\right) \cdot dW_{s}^{N, i}.
\end{split}
\end{equation*}

\noindent By Fubini-Tonelli theorem and stochastic Fubini theorem, we can move the semigroup on the first argument and use integration by parts to obtain:
\begin{equation*}
\begin{split}
\left\langle p^{N}\left( t\right) ,h\right\rangle &=\left\langle \mathcal{P}_{t}p^{N}(0),h\right\rangle -\int_{0}^{t}\left\langle \nabla \mathcal{P}_{t-s}\left( V^{N}\ast \left( \alpha \left( s \right)  S_{s}^{N}\right)
\right) ,h\right\rangle ds  \\
&+\int_{0}^{t}\left\langle \nabla \mathcal{P}_{t-s}\left( V^{N}\ast \left(
b\left(\,\cdot\,,\left( V^{N}\ast S_{s}^{N}\right) \left(\,\cdot\,\right) \right)
S_{s}^{N}\right) \right) ,h\right\rangle ds \\
& +\left\langle M_{t}^{N}(\,\cdot\,),h\right\rangle.
\end{split}
\end{equation*}
By the arbitrarily of $h$, this concludes the proof. 
\end{proof}

\noindent Now, let denote by $\left[ f\right] _{\gamma }$ the H\"{o}lder semi-norm on $\mathbb{R}^{d}$ and by $\left\Vert f\right\Vert _{\gamma }$ the associated norm, i.e.:
\begin{equation}\label{eq:holder_seminorm}
\left[ f\right] _{\gamma }=\sup_{\substack{ x,y\in \mathbb{R}^{d}  \\ x\neq
y }}\frac{\left\vert f\left( x\right) -f\left( y\right) \right\vert }{%
\left\vert x-y\right\vert ^{\gamma }},\qquad \left\Vert f\right\Vert
_{\gamma }=\left[ f\right] _{\gamma }+\left\Vert f\right\Vert _{\infty }
\end{equation}%
where, as usual, $\left\Vert f\right\Vert _{\infty }=\sup_{x\in \mathbb{R}^{d}}\left\vert f\left( x\right) \right\vert $. We state the following lemma.

\begin{lemma}\label{lem:estimate_martingale}
Let $M_t^{N}(\,\cdot\,)$ be the martingale in Eq.~\eqref{eq:martingale_to_be_used} and $\beta \in (0, 1/2)$ \textcolor{black}{the constant as in the definition of $V^{N}$; see Eq.~\eqref{eq:Vn}}. Then, there exists $\gamma \in \left( 0,1\right) $ such that, for all $p\geq 2$, there is a constant $C_{p}>0$ such that $\mathbb{E}\left[ \left\Vert
M_{t}^{N}\right\Vert _{\gamma }^{p}\right] \leq C_{p}$, for all $N\in \mathbb{%
N}$ and $t\in \left[ 0,T\right] $.
\end{lemma}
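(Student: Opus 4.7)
The strategy is to obtain moment bounds on the random field $x \mapsto M_t^N(x)$ via the Burkholder-Davis-Gundy (BDG) inequality, and then to upgrade these to a H\"older-norm estimate by means of Kolmogorov's continuity criterion together with a tail bound at infinity. Setting $\psi^N(\tau, z) \doteq (\mathcal{P}_\tau \nabla V^N)(z) = \nabla(\mathcal{P}_\tau V^N)(z)$, the starting observation is that, by commutativity of convolution and spatial differentiation, the integrand defining $M_t^N(x)$ equals $\psi^N(t-s, x - X_s^{N,i})$, so that for each fixed $t$ the process $s \mapsto \int_0^s \ldots$ is a sum of It\^o integrals driven by the independent Wiener processes $W^{N,i}$, with quadratic variation
\[
\langle M^N(x)\rangle_t = \int_0^t \frac{1}{N^2} \sum_{i=1}^N |\psi^N(t-s, x - X_s^{N,i})|^2\,ds,
\]
and an analogous expression for $M_t^N(x) - M_t^N(y)$ with the squared increment $|\psi^N(t-s, x - X_s^{N,i}) - \psi^N(t-s, y - X_s^{N,i})|^2$ in place of $|\psi^N|^2$.

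The plan is then to bound these integrands uniformly in $N$. For the pointwise $L^\infty$-control one would use $|\psi^N(\tau, z)|^2 \leq \|\psi^N(\tau,\cdot)\|_\infty^2$; for the spatial-increment bound one would interpolate between $L^\infty$ and Lipschitz-type estimates on $\psi^N(\tau,\cdot)$, namely a bound of the form $|\psi^N(\tau, a) - \psi^N(\tau, b)| \leq C \tau^{-(1+\gamma)/2} \|V^N\|_\infty |a-b|^\gamma$ for $\gamma \in (0,1)$, which follows from the heat-kernel smoothing applied to the compactly-supported $V^N$ (and uses $\|V^N\|_\infty \lesssim N^\beta$). Combined with the normalization $N^{-2} \sum_{i=1}^N (\,\cdot\,) \leq N^{-1} \sup_i (\,\cdot\,)$ appearing after BDG, this should yield
\[
\mathbb{E}|M_t^N(x)|^p \leq C_p, \qquad \mathbb{E}|M_t^N(x) - M_t^N(y)|^p \leq C_p\,|x-y|^{p\gamma},
\]
uniformly in $N$ and in $x,y\in\mathbb{R}^d$, for $\gamma$ small enough that the time integrability near $\tau = 0^+$ is preserved. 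The standing assumption $\beta \in (0,1/2)$ is used precisely to balance the $N^\beta$-growth of $V^N$ against the $N^{-1}$ normalization; the quantitative semi-norm estimates for the mollified heat kernels are exactly the content of Appendices~\ref{app:holder_heat_bound} and \ref{app:holder_heat_bound_1}, which would be invoked here.

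Taking $p$ large enough so that $p\gamma > d$, Kolmogorov's continuity criterion in $\mathbb{R}^d$ then produces a H\"older-continuous modification of $x \mapsto M_t^N(x)$ whose $\gamma'$-H\"older seminorm has uniformly bounded $L^p$ moment for any $\gamma' < \gamma - d/p$. To upgrade the local Kolmogorov bound to the full $\|\cdot\|_{\gamma'} = [\,\cdot\,]_{\gamma'} + \|\cdot\|_\infty$ on $\mathbb{R}^d$, one would exploit the Gaussian decay of $\psi^N(\tau,z)$ in the variable $z$ --- inherited from $G(\tau,\cdot)$ convolved with the compactly-supported $\nabla V^N$ --- to show via another BDG argument that $\mathbb{E}|M_t^N(x)|^p \to 0$ as $|x|\to\infty$ uniformly in $N$, and therefore that $\mathbb{E}\|M_t^N\|_\infty^p \leq C_p$. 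The main obstacle is the quantitative matching in the second step: reconciling the time singularity of the heat kernel at $\tau = 0^+$ with the $N^\beta$-growth of the mollifier so that the resulting bounds remain uniform in $N$. The threshold $\beta<1/2$ is tight for this basic argument to close, and this is precisely what the dedicated appendices on H\"older-type semi-norm bounds are designed to deliver.
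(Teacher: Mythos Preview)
Your proposal captures the correct first half of the argument --- BDG applied to the quadratic variation, together with the interpolated heat-kernel estimates on $\nabla\mathcal{P}_\tau V^N$ from Appendix~\ref{app:holder_heat_bound} --- and correctly identifies that the restriction $\beta<1/2$ is what makes the $N$-balance close. But the second half, the passage from pointwise moment bounds to a global $\|\cdot\|_\gamma$ bound on $\mathbb{R}^d$, has a genuine gap, and the paper explicitly comments on it in the Remark immediately following the lemma.

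The issue is twofold. First, the inference ``$\mathbb{E}|M_t^N(x)|^p\to 0$ as $|x|\to\infty$, therefore $\mathbb{E}\|M_t^N\|_\infty^p\le C_p$'' is not valid: pointwise moment decay does not control the expectation of the supremum without an additional quantitative input (one cannot exchange $\sup_x$ and $\mathbb{E}$). Second, and more fundamentally, the Kolmogorov--Chentsov criterion yields a H\"older bound on each unit cube with a random constant whose $p$-th moment is uniformly bounded, but the \emph{global} seminorm $[M_t^N]_\gamma$ over $\mathbb{R}^d$ is essentially the supremum of these per-cube constants over infinitely many cubes, and there is no reason this supremum has a finite moment. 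This is exactly what the paper means by ``the dominating constant would diverge when passing to the full space.''

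The paper's fix is to replace Kolmogorov by the fractional Sobolev embedding $W^{s,p}(\mathbb{R}^d)\hookrightarrow C^\gamma_b(\mathbb{R}^d)$ encoded in Lemma~\ref{lem:suff_hoelder}. That lemma requires not merely that the pointwise moments decay, but that they be dominated by an \emph{integrable} function $g_p(x)$ --- and this integrability is produced by the exponential spatial decay $e^{-\lambda|x|}$ in the heat-kernel bounds of Lemma~\ref{lem:heat_semigroup_estim}, combined with the exponential integrability of $\|X^{N,i}\|_{\infty,T}$ coming from hypothesis~(H4). Your proposal gestures at the Gaussian decay of $\psi^N$ but does not thread it through to an integrable envelope, nor does it invoke (H4); both are essential to close the argument on the full space.
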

\begin{proof}
It is enough to check the sufficient conditions \eqref{eq:cond_2}-\eqref{eq:cond_3} of Lemma \ref{lem:suff_hoelder} in Appendix  \ref{app:holder_heat_bound}.\\
\noindent \textcolor{black}{Let $\epsilon_N^{-1} = N^{\frac{\beta}{d}}$}. Using Eq.~\eqref{eq:bound1_heat_kernel}, the bound in Eq.~\eqref{eq:cond_2} reads
\begin{equation*}
\begin{split}
 \mathbb{E}\left[ \left\vert M_{t}^{N}\left( x\right) \right\vert ^{p}\right]
 &= \frac{1}{N^{p}}\mathbb{E}\left[ \left\vert
\sum_{i=1}^{N}\int_{0}^{t}\nabla \mathcal{P}_{t-s}V^{N}\left(
x-X_{s}^{N,i}\right) dW_{s}^{N, i}\right\vert ^{p}\right]\\
&\leq  \frac{C_{p}}{N^{p}}\mathbb{E}\left[ \left\vert
\sum_{i=1}^{N}\int_{0}^{t}\left\vert \nabla \mathcal{P}_{t-s}V^{N}\left(
x-X_{s}^{N,i}\right) \right\vert ^{2}ds\right\vert ^{p/2}\right]\\
&\leq \frac{C_{p}C_{T,R,V}^{p}\epsilon _{N}^{-pd-p\delta }}{N^{p}}\mathbb{E}%
\left[ \left\vert \sum_{i=1}^{N}\int_{0}^{t}\frac{1}{\left( t-s\right)
^{1-\delta }} e^{-\frac{\left\vert x-X_{s}^{N,i}\right\vert }{4T}}\,ds\right\vert ^{p/2}\right] \\
&\leq \frac{\widetilde{C}_{p,T,R,V}\epsilon _{N}^{-pd-p\delta }}{N^{p}}\,e^{-\frac{\left\vert x\right\vert}{8T}} \mathbb{E}\left[e^{\frac{|| X^{N, i} ||_{\infty, T}}{4 T}} \left\vert \sum_{i=1}^{N}\int_{0}^{t}\frac{1}{\left( t-s\right)^{1-\delta }}ds\right\vert ^{p/2}\right] \\
&\leq \frac{C_{p,T,R,V,\delta }^{\prime }\epsilon _{N}^{-pd-p\delta }}{
N^{p/2}}\,\textcolor{black}{e^{-\frac{|x|}{8 T}}}\mathbb{E}
\left[ e^{p\,\frac{|| X^{N, 1} ||_{\infty, T}}{8 T}} \right]
\end{split}
\end{equation*}
where, to ease notation, we set $|| X^{N, i} ||_{\infty, T}\doteq\sup_{s \in [0,T]}| X_s^{N,i} |,\,\,i\in[[N]]$. The last expected value is finite \textcolor{black}{thanks to $(\text{H4})$}; therefore,
\begin{equation*}
\mathbb{E}\left[ \left\vert M_{t}^{N}\left( x\right) \right\vert ^{p}\right]
\leq C_{p,T,R,V,\delta }^{\prime \prime }\frac{\epsilon _{N}^{-pd-p\delta }}{%
N^{p/2}}g^{p}\left( x\right),
\end{equation*}
where (up to a constant) $g\left( x\right) \doteq e^{-\frac{\left\vert
x\right\vert }{8T}}$ is integrable at any power. Now, recall that $\epsilon _{N}^{-1}=N^{\frac{\beta }{d}}$. Then
\begin{equation*}
\frac{\epsilon _{N}^{-pd-p\delta }}{N^{p/2}}=\frac{N^{\frac{\beta }{d}\left(
pd+p\delta \right) }}{N^{p/2}}=N^{\left( \frac{1}{2}-\beta \right) p-\frac{%
\beta p\delta }{d}},
\end{equation*}%
which is bounded for $\beta <\frac{1}{2}$ by choosing $\delta $
(depending on $p$) small enough.\\
\noindent As regards the bound in Eq.~ \eqref{eq:cond_2}, we use estimate \eqref{eq:bound2_heat_kernel} with $
\gamma $ small enough compared to $\delta $ so to have $( \gamma -\delta
\left( 1-\gamma \right)) <0$.  To ease notation and for the sake of space, we denote 
\begin{equation*}
\begin{split}
& \Delta_h M_t^{N}(x) \doteq M_{t}^{N}\left( x\right) -M_{t}^{N}\left(
x+h\right)\\
& \Delta_h \mathcal{P}_{t-s} V^{N} (x - X_s^{N,i}) \doteq V^{N} (x - X_s^{N,i}) - V^{N} (x + h - X_s^{N,i})
\end{split}
\end{equation*}
and, as before, $|| X^{N, i} ||_{\infty, T}\doteq\sup_{s \in [0,T]}| X_s^{N,i} |,\,\,i\in[[N]]$. We get
\begin{equation*}
\begin{split}
 \mathbb{E}\left[ \left\vert  \Delta_h M_t^{N}(x) \right\vert ^{p}\right] &= \frac{1}{N^{p}}\mathbb{E}\left[
\left\vert \sum_{i=1}^{N}\int_{0}^{t} \Delta_h \mathcal{P}_{t-s} V^{N} (x - X_s^{N,i})  dW_{s}^{N, i}\right\vert ^{p}\right] \\
& \leq \frac{C_{p}}{N^{p}}\mathbb{E}\left[ \left\vert
\sum_{i=1}^{N}\int_{0}^{t}\left\vert  \Delta_h \mathcal{P}_{t-s} V^{N} (x - X_s^{N,i})  dW_{s}^{N, i}  \right\vert ^{2}ds\right\vert ^{p/2}\right]\\
&\leq \frac{C_{p}}{N^{p}}\mathbb{E}\left[ \left\vert
\sum_{i=1}^{N}\int_{0}^{t}\frac{C_{T,R,V}^{2}}{\left( t-s\right) ^{1+\bar{\gamma}}}\left\vert h\right\vert ^{2\gamma
}\epsilon _{N}^{-2d-2\delta \left( 1-\gamma \right) } e^{-2\lambda
_{T,R,V}\left\vert x-X_{s}^{N,i}\right|}\, ds\right\vert ^{p/2}%
\right] \\
&\leq \frac{\widetilde{C}_{p,T,R,V}\epsilon _{N}^{-pd-p\delta \left(
1-\gamma \right) }}{N^{p/2}}\left\vert h\right\vert ^{p\gamma }e^{-2\lambda
_{T,R,V}\left\vert x \right|} \mathbb{E}\left[ e^{p\,\lambda _{T,R,V} \|X^{N,i}\|_{\infty, T}} \right]
\end{split}
\end{equation*}
and the conclusion is the same as for the previous term.
\end{proof}

\begin{remark}
Lemma \ref{lem:estimate_martingale} is a non-trivial achievement of this paper. Indeed, the Kolmogorov-Chentsov criterion \citep[see][Theorem 2.2.8]{karatzas1998blackian} would provide with much fewer computations a similar result on bounded sets. However, the dominating constant would diverge when passing to the full space. Notice that we will need the passage to the full space in Lemma \ref{lem:hoelder_estim_empirical_density} below. For this reason, we use a more complicated strategy --  summarized by the results in Appendix \ref{app:holder_heat_bound} -- based on Sobolev embedding theorem. 
\end{remark}

\begin{lemma}\label{lem:hoelder_estim_empirical_density} Let $p^{N}(t)$ as in Lemma \ref{lem:mild_form_empirical_density}. If $\beta \in \left( 0, 1/2 \right) $ and $\sup_{N}\left\Vert p^{N}(0)\right\Vert _{\gamma}^{2}<\infty $, then there exist $p\geq 2$, $\gamma \in \left( 0,1\right) $ and a constant $C_{\gamma }>0$ such that $\mathbb{E}\left[ \left\Vert
p^{N}(t)\right\Vert _{\gamma }^{p}\right] \leq C$.
\end{lemma}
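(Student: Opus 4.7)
The plan is to start from the mild representation in Lemma \ref{lem:mild_form_empirical_density},
\[
p^{N}(t) = \mathcal{P}_{t}p^{N}(0) + I_{1}^{N}(t) + I_{2}^{N}(t) + M_{t}^{N},
\]
where $I_{1}^{N}(t)\doteq\int_{0}^{t}\nabla\mathcal{P}_{t-s}\bigl(V^{N}\ast(\alpha(s)S_{s}^{N})\bigr)\,ds$ and $I_{2}^{N}(t)$ is the same integral with $b(\cdot,p^{N}(s,\cdot))$ in place of $\alpha(s)$, and to bound each of the four summands in $\|\cdot\|_{\gamma}$-norm. The initial piece is handled immediately: the heat semigroup is a contraction on bounded H\"older functions, so $\|\mathcal{P}_{t}p^{N}(0)\|_{\gamma}\leq \|p^{N}(0)\|_{\gamma}$, which is bounded uniformly in $N$ by hypothesis. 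The martingale term is already under control via Lemma \ref{lem:estimate_martingale}, which gives $\mathbb{E}[\|M_{t}^{N}\|_{\gamma}^{p}]\leq C_{p}$ uniformly in $N$ and $t$, provided $\beta<1/2$ together with a sufficiently large $p\geq 2$ and small $\gamma\in(0,1)$.

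For the two convolution integrals I would exploit the elementary bound, coming from (H1) and the fact that $V^{N}\ast S_{s}^{N}=p^{N}(s,\cdot)$,
\[
\bigl|V^{N}\ast(\alpha(s)\,S_{s}^{N})(x)\bigr|\leq \|\alpha\|_{\infty}\,p^{N}(s,x),\qquad \bigl|V^{N}\ast(b(\cdot,p^{N}(s))\,S_{s}^{N})(x)\bigr|\leq \|b\|_{\infty}\,p^{N}(s,x),
\]
so that each integrand is controlled in sup-norm by $\|p^{N}(s)\|_{\infty}\leq \|p^{N}(s)\|_{\gamma}$. Invoking the full-space gradient-heat H\"older estimate $\|\nabla \mathcal{P}_{\tau} g\|_{\gamma}\leq C\,\tau^{-(1+\gamma)/2}\|g\|_{\infty}$, established (in the spirit of the bounds of Appendices \ref{app:holder_heat_bound} and \ref{app:holder_heat_bound_1}, through Sobolev embedding rather than Kolmogorov--Chentsov), one obtains
\[
\|I_{1}^{N}(t)\|_{\gamma}+\|I_{2}^{N}(t)\|_{\gamma}\leq C\int_{0}^{t}(t-s)^{-(1+\gamma)/2}\,\|p^{N}(s)\|_{\gamma}\,ds.
\]

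Raising to the $p$-th power, applying Jensen's inequality with respect to the finite weight $(t-s)^{-(1+\gamma)/2}\,ds$ on $[0,t]$ (finite because $(1+\gamma)/2<1$), and then taking expectations, the four-term decomposition collapses into the integral inequality
\[
\mathbb{E}\bigl[\|p^{N}(t)\|_{\gamma}^{p}\bigr]\leq C_{1}+C_{2}\int_{0}^{t}(t-s)^{-(1+\gamma)/2}\,\mathbb{E}\bigl[\|p^{N}(s)\|_{\gamma}^{p}\bigr]\,ds,
\]
with $C_{1},C_{2}$ independent of $N$. The desired uniform bound $\mathbb{E}[\|p^{N}(t)\|_{\gamma}^{p}]\leq C$ on $[0,T]$ then follows from the singular (Henry) Gronwall inequality applied to this fractional-kernel bound.

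The main obstacle is the full-space, $N$-uniform gradient-heat H\"older estimate with singularity only of order $\tau^{-(1+\gamma)/2}$: as stressed in the Remark after Lemma \ref{lem:estimate_martingale}, the Kolmogorov--Chentsov route degenerates when one passes from bounded sets to the whole $\mathbb{R}^{d}$, so the more delicate Sobolev-embedding argument of Appendix \ref{app:holder_heat_bound} is needed. A secondary, but genuine, point is the joint calibration of the parameters: $\beta<1/2$, $\gamma$ small and $p\geq 2$ large must be chosen compatibly so that the martingale estimate of Lemma \ref{lem:estimate_martingale} and the $L^{1}$-integrability of the singular kernel $(t-s)^{-(1+\gamma)/2}$ hold simultaneously.
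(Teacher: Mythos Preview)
Your proof is correct and follows essentially the same route as the paper: mild decomposition from Lemma \ref{lem:mild_form_empirical_density}, contraction of $\mathcal{P}_t$ on $\text{C}_b^\gamma$, the martingale bound from Lemma \ref{lem:estimate_martingale}, the pointwise domination $|V^N\ast(\phi\,S_s^N)|\leq \|\phi\|_\infty\,p^N(s,\cdot)$, the gradient--heat H\"older estimate $\|\nabla\mathcal{P}_\tau g\|_\gamma\leq C\tau^{-(1+\gamma)/2}\|g\|_\infty$, and a singular Gronwall to close. The only cosmetic difference is that the paper works with $\mathbb{E}[\|p^N(t)\|_\gamma^p]^{1/p}$ and Minkowski's (integral) inequality, whereas you raise to the $p$-th power and use Jensen against the weight $(t-s)^{-(1+\gamma)/2}$; both yield the same fractional-kernel inequality.

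One small misattribution: the deterministic bound $\|\nabla\mathcal{P}_\tau g\|_\gamma\leq C\tau^{-(1+\gamma)/2}\|g\|_\infty$ is a classical analytic-semigroup estimate (cf.\ the first inequality of Lemma \ref{lem:heat_semigroup_estim} and the remark in the proof of Theorem \ref{th:globalExistence}) and does not require the Sobolev-embedding machinery. The Remark after Lemma \ref{lem:estimate_martingale} and the Sobolev argument of Appendix \ref{app:holder_heat_bound} concern the \emph{stochastic} bound $\mathbb{E}[\|M_t^N\|_\gamma^p]\leq C_p$, which you already invoke as a black box. So the ``main obstacle'' you flag is in fact the easy, purely analytic part; the genuinely delicate step is entirely inside Lemma \ref{lem:estimate_martingale}.
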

\begin{proof}
Lemma  \ref{lem:mild_form_empirical_density} provides the following bound
\begin{equation*}
\begin{split}
\mathbb{E}\left[ \left\Vert p^{N}(t)\right\Vert _{\gamma }^{p}\right]
^{1/p} &\leq  \mathbb{E}\left[ \left\Vert \mathcal{P}_{t}p^{N}(0)\right
\Vert _{\gamma }^{p}\right] ^{1/p}+\int_{0}^{t}\mathbb{E}\left[ \left\Vert
\nabla \mathcal{P}_{t-s}\left( V^{N}\ast \left( \alpha(s)S_{s}^{N}\right) \right) \right\Vert _{\gamma }^{p}\right] ^{1/p}ds\\
&+\int_{0}^{t}\mathbb{E}\left[ \left\Vert \nabla \mathcal{P}_{t-s}\left(
V^{N}\ast \left( b\left(\,\cdot\,,p^{N}\left(s, \,\cdot\,\right) \right)
S_{s}^{N}\right) \right) \right\Vert _{\gamma }^{p}\right] ^{1/p}ds+\mathbb{E
}\left[ \left\Vert M_{t}^{N}\right\Vert _{\gamma }^{p}\right] ^{1/p},
\end{split}
\end{equation*}
where we use the first inequality of  Lemma \ref{lem:heat_semigroup_estim} in Appendix \ref{app:holder_heat_bound} and the bound of Lemma \ref{lem:estimate_martingale}. Therefore,
\begin{equation*}
\begin{split}
\mathbb{E}\left[ \left\Vert  p^{N}(t)\right\Vert _{\gamma }^{p}\right]
^{1/p} &\leq  C+\int_{0}^{t}\frac{C}{\left( t-s\right) ^{\frac{1+\gamma }{2}}%
}\mathbb{E}\left[ \left\Vert V^{N}\ast \left( \alpha( s,\,\cdot\, )S_{s}^{N}\right) \right\Vert _{\infty }^{p}\right] ^{1/p}ds\\
&+\int_{0}^{t}\frac{C}{\left( t-s\right) ^{\frac{1+\gamma }{2}}}\mathbb{E}%
\left[ \left\Vert V^{N}\ast \left( b\left(\,\cdot\,,p^{N}\left(s, \,\cdot\,\right)
\right) S_{s}^{N}\right) \right\Vert _{\infty }^{p}\right] ^{1/p}ds+C.
\end{split}
\end{equation*}
At this point, we need to find a bound for the last two expected values. We start from the first.
\begin{equation*}
\begin{split}
& \left\vert \left( V^{N}\ast \left( \alpha(s,\,\cdot\,)S_{s}^{N}\right) \right)
\left( x\right) \right\vert \leq \int_{\mathbb{R}^{d}}V^{N}\left(
x-y\right) \left\vert  \alpha(s, y) \right\vert
S_{s}^{N}\left( dy\right)\\
& \leq \left\Vert \alpha(s,\,\cdot\,) \right\Vert _{\infty }\int_{\mathbb{R}^{d}}V^{N}\left( x-y\right) S_{s}^{N}\left( dy\right) =\left\Vert  \alpha(s,\,\cdot\,) \right\Vert _{\infty } p^{N}(s,x)
\end{split}
\end{equation*}
hence%
\begin{equation*}
\resizebox{1.0 \textwidth}{!}{$
\mathbb{E}\left[ \left\Vert V^{N}\ast \left( \alpha(s,\,\cdot\,)S_{s}^{N}\right)
\right\Vert _{\infty }^{p}\right] ^{1/p}  \leq \left\Vert  \alpha(s,\,\cdot\,) \right\Vert _{\infty }\mathbb{E}\left[ \left\Vert  p^{N}(s)\right\Vert
_{\infty }^{p}\right] ^{1/p}  \leq \left\Vert  \alpha(s,\,\cdot\,) \right\Vert _{\infty }\mathbb{E}\left[
\left\Vert  p^{N}(s)\right\Vert _{\gamma }^{p}\right] ^{1/p}$}
\end{equation*}
As regards the second expected value, we similarly obtain 
\begin{equation*}
\mathbb{E}\left[ \left\Vert V^{N}\ast \left( b\left(\,\cdot\,, p^{N}(s, \,
\cdot\,) \right) S_{s}^{N}\right) \right\Vert _{\infty }^{p}\right]
^{1/p}\leq \left\Vert b\right\Vert _{\infty }\mathbb{E}\left[ \left\Vert
 p^{N}(s)\right\Vert _{\gamma }^{p}\right] ^{1/p}.
\end{equation*}
Therefore,
\begin{equation*}
\mathbb{E}\left[ \left\Vert  p^{N}(t)\right\Vert _{\gamma }^{p}\right]
^{1/p}\leq C+C\int_{0}^{t}\frac{\left\Vert \alpha(s,\,\cdot\,) \right\Vert _{\infty
}+\left\Vert b\left(\,\cdot\,, p^{N}(s, \,
\cdot\,) \right) \right\Vert _{\infty }}{\left( t-s\right) ^{\frac{1+\gamma }{2}%
}}\mathbb{E}\left[ \left\Vert  p^{N}(s)\right\Vert _{\gamma }^{p}\right]
^{1/p}ds.
\end{equation*}%
The conclusion follows by a generalized version of Gronwall's lemma.
\end{proof}
\noindent We are now ready to prove Theorem \ref{th:oelschlager85New}; its proof is the content of the next subsection.
\subsection{Identification of the limit}
\label{subsec:proof_oelschlager}
Let us denote by $P_{N}$ and $Q_{N}$ the laws of $S^{N}$ and $V^{N}\ast S^{N}$,
respectively, on $\text{C}([0,T];\mathcal{P}(\mathbb{R}^{d}))$, for each
$N\in\mathbb{N}$. By Lemma \ref{lem:tightness}, we know that both the families
$(P_{N})_{N\in\mathbb{N}}$ and $(Q_{N})_{N\in\mathbb{N}}$ are tight in
$\text{C}([0,T];\mathcal{P}(\mathbb{R}^{d}))$. In particular, their convergent sub-sequences have the same limit, in the following strong sense. 

\begin{lemma}\label{lem:lemma_previous}
Assume a subsequence $(P_{N_{k}})_{k\in\mathbb{N}}$ converges weakly to a
probability measure $P$ on $\text{C}([0,T];\mathcal{P}(\mathbb{R}^{d}))$. Then
also $(Q_{N_{k}})_{k\in\mathbb{N}}$ converges weakly to $P$.
\end{lemma}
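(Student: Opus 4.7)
The plan is to show that $S^N$ and $V^N\ast S^N$ are uniformly close, in the metric $d_\infty(\mu,\nu) = \sup_{t \in [0,T]} d_w(\mu_t,\nu_t)$ on $\text{C}([0,T];\mathcal{P}(\mathbb{R}^d))$, with a deterministic, pathwise bound tending to zero as $N \to \infty$. Once this is established, the sequences $(P_{N_k})$ and $(Q_{N_k})$ must share their weak limit points, hence both converge weakly to $P$.

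For the key estimate, fix $f \in \text{Lip}_1(\mathbb{R}^d) \cap \text{C}_b(\mathbb{R}^d)$ and $t \in [0,T]$. Using that $V^N$ integrates to one and a change of variable $z = x - y$,
\begin{align*}
\left| \langle V^N \ast S_t^N, f\rangle - \langle S_t^N, f\rangle \right|
&= \left| \int_{\mathbb{R}^d} \int_{\mathbb{R}^d} V^N(z)\bigl[ f(z+y) - f(y)\bigr]\,dz\, S_t^N(dy) \right| \\
&\leq \int_{\mathbb{R}^d} V^N(z)\,|z|\,dz.
\end{align*}
Under assumption (H3), $V^N(z) = N^{\beta} V(N^{\beta/d} z)$; setting $w = N^{\beta/d}z$ yields
\[
\int_{\mathbb{R}^d} V^N(z)\,|z|\,dz = N^{-\beta/d}\int_{\mathbb{R}^d} V(w)\,|w|\,dw = C_V\, N^{-\beta/d},
\]
where $C_V < \infty$ because $V \in \text{C}_c^1(\mathbb{R}^d)$. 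Taking the supremum over such $f$ and then over $t \in [0,T]$, one obtains the deterministic bound
\[
\sup_{t \in [0,T]} d_w\bigl( V^N \ast S_t^N, S_t^N \bigr) \leq C_V\, N^{-\beta/d},
\]
which is pathwise, i.e.\ valid for every $\omega \in \Omega$.

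To conclude, I would use this coupling: on the original probability space, $S^{N_k}$ and $V^{N_k}\ast S^{N_k}$ are two $\text{C}([0,T];\mathcal{P}(\mathbb{R}^d))$-valued random variables at $d_\infty$-distance at most $C_V\, N_k^{-\beta/d} \to 0$ almost surely. For any bounded continuous test functional $\Phi : \text{C}([0,T];\mathcal{P}(\mathbb{R}^d)) \to \mathbb{R}$, weak convergence of $P_{N_k}$ to $P$ means $\mathbb{E}[\Phi(S^{N_k})] \to \int \Phi \, dP$; meanwhile, by tightness of $(P_{N_k})$ the laws are supported, up to arbitrary $\varepsilon$-mass, on a compact set on which $\Phi$ is uniformly continuous, so $|\mathbb{E}[\Phi(V^{N_k}\ast S^{N_k})] - \mathbb{E}[\Phi(S^{N_k})]| \to 0$. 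Hence $Q_{N_k} \to P$ weakly.

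I do not expect a serious obstacle here. The only subtle point is that pointwise (in $t$) closeness of the two measures has to be promoted to closeness in the path space $\text{C}([0,T];\mathcal{P}(\mathbb{R}^d))$, but the bound above is already uniform in $t$, so this is immediate; and the Slutsky-type passage from pathwise closeness to equality of weak limits is standard in the Polish setting.
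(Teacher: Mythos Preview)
Your proof is correct and takes a genuinely different, more direct route than the paper.

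The paper argues by sub-subsequences and finite-dimensional marginals: it extracts a convergent sub-subsequence $(Q_{N_k'})$ with limit $Q$, invokes Skorokhod's representation to realize $\widetilde{S}^{N_k'}\to\widetilde{\mu}$ a.s., and then for each fixed $t$ shows $V^{N_k'}\ast\widetilde{S}_t^{N_k'}\to\widetilde{\mu}_t$ a.s.\ in $\mathcal{P}(\mathbb{R}^d)$ (via the observation that $\langle V^N\ast\mu_N,\varphi\rangle\to\langle\mu,\varphi\rangle$ for $\varphi\in C_c$). This identifies the finite-dimensional marginals of $Q$ with those of $P$, whence $Q=P$.

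Your argument bypasses all of this by noticing that the bounded-Lipschitz distance between $S_t^N$ and its mollification is controlled, uniformly in $t$ and $\omega$, by the first absolute moment of $V^N$, namely $N^{-\beta/d}\int V(w)|w|\,dw$. This is sharper information: it gives a deterministic, quantitative bound on $d_\infty(S^N,V^N\ast S^N)$ in the path-space metric, from which the converging-together lemma immediately yields $Q_{N_k}\Rightarrow P$. Your approach needs only $\int|w|V(w)\,dw<\infty$, which is implied by (but weaker than) the compact support in (H3), and it avoids Skorokhod, sub-subsequences, and the finite-dimensional identification step. The paper's route, on the other hand, makes no explicit use of the scaling structure of $V^N$ and would adapt more readily to mollifiers that do not shrink in a self-similar way. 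For the setting at hand, your argument is the cleaner one.

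One minor remark: your last paragraph's use of tightness and uniform continuity on compacts is correct but slightly heavier than needed; it suffices to test against bounded Lipschitz $\Phi$ on the path space and use $|\mathbb{E}[\Phi(V^{N_k}\ast S^{N_k})]-\mathbb{E}[\Phi(S^{N_k})]|\le[\Phi]_{\mathrm{Lip}}\,C_V N_k^{-\beta/d}$ directly.
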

\begin{proof}
To prove the lemma, we are going to show that every convergent subsequence of $(Q_{N_{k}})_{k\in\mathbb{N}}$ has limit $P$; indeed, this implies that $(Q_{N_{k}})_{k\in\mathbb{N}}$ converges to $P$. To this end, let $(Q_{N_{k}^{\prime}})_{k\in\mathbb{N}}$ be a subsequence of $(Q_{N_{k}})_{k\in\mathbb{N}}$ converging to a probability measure $Q$ on $\text{C}([0,T];\mathcal{P}(\mathbb{R}^{d}))$. In particular, for every positive integer $m$
and every finite sequence $t_{1}<...<t_{m}\in\left[0,T\right]$, both $\pi_{\left(t_{1},...,t_{m}\right)}P_{N_{k}^{\prime}}$ and $\pi_{\left(t_{1},...,t_{m}\right)}Q_{N_{k}^{\prime}}$ converge weakly on $\mathcal{P}(\mathbb{R}^{d})^{m}$, where $\pi_{\left(  t_{1},...,t_{m}\right)  }$ is the projection on the finite dimensional marginal at times $\left(  t_{1}%
,...,t_{m}\right)$. The limits are, respectively, $\pi_{\left(t_{1},...,t_{m}\right)  }P$ and $\pi_{\left(  t_{1},...,t_{m}\right)  }Q$. If we prove that they are equal, then $P=Q$ \textcolor{black}{as a consequence of Kolmogorov extension theorem \citep[see e.g.][Theorem 1.1.10]{stroock2007multidimensional}}.\\
\indent Now, by Skorokhod representation theorem, on a new probability space $\left(\widetilde{\Omega},\widetilde{\mathcal{F}},\widetilde{\mathbb{P}}\right)$ we may consider a sequence $\widetilde{S}_{t}^{N_{k}^{\prime}}$ of continuous
processes with values in $\mathcal{P}(\mathbb{R}^{d})$ and a continuous
process $\widetilde{\mu}_{t}$ with values in $\mathcal{P}(\mathbb{R}^{d})$
such that their laws on $\text{C}([0,T];\mathcal{P}(\mathbb{R}^{d}))$ are
$P_{N_{k}^{\prime}}$ and $P$ respectively; and $V^{N_{k}^{\prime}}\ast\widetilde{S}_{\cdot}^{N_{k}^{\prime}}$ has law $Q_{N_{k}^{\prime}}$, which we know to be convergent, weakly, to $Q$. As remarked at the beginning of Appendix \ref{app:holder_heat_bound_1}, given $t\in\left[  0,T\right] $, with probability one, $\left\langle V^{N_{k}^{\prime}}\ast\widetilde{S}%
_{t}^{N_{k}^{\prime}},\varphi\right\rangle $ converges to $\left\langle
\widetilde{\mu}_{t},\varphi\right\rangle $ for all $\varphi\in C_{c}\left(
\mathbb{R}^{d}\right) $, and therefore for all $\varphi\in \text{C}_{b}\left(
\mathbb{R}^{d}\right) $ because $\widetilde{\mu}_{t}\in\mathcal{P}\left(
\mathbb{R}^{d}\right) $. Therefore, with $\widetilde{\mathbb{P}}$-probability
one, $V^{N_{k}^{\prime}}\ast\widetilde{S}_{t}^{N_{k}^{\prime}}$ converges to
$\widetilde{\mu}_{t}$ in the topology of $\mathcal{P}(\mathbb{R}^{d})$. Hence, also the law of $V^{N_{k}^{\prime}}\ast\widetilde{S}_{t}^{N_{k}^{\prime}}$ converges weakly to the law of $\widetilde{\mu}_{t}$ in the topology of $\mathcal{P}(\mathbb{R}^{d})$; namely $\pi_{t}Q_{N_{k}^{\prime}}$ converges weakly to $\pi_{t}P$.  Similarly, if $t_{1}<...<t_{m}\in\left[  0,T\right]  $, the $\mathcal{P}%
(\mathbb{R}^{d})^{m}$-valued random variable $\left(  V^{N_{k}^{\prime}}%
\ast\widetilde{S}_{t_{1}}^{N_{k}^{\prime}},...,V^{N_{k}^{\prime}}%
\ast\widetilde{S}_{t_{m}}^{N_{k}^{\prime}}\right)  $ converges a.s. to
$\left(  \widetilde{\mu}_{t_{1}},...,\widetilde{\mu}_{t_{m}}\right)  $ in the
topology of $\mathcal{P}(\mathbb{R}^{d})^{m}$. Therefore, also the law of
$\left(  V^{N_{k}^{\prime}}\ast\widetilde{S}_{t_{1}}^{N_{k}^{\prime}%
},...,V^{N_{k}^{\prime}}\ast\widetilde{S}_{t_{m}}^{N_{k}^{\prime}}\right)  $
converges weakly to the law of $\left(  \widetilde{\mu}_{t_{1}}%
,...,\widetilde{\mu}_{t_{m}}\right)  $ in the topology of $\mathcal{P}%
(\mathbb{R}^{d})^{m}$, which means that $\pi_{\left(  t_{1},...,t_{m}\right)
}Q_{N_{k}^{\prime}}$ converges weakly to $\pi_{\left(  t_{1},...,t_{m}\right)
}P$. 
\end{proof}

Now, let $(P_{N_{k}})_{k\in\mathbb{N}}$ be a convergent subsequence of
$(P_{N})_{N\in\mathbb{N}}$ (which exists thanks to Lemma \ref{lem:tightness})
with limit $P$ on $\text{C}([0,T];\mathcal{P}(\mathbb{R}^{d}))$. We shall
prove the following two statements.
\begin{itemize}
\item[\textit{(i)}]  The probability measure $P$
is equal to $\delta_{\mu}$ for a suitable $\mu\in\text{C}([0,T];\mathcal{P}%
(\mathbb{R}^{d}))$ which does not depend on the subsequence $\left(
N_{k}\right)  _{k\in\mathbb{N}}$; hence the full sequence $(P_{N}%
)_{N\in\mathbb{N}}$ will converge weakly to $\delta_{\mu}$ and $S^{N}$ will
converge in probability to $\mu$.
\item[\textit{(ii)}] $\mu$ satisfies the conditions in Theorem \ref{th:oelschlager85New}. 
\end{itemize}

\noindent To this end, with the purpose of simplifying notations, we shall prove that the original sequence $(P_{N})_{N\in\mathbb{N}}$ admits a subsequence $(P_{N_{k}})_{k\in\mathbb{N}}$ which converges weakly to $\delta_{\mu}$ for a unique $\mu\in
C([0,T];\mathcal{P}(\mathbb{R}^{d}))$ satisfying all the conditions of Theorem
\ref{th:oelschlager85New}. The same argument applied to any subsequence
$(P_{N_{k}})_{k\in\mathbb{N}}$ in place of the original $(P_{N})_{N\in
\mathbb{N}}$, proves the claim above; this will be the content of Proposition \ref{prop:identification}.

\noindent Denote by $\Lambda\subset\text{C}([0,T];\mathcal{P}(\mathbb{R}%
^{d}))$ the set of all $\left(  \mu_{t}\right)  _{t\in\left[  0,T\right]  }$
such that there exists $p:[0,T]\times\mathbb{R}^{d}\rightarrow\mathbb{R}$ with
the property that $x\mapsto p\left(  t,x\right)  $ is continuous, bounded, non
negative, $\int_{\mathbb{R}^{d}}p\left(  t,x\right)  dx=1$ and $\mu_{t}\left(
dx\right)  =p\left(  t,x\right)  dx$ for all $t\in\left[  0,T\right]  $.
Since
\[
t\mapsto\int_{\mathbb{R}^{d}}p\left(  t,x\right)  \varphi\left(  x\right)
dx=\left\langle \mu_{t},\varphi\right\rangle
\]
is continuous for every $\varphi\in\text{C}_{b}\left(  \mathbb{R}^{d}\right)
$, $p$ is measurable in $\left(  t,x\right)  $ and weakly continuous in $t$,
in the previous sense.\newline\noindent Given \textcolor{black}{$\alpha \in \text{C}_b([0,T] \times \mathbb{R}^{d} ; \mathbb{R}^{d})$}, $\varphi\in\text{C}_{c}^{1,2}\left(  [0,T]\times\mathbb{R}^{d}\right)  $ and
$\mu\in\Lambda$, set
\begin{equation}
\resizebox{1.0 \textwidth}{!}{$
\Phi_{\varphi}\left(  \mu\right)  =\sup_{t\in\left[  0,T\right]  }\left\vert
\left\langle \mu_{t},\varphi(t,\,\cdot\,)\right\rangle -\left\langle \mu_{0}%
,\varphi(0,\cdot)\right\rangle -\int_{0}^{t}\left\langle \mu_{s}, \mathcal{A}\varphi(s,\,\cdot\,) -\left(  \alpha(s)+b\left(
p(s)\right)  \right)  \cdot\nabla\varphi(s,\,\cdot\,)\right\rangle ds\right\vert
$}
\end{equation}
where for the sake of space $b\left( p(s)\right)  $ denotes the function $b\left(\,\cdot\,,p\left(
s,\,\cdot\,\right)  \right)  $ and $p\left(  s,\,\cdot\,\right)  $ is the density of
$\mu_{s}$. Moreover, we remind that $\mathcal{A}$ is the operator defined in Eq.~ \eqref{eq:operator}.

\begin{proposition}\label{prop:identification}
Let $\left(  N_{k}\right)  $ be a subsequence such that $P_{N_{k}}$ converges
in law to $P$ on $\text{C}([0,T];\mathcal{P}(\mathbb{R}^{d}))$. Then:

\begin{itemize}
\item[\textit{(i)}] $P\left(  \Lambda\right)  =1$.

\item[\textit{(ii)}] $\int\left(  \Phi_{\varphi}\left(  \mu\right)
\wedge1\right)  P\left(  d\mu\right)  =0$ for every $\varphi\in$C$_{c}%
^{1,2}\left(  [0,T]\times\mathbb{R}^{d}\right)  $.
\end{itemize}
\end{proposition}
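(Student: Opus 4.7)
The strategy is to transfer convergence onto a common probability space via Skorokhod's theorem and then pass to the limit in the prelimit martingale identity \eqref{eq:identity_empirical_meas}. First, since by Lemma~\ref{lem:tightness} and Lemma~\ref{lem:lemma_previous} both $P_{N_k}$ and $Q_{N_k}$ converge to the same limit $P$ on $\text{C}([0,T];\mathcal{P}(\mathbb{R}^d))$, I would apply Skorokhod's representation theorem jointly (exactly as in the proof of Lemma~\ref{lem:lemma_previous}) to obtain an auxiliary space $(\tilde\Omega,\tilde{\mathcal{F}},\tilde{\mathbb{P}})$ carrying $\tilde S^{N_k}$ with law $P_{N_k}$ and $\tilde \mu$ with law $P$, such that $\tilde S^{N_k}\to \tilde \mu$ and $V^{N_k}\ast \tilde S^{N_k} \to \tilde \mu$ hold $\tilde{\mathbb{P}}$-a.s.\ in $\text{C}([0,T];\mathcal{P}(\mathbb{R}^d))$. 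It then suffices to prove (i) $\tilde \mu \in \Lambda$ $\tilde{\mathbb{P}}$-a.s., and (ii) $\Phi_\varphi(\tilde\mu)=0$ $\tilde{\mathbb{P}}$-a.s.

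For (i), set $\tilde p^{N_k}(t,\cdot)\doteq V^{N_k}\ast \tilde S_t^{N_k}$. Lemma~\ref{lem:hoelder_estim_empirical_density} supplies $\sup_k \tilde{\mathbb{E}}[\|\tilde p^{N_k}(t)\|_\gamma^p]<\infty$ for some $\gamma\in(0,1)$, $p\geq 2$. By Markov's inequality together with a diagonal extraction over rational times, $\tilde{\mathbb{P}}$-a.s.\ there is a (random) subsequence along which the H\"older norms remain bounded; Ascoli--Arzel\`a then provides locally uniform convergence to a non-negative H\"older function $\tilde p(t,\cdot)$, and uniqueness of weak limits in $\mathcal{P}(\mathbb{R}^d)$ forces $\tilde\mu_t(dx)=\tilde p(t,x)\,dx$ with $\int \tilde p(t,x)\,dx=1$. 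Continuity of $t\mapsto \tilde\mu_t$ in the weak topology lets me extend this density identification from rational times to all $t\in[0,T]$.

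For (ii), I would start from the identity \eqref{eq:identity_empirical_meas} applied to $\tilde S^{N_k}$, written as
\begin{equation*}
B^{N_k}(t) \doteq \langle \tilde S_t^{N_k},\varphi(t)\rangle - \langle\tilde S_0^{N_k},\varphi(0)\rangle - \int_0^t\!\left\langle \tilde S_s^{N_k},\tfrac{\partial \varphi}{\partial s}+\tfrac12\Delta\varphi+(\alpha(s)+b(\cdot,\tilde p^{N_k}(s,\cdot)))\cdot \nabla\varphi\right\rangle ds = M_t^{N_k,\varphi}.
\end{equation*}
The martingale obeys $\tilde{\mathbb{E}}[(M_t^{N_k,\varphi})^2]\leq \|\nabla\varphi\|_\infty^2 T/N_k\to 0$ by It\^o's isometry, so along a further subsequence it vanishes $\tilde{\mathbb{P}}$-a.s. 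The boundary and linear drift terms converge a.s.\ by weak convergence $\tilde S^{N_k}\to\tilde\mu$ and continuity/compact support of $\varphi$ and its derivatives. Combined with handling of the nonlinear term (described below), this yields $B^{N_k}(t)\to$ the defect of $\tilde\mu$ at each $t$, hence forcing it to vanish; equicontinuity of the defect in $t$ (from boundedness of the integrand) then upgrades pointwise vanishing to $\sup_t$ vanishing, i.e.\ $\Phi_\varphi(\tilde\mu)=0$ a.s. Taking expectation then proves (ii).

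The principal obstacle is the coupling $b(\cdot,V^{N_k}\ast \tilde S_s^{N_k})$ tested against $\tilde S_s^{N_k}$ itself, since mere weak convergence of measures is insufficient. The resolution is to upgrade the weak convergence of the mollified empirical measures (Lemma~\ref{lem:lemma_previous}) to locally uniform convergence of their densities, which is provided by the H\"older estimate of Lemma~\ref{lem:hoelder_estim_empirical_density}. Concretely, letting $K$ denote the compact support of $\nabla\varphi(s,\cdot)$, the Lipschitz assumption on $b$ from $(\mathrm{H1})$ gives $\sup_{x\in K}|b(x,\tilde p^{N_k}(s,x))-b(x,\tilde p(s,x))|\leq L\sup_{x\in K}|\tilde p^{N_k}(s,x)-\tilde p(s,x)|\to 0$ a.s.\ along the subsequence of (i); bounded convergence then transmits this to the $ds$-integral, while the residual term $\langle \tilde S_s^{N_k}-\tilde\mu_s,\,b(\cdot,\tilde p(s,\cdot))\cdot\nabla\varphi\rangle$ is purely weak-convergence and vanishes by continuity of its integrand.
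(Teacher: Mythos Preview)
Your overall strategy---Skorokhod representation plus almost-sure passage to the limit in the prelimit identity---is natural, but there is a genuine gap in the treatment of the nonlinear term $b(\cdot,V^{N_k}\ast \tilde S_s^{N_k})$.

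The crux of your argument is the claim that $\tilde p^{N_k}(s,\cdot)\to \tilde p(s,\cdot)$ locally uniformly, $\tilde{\mathbb{P}}$-a.s. You justify this by Markov's inequality and a diagonal extraction over rational times, producing a \emph{random} subsequence along which the H\"older norms are bounded. But a random subsequence is of no use: you must eventually prove that the integrand $\langle \tilde S_s^{N_k}, b(\cdot,\tilde p^{N_k}(s,\cdot))\cdot\nabla\varphi\rangle$ converges along the \emph{fixed deterministic} sequence $(N_k)$ for a.e.\ $(\omega,s)$ (to apply dominated convergence in $s$), and the estimate $\sup_k \tilde{\mathbb E}[\|\tilde p^{N_k}(t)\|_\gamma^p]\leq C$ from Lemma~\ref{lem:hoelder_estim_empirical_density} simply does not imply $\sup_k\|\tilde p^{N_k}(t,\omega)\|_\gamma<\infty$ on a set of full $\tilde{\mathbb P}\otimes dt$-measure. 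Lemma~\ref{lem:upgrade} gives locally uniform convergence of densities only when the limit is already known to be deterministic, which is precisely what remains to be proved at this stage.

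The paper circumvents this obstruction by a different device: it introduces the heat-kernel regularised functional $\Phi_{\varphi,\delta}$, in which $b(\cdot,p(s,\cdot))$ is replaced by $b(\cdot,\mathcal P_\delta\mu_s)$. Since $\mu\mapsto \mathcal P_\delta\mu$ is weak-to-uniform continuous, $\Phi_{\varphi,\delta}$ is continuous on $C([0,T];\mathcal P(\mathbb R^d))$, so one can pass to the limit $k\to\infty$ \emph{in expectation} directly by weak convergence of $Q_{N_k}$. The H\"older estimate then enters only once, and only in expectation, to show that the discrepancy between the true nonlinear term and its $\delta$-regularisation is bounded by $(C_\gamma\delta^{\gamma/2}+\epsilon_{N_k}^\gamma)\|\nabla\varphi\|_\infty\int_0^T\mathbb E\big[[V^{N_k}\ast S_s^{N_k}]_\gamma\big]\,ds$, which vanishes as $k\to\infty$ then $\delta\to 0$. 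This double-limit structure is exactly what your almost-sure approach lacks.
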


\begin{proof}
\noindent The proof is divided in four steps. Before proceeding, notice that by Lemma \ref{lem:lemma_previous} also $(Q_{N_{k}})_{k\in\mathbb{N}}$ converges weakly to $P$.\\
\indent\textbf{\textit{Step 1}}\quad On an auxiliary probability space, let \textcolor{black}{$(\mu_{t})_{0 \leq t \leq T}$}
be a process with law $P$. Given $t\in\left[  0,T\right]  $, $S_{t}^{N_{k}}$
converges in law to $\mu_{t}$. Moreover, $V^{N_{k}}\ast S_{t}^{N_{k}}$
satisfies the assumptions of Lemma \ref{lem:upgrade} of Appendix
\ref{app:holder_heat_bound_1}. Therefore $P\left(  \Lambda\right)  =1$.

\indent\textbf{\textit{Step 2}}\quad For every $\delta\in(0,1)$ and $\mu\in
\mathcal{P}(\mathbb{R}^{d})$, let $\mathcal{P}_{\delta}\mu$ denote the
following function:
\[
\left(  \mathcal{P}_{\delta}\mu\right)  \left(  x\right)  =\int_{\mathbb{R}%
^{d}}G\left(  \delta,x-y\right)  \mu\left(  dy\right)  .
\]
Moreover, introduce for $\varphi\in\text{C}_{c}^{1,2}\left(  [0,T]\times
\mathbb{R}^{d}\right)  $ and $\delta\in\left(  0,1\right)  $, the regularized
functional, defined on $\mu\in C([0,T];\mathcal{P}(\mathbb{R}^{d}))$ (instead
of $\Lambda$)
\[
\resizebox{1.0 \textwidth}{!}{$
\Phi_{\varphi,\delta}\left(  \mu\right)  =\sup_{t\in\left[  0,T\right]
}\left\vert \left\langle \mu_{t},\varphi(t,\,\cdot\,)\right\rangle -\left\langle
\mu_{0},\varphi(0,\,\cdot\,)\right\rangle -\int_{0}^{t}\left\langle \mu_{s}
,  \mathcal{A}\varphi(s,\,\cdot\,)-\left(  \alpha(s)+b\left(  \mathcal{P}_{\delta}\mu_{s}\right)  \right)  \cdot\nabla
\varphi(s,\,\cdot\,)\right\rangle ds\right\vert .$}
\]
It is easy to check the previous functional is continuous on $\text{C}%
([0,T];\mathcal{P}(\mathbb{R}^{d}))$. Therefore, being $\Phi_{\varphi,\delta
}\left(  \cdot\right)  \wedge1$ continuous and bounded,
\[
\lim_{k\rightarrow\infty}\int\left(  \Phi_{\varphi,\delta}\left(  \mu\right)
\wedge1\right)  Q_{N_{k}}\left(  d\mu\right)  =\int\left(  \Phi_{\varphi
,\delta}\left(  \mu\right)  \wedge1\right)  P\left(  d\mu\right)  .
\]
Recall we know that $P\left(  \Lambda\right)  =1$. For each $\mu\in\Lambda$
and $s\in\left[  0,T\right]  $ it holds that:
\[
\lim_{\delta\rightarrow0}\mathcal{P}_{\delta}\mu_{s}=p(s),
\]
locally in the uniform topology, where $p(s)$ is the density of $\mu_{s}$;
therefore,
\[
\lim_{\delta\rightarrow0}b\left(  \mathcal{P}_{\delta}\mu_{s}\right)
=b\left( p(s)\right)
\]
locally in the uniform topology, and it is a bounded convergence. Hence,
thanks to the local cut-off given by $\varphi(s)$ we have:
\begin{equation*}
\begin{split}
\left\langle \mu_{s},b\left(  \mathcal{P}_{\delta}\mu_{s}\right)  \cdot
\nabla\varphi(s ,\,\cdot\,)\right\rangle &=\left\langle p(s),b\left(  \mathcal{P}%
_{\delta}\mu_{s}\right)  \cdot\nabla\varphi(s,\,\cdot\,)\right\rangle \\
&\overset{\delta
\rightarrow0}{\rightarrow}\left\langle p(s),b\left( p(s)\right)
\cdot\nabla\varphi(s,\,\cdot\,)\right\rangle =\left\langle \mu_{s},b\left(
p(s)\right)  \cdot\nabla\varphi(s,\,\cdot\,)\right\rangle .
\end{split}
\end{equation*}

\noindent By Lebesgue dominated convergence we conclude that
\[
\lim_{\delta\rightarrow0}\Phi_{\varphi,\delta}\left(  \mu\right)
=\Phi_{\varphi}\left(  \mu\right)
\]
and thus again, by the same theorem,
\[
\lim_{\delta\rightarrow0}\int\left(  \Phi_{\varphi,\delta}\left(  \mu\right)
\wedge1\right)  P\left(  d\mu\right)  =\int\left(  \Phi_{\varphi}\left(
\mu\right)  \wedge1\right)  P\left(  d\mu\right)  .
\]
Therefore
\[
\int\left(  \Phi_{\varphi}\left(  \mu\right)  \wedge1\right)  P\left(
d\mu\right)  =\lim_{\delta\rightarrow0}\lim_{k\rightarrow\infty}\int\left(
\Phi_{\varphi,\delta}\left(  \mu\right)  \wedge1\right)  Q_{N_{k}}\left(
d\mu\right)  .
\]
In the next step, we prove that this double limit, taken in the specified
order, is zero.\newline\indent\textbf{\textit{Step 3}}\quad We have the following
identity:
\begin{equation}
\int\left(  \Phi_{\varphi,\delta}\left(  \mu\right)  \wedge1\right)  Q_{N_{k}%
}\left(  d\mu\right)  =\mathbb{E}\left[  \Phi_{\varphi,\delta}\left(
V^{N_{k}}\ast S^{N_{k}}\right)  \wedge1\right]  .\label{eq:expexted}%
\end{equation}
Choosing $V^{N_{k},-}\ast\varphi$ as test function,
\[
\resizebox{1.0 \textwidth}{!}{$
\begin{split}
&  \left\langle S_{t}^{N_{k}},(V^{N_{k},-}\ast\varphi)\left(t, \,\cdot\,\right)
\right\rangle -\left\langle S_{0}^{N_{k}},(V^{N_{k},-}\ast\varphi)\left(
0,\,\cdot\,\right)  \right\rangle -\int_{0}^{t}\left\langle S_{s}^{N_{k}}, \mathcal{A}(V^{N_{k},-}\ast\varphi)(s,\,\cdot\,)
\right\rangle ds\\
&  =\int_{0}^{t}\left\langle S_{s}^{N_{k}},\left(  \alpha(s)+b\left(
V^{N_{k}}\ast S_{s}^{N_{k}}\right)  \right)  \cdot (\nabla V^{N_{k},-}
\ast\varphi)(s,\,\cdot\,)\right\rangle ds+M_{t}^{N_{k},V^{N_{k},-}\ast\varphi},
\end{split}$}
\]
where $M_t^{N_k, V^{N_k,-} \ast \varphi}$ denotes the martingale \eqref{eq:martingale_generic_test_function} in which $N$ and $\varphi$ have been replaced by $N_k$ and $V^{N_k,-} \ast \varphi$, respectively. Thus, 
\begin{equation}
\resizebox{1.0 \textwidth}{!}{$
\begin{split}
\left\langle V^{N_{k}}\ast S_{t}^{N_{k}},  \varphi\left(t,\,\cdot\,\right)
\right\rangle  & -\left\langle V^{N_{k}}\ast S_{0}^{N_{k}},\varphi\left(
0,\,\cdot\,\right)  \right\rangle -\int_{0}^{t}\left\langle V^{N_{k}}\ast S_{s}^{N_{k}%
},\mathcal{A}\varphi(s,\,\cdot\,)\right\rangle ds\\
&  =\int_{0}^{t}\left\langle V^{N_{k}}\ast\left[  \left(  \alpha(s)+b\left(
V^{N_{k}}\ast S_{s}^{N_{k}}\right)  \right)  S_{s}^{N_{k}}\right]
,\nabla\varphi(s,\,\cdot\,)\right\rangle ds+M_{t}^{N_{k},V^{N_{k},-}\ast\varphi}.\\
\end{split}$}
\end{equation}
For the sake of space, we set for $t\in\lbrack0,T]$:
\begin{equation*}
\begin{split}
& V_{t}^{k,\ast}\doteq V^{N_{k}}\ast S_{t}^{N_{k}}\quad\quad\quad\,\, V_{t}^{k,\alpha,\ast
}\doteq V^{N_{k}}\ast(\alpha(t)S_{t}^{N_{k}})\\
& V_{t}^{k,\alpha,b}%
\doteq\alpha(s)+b(V_{s}^{k,\ast})\quad V_{t}^{k,\alpha,b,\delta}\doteq\alpha(s)+b(\mathcal{P}_{\delta}(V_{s}^{k,\ast})).
\end{split}
\end{equation*}
Now, we compute the expected value on the right-hand side of Eq.~\eqref{eq:expexted}.
\begin{equation}
\resizebox{1.0 \textwidth}{!}{$
\begin{split}
&  \mathbb{E}\left[  \Phi_{\varphi,\delta}(V_{t}^{k,\ast})\wedge1\right]  \\
&  \leq\mathbb{E}\left[  \sup_{t\in\left[  0,T\right]  }\left\vert
\left\langle V_{t}^{k,\ast},\varphi(t,\,\cdot\,)\right\rangle -\left\langle
V_{0}^{k,\ast},\varphi(0,\,\cdot\,)\right\rangle -\int_{0}^{t}\left\langle
V_{s}^{k,\ast},\mathcal{A}\varphi(s,\,\cdot\,)
-V_{t}^{k,\alpha,b,\delta}\cdot\nabla\varphi(s,\,\cdot\,)\right\rangle ds\right\vert
\right]  \\
&  =\mathbb{E}\left[  \sup_{t\in\left[  0,T\right]  }\left\vert \int_{0}%
^{t}\left\langle V^{N_{k}}\ast\left[  V_{t}^{k,\alpha,b}\,S_{s}^{N_{k}%
}\right]  ,\nabla \varphi(s,\,\cdot\,)\right\rangle ds+M_{t}^{N_{k},V^{N_{k},-}\ast\varphi
}-\int_{0}^{t}\left\langle V_{s}^{k,\ast},V_{s}^{k,\alpha,b,\delta}\cdot
\nabla\varphi(s,\,\cdot\,)\right\rangle ds\right\vert \right]  \\
&  \leq\mathbb{E}\left[  \left\vert M_{T}^{N_{k},V^{N_{k},-}\ast\varphi
}\right\vert ^{2}\right]  ^{1/2}+\mathbb{E}\left[  \int_{0}^{T}\left\vert
\left\langle V_{t}^{k,\alpha,\ast},\nabla\varphi(s,\,\cdot\,)\right\rangle -\left\langle
V_{s}^{k,\ast},\alpha(s)\cdot\nabla\varphi(s,\,\cdot\,)\right\rangle \right\vert ds\right]  \\
&  +\mathbb{E}\left[  \int_{0}^{T}\left\vert \left\langle V^{N_{k}}\ast\left[
b(V_{s}^{k,\ast})S_{s}^{N_{k}}\right]  ,\nabla\varphi(s,\,\cdot\,)\right\rangle
-\left\langle V_{s}^{k,\ast},b(\mathcal{P}_{\delta}(V_{s}^{k,\ast}%
))\cdot\nabla\varphi(s,\,\cdot\,)\right\rangle \right\vert ds\right]  \doteq(i)+(ii)+(iii).
\end{split}$}
\label{eq:expected_value}
\end{equation}
In the previous equation, we use the following bound
\[
\mathbb{E}\left[  \sup_{t\in\left[  0,T\right]  }\left\vert M_{t}%
^{N_{k},V^{N_{k},-}\ast\varphi}\right\vert \right]  \leq C\mathbb{E}\left[
\left\vert M_{T}^{N_{k},V^{N_{k},-}\ast\varphi}\right\vert ^{2}\right]  ^{1/2}%
\]
due to Doob's inequality. At this point, we have that the terms $(i)-(iii)$ in
Eq.~\eqref{eq:expected_value} converge to zero as $N_{k}\rightarrow\infty$. By
hypothesis $\Vert V^{N_{k},-}\ast\nabla\varphi\Vert_{\infty}$ is bounded
because $\nabla\varphi$ is uniformly continuous and hence $V^{N_{k},-}%
\ast\nabla\varphi$ converges uniformly to $\nabla\varphi$. This implies that
\eqref{eq:expected_value}-$(i)$ converges to zero. Indeed,
\[
\mathbb{E}\left[  \left(  M_{T}^{N_{k},V^{N_{k},-}\ast\varphi}\right)
^{2}\right]  =\frac{1}{N^{2}}\sum_{i=1}^{N}\int_{0}^{t}\mathbb{E}\left[
\left\vert \nabla V^{N_{k},-}\ast\varphi(s,X_{s}^{N,i})\right\vert
^{2}\right]  ds\leq\frac{1}{N}\left\Vert V^{N_{k},-}\ast\nabla\varphi
\right\Vert _{\infty}^{2}T
\]
The uniform converges of $V^{N_{k},-}\ast\nabla\varphi$ and $V^{N_{k},-}%
\ast\left(  \alpha(\,\cdot\,) \cdot\nabla\varphi\right)  $, the weak convergence of
$S_{s}^{N_{k}}$ (realized a.s. on an auxiliary probability space, by Skorohod
theorem) and Lebesgue dominated convergence theorem implies that also the term
\eqref{eq:expected_value}-$(ii)$ converges to zero. The converges to zero of
the third term is more delicate and it will be proved here below in the third
step.\newline\indent\textbf{\textit{Step 4}}\quad Let us consider
\begin{equation*}%
\begin{split}
&  \mathbb{E}\left[  \int_{0}^{T}\left\vert \left\langle S_{s}^{N_{k}}%
,b(V_{s}^{k,\ast})\cdot (V^{N_{k}}\ast\nabla\varphi)(s,\,\cdot\,)-V^{N_{k}}\ast\left[
b(\mathcal{P}_{\delta}(V_{s}^{k,\ast}))\cdot\nabla\varphi(s,\,\cdot\,)\right]
\right\rangle \right\vert (\,\cdot\,)   ds\right]\\
&  \leq\int_{0}^{T}\mathbb{E}\left[  \int_{\mathbb{R}^{d}}\int_{\mathbb{R}%
^{d}}V^{N_{k}}\left(  x-y\right)  \left\vert b(\mathcal{P}_{\delta}%
(V_{s}^{k,\ast})(y))-b(V_{s}^{k,\ast}(x))\right\vert \left\vert \nabla
\varphi\left(s, y\right)  \right\vert \,dy\,S_{s}^{N_{k}}\left(  dx\right)
\right]  \,ds\\
&  \leq L_{b}\int_{0}^{T}\mathbb{E}\left[  \int_{\mathbb{R}^{d}}%
\int_{\mathbb{R}^{d}}V^{N_{k}}\left(  x-y\right)  \left\vert \mathcal{P}%
_{\delta}(V_{s}^{k,\ast})(y)-V_{s}^{k,\ast}(x)\right\vert \left\vert
\nabla\varphi\left(s, y\right)  \right\vert dy\,S_{s}^{N_{k}}\left(  dx\right)
\right]  ds\\
&  \leq\int_{0}^{T}\mathbb{E}\left[  \int_{\mathbb{R}^{d}}\int_{\mathbb{R}%
^{d}}V^{N_{k}}\left(  x-y\right)  \left\vert \mathcal{P}_{\delta}%
(V_{s}^{k,\ast})(y)-V_{s}^{k,\ast}(y)\right\vert \left\vert \nabla
\varphi(s, y)\right\vert dy\,S_{s}^{N_{k}}(dx)\right]  \,ds\\
&  +\int_{0}^{T}\mathbb{E}\left[  \int_{\mathbb{R}^{d}}\int_{\mathbb{R}^{d}%
}V^{N_{k}}\left(  x-y\right)  \left\vert V_{s}^{k,\ast}\left(  y\right)
-V_{s}^{k,\ast}\left(  x\right)  \right\vert \left\vert \nabla\varphi\left(
s, y\right)  \right\vert dy\,S_{s}^{N_{k}}\left(  dx\right)  \right]  ds.
\end{split}
\end{equation*}

\noindent We now compute the following two bounds (notice that we use the
explicit expression). The first is given by:
\[%
\begin{split}
&  \left\vert \mathcal{P}_{\delta}\left( V^{N_{k}}\ast S_{s}^{N_{k}}\right)
\left(  y\right) - (V^{N_{k}}\ast S_{s}^{N_{k}} )\left(  y\right)  \right\vert
=\left\vert \mathbb{E}\left[  \left(  V^{N_{k}}\ast S_{s}^{N_{k}}\right)
\left(  y+W_{\delta}\right)  -\left(  V^{N_{k}}\ast S_{s}^{N_{k}}\right)
\left(  y\right)  \right]  \right\vert \\
&  \leq\mathbb{E}\left[  \left\vert \left(  V^{N_{k}}\ast S_{s}^{N_{k}%
}\right)  \left(  y+W_{\delta}\right)  -\left(  V^{N_{k}}\ast S_{s}^{N_{k}%
}\right)  \left(  y\right)  \right\vert \right]  \leq\left[  V^{N_{k}}\ast
S_{s}^{N_{k}}\right]  _{\gamma}\mathbb{E}\left[  \left\vert W_{\delta
}\right\vert ^{\gamma}\right]  \\
&  \leq C_{\gamma}\left[  V^{N_{k}}\ast S_{s}^{N_{k}}\right]  _{\gamma}%
\delta^{\gamma/2},
\end{split}
\]
whereas the second, since $V$ has compact support, say 1, so that the
support of $V^{N_{k}}$ is $\epsilon_{N_{k}}$, by
\[%
\begin{split}
&  V^{N_{k}}\left(  x-y\right)  \left\vert (V^{N_{k}}\ast S_{s}^{N_{k}})\left(
y\right)  - (V^{N_{k}}\ast S_{s}^{N_{k}})\left(  x\right)  \right\vert \leq
V^{N_{k}}\left(  x-y\right)  \left[  V^{N_{k}}\ast S_{s}^{N_{k}}\right]
_{\gamma}\left\vert x-y\right\vert ^{\gamma}\\
&  \leq\epsilon_{N_{k}}^{\gamma}V^{N_{k}}\left(  x-y\right)  \left[  V^{N_{k}%
}\ast S_{s}^{N_{k}}\right]  _{\gamma}.
\end{split}
\]
Therefore
\[%
\begin{split}
&  \mathbb{E}\left[  \int_{0}^{T}\left\vert \left\langle S_{s}^{N_{k}}%
,b(V_{s}^{k,\ast})\cdot (V^{N_{k}}\ast\nabla\varphi)(s,\,\cdot\,)-V^{N_{k}}\ast\left[
b(\mathcal{P}_{\delta}(V_{s}^{k,\ast}))\cdot\nabla\varphi(s,\,\cdot\,)\right]
\right\rangle \right\vert (\,\cdot\,)   ds\right]\\
&  \leq C_{\gamma}\delta^{\gamma/2}\int_{0}^{T}\mathbb{E}\left[  \left[
V^{N_{k}}\ast S_{s}^{N_{k}}\right]  _{\gamma}\int_{\mathbb{R}^{d}}%
\int_{\mathbb{R}^{d}}V^{N_{k}}\left(  x-y\right)  \left\vert \nabla
\varphi\left(s,y\right)  \right\vert dy\,S_{s}^{N_{k}}\left(  dx\right)
\right]  ds\\
&  +\epsilon_{N_{k}}^{\gamma}\int_{0}^{T}\mathbb{E}\left[  \left[  V^{N_{k}%
}\ast S_{s}^{N_{k}}\right]  _{\gamma}\int_{\mathbb{R}^{d}}\int_{\mathbb{R}%
^{d}}V^{N_{k}}\left(  x-y\right)  \left\vert \nabla\varphi\left(s, y\right)
\right\vert dy\,S_{s}^{N_{k}}\left(  dx\right)  \right]  ds\\
&  \leq\left(  C_{\gamma}\delta^{\gamma/2}+\epsilon_{N_{k}}^{\gamma}\right)
\left\Vert \nabla\varphi\right\Vert _{\infty}\int_{0}^{T}\mathbb{E}\left[
\left[  V^{N_{k}}\ast S_{s}^{N_{k}}\right]  _{\gamma}\int_{\mathbb{R}^{d}}%
\int_{\mathbb{R}^{d}}V^{N_{k}}\left(  x-y\right)  dy\,S_{s}^{N_{k}}\left(
dx\right)  \right]  ds\\
&  =\left(  C_{\gamma}\delta^{\gamma/2}+\epsilon_{N_{k}}^{\gamma}\right)
\left\Vert \nabla\varphi\right\Vert _{\infty}\int_{0}^{T}\mathbb{E}\left[
\left[  V^{N_{k}}\ast S_{s}^{N_{k}}\right]  _{\gamma}\right]  ds,
\end{split}
\]
which converges to zero as $k\rightarrow\infty$ and then $\delta
\rightarrow\infty$ thanks to the first estimate of Lemma
\ref{lem:hoelder_estim_empirical_density}.
\end{proof}

\noindent In order to complete the proof of Theorem \ref{th:oelschlager85New}
we have to prove that $P$ is supported on a class of solutions of equation
\eqref{eq:deterministic} where we may apply the uniqueness result of Appendix \ref{subsec:HJ_and_FP_mild} now we know that $P$ is supported on $\Lambda$ and
satisfies $\int\left(  \Phi_{\varphi}\left(  \mu\right)  \wedge1\right)
P\left(  d\mu\right)  =0$ for every $\varphi\in$C$_{c}^{1,2}\left(
[0,T]\times\mathbb{R}^{d}\right)  $. On an auxiliary probability space
$\left(  \widetilde{\Omega},\widetilde{\mathcal{F}},\widetilde{\mathbb{P}%
}\right)  $ with expectation $\widetilde{\mathbb{E}}$, let \textcolor{black}{$(\widetilde{\mu
}_{t})_{0 \leq t \leq T}$} be a process with law $P$. We know that%
\[
\widetilde{\mathbb{E}}\left[  \Phi_{\varphi}\left(  \widetilde{\mu}\right)
\wedge1\right]  =0
\]
hence
\begin{equation}
\resizebox{1.0 \textwidth}{!}{$
\sup_{t\in\left[  0,T\right]  }\left\vert \left\langle \widetilde{\mu}%
_{t},\varphi(t,\,\cdot\,)\right\rangle -\left\langle \mu_{0},\varphi(0,\,\cdot\,)\right\rangle
-\int_{0}^{t}\left\langle \widetilde{\mu}_{s}, \mathcal{A}\varphi(s,\,\cdot\,) -\left(  \alpha(s)+b\left(  \widetilde{p}_{s}\right)
\right)  \cdot\nabla\varphi(s,\,\cdot\,)\right\rangle ds\right\vert
=0\label{towards weak formulation}$}
\end{equation}
with $\widetilde{\mathbb{P}}$-probability one. The set C$_{c}^{1,2}\left(
[0,T]\times\mathbb{R}^{d}\right)  $ is separable in the natural metric and
therefore we may find a dense countable family $\mathcal{D}\subset$%
C$_{c}^{1,2}\left(  [0,T]\times\mathbb{R}^{d}\right)  $; it follows that we
may reverse the quantifiers and get that with $\widetilde{\mathbb{P}}%
$-probability one identity (\ref{towards weak formulation}) holds for all
$\varphi\in\mathcal{D}$. Obviously we can also write
\[
\resizebox{1.0 \textwidth}{!}{$
\sup_{t\in\left[  0,T\right]  }\left\vert \left\langle \widetilde{p}(t),\varphi(t,\,\cdot\,)\right\rangle -\left\langle p_0,\varphi(0,\,\cdot\,)\right\rangle
-\int_{0}^{t}\left\langle \widetilde{p}(s), \mathcal{A}\varphi(s, \,\cdot\,) -\left(  \alpha(s)+b\left(  \widetilde{p}_{s}\right)
\right)  \cdot\nabla\varphi(s,\,\cdot\,)\right\rangle ds\right\vert =0$}
\]
since $\widetilde{\mu}_{t}$ has density $\widetilde{p}_{t}$, and also $\mu
_{0}$ has density $p_{0}$ by assumption. From the density of $\mathcal{D}$ and
classical limit theorems we get that, with $\widetilde{\mathbb{P}}%
$-probability one, the previous identity holds for every $\varphi\in$%
C$_{c}^{1,2}\left(  [0,T]\times\mathbb{R}^{d}\right)  $. Recall that we denote
by $G\left(  t,x\right)  $ the density of blackian motion in $\mathbb{R}^{d}$
and by $\mathcal{P}_{t}$ the associated heat semigroup. From the previous
identity we deduce
\begin{equation}
\left\langle \widetilde{p}(t),\psi(\,\cdot\,)\right\rangle =\left\langle \mathcal{P}%
_{t}p_0,\psi(\,\cdot\,)\right\rangle -\int_{0}^{t}\left\langle \nabla\mathcal{P}%
_{t-s}\left(  \alpha(s)+b(\widetilde{p}(s))\right)  ,\psi(\,\cdot\,)\right\rangle
ds\label{mild weak}%
\end{equation}
for every $\psi\in \text{C}_{c}^{2}\left(  \mathbb{R}^{d}\right)  $. Indeed, given
$t\in\left[  0,T\right]  $ and $\psi\in \text{C}_{c}^{2}\left(  \mathbb{R}%
^{d}\right)  $, consider the test function $\varphi^{(t)}(s)=\mathcal{P}_{t-s}\psi
$ for $s \in [0,t]$; by approximation by functions of class C$_{c}^{1,2}\left(  [0,T]\times
\mathbb{R}^{d}\right)$, we deduce

\begin{equation}
\begin{split}
\left\langle \widetilde{p}(t),\mathcal{P}_{t-t}\psi\right\rangle
&-\left\langle p_0,\mathcal{P}_{t-0}\psi\right\rangle \\
&-\int_{0}%
^{t}\left\langle \widetilde{p}(s), \mathcal{A}(\mathcal{P}_{t-\cdot}\psi)(s) -\left(  \alpha(s)+b\left(  \widetilde{p}(s)\right)  \right)  \cdot\nabla\mathcal{P}_{t-s}\psi\right\rangle ds
\end{split}
\end{equation}
which simplifies to
\[
\left\langle \widetilde{p}(t),\psi\right\rangle =\left\langle p_0 ,\mathcal{P}_{t}\psi\right\rangle -\int_{0}^{t}\left\langle \alpha
_{s}+b(\widetilde{p}(s)),\nabla\mathcal{P}_{t-s}\psi\right\rangle ds
\]
and therefore leads to equation (\ref{mild weak}) by simple manipulations. By
the arbitrariness of $\psi$ and the continuity in $x$ of $\widetilde{p}_{t}$ and
of both $\mathcal{P}_{t}f$ and $\nabla\mathcal{P}_{t-s}f$\ (this one only for
$s<t$) for every continuous bounded $f$ (here we also use the bound
$\left\Vert \nabla\mathcal{P}_{t-s}f\right\Vert _{\infty}\leq\frac{C}{\left(
t-s\right)  ^{1/2}}\left\Vert f\right\Vert _{\infty}$ and the integrability of
$\frac{C}{\left(  t-s\right)  ^{1/2}}$) we get%
\[
\widetilde{p}\left(t,  x\right)  =\left(  \mathcal{P}_{t}p_0\right)
\left(  x\right)  -\int_{0}^{t}\nabla\mathcal{P}_{t-s}\left(  \alpha(s)+b(\widetilde{p}(s))\right)  \left(  x\right)  ds.
\]
By the same arguments we deduce that $\widetilde{p}$ is continuous in $\left(
t,x\right)  $. Moreover, it is bounded uniformly in $\left(  t,x\right)  $ by
the identity itself, because $\mathcal{P}_{\cdot}p_{0}$ is bounded, $\alpha$
is bounded, $b$ is bounded and again we use $\left\Vert \nabla\mathcal{P}%
_{t-s}f\right\Vert _{\infty}\leq\frac{C}{\left(  t-s\right)  ^{1/2}}\left\Vert
f\right\Vert _{\infty}$. In conclusion $\widetilde{p}$ is of class
$\text{C}_{b}\left(  \left[  0,T\right]  \times\mathbb{R}^{d}\right)  $. In Appendix \ref{subsec:HJ_and_FP_mild} it is proved that in this class there is a unique solution of
the previous mild equation, hence $P$ is supported by a single element. This
completes the proof of Theorem \ref{th:oelschlager85New}.

\section{Approximate Nash equilibria from the mean field game}\label{sec:nash}
\noindent In this section we show that if we have a weak solution $(u, p)$ of the PDE system in Eq.~\eqref{eq:pdesystem}, then we can construct a sequence of approximate Nash equilibria for the corresponding $N$-player game. This is the content of the following theorem.
\begin{theorem}\label{th:Nash_equilibria_MFG}
Let $N \in \mathbb{N}$, $N > 1$. Grant \text{(H1)}-\text{(H4)}. Suppose $(u, p)$ is a weak solution of the PDE system in Eq.~\eqref{eq:pdesystem} and let $\alpha^{*}(t, x) \doteq - \triangledown u(t, x)$ the optimal control of the problem \textbf{OC} in the class \textcolor{black}{$\mathcal{A}^{fb}_{K}$} with $K$ given by Definition \eqref{eq:basic_const}. Set
\begin{equation}
\alpha^{N, i}(t, \mathbf{x}) \doteq \alpha^{*}(t, x_i) \doteq  -\triangledown u(t, x_i),\quad t \in [0,T],\,\mathbf{x} = \left(x_1, \ldots, x_N\right) \in \mathbb{R}^{d \times N},\,\,i \in \textcolor{black}{[[N]]}
\end{equation}
and $\bm{\alpha}^{N} = (\alpha^{N,1}, \ldots, \alpha^{N,N})\textcolor{black}{\in \mathcal{A}^{N;fb}_K}$. Then for every $\varepsilon > 0$, there exist $N_0 = N_0(\varepsilon) \in \mathbb{N}$ such that $\bm{\alpha}^{N}$ is an $\varepsilon$-Nash equilibrium for the $N$-player game whenever $N \geq N_0$.
\end{theorem}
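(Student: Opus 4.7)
The plan is the standard two-step MFG approximation argument: prove that the cost of the candidate equilibrium converges to the MFG value $J(\alpha^*)$, and that no admissible deviation beats $J(\alpha^*)$ asymptotically. By the permutation symmetry of the $N$-player system under $\bm{\alpha}^N$ it suffices to treat player $i=1$, and we may argue by contradiction: suppose there exist $\varepsilon_0>0$, a subsequence $N_k\to\infty$, and admissible $\beta_k\in\mathcal{A}_K^{N_k,1,fb}$ with
\[
J_1^{N_k}(\bm{\alpha}^{N_k})-J_1^{N_k}([\bm{\alpha}^{N_k,-1},\beta_k])>\varepsilon_0.
\]

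First I would handle $J_1^{N}(\bm{\alpha}^{N})$. Under $\bm{\alpha}^N$ the system is exactly the moderately interacting particle system of Section \ref{sec:moderate} with individual feedback $\alpha^*(s,x)=-\nabla u(s,x)\in \text{C}_b([0,T]\times\mathbb{R}^d;\mathbb{R}^d)$. Theorem \ref{th:oelschlager85New} together with the Verification Theorem identify the weak limit of $S^N$ as the MFG flow $\mu$ with density $p$, so that the pair $(X^{N,1},V^N\ast S^N)$ converges weakly to $(X^*,p)$. Boundedness and continuity of $\tfrac{1}{2}|\alpha^*|^2$, $f$ and $g$ (cfr.~(H1), (H2) and the definition of $K(T,b,f,p_0,g)$) then give $J_1^{N_k}(\bm{\alpha}^{N_k})\to J(\alpha^*)$ by dominated convergence.

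The substantive step is the lower bound along deviations. Replacing player $1$'s strategy by $\beta_k$ destroys exchangeability, so Theorem \ref{th:oelschlager85New} does not apply directly; however, a single player contributes only $O(N^{\beta-1})$ to $V^N\ast S^N$ (uniformly over feedbacks bounded by $K$), hence the tightness and H\"older estimates of Lemmas \ref{lem:tightness} and \ref{lem:hoelder_estim_empirical_density} persist for the asymmetric $(N_k-1)+1$ system. To pass to the limit in the deviation itself I would reinterpret $\widetilde{\beta}_k(s):=\beta_k(s,\mathbf{X}_s^{N_k})$, which is a.s.\ bounded by $K$, as a relaxed control $q_k(ds,da)=ds\,\delta_{\widetilde{\beta}_k(s)}(da)$ on $[0,T]\times\overline{B_K(0)}$. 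Compactness of this set of relaxed controls in the stable topology yields tightness of $(X^{N_k,1},V^{N_k}\ast S^{N_k},q_k)$, and along a convergent subsequence Skorokhod representation combined with the identification argument of Subsection \ref{subsec:proof_oelschlager}, applied to the $(N_k-1)$-symmetric subsystem, gives $V^{N_k}\ast S^{N_k}\to p$ a.s.\ while $(X^{N_k,1},q_k)\to(\overline{X},\overline{q})$ is a relaxed admissible pair for the MFG with environment $p$.

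Finally, Fatou together with lower semicontinuity of $a\mapsto\tfrac{1}{2}|a|^2$ ensures that the relaxed cost of $(\overline{X},\overline{q})$ is at most $\liminf_k J_1^{N_k}([\bm{\alpha}^{N_k,-1},\beta_k])$, while the equivalence of feedback, open-loop and relaxed controls recalled at the end of Subsection \ref{subsec:openloop} forces this cost to be bounded below by $J(\alpha^*)$. Combining with the first step contradicts $\varepsilon_0>0$ and concludes the proof. The main obstacle is precisely this second step: re-proving propagation of chaos for the $(N_k-1)+1$ asymmetric system uniformly in the deviation $\beta_k$, and securing enough compactness on the deviations to pass to the limit — which is where the relaxed-control relaxation of the admissible class is essential, since no continuity is available in $k$ on the space of feedbacks themselves.
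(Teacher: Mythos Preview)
Your three-step architecture, the use of relaxed controls to gain compactness for the deviating strategy, and the martingale-problem identification of the limit are all exactly what the paper does; Step 1 in particular is carried out in the same way.

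The one genuine difference is how you handle the broken symmetry in Step 2. You argue \emph{directly}: the single deviating player contributes $O(N^{\beta-1})$ to $V^N\ast S^N$ and $O(1/N)$ to the weak identity \eqref{eq:identity_empirical_meas}, so the estimates of Lemmas \ref{lem:tightness}--\ref{lem:hoelder_estim_empirical_density} (which only use the uniform drift bound $K$) and the identification of Proposition \ref{prop:identification} go through for the asymmetric $(N_k-1)+1$ system. This is correct. The paper instead restores symmetry by a Girsanov change of measure (following Lacker): under a new probability $\mathbb{P}^N$ the deviating player also uses $\alpha^*$, Theorem \ref{th:oelschlager85New} applies verbatim to give $S^{N;\beta}\to\delta_p$ under $\mathbb{P}^N$, and uniform integrability of $(Z_T^N)^{-1}$ transfers this convergence back to the original $\mathbb{Q}^N$. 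Your route avoids the change of measure but has to re-open the propagation-of-chaos proof and argue that the $O(1/N)$ perturbation is harmless; the paper's route black-boxes Theorem \ref{th:oelschlager85New} entirely at the cost of a (here routine, since drifts are bounded) Girsanov argument. Both work.

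One place your sketch is lighter than the paper: identifying the limit dynamics of $X^{N_k,1;\beta}$. The paper shows explicitly that the limit law $\Theta^*$ solves the relaxed martingale problem associated with Eq.~\eqref{eq:limit_equation} by decomposing the pre-limit martingale functional into a weakly continuous functional of the extended empirical measure $\overline S^{N;\beta}$ plus a remainder controlled by $\int\|p^N(s,\cdot)-p(s,\cdot)\|_\infty\,ds$, which vanishes by Lemma \ref{lem:upgrade}. Your phrase ``$(\overline X,\overline q)$ is a relaxed admissible pair for the MFG with environment $p$'' is the right claim, and your line through Skorokhod plus the identification argument is the right idea, but this step is where most of the work in the paper's Step 2 actually lives.
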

\begin{proof}
The proof is divided in three steps.\\
\indent \textbf{\textit{Step 1}}\, Let $((\Omega_{N}, \mathcal{F}_{N}, (\mathcal{F}_t^{N}), \mathbb{P}^N), \bm{W}^{N}, \bm{X}^{N})$ be a weak solution of Eq.~ \eqref{eq:privatestate_evolution_N_player} under strategy vector $\bm{\alpha}^{N}$.  We note that the function $F$ defined in \eqref{eq:function_F} with $\alpha(s, X_s^{N,i}) = - \triangledown u(s, X_s^{N,i})$  is continuous and bounded; this guarantees the existence of \textcolor{black}{a weak solution of the the system in Eq.~ \eqref{eq:privatestate_evolution_N_player} for any $N\in\mathbb{N}$} 
Let $S^{N}_t$ (resp. $S^{N}$) denote the associated empirical measure on $\mathbb{R}^{d}$ (resp. on the path space $\mathcal{X}$). We are going to show that
\begin{equation}\label{eq:goal_Step1}
\lim_{N \rightarrow \infty} J^{N}_{i}(\bm{\alpha}^{N}) = J(\alpha^{*}).
\end{equation}
\noindent Theorem \eqref{th:oelschlager85New}-$(i)$ enables us to prove the convergence result in Eq.~\eqref{eq:goal_Step1} for the following simplified cost functional, where we do not change the notation for the sake of simplicity:
\begin{equation*}
J_i^{N}(\bm{\alpha}^{N}) = \mathbb{E}\left[\int_{0}^{T}\frac{1}{2}|-\triangledown u(s, X_s^{N,i})|^2\,ds + g(X_T^{N,i})\right].
\end{equation*}
Symmetry of the coefficients allows us to re-write the previous cost functional in terms of $S^{N}_{t}$, $t \in [0,T]$ as
\begin{equation*}
\begin{split}
J_i^{N}(\bm{\alpha}^{N}) &= \mathbb{E}\left[\int_{0}^{T}\frac{1}{N}\sum_{i=1}^{N}\frac{1}{2}|-\triangledown u(s, X_s^{N,i})|^2\,ds + \frac{1}{N}\sum_{i=1}^{N} g(X_T^{N,i})\right]\\
									 &= \mathbb{E}\left[\int_{0}^{T} \langle S_s^{N}, \frac{1}{2}|-\triangledown u(s,\,\cdot\,)|^2\rangle\,ds + \langle S_T^{N}, g(\,\cdot\,)\rangle\right]\\
\end{split}
\end{equation*}
which converges, as $N \rightarrow \infty$, to
\begin{equation*}
\begin{split}
\textcolor{black}{\int_{0}^{T} \langle S_s^{\infty}, \frac{1}{2}|-\triangledown u(s,\,\cdot\,)|^2\rangle\,ds + \langle S_T^{\infty}, g(\,\cdot\,)\rangle}
\end{split}
\end{equation*}
\textcolor{black}{where $S^{\infty}$ is the deterministic limit in probability of the sequence of random empirical measures $(S^{N})_{N\in\mathbb{N}}$ given by Theorem \eqref{th:oelschlager85New}-$(i)$.}

\noindent We claim that $S_t^{\infty} \equiv p(t,\,\cdot\,)$, $t \in [0, T]$, with $p$ the second component of the pair $(u, p)$, i.e. the density of the solution of Eq.~ \eqref{eq:mfgstate} as stated by the Verification Theorem \ref{th:VerificationTheorem}. Theorem \ref{th:oelschlager85New} states that, given $\bm{\alpha}^{N}$, the empirical measure $S_t^{N}$ corresponding to the interacting system with this control converges to a \textcolor{black}{flow of measures} with density $p^{\alpha}(t,\,\cdot\,)$, where we stress the dependence on $\alpha$. In addition, Theorem \ref{th:oelschlager85New}-$(ii)$ states that $p^{\alpha}(t,\,\cdot\,)$  is the mild solution of Eq.~\eqref{eq:deterministic}. By applying the previous result to the optimal control we have that the corresponding empirical measure on $\mathbb{R}^{d}$ converges to $p^{\alpha^{*}}(t,\,\cdot\,)$, mild solution of Eq.~\eqref{eq:deterministic}.
Also $p$, the second component of $(u, p)$, is a mild solution of this equation. The uniqueness Theorem \ref{th:localExistenceUniqueness} now implies that  $p^{\alpha^{*}}(t,\,\cdot\,)$ coincides with $p(t,\,\cdot\,)$. Hence, we can conclude that Eq.~\eqref{eq:goal_Step1} holds.\\
\indent\textbf{\textit{Step 2}} For each $N \in \mathbb{N} \setminus \left\{i\right\}$, \textcolor{black}{let $\beta^{N,i} \in \mathcal{A}^{N;1;fb}_{K}$} such that
\begin{equation*}
J^{N}_i([\bm{\alpha}^{N,-i}, \textcolor{black}{\beta^{N,i}}]) \leq \inf_{\beta \in \textcolor{black}{\mathcal{A}^{N;1;fb}_{K}}} J^{N}_{i}([\bm{\alpha}^{N,-i}, \beta]) + \varepsilon/2.
\end{equation*}
We are going to show the following result:
\begin{equation}\label{eq:goal_Step2}
\liminf_{N \rightarrow \infty} J^{N}_i([\bm{\alpha}^{N,-i}, \textcolor{black}{\beta^{N,i}}]) \geq J(\alpha^{*}).
\end{equation}
\noindent To this aim, we introduce the $N$-player dynamics in the case the first player only deviates from the Nash equilibrium.  For $N \in \mathbb{N}$, consider the system of equations:
\begin{equation}\label{eq:one_player_deviates}
\begin{split}
X_t^{N,1;\beta}&= \textcolor{black}{X_0}^{N,1}+\int_0^t \left( \textcolor{black}{\beta^{N,1}(s,\bm{X}^{N;\beta}_s)} + b(X_s^{N,1;\beta},\frac{1}{N}\sum_{j=1}^NV^N(X_s^{N,1;\beta}-X_s^{N,j;\beta}))\right)\,ds\\
					&+ \textcolor{black}{W_t^{N,1;\beta}}\\
X_t^{N,i;\beta}&= \textcolor{black}{X_0}^{N,i}+\int_0^t\left(\alpha^*(s,X^{N,i;\beta}_s) + b(X_s^{N,i;\beta},\frac{1}{N}\sum_{j=1}^NV^N(X_s^{N,i;\beta}-X_s^{N,j;\beta}))\right)\,ds\\
					& + {W^{N, i;\beta}_t}\\
&\quad\quad\quad  i\in\left\{2, \ldots, N\right\},\,\,t \in [0, T],
\end{split}
\end{equation}
where \textcolor{black}{$\beta^{N,1} \in \textcolor{black}{\mathcal{A}^{N;1;fb}_{K}}$}. We denote with $S^{N;\beta}\doteq(S^{N;\beta}_t)_{t \in [0, T]}$ the empirical measure process on $\mathbb{R}^{d}$ of the previous system.\\
\noindent Now, for each $N \in \mathbb{N}$, \textcolor{black}{let} $((\Omega_{N}, \mathcal{F}_{N}, (\mathcal{F}_t^{N}), \mathbb{Q}^N), \textcolor{black}{\bm{W}^{N;\beta}}, \bm{X}^{N;\beta})$ be a weak solution of Eq.~\eqref{eq:one_player_deviates}. \textcolor{black}{Since} the presence of a deviating player destroys the symmetry of the pre-limit system, \textcolor{black}{following \cite{lacker2020convergence} proof of Theorem 3.10 therein,} we perform a change of measure to restore it. More precisely, we define as $\mathbb{P}^{N}$ the probability measure under which $\bm{X}^{N;\beta}$ has the following dynamics:
\begin{equation*}
\begin{split}
X_t^{N,\textcolor{black}{i};\beta}&= \textcolor{black}{X_0}^{N,\textcolor{black}{i}}+\int_0^t \left( \alpha^*(s,X^{N,i;\beta}_s) + b(X_s^{N,\textcolor{black}{i};\beta},\frac{1}{N}\sum_{j=1}^NV^N(X_s^{N,\textcolor{black}{i};\beta}-X_s^{N,j;\beta}))\right)\,ds +\\
&\quad\quad\quad  + \textcolor{black}{\widehat{W}_t^{N,i;\beta}}, \quad\quad\quad i\in \textcolor{black}{[[N]]},\,\,t \in [0, T],
\end{split}
\end{equation*}
\textcolor{black}{where the $\widehat{W}_t^{N,i;\beta}$ are $\mathbb{P}^N$-Wiener processes, i.e. $\mathbb{P}^N$ is defined via $\frac{d\mathbb{P}^{N}}{d\mathbb{Q}^{N}}\Big\vert_{t=T} \doteq Z_T^N$ where} 
\begin{equation*}
\textcolor{black}{Z^N_t \doteq \mathcal{E}_t\left(\int_0^{\cdot}\left(\bm{\beta}^N(s,\bm{X}^{N;\beta}_s)-\bm{\alpha}^N(s,\bm{X}^{N;\beta}_s)\right)d\bm{W}_s^{N;\beta} \right)}
\end{equation*}
\textcolor{black}{where $\bm{\beta}^N=[\bm{\alpha}^{N,-1},\beta^{N,1}]$ and $\bm{W}^{N;\beta}=(W^{N,1;\beta},\ldots,W^{N,N;\beta})$. We notice that $Z^N$ is a well-defined $\mathbb{Q}^N$-martingale thanks to boundedness of the coefficients.} Theorem  \ref{th:oelschlager85New}-$(i)$ ensures the convergence under $\mathbb{P}^{N}$ of the $S^{N;\beta}$ to $S^{\infty} \equiv \delta_p$. Boundedness of the coefficients also gives uniform integrability of the sequence \textcolor{black}{$((Z^{N}_T)^{-1})_{N \in \mathbb{N}}$}; therefore, the probability measures $\mathbb{Q}^{N}(A) \doteq \mathbb{E}^{\mathbb{P}^{N}}\left[\textcolor{black}{(Z^{N}_T)^{-1}}\mathsf{1}_{A}\right]$, $A \in \mathcal{F}^{N}$, converge to zero whenever $\mathbb{P}^{N}(A)$ converges to zero  in the limit $N \rightarrow \infty$. So the convergence (in law and also in probability) of $S^{N;\beta}$ to $S^{\infty}$ under $\mathbb{P}^{N}$ implies its convergence (in law and also in probability) under $\mathbb{Q}^{N}$ to the same (constant) limit.\\
\noindent Now, \textcolor{black}{in order to gain more compactness in the space of admissible controls,} we interpret the controls in Eq.~\eqref{eq:one_player_deviates} as stochastic relaxed controls (Appendix \ref{app:relaxed_controls}). To this end, we denote with $\textcolor{black}{\overline{B}_{K}(0)} \subset \mathbb{R}^{d}$ the closed ball of radius \textcolor{black}{$K$} around the origin and \textcolor{black}{$\mathcal{R}_K \doteq \mathcal{R}_{\overline{B}_{K}(0)}$}. Then $\textcolor{black}{\mathcal{R}_{K}}$ is compact (Appendix \ref{app:relaxed_controls}). For $N \in \mathbb{N}$, let $\tilde{\beta}_{t}^{1}$ and $\tilde{\alpha}_{t}^{*,i}$, $i \in \{2, \ldots, N\}$, be $\textcolor{black}{\mathcal{R}_{K}}$-valued random measures determined by:
\begin{equation*}
\begin{split}
&\textcolor{black}{\tilde{\beta}_{t}^{N,1}(dx)\,dt \doteq \delta_{\beta^{N,1}(t,\bm{X}_t^{N;\beta})}(x)\,dx\,dt,\quad\quad\quad\,\,\,\,(t, x) \in [0,t] \times \overline{B}_{K}(0)}\\
&\tilde{\alpha}_{t}^{*,i}(dx)\,dt \doteq  \delta_{\alpha^{*}(t, X_t^{N, i; \beta})}(x)\,dx\,dt\quad\,\,\,\, (t, x) \in [0,t] \times \textcolor{black}{\overline{B}_{K}(0)},\,\,i\in\{2,\ldots,N\}.\\
\end{split}
\end{equation*}
We rewrite Eq.~\eqref{eq:one_player_deviates} in terms of these relaxed controls:
\begin{equation}\label{eq:one_player_deviates_deviation}
\begin{split}
X_t^{N,1;\beta}= \textcolor{black}{X_0}^{N,1} &+ \int_{[0,t]\times \overline{B}_{\textcolor{black}{K}}(0)}  x \tilde{\beta}_{s}^{N,1}(dx)\,ds\\
&+ \int_{0}^{t} b(X_s^{N,1;\beta},\frac{1}{N}\sum_{j=1}^NV^N(X_s^{N,1;\beta}-X_s^{N,j;\beta}))\,ds + \textcolor{black}{W_t^{N,1;\beta}}\\
X_t^{N,i;\beta}=  \textcolor{black}{X_0}^{N,i} &+ \int_{[0,t]\times \overline{B}_{\textcolor{black}{K}}(0)} x \tilde{\alpha}_{i,s}^{*}(dx)\textcolor{black}{\,ds}\\
&+ \int_{0}^{t} b(X_s^{N,i;\beta},\frac{1}{N}\sum_{j=1}^NV^N(X_s^{N,i;\beta}-X_s^{N,j;\beta})) \,ds + \textcolor{black}{W^{N, i;\beta}_t}\\
&\quad\quad\quad  i\in\left\{2, \ldots, N\right\},\,\,t \in [0, T].
\end{split}
\end{equation}
\noindent We do the following claims. \textbf{Claim a.}: the family $\left(\mathbb{P}^{N} \circ (X^{N, 1; \beta}, \textcolor{black}{\tilde{\beta}^{N,1}}, S^{N;\beta})^{-1}\right)_{N \in \mathbb{N}}$ is tight in $\mathcal{P}(\mathcal{X} \times \textcolor{black}{\mathcal{R}_{K}} \times \mathcal{P}(\mathbb{R}^{d}))$ and thus it admits a convergent subsequence. We denote by $(X^{\beta^{*}}, \tilde{\beta}^{*,1},\,p)$ the limit of the subsequence \textcolor{black}{that can be constructed by means of Skorokhod's representation theorem on a suitable limiting probability space $(\Omega^{\beta^*},\mathcal{F}^{\beta^*},\mathbb{Q}^{\beta^*})$};
\textbf{Claim b.}: the limit $X^{\beta^{*}}$ has the following representation:
\begin{equation}\label{eq:limit_equation}
X_t^{\tilde{\beta}^{*}} = \textcolor{black}{X_0} +  \int_{[0,T]\times \overline{B}_{\textcolor{black}{K}}(0)} x \tilde{\beta}^{*,1}_{s}(dx)\,ds + \int_{0}^{t} b(X_s^{\tilde{\beta}^{*}}, p(s, X_s^{\tilde{\beta}^{*}}))\,ds + \textcolor{black}{W^{\beta^*}_t},\quad t \in [0,T]
\end{equation}
\textcolor{black}{on $(\Omega^{\beta^*},\mathcal{F}^{\beta^*},\mathbb{Q}^{\beta^*})$ where $W^{\beta^*}$ is a Wiener process, i.e. there exist a filtration $(\mathcal{F}^{\beta^*}_t)$ and an $(\mathcal{F}^{\beta^*}_t)$-Wiener process $W^{\beta^*}$ on $(\Omega^{\beta^*},\mathcal{F}^{\beta^*},\mathbb{Q}^{\beta^*})$. such that $X^{\tilde{\beta}^{*}}$ has representation \eqref{eq:limit_equation}.}
If both \textbf{Claim a} and \textbf{Claim b} hold, by setting $\beta_t^{*} \doteq \int_{\overline{B}_{\textcolor{black}{K}}(0)} x \tilde{\beta}^{*,1}_{t}(dx)$, we have that $J^{N}_i([\bm{\alpha}^{N,-i}, \textcolor{black}{\beta^{N,i}}])$ converges to
\begin{equation*}
J(\beta^{*}) = \mathbb{E}\left[\int_{0}^{T} \frac{1}{2} |\beta^{*}_s|^2\,ds + g(X_T^{\beta^{*}})\right]
\end{equation*}
along the selected subsequence with $J(\beta^{*}) \geq J(\alpha^{*})$.  Eq.~\eqref{eq:goal_Step2} follows by taking the \textcolor{black}{limit inferior} of the sequence.\\
\noindent We now prove the two claims.\\
\indent\textit{Proof of \textbf{Claim a.}} Tightness of $(\mathbb{P}^{N} \circ (X^{N,1:\beta})^{-1})$ and of $(\mathbb{P}^{N} \circ (S^{N;\beta})^{-1})$ under $\mathbb{Q}^{N}$ follows from their tightness under $\mathbb{P}^{N}$. On the other hand, $(\mathbb{P}^{N} \circ (\textcolor{black}{\tilde{\beta}^{N,1}})^{-1})$ is tight in $\mathcal{P}(\textcolor{black}{\mathcal{R}_{K}})$ because $\textcolor{black}{\mathcal{R}_{K}}$ is compact. This implies that $\left(\mathbb{P}^{N} \circ (X^{N, 1; \beta}, \tilde{\beta}^{N,1}, S^{N;\beta})^{-1}\right)_{N \in \mathbb{N}}$ is tight in $\mathcal{P}(\mathcal{X} \times \textcolor{black}{\mathcal{R}_{K}} \times \mathcal{P}(\mathbb{R}^{d}))$.\\
\indent\textit{Proof of \textbf{Claim b.}} We use a characterization of solutions to Eq.~ \eqref{eq:limit_equation} with fixed measure variable through a martingale problem in the sense of \cite{stroock2007multidimensional} \textcolor{black}{(see \cite{el1990martingale} for a study of the martingale problems we employ)}. Let $f \in \text{C}_{c}^{2}(\mathbb{R}^{d})$ and let us define the process $M^{f}$ on $(\mathcal{X}\times\textcolor{black}{\mathcal{R}_{K}}, \mathcal{B}(\mathcal{X}\times\textcolor{black}{\mathcal{R}_{K}}))$ by 
\begin{equation}\label{eq:process_martingale}
\begin{split}
M^{f}_t(\varphi, \rho) &\doteq f(\varphi(t))-f(\varphi(0)) -\int_{[0,t] \times \overline{B}_{\textcolor{black}{K}}(0)} x \rho_s(dx)\textcolor{black}{\nabla f(\varphi(s))}\,ds\\
								& - \int_{0}^{t}\left(b(\varphi(s), p(s, \varphi(s)) \nabla f(\varphi(s)) + \frac{1}{2}\Delta f(\varphi(s))\right)\,ds,
\end{split}
\end{equation}
where $t \in [0, T]$. \textcolor{black}{We claim that $\Theta^{*}\doteq \mathbb{P} \circ (X^{\tilde{\beta}^{*}}, \tilde{\beta}^{*})^{-1}\in \mathcal{P}(\mathcal{X} \times \mathcal{R}_K)$ is a solution of the martingale problem associated to Eq.\eqref{eq:process_martingale}, i.e. such that for all $f \in \text{C}_{c}^{2}(\mathbb{R}^{d})$, $M^f$ is a $\Theta^*$-martingale.} The martingale property is intended on $(\mathcal{X} \times \textcolor{black}{\mathcal{R}_{K}}, \mathcal{B}(\mathcal{X} \times \textcolor{black}{\mathcal{R}_{K}}))$ with respect to the $\Theta^{*}$-augmentation of the canonical filtration made right continuous by a standard procedure. However, to conclude it is sufficient to check that the
martingale property holds with respect to the canonical filtration on $\mathcal{X} \times \textcolor{black}{\mathcal{R}_{K}}$ \citep[see, for instance, Problem 5.4.13 in][]{karatzas1998blackian}.
We denote by $(\mathcal{G}_t)_{t \in [0,T]}$ such a filtration show that the process in Eq.~ \eqref{eq:process_martingale}, which is bounded, measurable and $\mathcal{G}_t$-adapted, is a $\Theta^{*} \doteq \mathbb{P} \circ (X^{\tilde{\beta}^{*}}, \tilde{\beta}^{*})^{-1}$ martingale for all $f \in \text{C}_{c}^{2}(\mathbb{R}^{d})$. This is equivalent to having 
$$
\mathbb{E}^{\Theta^{*}}\left[Y\cdot (M^{f}_{t_2}-M^{f}_{t_1})\right] = 0
$$
for every choice of $(t_1, t_2, Y) \in [0,T]^{2} \times \text{C}_b(\mathcal{X} \times \textcolor{black}{\mathcal{R}_{K}})$ such that $t_1 \leq t_2$ and $Y$ is $\mathcal{G}_{t_1}$-measurable. To this aim, we define and compute the following function $\Psi^{p} = \Psi^{p}_{(t_1, t_2, Y, f)}:\mathcal{P}(\mathcal{X} \times \textcolor{black}{\mathcal{R}_{K}})\rightarrow\mathbb{R}$:
\begin{equation}\label{eq:second_revisione}
\begin{split}
\Psi^{p}(\Theta^{*}) &= \Psi^{p}_{(t_1, t_2, Y, f)}(\Theta^{*}) \doteq \mathbb{E}^{\Theta^{*}}\left[Y\cdot (M^{f}_{t_2}-M^{f}_{t_1})\right]\\
&= \int_{\mathcal{X} \times \mathcal{R}_C}Y(\varphi,\rho)\left(f(\varphi(t_2))-f(\varphi(t_1)) \right)\,\Theta^*(d\varphi,d\rho)\\
&-\int_{\mathcal{X} \times \mathcal{R}_C}Y(\varphi,\rho)\int_{\bar B_C\times[t_1,t_2]}x\rho_t(dx)\nabla f(\varphi(t)) dt\,\Theta^*(d\varphi,d\rho)\\
&-\int_{\mathcal{X} \times \mathcal{R}_C}Y(\varphi,\rho)\int_{t_1}^{t_2} b(\varphi(t),p(t,\varphi(t)) \nabla f(\varphi(t))dt\,\Theta^*(d\varphi,d\rho)\\
&-\frac{1}{2} \int_{\mathcal{X} \times \mathcal{R}_C} Y(\varphi,\rho)\int_{t_1}^{t_2}\Delta f(\varphi(t))dt\,\Theta^*(d\varphi,d\rho).
\end{split}
\end{equation}
The previous function, in particular, is continuous with respect to the weak convergence of measure since the integrands are bounded and continuous on $\mathcal{X} \times \textcolor{black}{\mathcal{R}_{K}}$. Also, we define:
\begin{equation*}
\begin{split}
&\overline{M}^{f,i}_t(\varphi^N,\rho^N) \doteq  f(\varphi^{N,i}(t))-f(\varphi^{N,i}(0))\\
&-\int_0^t\left[\int_{\bar B_C}x\rho^{N,i}_s(dx)+b(\varphi^{N,i}(s), v(\varphi^N(s)))\right]\nabla f(\varphi^{N,i}(s))+\frac{1}{2}\Delta f(\varphi^{N,i}(s)) ds\\
& v(\varphi^N(t)) \doteq  \frac{1}{N}\sum_{j=1}^NV^N(\varphi^{N,i}(t)-\varphi^{N,j}(t)),\quad t\in[0,T],\quad \textcolor{black}{i\in [[N]]},
\end{split}
\end{equation*}
\textcolor{black}{for $(\varphi^{N}, \rho^{N}) \in \mathcal{X}^{\times N} \times \mathcal{R}_{K}^{\times N}$, where $\rho^{N,i}$ and $\varphi^{N,i}$ are respectively the $i^{th}$ component of $\rho$ and $\varphi$, and} the extended empirical measure $\overline{S}^{N;\beta}$ as
\begin{equation*}
\overline{S}^{N;\beta} \doteq \frac{1}{N} \sum_{i = 1}^{N} \delta_{(X^{N,i;\beta}, \rho^{N, i; \beta})}.
\end{equation*}
\textcolor{black}{Here, $X^{N, i; \beta}$ denotes the dynamics of player $i$ in the system where the first player only deviates from the Nash equilibrium written in terms of relaxed controls $\rho^{N, i;\beta}$.}

\noindent Now, \textcolor{black}{by construction,} it holds that
\begin{equation}\label{eq:sum_N}
\frac{1}{N}\sum_{i=1}^{N} \mathbb{E}^{\overline{\Theta}^*_N}\left[\overline{Y}^{i}\cdot\left(\overline{M}^{f,i}_{t_2}-\overline{M}_{t_1}^{f,i} \right) \right] = 0,
\end{equation}
where $\overline{\Theta}^*_N \doteq \mathbb{P}^{N} \circ (X^{N, i; \beta}, \rho^{N,i;\beta})^{-1}$ \textcolor{black}{and for every choice of $(t_1, t_2, \overline{Y}^i) \in [0,T]^{2} \times \text{C}_b(\mathcal{X}^{\times N} \times \textcolor{black}{\mathcal{R}_{K}^{\times N}})$ such that $t_1 \leq t_2$ and $Y$ is $\mathcal{G}^N_{t_1}$-measurable, with $(\mathcal{G}^N_t)$ being the canonical filtration on $\mathcal{B}(\mathcal{X}^{\times N} \times \textcolor{black}{\mathcal{R}_{K}^{\times N}})$}. \textcolor{black}{ To conclude, it then suffices to show} that the previous term converges to the expected value of $\Psi^{p}(\Theta^{*})$ in the limit for $N \rightarrow \infty$. Let us set the sequence $\overline{Y}^{i}$ as $\overline{Y}^{i}(\varphi^{N}) \doteq Y(\varphi^{N,i})$ and show that the following decomposition for the term in Eq.~\eqref{eq:sum_N} holds:
\begin{equation}\label{eq:sum_N_decomposed}
\frac{1}{N}\sum_{i=1}^{N} \mathbb{E}^{\overline{\Theta}^*_N}\left[Y\cdot\left(\overline{M}^{f,i}_{t_2}-\overline{M}_{t_1}^{f,i} \right) \right] =  \mathbb{E}\left[\Psi_{(t_1, t_2, Y, f)}(\overline{S}^{N;\beta})\right] - \Delta^{p}_{(t_1, t_2, Y, f)}(\overline{S}^{N;\beta}).
\end{equation}
Indeed, the first term is equal to: 
\begin{equation}\label{eq:sum_N_decomposed_first_term}
\begin{split}
\mathbb{E}\left[\Psi_{(t_1, t_2, Y, f)}(\overline{S}^{N;\beta})\right]& = \frac{1}{N}\sum_{i=1}^N\mathbb{E} \left[Y(X^{N,i;\beta},\rho^{N,i;\beta})\left(f(X^{N,i;\beta}_{t_2})-f(X^{N,i;\beta}_{t_1}) \right)\right]\\
&-\frac{1}{N}\sum_{i=1}^N\mathbb{E}\left[Y(X^{N,i;\beta},\rho^{N,i;\beta})\int_{ \overline{B}_{\textcolor{black}{K}}(0) \times[t_1,t_2]}x\rho^{N,i;\beta}_t(dx)\nabla f(X^{N,i;\beta}_t) dt\right]\\
&-\frac{1}{N}\sum_{i=1}^N\mathbb{E}\left[Y(X^{N,i;\beta},\rho^{N,i;\beta})\int_{t_1}^{t_2} b(X^{N,i;\beta}_t,p(t,X^{N,i;\beta}_t)) \nabla f(X^{N,i;\beta}_t)dt \right]\\
&-\frac{1}{N}\sum_{i=1}^N\mathbb{E}\left[Y(X^{N,i;\beta},\rho^{N,i;\beta})\int_{t_1}^{t_2}\frac{1}{2}\Delta f(X^{N,i;\beta}_t)dt \right],\\
\end{split}
\end{equation}
whereas the second reads as:
\begin{equation}\label{eq:sum_N_decomposed_second_term}
\begin{split}
 \Delta^{p}_{(t_1, t_2, Y, f)}(\overline{S}^{N;\beta})
&= \frac{1}{N}\sum_{i=1}^N\mathbb{E}\left[Y(X^{N,i;\beta},\rho^{N,i;\beta})\int_{t_1}^{t_2} b(X^{N,i;\beta}_t,v(X^{N;\beta}_t) ) \nabla f(X^{N,i;\beta}_t)dt \right]\\
&- \frac{1}{N}\sum_{i=1}^N\mathbb{E}\left[Y(X^{N,i;\beta},\rho^{N,i;\beta})\int_{t_1}^{t_2} b(X^{N,i;\beta}_t,p(t,X^{N,i;\beta}_t)) \nabla f(X^{N,i;\beta}_t)dt \right]
\end{split}
\end{equation}
\textcolor{black}{In particular, $\Psi_{(t_1, t_2, Y, f)}(\overline{S}^{N; \beta})$ corresponds to the integrals in Eq.~\eqref{eq:second_revisione} computed w.r.t. the extended empirical measure $\overline{S}^{N; \beta}$.} The term in Eq.~\eqref{eq:sum_N_decomposed_first_term} converges to $\Psi^{p}_{(t_1, t_2, Y, f)}(p)$  in the limit for $N \rightarrow \infty$ thanks to the weak continuity of the involved functional and weak convergence of measures. Term in Eq.~\eqref{eq:sum_N_decomposed_second_term}, instead, vanishes in the limit as $N \rightarrow \infty$ thanks to Lemma \ref{lem:upgrade}, since it can be bounded by: :
\begin{small}
\begin{equation}
\begin{split}
& \left|  \Delta^{p}_{(t_1, t_2, Y, f)}(\overline{S}^{N;\beta}) \right|\\
&\leq  \frac{1}{N}\sum_{i=1}^N\mathbb{E}\left[ \left|Y(X^{N,i;\beta})\right|\int_{t_1}^{t_2}\left|b(X_s^{N,i;\beta},p^N(s,X^{N,i;\beta}_s))-b(X_s^{N,i;\beta},p(s,X^{N,i;\beta}_s) )\right|\left|\nabla f(X^{N,i;\beta}_s)\right|ds \right]\\
&\leq  \frac{1}{N}\sum_{i=1}^N \|Y\|_{\infty}\|\nabla f\|_{\infty}L\int_{t_1}^{t_2} \|p^N(s,\cdot)-p(s,\cdot)\|_{\infty}ds.
\end{split}
\end{equation}
\end{small}

\noindent\textcolor{black}{We conclude that $\Theta^{*}\in \mathcal{P}(\mathcal{X} \times \mathcal{R}_K)$ solves the martingale problem associated to Eq.\eqref{eq:process_martingale}. By an argument analogous to that in the proofs of Proposition 5.4.6 and Corollary 5.4.8 in \cite{karatzas1998blackian}, we finally conclude that there exists a weak solution $((\Omega^{\beta^*},\mathcal{F}^{\beta^*},\mathbb{Q}^{\beta^*}), X^{\tilde{\beta}^{*}},W^{\beta^*})$ of Eq.\eqref{eq:limit_equation}.}\\
\indent\textbf{Step 3.} For every $N \in \mathbb{N}, $
\begin{equation}
\begin{split}
&J^{N}_{i}(\bm{\alpha}^N)-\inf_{\beta} J^{N}_{i}([\bm{\alpha}^{N,-i},\beta]) \\
& \leq J^{N}_{i}(\bm{\alpha}^N) -J(\alpha^{*})+J(\alpha^{*})- J^{N}_{i}([\bm{\alpha}^{N,-i},\beta^N_1]) + \varepsilon/2.
\end{split}
\end{equation}
By \textbf{Step 1} and \textbf{Step 2} there exists $N_0(\varepsilon)$ such that
\begin{equation*}
J^{N}_i(\bm{\alpha}^N) -J(\alpha^{*})\leq \varepsilon/4 \qquad J(\alpha^{*})- J^{N}_{i}([\bm{\alpha}^{N,-i},\beta^N_1])\leq  \varepsilon/4.
\nonumber
\end{equation*}
\noindent for all $N\geq N_0(\varepsilon)$. This concludes the proof.
\end{proof}

\newpage
\appendix
\begin{center}
\textsc{APPENDIX}
\end{center}

\section{Some well known results}
\label{app:usefulLemmas}
For the reader convenience, we collect here some (well-known) results on convolutions, regularizations and mollifiers that have been used through the paper.\\
\indent First, we remind some properties on convolution and regularization.  
\begin{proposition}[Convolution and regularization]
\label{prop:brezisConv}\citep[Propositions 4.4.15, 4.4.19 and 4.4.20]{brezis2010functional}
The following statements on convolution hold true:
\begin{itemize}
\item[\textit{(i)}] Let $f\in L^1(\mathbb{R}^d)$ and $g\in L^p(\mathbb{R}^d)$, $1\leq
p\leq \infty$. Then $f\ast g$ is well defined in $L^p(\mathbb{R}^d)$.

\item[\textit{(ii)}] Let $\theta\in \text{C}_c(\mathbb{R}^d)$ and $\varphi \in L^1_{loc}(\mathbb{R%
}^d)$. Then $\Theta*\varphi$ is well defined in $\text{C}(\mathbb{R}^d)$.

\item[\textit{(iii)}] Let $\theta\in \text{C}_c^k(\mathbb{R}^d)$ and $\varphi\in L^1_{loc}(
\mathbb{R}^d)$. Then $\Theta*\varphi$ is well defined in $\text{C}^k(\mathbb{R}^d)$, $k\geq 1$, also $k=\infty$.
\end{itemize}
\end{proposition}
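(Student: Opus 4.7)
The plan is to dispatch the three parts in order via classical measure-theoretic arguments. For part (i), this is Young's convolution inequality. I would start with the two endpoint cases: $p=1$ is handled by a direct Fubini--Tonelli computation on the non-negative integrand $|f(x-y)||g(y)|$, yielding $\|f*g\|_{1} \leq \|f\|_{1}\|g\|_{1}$; $p=\infty$ follows from the pointwise estimate $|(f*g)(x)| \leq \|f\|_{1} \|g\|_{\infty}$. For $1<p<\infty$ I would use the standard H\"older trick: split $|f(x-y)||g(y)| = |f(x-y)|^{1/p'} \cdot |f(x-y)|^{1/p}|g(y)|$ and apply H\"older with exponents $(p',p)$ in the $y$-integral, then raise to power $p$ and integrate in $x$ using Fubini to deduce $\|f*g\|_{p} \leq \|f\|_{1} \|g\|_{p}$. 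Joint measurability of $(x,y)\mapsto f(x-y)g(y)$, needed for Fubini, reduces to measurability of $f*g$ in $x$.

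For part (ii), set $K \doteq \text{supp}(\theta)$, which is compact. The integrand $\theta(x-y)\varphi(y)$ vanishes outside $y \in x-K$ and is dominated by $\|\theta\|_{\infty} |\varphi(y)|\mathbf{1}_{x-K}(y)$, which is integrable since $\varphi \in L^{1}_{\text{loc}}$; hence $(\theta*\varphi)(x)$ is well defined at every $x$. To obtain continuity at a point $x_{0}$, I would take any sequence $x_{n} \to x_{0}$: for $n$ large, all translated supports $x_{n} - K$ are contained in a fixed compact $K'$, providing the uniform dominating function $\|\theta\|_{\infty} |\varphi|\mathbf{1}_{K'}$; uniform continuity of $\theta$ on $\mathbb{R}^{d}$ gives $\theta(x_{n} - y) \to \theta(x_{0} - y)$ pointwise, and Lebesgue's dominated convergence concludes.

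For part (iii), I would argue by induction on $k$ with base case $k=0$ furnished by (ii). The inductive step rests on the identity $\partial_{i}(\theta * \varphi) = (\partial_{i}\theta)*\varphi$, obtained by passing to the limit in the difference quotient $h^{-1}(\theta(x+he_{i}-y) - \theta(x-y))$. The mean value theorem bounds this quotient pointwise by $\|\nabla\theta\|_{\infty} \mathbf{1}_{K''}(y)$ for a slightly enlarged compact $K''$ independent of small $h$, so dominated convergence justifies the interchange of derivative and integral. Since $\partial_{i}\theta \in \text{C}_{c}^{k-1}(\mathbb{R}^{d})$, the induction hypothesis applied to $(\partial_{i}\theta)*\varphi$ shows it lies in $\text{C}^{k-1}(\mathbb{R}^{d})$, whence $\theta*\varphi \in \text{C}^{k}(\mathbb{R}^{d})$. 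The case $k=\infty$ follows by intersecting over all finite $k$.

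The only mild subtlety is the construction of a compactly supported, $\varphi$-integrable dominating function for the difference-quotient step in (iii); once this is set up correctly, all three assertions reduce to routine applications of Fubini--Tonelli and Lebesgue's dominated convergence theorem, which is why the paper refers the reader to \cite{brezis2010functional} rather than including a proof.
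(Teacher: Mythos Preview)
Your proposal is correct and, as you yourself note, the paper does not supply a proof at all: Proposition~\ref{prop:brezisConv} is stated with a bare citation to \cite{brezis2010functional} (Propositions~4.4.15, 4.4.19 and~4.4.20) and is included in Appendix~A purely as a reminder of well-known facts. Your sketch is precisely the standard argument one finds in that reference---Young's inequality via Fubini--Tonelli and H\"older for~(i), dominated convergence on compactly supported translates for~(ii), and differentiation under the integral sign for~(iii)---so there is nothing to compare.
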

\noindent In particular, in our work we used convolution of the type $\theta \ast \mu$, where $\theta\in \text{C}_c^{\infty}(\mathbb{R}^d)$ and $\mu\in\mathcal{P}(\mathbb{R}^d)$. Therefore, since $\mu\in L^1(\mathbb{R}^d)$ and $\theta\in L^p(\mathbb{R}^d)$ for
any $1\leq p\leq \infty$, by item \textit{(i)} of Proposition \ref{prop:brezisConv} the convolution $\theta*\mu$ is well defined in $L^p(\mathbb{R}^d)$. Moreover, by  items \textit{(ii)} and \textit{(iii)} of Proposition \ref{prop:brezisConv}, $%
\theta*\mu\in \text{C}^k(\mathbb{R}^d)$ for any $k\geq 1$, also $k=\infty$. Also, we use scalar product of the type $\langle\theta*\mu,\varphi \rangle$, where $\varphi\in L^2(\mathbb{R}^d)$.  In particular, for any function $g:\mathbb{R}^d\rightarrow\mathbb{R}$ if we denote $g^-\doteq g(-\cdot)$, then
\begin{eqnarray}
\langle \theta*\mu,\varphi \rangle&=&\int_{\mathbb{R}^d}\int_{\mathbb{R}%
^d}\theta(x-y)\mu(dy)\varphi(x)dx  \notag \\
&=&\int_{\mathbb{R}^d}\int_{\mathbb{R}^d}\theta(x-y)\varphi(x)dx\mu(dy)  \notag
\\
&=&\int_{\mathbb{R}^d}\theta(-\cdot)*\varphi(y)\mu(dy)=\langle
\mu,\theta^-*\varphi \rangle.  \notag
\end{eqnarray}
\indent Second, we give the following definition and proposition.

\begin{definition}[Mollifiers]\label{prop:brezisConv2}
\citep[Chapter 4.4]{brezis2010functional} A sequence of mollifiers is any sequence of
functions $(\theta_N)_{N\in\mathbb{N}}$ from $\mathbb{R}^d$ to $\mathbb{R}$
such that for each $N \in\mathbb{N}$: $\theta_N\in \text{C}^{\infty}_c(\mathbb{R}^d)$ with support in $\overline{B}_{1/N}(0)$, $\theta_N \geq 0$
and $\int_{\mathbb{R}^d}\theta^N(dx)=1$.
\end{definition}

\begin{proposition}[Mollification]
\citep[Proposition 4.4.21]{brezis2010functional} Let $f\in \text{C}(\mathbb{R}^d)$. Then $\theta_N*f\rightarrow f$ uniformly on compact sets.
\end{proposition}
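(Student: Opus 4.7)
The plan is to exploit the two defining properties of a mollifier sequence, namely that $\theta_N$ is a nonnegative function supported in $\overline{B}_{1/N}(0)$ with unit mass, together with the uniform continuity of $f$ on suitable compact sets. Fix a compact set $K \subset \mathbb{R}^d$ and consider the slightly enlarged compact set $K_1 \doteq \{x \in \mathbb{R}^d : \mathrm{dist}(x, K) \leq 1\}$. Since $f \in \text{C}(\mathbb{R}^d)$, the restriction $f|_{K_1}$ is uniformly continuous, so given $\varepsilon > 0$ there exists $\delta \in (0,1)$ such that $|f(x) - f(z)| \leq \varepsilon$ whenever $x, z \in K_1$ with $|x - z| \leq \delta$.

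Next, using that $\int_{\mathbb{R}^d} \theta_N(y)\,dy = 1$, I would write, for any $x \in K$ and any $N \in \mathbb{N}$,
\begin{equation*}
(\theta_N \ast f)(x) - f(x) = \int_{\mathbb{R}^d} \theta_N(y)\bigl(f(x-y) - f(x)\bigr)\,dy.
\end{equation*}
Because $\theta_N$ is supported in $\overline{B}_{1/N}(0)$, only values $|y| \leq 1/N$ contribute to the integral. Choose $N_0 \in \mathbb{N}$ such that $1/N_0 \leq \delta$; then for every $N \geq N_0$ and every $x \in K$, the integrand is supported where $|y| \leq \delta < 1$, hence both $x$ and $x-y$ lie in $K_1$ and $|x - (x-y)| = |y| \leq \delta$. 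By the uniform continuity choice above, $|f(x-y) - f(x)| \leq \varepsilon$ on the support of $\theta_N$.

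Combining with the nonnegativity of $\theta_N$ and its unit mass, I obtain
\begin{equation*}
\sup_{x \in K} |(\theta_N \ast f)(x) - f(x)| \leq \int_{\overline{B}_{1/N}(0)} \theta_N(y)\,|f(x-y) - f(x)|\,dy \leq \varepsilon \int_{\mathbb{R}^d} \theta_N(y)\,dy = \varepsilon
\end{equation*}
for all $N \geq N_0$, which establishes uniform convergence on $K$. There is no real obstacle here: the only subtlety worth mentioning is the use of the enlargement $K_1$ to upgrade pointwise continuity of $f$ to uniform continuity on the relevant region, which is necessary because $f$ is only assumed continuous (not uniformly continuous) on $\mathbb{R}^d$.
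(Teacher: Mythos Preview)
Your proof is correct and is the standard textbook argument. Note that the paper does not actually give its own proof of this proposition: it is stated with a citation to \cite[Proposition~4.4.21]{brezis2010functional} and used as a known result, so there is nothing to compare against beyond observing that your argument is precisely the classical one found in Brezis.
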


\indent Third, we give the following results on weak convergence.
\begin{lemma}[Weak convergence and the double index problem]
\label{lem:doubleInd} Let $(\mu_N)_{N \in\mathbb{N}} \subset \mathcal{P}(\mathbb{R}^{d})$ a sequence converging weakly to $\mu \in \mathcal{P}(\mathbb{R}^{d})$. Let $(f_N)_{N \in\mathbb{N}} \in \text{C}_b(\mathbb{R}^d)$ be a sequence converging to $f \in  \text{C}_b(\mathbb{R}^d)$ uniformly on compact sets and such that $\sup_{n\in\mathbb{N}}\|f_N\|_{\infty}\leq C<\infty$, $\|f\|_{\infty}\leq C<\infty$ for some $C>0$. 
Then 
\begin{eqnarray}
\int_{\mathbb{R}^d}f_N(x)\mu_N(dx)\underset{N\rightarrow\infty}{
\longrightarrow} \int_{\mathbb{R}^d}f(x)\mu(dx).  \notag
\end{eqnarray}
\end{lemma}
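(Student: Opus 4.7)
The plan is to combine the definition of weak convergence with tightness (coming from Prokhorov's theorem) and the local uniform convergence of $(f_N)$, using the standard add‑and‑subtract decomposition
\[
\int_{\mathbb{R}^d} f_N\,d\mu_N - \int_{\mathbb{R}^d} f\,d\mu \;=\; \int_{\mathbb{R}^d}(f_N-f)\,d\mu_N \;+\; \left(\int_{\mathbb{R}^d} f\,d\mu_N - \int_{\mathbb{R}^d} f\,d\mu\right).
\]
The second bracket vanishes as $N\to\infty$ directly by the hypothesis $\mu_N \rightharpoonup \mu$, since $f\in \text{C}_b(\mathbb{R}^d)$ is an admissible test function for weak convergence.

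For the first term, I would invoke tightness of $(\mu_N)_{N\in\mathbb{N}}$: a weakly convergent sequence in $\mathcal{P}(\mathbb{R}^d)$ is tight by Prokhorov's theorem. Fix $\varepsilon>0$ and choose a compact set $K_\varepsilon\subset\mathbb{R}^d$ with $\sup_{N\in\mathbb{N}}\mu_N(K_\varepsilon^c)<\varepsilon/(4C+1)$. Then I would split
\[
\left|\int_{\mathbb{R}^d}(f_N-f)\,d\mu_N\right| \;\leq\; \sup_{x\in K_\varepsilon}|f_N(x)-f(x)| \;+\; 2C\,\mu_N(K_\varepsilon^c).
\]
The second summand is at most $\varepsilon/2$ uniformly in $N$ by the choice of $K_\varepsilon$ and the uniform bounds $\|f_N\|_\infty,\|f\|_\infty\leq C$; the first summand tends to $0$ as $N\to\infty$ because $f_N\to f$ uniformly on the compact set $K_\varepsilon$. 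Hence $\limsup_N|\int(f_N-f)\,d\mu_N|\leq \varepsilon/2$, and since $\varepsilon$ is arbitrary the first term also vanishes in the limit, yielding the claim.

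There is no real obstacle here: the only subtlety worth flagging is that one cannot pass to the limit $\int f_N\,d\mu_N\to\int f\,d\mu$ by weak convergence alone, because the integrand changes with $N$; the role of tightness is precisely to confine the mass of $\mu_N$ to a set where the assumed local uniform convergence of $f_N$ takes over, while the uniform bound $\sup_N\|f_N\|_\infty\leq C$ controls the contribution from the (uniformly small) tail.
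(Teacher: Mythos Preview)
Your proof is correct and follows essentially the same route as the paper: both combine tightness of the weakly convergent sequence $(\mu_N)$ with the local uniform convergence of $(f_N)$ and the uniform bound $C$, controlling the tail by Prokhorov and the bulk by $\|f_N-f\|_{\infty,K}$. Your two-term decomposition is in fact slightly cleaner than the paper's three-term one, since by integrating $f$ over all of $\mathbb{R}^d$ you can invoke weak convergence directly on $\int f\,d\mu_N$ without restricting to a ball.
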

\begin{proof}
The proof is based on the following decomposition, holding for any $R>0$:
\begin{eqnarray}
\int_{\mathbb{R}^d}f_N(x)\mu_N(dx)-\int_{\mathbb{R}^d}f(x)\mu(dx) &=& \int_{\overline{B}_R(0)}(f_N(x)-f(x))\mu_N(dx)\nonumber\\
&+&\int_{\overline{B}_R(0)}f(x)(\mu_N-\mu)(dx)\nonumber\\
&+&\int_{\mathbb{R}^d\setminus \overline{B}_R(0)}f_N(x)\mu_N(dx)-\int_{\mathbb{R}^d\setminus \overline{B}_R(0)}f(x)\mu(dx)
\nonumber
\end{eqnarray}
\noindent where $\overline{B}_R(0)\subset\mathbb{R}^d$ is the closed ball of radius $R$ centered at the origin. Hence
\begin{eqnarray}
\left|\int_{\mathbb{R}^d}f_N(x)\mu_N(dx)-\int_{\mathbb{R}^d}f(x)\mu(dx)\right| &\leq & \|f_N-f \|_{\infty,\overline{B}_R(0)}\nonumber\\
&+&\left|\int_{\overline{B}_R(0)}f(x)(\mu_N-\mu)(dx)\right|\nonumber\\
&+&C\left( \mu_N(\mathbb{R}^d\setminus \overline{B}_R(0))+\mu(\mathbb{R}^d\setminus \overline{B}_R(0))\right)
\nonumber
\end{eqnarray}
\nonumber
\noindent where $ \|\cdot \|_{\infty,\overline{B}_R(0)}$ is the infinity norm on $\overline{B}_R(0)$. Now let $\varepsilon>0$ and choose $R>0$ be such that
\begin{eqnarray}
\sup_{N\in\mathbb{N}}\mu_N(\mathbb{R}^d\setminus \overline{B}_R(0))<\frac{\varepsilon}{4C}\quad \text{and}\quad \mu(\mathbb{R}^d\setminus \overline{B}_R(0))<\frac{\varepsilon}{4C}
\nonumber
\end{eqnarray} 
\noindent by the tightness of the family $(\mu_N)_{N \in\mathbb{N}}$. Then, by uniform convergence on compact sets of the sequence $(f_N)_{N \in\mathbb{N}}$ to $f$ and by weak convergence of the $(\mu_N)_{N \in\mathbb{N}}$ to $\mu$ there exists $N_0\in\mathbb{N}$ such that the first and second terms are lower than $\frac{\varepsilon}{4}$ for all $N\geq N_0$. We conclude that for all $\varepsilon>0 $ there exists $N_0\in\mathbb{N}$ such that
\begin{eqnarray}
\left|\int_{\mathbb{R}^d}f_N(x)\mu_N(dx)-\int_{\mathbb{R}^d}f(x)\mu(dx)\right|< \varepsilon
\nonumber
\end{eqnarray}
\noindent  for all $N\geq N_0$.
\end{proof}

\begin{lemma}
\label{lem:L2convAndProb}
Let $(\mu_N)_{N \in\mathbb{N}} \subset \mathcal{P}(\mathbb{R}^{d})$ a sequence converging weakly to $\mu \in \mathcal{P}(\mathbb{R}^{d})$. Set $f_N \doteq \theta_N*\mu_N$ for some mollifiers $\theta_N$ and assume $ \lim_{N \rightarrow \infty} f_N = f$ in $L^2(\mathbb{%
R}^d)$ for some $f\in L^2(\mathbb{R}^d)$. Then $\mu$ has density $f$ with
respect to the Lebesgue measure on $\mathbb{R}^d$.
\end{lemma}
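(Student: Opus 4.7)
The plan is to identify $\mu$ with the measure $f(x)\,dx$ by testing both against a generic $\varphi\in\text{C}_c(\mathbb{R}^d)$ and exploiting the two modes of convergence (weak convergence of measures, and $L^2$ convergence of densities) through a common bridge.

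First, I would use the convolution identity recalled just after Proposition \ref{prop:brezisConv}: for each $N$,
\begin{equation*}
\langle f_N,\varphi\rangle \;=\; \langle \theta_N\ast\mu_N,\varphi\rangle \;=\; \langle \mu_N,\theta_N^{-}\ast\varphi\rangle,
\end{equation*}
where $\theta_N^{-}(x)\doteq\theta_N(-x)$. Since $\varphi\in\text{C}_c(\mathbb{R}^d)$ is uniformly continuous, and $\theta_N^{-}$ is a probability density supported in $\overline{B}_{1/N}(0)$, the sequence $\theta_N^{-}\ast\varphi$ converges to $\varphi$ uniformly on $\mathbb{R}^d$, and $\|\theta_N^{-}\ast\varphi\|_\infty\leq\|\varphi\|_\infty$ for every $N$. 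Applying Lemma \ref{lem:doubleInd} with $f_N\leftarrow \theta_N^{-}\ast\varphi$ and $f\leftarrow\varphi$, I get
\begin{equation*}
\lim_{N\to\infty}\langle \mu_N,\theta_N^{-}\ast\varphi\rangle \;=\; \langle\mu,\varphi\rangle.
\end{equation*}

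Second, since $\varphi\in\text{C}_c(\mathbb{R}^d)\subset L^2(\mathbb{R}^d)$, Cauchy--Schwarz together with the hypothesis $f_N\to f$ in $L^2(\mathbb{R}^d)$ gives
\begin{equation*}
\lim_{N\to\infty}\langle f_N,\varphi\rangle \;=\; \int_{\mathbb{R}^d} f(x)\,\varphi(x)\,dx.
\end{equation*}
Combining the two displays yields $\langle\mu,\varphi\rangle=\int f\varphi\,dx$ for every $\varphi\in\text{C}_c(\mathbb{R}^d)$.

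Finally, I would conclude that $\mu$ is absolutely continuous with density $f$. Indeed, the identity above shows that the two Radon measures $\mu$ and $f(x)\,dx$ coincide on $\text{C}_c(\mathbb{R}^d)$; by the Riesz--Markov representation theorem they coincide as Borel measures. Since $\mu\in\mathcal{P}(\mathbb{R}^d)$ is non-negative and has total mass one, it follows automatically that $f\geq 0$ almost everywhere and $\int_{\mathbb{R}^d} f\,dx=1$, so $f$ is indeed a probability density. The only subtle point is the passage from the scalar identity to the measure identification; this is routine via Riesz--Markov, so I do not expect a real obstacle. The bridging step through $\theta_N^{-}\ast\varphi$, combined with Lemma \ref{lem:doubleInd}, is what makes the two disparate convergences (weak-$\ast$ of measures and $L^2$ of densities) compatible, and is the only delicate point of the argument.
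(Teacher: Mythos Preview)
Your proof is correct and follows essentially the same route as the paper: both use the convolution transfer $\langle f_N,\varphi\rangle=\langle\mu_N,\theta_N^{-}\ast\varphi\rangle$, invoke Lemma \ref{lem:doubleInd} for the measure side, and use the $L^2$ convergence for the density side. Your closing via Riesz--Markov on $\text{C}_c(\mathbb{R}^d)$ is in fact slightly cleaner than the paper's, which tests against $\varphi\in L^2\cap\text{C}$ and then asserts extension to $\text{C}_b$.
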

\begin{proof}
\noindent First, notice that $\langle f_N, \varphi \rangle =\langle \theta_N \ast \mu_N\varphi \rangle = \langle \mu_N\theta_N^{-}*\varphi \rangle$ for any $\varphi \in L^2(\R^d)\cap C(\R^d)$ and for each $N \in\N$. Set $\varphi_N \doteq \theta_N^-*\varphi$ for each $N \in \N$. Now \textcolor{black}{$\langle f_N, \varphi \rangle \rightarrow \langle f, \varphi \rangle$} for any $\varphi\in L^2(\R^d)$, by strong convergence in $L^2(\R^d)$ of the $f^N$, but also
\begin{equation*}
\int_{\R^d}f_N(x)\varphi(x) dx= \int_{\R^d}\varphi_N^-(x) \mu_N(dx) \rightarrow \int_{\R^d}\varphi(x) \mu(dx) 
\end{equation*}
\noindent by weak convergence of the $\mu^N$ and uniform convergence on compact sets of the $\phi_N$ to $\phi$ (Lemma \ref{lem:doubleInd}). Hence
$$
\int_{\R^d}\varphi(x) \mu(dx)=\int_{\R^d}\varphi(x)f(x)dx
$$
\noindent for any $\varphi\in L^2(\R^d)\cap \text{C}(\R^d)$. The same reasoning holds for any $\varphi\in \text{C}_b(\R^d)$ hence we conclude.
\end{proof}

\section{Hamilton-Jacobi Equation, Kolmogorov equation Equations and Mild Solutions}\label{app:hjandfpmild}
\noindent In Subsection \ref{subsec:HJ_and_FP_mild} we study the decoupled Hamilton-Jacobi Bellman equation and Kolmogorov equation equations defining the PDE system in Eq.~\eqref{eq:pdesystem} via the mild formulation; see Theorem \ref{teo:uniqueness_density}, Theorem \ref{teo:uniqueness_value_function}. This enables us to prove the equivalence between the mild and weak formulations; see proof of Lemma \ref{lem:equiv_mild} in Subsection \ref{subsec:equiv_mild}. \textcolor{black}{In Subsection \ref{subsec:global_existence} we prove Theorem \ref{th:globalExistence}, i.e. the existence of a global solution of the PDE system (see Theorem \ref{th:globalExistence} in Section \ref{sec:meanfieldgame}).} On the other hand, in Subsection \ref{subsec:local_existence_uniqueness} we prove Theorem \ref{th:localExistenceUniqueness}, i.e. the local uniqueness of a solution of the  PDE system (see Theorem \ref{th:localExistenceUniqueness} in Section \ref{sec:meanfieldgame}).  Finally, in Subsection \ref{subsec:proofVerificationTheorem} we give the proof of Theorem \ref{th:VerificationTheorem}.

\subsection{The Hamilton-Jacobi and the Kolmogorov equation equation in mild form}\label{subsec:HJ_and_FP_mild}
\noindent Throughout this section, we assume that $p_0, b, f, g$ satisfy the hypotheses (H1)-(H2) and (H4) in Section \ref{sec:preliminaries}. 
\begin{theorem}
\label{teo:uniqueness_density} Given $p_{0}\in \text{C}_{b}\left(  \mathbb{R}%
^{d}\right)  $, given $\alpha\in \text{C}_{b}\left(  \left[  0,T\right]
\times\mathbb{R}^{d};\mathbb{R}^{d}\right)  $, there exists at most one
solution of equation
\begin{equation}
p\left(  t\right)  =\mathcal{P}_{t}p_{0}-\int_{0}^{t}\nabla\mathcal{P}%
_{t-s}\left(  p(s)\left(  \alpha(s)-b(\,\cdot\,,p(s))\right)  \right)
\,ds.\label{eq:equation_mild}%
\end{equation}
in the class $\text{C}_{b}\left(  \left[  0,T\right]  \times\mathbb{R}^{d}\right)  $.
\end{theorem}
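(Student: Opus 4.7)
The plan is to show uniqueness by a direct contraction-type estimate on the difference of two solutions, combined with the singular-kernel Gronwall lemma. Suppose $p_1, p_2 \in C_b([0,T]\times\mathbb{R}^d)$ both satisfy \eqref{eq:equation_mild} with the same data $p_0$ and $\alpha$. Subtracting the two mild formulations, one obtains
\begin{equation*}
p_1(t) - p_2(t) = -\int_0^t \nabla \mathcal{P}_{t-s}\bigl(\Delta_s\bigr)\,ds,
\end{equation*}
where $\Delta_s \doteq p_1(s)(\alpha(s) - b(\cdot,p_1(s))) - p_2(s)(\alpha(s)-b(\cdot,p_2(s)))$.

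The first step is to split $\Delta_s$ in a telescoping manner:
\begin{equation*}
\Delta_s = (p_1(s)-p_2(s))\alpha(s) - (p_1(s)-p_2(s))\,b(\cdot,p_1(s)) - p_2(s)\bigl(b(\cdot,p_1(s))-b(\cdot,p_2(s))\bigr).
\end{equation*}
Using the boundedness of $\alpha$ and $b$ from (H1) together with the Lipschitz estimate $|b(x,p)-b(x,q)|\le L|p-q|$, and denoting $M \doteq \sup_{s\in[0,T]}\|p_2(s)\|_\infty<\infty$, we deduce
\begin{equation*}
\|\Delta_s\|_\infty \le \bigl(\|\alpha\|_\infty + \|b\|_\infty + LM\bigr)\|p_1(s)-p_2(s)\|_\infty.
\end{equation*}

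The second step is to exploit the standard smoothing bound for the heat semigroup $\|\nabla\mathcal{P}_{t-s}h\|_\infty \le \frac{C}{\sqrt{t-s}}\|h\|_\infty$ (already used throughout the excerpt, e.g.~at the end of Section~\ref{sec:moderate}), which applied to the mild difference gives
\begin{equation*}
\|p_1(t)-p_2(t)\|_\infty \le C' \int_0^t \frac{1}{\sqrt{t-s}}\,\|p_1(s)-p_2(s)\|_\infty\,ds,
\end{equation*}
with $C'$ depending on $T$, $\|\alpha\|_\infty$, $\|b\|_\infty$, $L$ and $M$. Setting $\phi(t) \doteq \|p_1(t)-p_2(t)\|_\infty$, which is bounded on $[0,T]$ since both $p_i\in C_b$, we apply the generalized Gronwall (Henry--Gronwall) lemma for kernels of the form $(t-s)^{-1/2}$ to conclude $\phi\equiv 0$ on $[0,T]$, i.e.~$p_1=p_2$.

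The main (mild) obstacle is purely the singularity of the kernel $\nabla\mathcal{P}_{t-s}$: a naive Gronwall does not apply directly, but the integrable singularity $(t-s)^{-1/2}$ is standard and handled by the Henry--Gronwall inequality. Everything else is a routine telescoping and Lipschitz estimate made available by hypothesis (H1), and the a priori $L^\infty$ bounds on the two solutions come for free from the ambient space $C_b([0,T]\times\mathbb{R}^d)$. No further regularity on $b$ beyond Lipschitz continuity in the density argument is needed.
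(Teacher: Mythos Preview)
Your proof is correct and follows essentially the same route as the paper: subtract the two mild solutions, telescope the nonlinear term using the Lipschitz bound on $b$ from (H1), apply the heat-semigroup gradient estimate $\|\nabla\mathcal{P}_{t-s}h\|_\infty\le C(t-s)^{-1/2}\|h\|_\infty$, and conclude via a generalized Gronwall inequality. The paper's proof simply refers to the corresponding $\Gamma_1$-estimates carried out in the proof of Theorem~\ref{th:localExistenceUniqueness}, which are precisely the ones you write out explicitly here.
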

\begin{proof}
Assume by contradiction that $p^{(i)}(t)$, $i=1, 2$, are two solutions of Eq.~ \eqref{eq:equation_mild} of class $\text{C}_b([0,T]\times\mathbb{R}^{d})$ and set $q(t)$ as their difference. By a  generalized form of Gronwall's lemma one has that $\left\Vert q\left(t\right)  \right\Vert _{\infty}=0$ for every $t\in\left[  0,T\right]$, from which the conclusion readily follows. The precise estimates can be found in the proof of  Theorem \ref{th:localExistenceUniqueness} in Subsection \eqref{subsec:local_existence_uniqueness}. For the sake of space, we refer the reader to that proof; in particular one has to use the estimate for the map $\Gamma_1$, first component of the map $\Gamma$ defined in \eqref{eq:Gamma_map}.
\end{proof}
\begin{theorem}
\label{teo:uniqueness_value_function} \noindent \textcolor{black}{Given $p \in \text{C}_{b}([0,T] \times \mathbb{R}^{d})$}, If
$\alpha\in \text{C}_{b}([0,T]\times\mathbb{R}^{d};\mathbb{R}^{d})$, then there exists
at most one solution $u$, in the class $\text{C}_{b}\left(  \left[  0,T\right]  \times\mathbb{R}%
^{d}\right)$ and such that its partial derivatives are also of class $\text{C}_{b}\left(  \left[  0,T\right]  \times\mathbb{R}^{d}\right)$, of the following equation
\begin{equation}
u\left(  t\right)  =\mathcal{P}_{T-t}g-\int_{t}^{T}\mathcal{P}_{s-t}\left(
b\left(  \,\cdot\,,p\left(  s\right)  \right)  \cdot\alpha\left(  s\right)
-\frac{1}{2}\left\vert \nabla u\left(  s\right)  \right\vert ^{2}%
+f(\,\cdot\,,p(s))\right)  ds\label{eq:equation_mild_2}
\end{equation}
\end{theorem}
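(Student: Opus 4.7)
The plan is to argue by contradiction: suppose $u^{(1)}, u^{(2)}$ are two solutions of \eqref{eq:equation_mild_2} in the stated class, and set $w\doteq u^{(1)}-u^{(2)}$. Both $w$ and its spatial derivatives belong to $\text{C}_{b}([0,T]\times\mathbb{R}^d)$ by hypothesis. Since the terms $\mathcal{P}_{T-t}g$, $\mathcal{P}_{s-t}(b(\cdot,p(s))\cdot\alpha(s))$ and $\mathcal{P}_{s-t}f(\cdot,p(s))$ are independent of $u$, they cancel in the subtraction, leaving
\begin{equation*}
w(t)=\frac{1}{2}\int_{t}^{T}\mathcal{P}_{s-t}\left(\left\vert\nabla u^{(1)}(s)\right\vert^{2}-\left\vert\nabla u^{(2)}(s)\right\vert^{2}\right)ds.
\end{equation*}
Using the elementary factorization $|a|^{2}-|b|^{2}=(a+b)\cdot(a-b)$, we rewrite the integrand as $\left(\nabla u^{(1)}(s)+\nabla u^{(2)}(s)\right)\cdot\nabla w(s)$, which is a linear expression in $\nabla w$ with coefficients bounded by $M\doteq\|\nabla u^{(1)}\|_{\infty}+\|\nabla u^{(2)}\|_{\infty}<\infty$.

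The second step is to pass to the gradient. Using that $\nabla$ commutes with the heat semigroup together with the standard estimate $\|\nabla\mathcal{P}_{\tau}h\|_{\infty}\leq C\tau^{-1/2}\|h\|_{\infty}$ (recall this bound is invoked repeatedly in the body of the paper, e.g.~at the end of Section \ref{sec:moderate}), differentiation under the integral sign yields
\begin{equation*}
\nabla w(t)=\frac{1}{2}\int_{t}^{T}\nabla\mathcal{P}_{s-t}\left[\left(\nabla u^{(1)}(s)+\nabla u^{(2)}(s)\right)\cdot\nabla w(s)\right]ds,
\end{equation*}
hence
\begin{equation*}
\|\nabla w(t)\|_{\infty}\leq\frac{CM}{2}\int_{t}^{T}\frac{1}{\sqrt{s-t}}\|\nabla w(s)\|_{\infty}\,ds.
\end{equation*}

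The third step is a backward Gronwall argument with weakly singular kernel: iterating the inequality twice (or applying the Henry--Gronwall lemma with $\tau^{-1/2}$-kernel) converts the kernel into an integrable one of the form $(T-t)^{1/2}$ times a bounded factor, and then a standard Gronwall inequality forces $\|\nabla w(t)\|_{\infty}=0$ for every $t\in[0,T]$. Substituting this back into the identity for $w$ at the beginning of the proof immediately gives $w(t)\equiv 0$, establishing uniqueness.

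The only delicate point is the commutation of $\nabla$ with the time integral and with $\mathcal{P}_{s-t}$: since $\nabla u^{(i)}\in \text{C}_{b}$, the integrand is bounded and we may legitimately differentiate under the integral sign and move the gradient onto the heat kernel, picking up the $(s-t)^{-1/2}$ singularity. Everything else is a routine generalized Gronwall computation, entirely analogous to the one used for Theorem \ref{teo:uniqueness_density} and referenced in the body of that proof via the contraction estimate on $\Gamma_{1}$ in Subsection \ref{subsec:local_existence_uniqueness}.
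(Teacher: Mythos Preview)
Your proof is correct and follows essentially the same route as the paper's. The paper's argument is stated very tersely: it sets $\theta^{(i)}=\nabla u^{(i)}$, takes their difference $q$, and refers to the estimates for the map $\Gamma_{2}$ from Subsection~\ref{subsec:local_existence_uniqueness} (together with the generalized Gronwall lemma) to conclude $\|q(t)\|_{\infty}=0$, whence $u^{(1)}=u^{(2)}$ by the mild equation. Your write-up unpacks exactly this: subtract, factor $|\nabla u^{(1)}|^{2}-|\nabla u^{(2)}|^{2}$, apply $\|\nabla\mathcal{P}_{\tau}h\|_{\infty}\leq C\tau^{-1/2}\|h\|_{\infty}$, and close with a singular-kernel Gronwall inequality---the only cosmetic difference is that the paper points to $\Gamma_{2}$ rather than $\Gamma_{1}$, since it is the second component of $\Gamma$ that encodes the gradient equation.
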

\begin{proof}
Assume by contradiction that $u^{(i)}(t)$, $i = 1, 2$, are two solutions of Eq.~\eqref{eq:equation_mild_2} of class $\text{C}_{b}\left(  \left[  0,T\right]  \times\mathbb{R}%
^{d}\right)$ and such that their partial derivatives are of class $\text{C}_{b}\left(  \left[  0,T\right]  \times\mathbb{R}^{d}\right)$. Set $\theta^{(i)} = \nabla u^{(i)}$, $i = 1, 2$ and $q(t)$ their difference. Using the estimates for the map $\Gamma_2$, second component of the map $\Gamma$ defined in \eqref{eq:Gamma_map}, one has that $\| q(t) \|_{\infty} = 0$ for every $t \in [0, T]$, from which $\theta^{(1)}(t) = \theta^{(2)}(t)$ for every $t \in [0, T]$. Therefore, $u^{(1)}(t) = u^{(2)}(t)$ because of Eq.~\eqref{eq:equation_mild_2}.
\end{proof}
\subsection{Proof of Lemma \protect\ref{lem:equiv_mild}}\label{subsec:equiv_mild}
\begin{proof}
Let $(u, p)$ be a weak solution of the PDE system in Eqs.~\eqref{eq:mfgsystem_eq1}--\eqref{eq:mfgsystem_eq2}, and consider Eq.~\eqref{eq:mfgsystem_eq1}. In particular, for a given $t \in \left[0, T \right]$, 
\begin{equation}
\begin{split}
\left\langle u\left( t\right) ,\varphi \left( t\right) \right\rangle
&-\left\langle g,\varphi\left( T\right) \right\rangle +\int_{t}^{T}\left\langle
u\left( s\right) , \mathcal{A}\varphi(s)  \right\rangle ds \\
&= \int_{t}^{T}\left\langle b(\,\cdot\,,p(s))\cdot \nabla u\left( s\right) -\frac{1}{%
2}\left\vert \nabla u\left( s\right) \right\vert ^{2}+f(\,\cdot\,,p(s)),\varphi \left(
s\right) \right\rangle ds. \\
\end{split}
\end{equation}
Using on $\left[ t,T\right] $ the following test function
\begin{equation*}
\varphi^{\left( t\right) }\left( s,x\right) = \left( \mathcal{P}_{s-t}\phi \right) \left( x\right) ,\qquad s\in \left[ t,T\right],
\end{equation*}
with $\phi \in \text{C}^{1}([0,T] \times \text{C}^2_{b}(\mathbb{R}^{d}) \cap W^{2,2}(\mathbb{R}^{d}))$, we get
\begin{equation*}
\begin{split}
&\left\langle u\left( t\right) ,\phi \right\rangle -\left\langle g,\mathcal{%
P}_{T-t}\phi \right\rangle +\int_{t}^{T}\left\langle u\left( s\right)
,\mathcal{A}\mathcal{P}_{s-t}\phi
\right\rangle ds \\
&=\int_{t}^{T}\left\langle b(\,\cdot\,,p(s))\cdot \nabla u\left( s\right) -\frac{1}{
2}\left\vert \nabla u\left( s\right) \right\vert ^{2}+f(\,\cdot\,,p(s)),\mathcal{P}
_{s-t}\phi \right\rangle ds.\\
\end{split}
\end{equation*}
\noindent Notice that $\mathcal{A} \mathcal{P}
_{s-t}\phi =0$ and that $\left\langle a,\mathcal{P}_{t}b\right\rangle
=\left\langle \mathcal{P}_{t}a,b\right\rangle $ for every pair of functions $
a,b\in \text{C}_{b}\left( \mathbb{R}^{d}\right) $. Then
\begin{equation*}
\begin{split}
&\left\langle u\left( t\right) ,\phi \right\rangle -\left\langle \mathcal{P}
_{T-t}g,\phi \right\rangle \\
&\int_{t}^{T}\left\langle \mathcal{P}_{s-t}\left( b(\,\cdot\,,p(s))\cdot \nabla
u\left( s\right) -\frac{1}{2}\left\vert \nabla u\left( s\right) \right\vert
^{2}+f(\,\cdot\,,p(s))\right) ,\phi \right\rangle ds.\\
\end{split}
\end{equation*}
Because $\phi$ can be chosen in an arbitrary way, we deduce the mild formulation of Eq.~\eqref{eq:mfgsystemmild_eq1}. The equation for $p$ is similar, as well as the other direction.
\end{proof}

\textcolor{black}{
\subsection{Proof of Theorem \ref{th:globalExistence}}
\label{subsec:global_existence}
Throughout this section, we assume that $p_0, b, f,$ and $g$ satisfy the hypotheses (H1)-(H2) and (H4) in Section \ref{sec:preliminaries} and (H5) in Section \ref{sec:meanfieldgame}. In addition, we shall repeatedly use the following well-known inequality:
\begin{equation}\label{gradient estimate infinity norm}
\left\Vert \nabla\mathcal{P}_{t}f\right\Vert _{\infty}\leq C_{d}%
t^{-1/2}\left\Vert f\right\Vert _{\infty}
\end{equation}
for all $f\in L^{\infty}\left(  \mathbb{R}^{d}\right)  $, with $C_{d}=d^{1/2}$, which follows for instance from the formula $\nabla\mathcal{P}_{t}f\left(
x\right)  =t^{-1}\mathbb{E}\left[ W_{t}f\left(  x+W_{t}\right)  \right]  $
(elementary proved by differentiating the heat kernel):
\[
\left\vert \nabla\mathcal{P}_{t}f\left(  x\right)  \right\vert \leq
t^{-1}\left\Vert f\right\Vert _{\infty}\mathbb{E}\left[  \left\vert
W_{t}\right\vert \right]  \leq t^{-1}\left\Vert f\right\Vert _{\infty
}\mathbb{E}\left[  \left\vert W_{t}\right\vert ^{2}\right]  ^{1/2}\leq
C_{d}t^{-1/2}\left\Vert f\right\Vert _{\infty}.
\]
We use the Brouwer-Schauder fixed point theorem to prove Theorem \ref{th:globalExistence}. Brouwer-Schauder fixed point theorem says that if $K$ is a non empty, closed, bounded  and convex subset of a Banach space $V$ and $\Phi\,:\,K \rightarrow K$ is a continuous map such that $\Phi \left(K\right)$ is relatively compact in $V$, then $\Phi$ has a fixed point in $K$.\\
\indent We will apply this theorem to the space $V=\text{C}_{b}(\left[ 0,T\right] \times \mathbb{R}^{d})$. Instead, in order to define the map $\Phi$, let $p \in V$ be given and let $w = w_{p}$ be a weak solution of the first equation of the PDE system \eqref{eq:pdesystem_hopf_cole}. Existence and uniqueness of such a solution is given by classical parabolic results; e.g., one proof can be done by contraction principle applied to the mild formulation in Eq.~\eqref{eq:mild_formulation_app} below. In particular, $w_p$ satisfies the following properties:
\begin{equation*}
w_{p}\left( t,x\right) \geq e^{-\left( \left\Vert g\right\Vert _{\infty
}+T\left\Vert f\right\Vert _{\infty }\right)}\quad\text{and}\quad\left\Vert w_{p}\right\Vert _{\infty }+\left\Vert \nabla w_{p}\right\Vert
_{\infty }\leq C_{1}\left( b,f,g,T\right)
\end{equation*}
independently of $p\in V$, with $C_{1}\left( b,f,g,T\right) >0$ depending only on $\| b \|_{\infty}, \| f \|_{\infty}$ and $\| g \|_{\infty}$. One way to prove this fact is by using the following identity 
\begin{equation}\label{eq:mild_formulation_app}
w_{p}\left( t\right) =\mathcal{P}_{T-t}\exp \left( -g\right) -\int_{t}^{T}%
\mathcal{P}_{s-t}\left( b\left( \cdot ,p\left( s\right) \right) \cdot \nabla
w_{p}\left( s\right) -w_{p}\left( s\right) f\left( \cdot ,p\left( s\right)
\right) \right) ds  
\end{equation}
and estimate \ref{gradient estimate infinity norm} of the heat semi-group's gradient. At this point, we call $\Phi \left( p\right) $ the solution of the following equation 
\begin{equation} \label{eq for Phi p}
\Phi \left( p\right) \left( t\right) =\mathcal{P}_{t}p_{0}+\int_{0}^{t}%
\nabla \mathcal{P}_{t-s}\left( \Phi \left( p\right) \left( s\right) \left( 
\frac{\nabla w_{p}\left( s\right) }{w_{p}\left( s\right) }+b\left( \cdot
,p\left( s\right) \right) \right) \right) ds. 
\end{equation}
\noindent Notice that this is not the second equation of the PDE system \eqref{eq:pdesystem_hopf_cole} with $w=w_{p}$ because we keep the original $p$ in $b\left( \cdot ,p\left(s\right) \right)$. Existence of a global solution $\Phi \left( p\right) \in
V$ can be proved by iteration, using \ref{gradient estimate infinity norm} and 
$\left\Vert \frac{\nabla w_{p}}{w_{p}}\right\Vert _{\infty }\leq C_{w}\left( g,f,b,T\right)$. In addition, one gets 
\begin{equation*}
\left\Vert \Phi \left( p\right) \right\Vert _{\infty }\leq C_{2}\left(
b,f,g,p_{0},T\right) 
\end{equation*}
for a suitable constant $C_{2}\left( b,f,g,p_{0},T\right) >0$ depending, again, only on $\| b \|_{\infty}, \| f \|_{\infty}$ and $\| g \|_{\infty}$. Therefore, the set
\begin{equation*}
K \doteq \left\{ p\in V:\left\Vert p\right\Vert _{\infty }\leq C_{2}\left(
b,f,g,p_{0},T\right) \right\} 
\end{equation*}
is bounded, closed, convex and invariant.\\
\noindent We prove now that the map $\Phi$ satisfies the assumptions in the Brouwer-Schauder fixed point theorem. It is not difficult to prove that the map $\Phi$ is continuous by using \ref{gradient estimate infinity norm} again. Instead, it is non straightforward to prove that $\Phi\left(K\right)$ is relatively compact, due to the unboundedness of the space domain. In order to do so, we use the following compactness result, which is an easy variant of the Ascoli-Arzel\`{a} theorem.
\begin{theorem}\label{th:AscoliArzela_1}
Let $\alpha(\cdot), \beta(\cdot)$ and $C_{1}(\cdot), C_{2}(\cdot)$ be four positive and non-decreasing functions and
$\rho $ as in (H5); see Section \ref{sec:meanfieldgame}. Let $C_{3}>0$ a constant. Then the set $\Xi
_{\alpha ,C_{1},\beta ,C_{2},\rho ,C_{3}}$ of all functions $f\in
\text{C}_{b}\left( \left[ 0,T\right] \times \mathbb{R}^{d}\right) $ such that 
\begin{eqnarray*}
\sup_{t\in \left[ 0,T\right] }\left\Vert f\left( t\right) \right\Vert
_{\alpha ,R} &\leq &C_{1}\left( R\right) \text{ for every }R>0\quad\quad\quad\quad\quad\quad\,\,\,\,\text{(H1.1)}\\
\sup_{\substack{ t,s\in \left[ 0,T\right]  \\ t\neq s}}\frac{\left\Vert
f\left( t\right) -f\left( s\right) \right\Vert _{\infty ,R}}{\left\vert
t-s\right\vert ^{\beta }} &\leq &C_{2}\left( R\right) \text{ for every }R>0
\quad\quad\quad\quad\quad\quad\,\,\,\,\text{(H2.1)}\\
\left\vert f\left( t,x\right) \right\vert  &\leq &C_{3}\rho \left( x\right) 
\text{ for all }\left( t,x\right) \in \left[ 0,T\right] \times \mathbb{R}^{d}\quad\quad\text{(H3.1)}\\
\end{eqnarray*}%
is relatively compact in $\text{C}_{b}\left( \left[ 0,T\right] \times \mathbb{R}%
^{d}\right) $.
\end{theorem}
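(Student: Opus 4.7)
The plan is to prove relative compactness by extracting a subsequence of any given sequence $(f_n)_{n\in\mathbb{N}}\subset \Xi_{\alpha,C_1,\beta,C_2,\rho,C_3}$ that converges in $\text{C}_b([0,T]\times \mathbb{R}^d)$, using a two-step strategy: first a diagonal argument based on the classical Ascoli-Arzel\`a theorem on compact sets, then a tail estimate based on the assumption that $\rho(x)\to 0$ as $\|x\|\to\infty$ (from (H5)) to upgrade local uniform convergence to global uniform convergence.

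First I would fix $R>0$ and observe that on the compact set $[0,T]\times \overline{B}_R(0)$, hypothesis (H1.1) gives a uniform H\"older bound in $x$ (hence equicontinuity in the space variable), while (H2.1) gives a uniform $\beta$-H\"older bound in $t$. Together with the uniform bound $\|f_n\|_{\infty,R}\leq C_1(R)$, the classical Ascoli-Arzel\`a theorem applied on $[0,T]\times\overline{B}_R(0)$ yields a subsequence of $(f_n)$ converging uniformly on that set. Performing this extraction for $R=1,2,3,\ldots$ and taking a diagonal subsequence produces $(f_{n_k})_{k\in\mathbb{N}}$ converging uniformly on every compact subset of $[0,T]\times \mathbb{R}^d$ to some continuous limit $f$.

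Next I would use the tail bound (H3.1) to promote this to uniform convergence on the whole $[0,T]\times \mathbb{R}^d$. Given $\varepsilon>0$, since $\rho(x)\to 0$ as $\|x\|\to\infty$, choose $R_\varepsilon>0$ large enough that $\rho(x)<\varepsilon/(4 C_3)$ whenever $\|x\|\geq R_\varepsilon$. Then for all $(t,x)$ with $\|x\|\geq R_\varepsilon$ and all $k,\ell$,
\begin{equation*}
|f_{n_k}(t,x)-f_{n_\ell}(t,x)|\leq 2 C_3\, \rho(x)<\varepsilon/2.
\end{equation*}
On the compact set $[0,T]\times \overline{B}_{R_\varepsilon}(0)$, uniform convergence from the previous step yields $k_0$ such that $|f_{n_k}-f_{n_\ell}|<\varepsilon/2$ on that set whenever $k,\ell\geq k_0$. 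Combining the two estimates shows that $(f_{n_k})$ is Cauchy in $\text{C}_b([0,T]\times \mathbb{R}^d)$, hence converges uniformly on the whole domain to $f$, which inherits the bound $|f(t,x)|\leq C_3\rho(x)$ and therefore belongs to $\text{C}_b([0,T]\times \mathbb{R}^d)$.

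The argument is essentially routine; the only genuinely non-classical ingredient is the tail control step, which is where the decay assumption on $\rho$ from (H5) is essential and distinguishes this statement from the usual Ascoli-Arzel\`a theorem. I do not anticipate serious obstacles: equicontinuity in both variables is handed to us by (H1.1)--(H2.1), pointwise boundedness follows from (H3.1) together with boundedness of $\rho$, and the diagonal subsequence construction is standard.
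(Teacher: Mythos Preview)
Your proposal is correct and follows essentially the same strategy as the paper: extract a diagonal subsequence converging uniformly on each $[0,T]\times\overline{B}_R(0)$ via Ascoli--Arzel\`a using (H1.1)--(H2.1), then use the decay of $\rho$ from (H5) together with (H3.1) to upgrade local uniform convergence to global uniform convergence. The only cosmetic differences are that the paper phrases the local step through the identification $\text{C}_b([0,T]\times B_M)\cong \text{C}([0,T];\text{C}_b(B_M))$ and the compact embedding $\text{C}_b^\alpha(B_M)\hookrightarrow \text{C}_b(B_M)$, and compares the subsequence directly to the pointwise limit rather than using a Cauchy argument; neither changes the substance.
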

\noindent Before proceeding with the proof of Theorem \ref{th:AscoliArzela_1}, we recall the following version of the Ascoli-Arzel\`{a} theorem.
\begin{theorem}\label{th:AscoliArzela_2}
Assume that that a family of functions $F\subset \text{C}\left( \left[ 0,T\right] ;  \text{C}_{b}\left( B_{M}\right) \right) $ satisfies the following two properties:
\begin{itemize}
\item[\textit{(i)}] $\left\{ f\left( t\right) ;f\in F,t\in \left[ 0,T\right] \right\} \subset K_{M}$ for some compact set $K_{M} \subset \text{C}_{b}\left( B_{M}\right) $
\item[\textit{(ii)}] $F$ is uniformly equicontinuous in $\text{C} \left( \left[ 0,T\right] ; \text{C}_{b}\left( B_{M}\right) \right) $, namely for every $\epsilon >0$ there exists a $\delta >0$ such that $\left\Vert f\left( t\right) -f\left( s\right) \right\Vert _{\text{C}_{b}\left( B_{M}\right) }\leq \epsilon $ for every $f\in F$ and $t, s\in \left[ 0,T\right] $ such that $\left\vert t-s\right\vert \leq
\delta $.
\end{itemize}
Then $F$ is relatively compact in $\text{C}\left( \left[ 0,T\right] ; \text{C}_{b}\left(B_{M}\right) \right) $.
\end{theorem}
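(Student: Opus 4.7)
The plan is to recognize this as an instance of the classical Ascoli--Arzel\`a theorem for functions from a compact metric space into a complete metric (Banach) space, and then carry out the standard diagonalization--equicontinuity argument explicitly since the target space is the infinite-dimensional Banach space $\text{C}_{b}(B_{M})$. The key preliminary observation is that $\text{C}_{b}(B_{M})$, equipped with the sup norm $\left\Vert \cdot\right\Vert _{\infty,M}$, is a complete metric space, so $\text{C}([0,T];\text{C}_{b}(B_{M}))$ is itself a Banach space under the uniform norm on $[0,T]$; hence it suffices to extract a uniformly convergent subsequence from an arbitrary sequence $(f_{n})\subset F$, the limit automatically landing in $\text{C}([0,T];\text{C}_{b}(B_{M}))$.

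First, I would fix a countable dense subset $\{t_{k}\}_{k\in\mathbb{N}}\subset [0,T]$ and use hypothesis $(i)$ at each point $t_{k}$: since $\{f(t_{k}):f\in F\}\subset K_{M}$ and $K_{M}$ is compact in $\text{C}_{b}(B_{M})$, one can apply a Cantor diagonal extraction to produce a subsequence $(f_{n_{j}})$ such that $f_{n_{j}}(t_{k})$ converges in $\text{C}_{b}(B_{M})$ for every $k\in\mathbb{N}$. This step uses only $(i)$ and the sequential compactness of $K_{M}$.

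Next, I would promote this pointwise (on the dense set) convergence to uniform convergence in $t\in [0,T]$. Given $\epsilon>0$, use $(ii)$ to pick $\delta>0$ so that $\left\Vert g(t)-g(s)\right\Vert _{\infty,M}\leq\epsilon/3$ whenever $|t-s|\leq\delta$ for all $g\in F$. Cover $[0,T]$ by finitely many $t_{k_{1}},\dots,t_{k_{N}}$ from the dense set with spacing at most $\delta$; for $j,j'$ large enough one has $\left\Vert f_{n_{j}}(t_{k_{i}})-f_{n_{j'}}(t_{k_{i}})\right\Vert _{\infty,M}\leq\epsilon/3$ simultaneously for $i=1,\dots,N$. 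A standard triangle-inequality argument, writing an arbitrary $t\in[0,T]$ as within $\delta$ of some $t_{k_{i}}$, then yields
\[
\sup_{t\in[0,T]}\left\Vert f_{n_{j}}(t)-f_{n_{j'}}(t)\right\Vert _{\infty,M}\leq\epsilon,
\]
so $(f_{n_{j}})$ is Cauchy in $\text{C}([0,T];\text{C}_{b}(B_{M}))$. By completeness it converges to some $f_{\infty}$, and uniform limits of continuous functions are continuous, so $f_{\infty}\in \text{C}([0,T];\text{C}_{b}(B_{M}))$.

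The only step requiring any delicacy is the passage from the dense set $\{t_{k}\}$ to the full interval $[0,T]$, which is precisely where equicontinuity $(ii)$ is used; everything else is routine diagonal extraction. Since every sequence in $F$ admits a convergent subsequence, $F$ is relatively compact, concluding the proof. I would note in passing that the result is a direct specialization of Ascoli--Arzel\`a in the form ``equicontinuity plus pointwise precompactness in a complete target,'' but writing out the two-step argument makes the application in the proof of Theorem~\ref{th:AscoliArzela_1} transparent.
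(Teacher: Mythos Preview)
Your proof is correct and is the standard diagonal-extraction-plus-equicontinuity argument for Ascoli--Arzel\`a with values in a complete metric space. The paper does not actually prove this statement: it is merely recalled as a known version of Ascoli--Arzel\`a, so there is nothing to compare against beyond noting that your write-up supplies the routine details the paper omits.
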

\begin{proof}[Proof of Theorem \ref{th:AscoliArzela_1}]
Notice that given any closed ball $B_M \doteq \overline{B}_{M}(0) \subset \mathbb{R}^{d}$ of radius  $M$ around the origin, the space $\text{C}_{b}\left( 
\left[ 0,T\right] \times B_{M}\right) $ and the space $\text{C}\left( \left[ 0,T\right]
;\text{C}_{b}\left( B_{M}\right) \right) $ are equivalent. This is not longer true for $\text{C}_{b}\left( \left[ 0,T\right] \times \mathbb{R}^{d}\right) $ and $\text{C}\left( \left[ 0,T\right] ; \text{C}_{b}\left( \mathbb{R}^{d}\right) \right)$. Indeed, it holds that $\text{C}\left( \left[ 0,T\right] ; \text{C}_{b}\left( \mathbb{R}^{d}\right) \right) \subset \text{C}_{b}\left( \left[ 0,T\right] \times \mathbb{R}^{d}\right)$. On any $B_{M}$ we use Theorem \ref{th:AscoliArzela_2}. Now, consider a sequence $\left( p_{n}\right) _{n\in \mathbb{N}}\subset \Xi
_{\alpha ,C_{1},\beta ,C_{2},\rho ,C_{3}}$. For every $B_{M}$, denote by $
p_{n}^{M}$ the restriction of $p_{n}$ to $\left[ 0,T\right] \times B_{M}$.
They belong to $\text{C}_{b}\left( \left[ 0,T\right] \times B_{M}\right) $ which is equivalent to $\text{C}\left( \left[ 0,T\right] ; \text{C}_{b}\left( B_{M}\right) \right)$. The space $\text{C}_{b}^{\alpha}\left(B_{M}\right) $ has compact embedding into $\text{C}_{b}\left( B_{M}\right) $ by Ascoli-Arzel\`{a} theorem. By $(H1.1)$ in Theorem \ref{th:AscoliArzela_1}, the set $\left\{ p_{n}^{M}\left( t\right) ,n\in \mathbb{N},t\in \left[ 0,T\right]\right\}$ is bounded in $\text{C}_{b}^{\alpha }\left( B_{M}\right)$, hence assumption \textit{(i)} of Theorem \ref{th:AscoliArzela_2} is satisfied. On the other hand, by $(H2.1)$  in Theorem \ref{th:AscoliArzela_1} the sequence $\left(p_{n}^{M}\right) _{n\in \mathbb{N}}$ is uniformly equicontinuous in $\text{C}\left( \left[ 0,T\right] ; \text{C}_{b}\left( B_{M}\right) \right)$. Hence, by 
Theorem \ref{th:AscoliArzela_2} we may extract a subsequence which converges in $\text{C}\left( \left[ 0,T\right] ; \text{C}_{b}\left( B_{M}\right)
\right) $. By a diagonal argument, we can find a function $p\in \text{C}\left( \left[ 0,T\right] \times \mathbb{R}^{d}\right) $ and a subsequence $\left( p_{n_{k}}\right) $ such that $\left\Vert (p_{n}^{M}-p)_{|_{\left[ 0,T\right]
\times B_{M}}} \right\Vert _{\infty }\rightarrow 0$ as $k\rightarrow \infty $,
for every $M$. Given $\epsilon >0$, let $M_{\epsilon }$ be such that 
\begin{equation*}
\left\Vert \rho_{|_{B_{M}^{c}}}\right\Vert _{\text{C}_{b}\left( B_{M}^{c}\right)
}\leq \frac{\epsilon }{4C_{3}}.
\end{equation*}%
Since $\left\vert p_{n_{k}}\left( t,x\right) \right\vert \leq C_{3}\rho
\left( x\right) $, we also have
\begin{equation*}
\left\Vert p_{n_{k}}|_{\left[ 0,T\right] \times B_{M}^{c}}\right\Vert
_{\text{C}_{b}\left( \left[ 0,T\right] \times B_{M}^{c}\right) }\leq \frac{\epsilon 
}{4}.
\end{equation*}%
In addition, since $p_{n_{k}}\rightarrow p$ point-wise, we also have $\left\vert p\left( t,x\right) \right\vert \leq C_{3}\rho \left( x\right)$
and thus 
\begin{equation*}
\left\Vert p_{|\left[ 0,T\right] \times B_{M}^{c}}\right\Vert _{\text{C}_{b}\left( 
\left[ 0,T\right] \times B_{M}^{c}\right) }\leq \frac{\epsilon }{4}.
\end{equation*}%
Then $p\in \text{C}_{b}\left( \left[ 0,T\right] \times \mathbb{R}^{d}\right) $ and $\left\Vert p_{n_{k}}-p\right\Vert _{\infty } \leq \epsilon$. Now, if corresponding to $M_{\epsilon }$, we choose $k_{0}$ such that for all $%
k\geq k_{0}$ we have 
\begin{equation*}
\left\Vert (p_{n_{k}}^{N}-p)_{|_{\left[ 0,T\right] \times B_{N}}} \right\Vert
_{\text{C}_{b}\left( \left[ 0,T\right] \times B_{N}\right) }\leq \frac{\epsilon }{2}.
\end{equation*}
Whence, we have proved uniform convergence on the full space $\mathbb{R}^{d}$.
\end{proof}
\noindent The following proposition allows us to conclude the proof of Theorem \ref{th:globalExistence}.
\begin{proposition}
There exist four positive and non decreasing functions  $\alpha(\,\cdot\,) ,C_{1}(\,\cdot\,),$ $\beta(\,\cdot\,), C_{2}(\,\cdot\,)$, $\rho$ as in (H5) of Section \ref{sec:meanfieldgame} and a constant $C_{3} > 0$ such that $\Phi \left( K\right) \subset \Xi _{\alpha ,C_{1},\beta ,C_{2},\rho ,C_{3}}$.
\end{proposition}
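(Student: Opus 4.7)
The plan is to verify each of the three defining properties of $\Xi_{\alpha,C_1,\beta,C_2,\rho,C_3}$ for an arbitrary $q \doteq \Phi(p)$ with $p \in K$, working throughout with the integral representation \eqref{eq for Phi p}. Two preliminary observations will be used repeatedly: first, the effective drift $F_p(s,x) \doteq \frac{\nabla w_p(s,x)}{w_p(s,x)} + b(x,p(s,x))$ is bounded in $L^\infty([0,T]\times \mathbb{R}^d)$ by a constant $M = M(b,f,g,T)$ independent of $p \in K$; second, $\|q\|_\infty \leq C_2(b,f,g,p_0,T)$ uniformly in $p \in K$.

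Properties (H1.1) and (H2.1) of Theorem \ref{th:AscoliArzela_1} follow from standard parabolic regularity estimates applied to the mild formula, in the spirit of Lemma \ref{lem:hoelder_estim_empirical_density} and the semigroup bounds of Appendix \ref{app:holder_heat_bound}. For the spatial H\"older bound (H1.1) I would estimate the two summands of \eqref{eq for Phi p} separately: the first summand $\mathcal{P}_t p_0$ inherits H\"older regularity from $p_0 \in \text{C}_b^{\alpha}(\mathbb{R}^d)$ granted by (H5), and the second from the heat-semigroup estimate $\|\nabla \mathcal{P}_{t-s} h\|_\gamma \leq C(t-s)^{-(1+\gamma)/2}\|h\|_\infty$ applied with $h = q(s)F_p(s)$, for $\gamma \in (0,1)$ chosen small enough that the time-integral converges. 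For the temporal H\"older bound (H2.1) I would split
\[q(t_2) - q(t_1) = (\mathcal{P}_{t_2}-\mathcal{P}_{t_1})p_0 + \int_{t_1}^{t_2} \nabla \mathcal{P}_{t_2-s}(q F_p)\, ds + \int_0^{t_1} \left(\nabla \mathcal{P}_{t_2-s} - \nabla \mathcal{P}_{t_1-s}\right)(q F_p)\, ds\]
and apply the semigroup time-regularity bound $\|(\mathcal{P}_{t_2}-\mathcal{P}_{t_1})\varphi\|_\infty \leq C|t_2-t_1|^{\beta}\|\varphi\|_{2\beta}$, together with an analogous estimate for the difference of the gradient semigroups. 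All resulting constants depend on $R$ and the data but not on $p \in K$.

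The main obstacle is property (H3.1), namely the pointwise decay $|q(t,x)| \leq C_3 \rho(x)$. Here I would argue probabilistically: the mild equation \eqref{eq for Phi p} is the Duhamel form of a linear Kolmogorov-forward equation with drift bounded uniformly in $p \in K$, so $q(t,\cdot)$ is the density at time $t$ of a diffusion driven by Brownian motion and a drift of $L^\infty$-norm at most $M$, with initial density $p_0$. Writing $q(t,x) = \int \mathrm{p}^{Y}(t,y,x)\,p_0(y)\,dy$ and invoking Aronson's Gaussian upper bound $\mathrm{p}^{Y}(t,y,x) \leq C_A\, G(c_A t, y-x)$, with $C_A, c_A$ depending only on $M$, $T$, and $d$, one gets $q(t,x) \leq C_A (\mathcal{P}_{c_A t}\rho)(x)$. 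The problem then reduces to proving $(\mathcal{P}_t \rho)(x) \leq C(T)\rho(x)$ uniformly in $(t,x) \in [0,T]\times \mathbb{R}^d$, which is where (H5) enters decisively: setting $L \doteq \|\nabla\rho^{-1}\|_\infty$ and splitting $\mathbb{E}[\rho(x+W_t)]$ according to whether $|W_t| \leq \rho^{-1}(x)/(2L)$, the Lipschitz estimate $\rho^{-1}(x+W_t) \geq \rho^{-1}(x) - L|W_t|$ yields $\rho(x+W_t) \leq 2\rho(x)$ on the first event, while a Gaussian tail bound dominates the complementary contribution by $\|\rho\|_\infty \exp(-c\,\rho^{-1}(x)^2/T)$, and the latter is in turn bounded by a constant multiple of $\rho(x) = 1/\rho^{-1}(x)$ because $\sup_{s \geq 0} s\exp(-cs^2/T) < \infty$.

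Assembling the three estimates produces an inclusion $\Phi(K) \subset \Xi_{\alpha, C_1, \beta, C_2, \rho, C_3}$ with H\"older exponents and constants depending only on $T$, $b$, $f$, $g$ and $p_0$. Combined with Theorem \ref{th:AscoliArzela_1} this provides the relative compactness of $\Phi(K)$ required to apply the Brouwer-Schauder fixed point theorem and thereby to complete the proof of Theorem \ref{th:globalExistence}.
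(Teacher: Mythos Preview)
Your treatment of (H1.1) and (H2.1) matches the paper's almost line by line: the paper also splits the mild formula, uses $\|\nabla\mathcal{P}_{t-s}h\|_\alpha\leq C(t-s)^{-(1/2+\alpha)}\|h\|_\infty$ for the spatial H\"older bound, and for the time increment uses exactly the three-term decomposition you write together with $\|\mathcal{P}_t k - k\|_\infty\leq C_\alpha t^\alpha\|k\|_\alpha$.

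For (H3.1) you take a genuinely different route. The paper proceeds purely at the PDE level: it sets $\pi_p\doteq\rho^{-1}\Phi(p)$, plugs $\varphi=\rho^{-1}\psi$ into the weak formulation to derive a mild equation for $\pi_p$, and closes a Gronwall estimate on $\|\pi_p(t)\|_\infty$ using both bounds $\|\nabla\rho^{-1}\|_\infty<\infty$ and $\|\Delta\rho^{-1}\|_\infty<\infty$ from (H5). Your argument is probabilistic: identify $q=\Phi(p)$ as the time-$t$ density of a diffusion with drift bounded by $M$, invoke an Aronson-type upper bound $p^Y(t,y,x)\leq C_A G(c_A t,y-x)$, and reduce the matter to $(\mathcal{P}_t\rho)(x)\leq C(T)\rho(x)$, which you verify by the neat splitting on $\{|W_t|\leq \rho^{-1}(x)/(2L)\}$. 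This is correct, and in fact only uses the Lipschitz bound on $\rho^{-1}$, so it formally dispenses with the hypothesis $\|\Delta\rho^{-1}\|_\infty<\infty$. The price is that you import Aronson's estimate as a black box (and implicitly use uniqueness in $\text{C}_b$ of the linear Fokker--Planck equation to identify $\Phi(p)$ with the diffusion density), whereas the paper's argument stays entirely within the elementary semigroup calculus developed in the appendix.
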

\begin{proof}
Without loss of generality, we may assume $\alpha <\frac{1}{2}$. To shorten notations, set 
\begin{equation*}
h\left( s\right) \doteq \frac{\nabla w_{p}\left( s\right) }{w_{p}\left( s\right) }+b\left( \cdot ,p\left( s\right) \right).
\end{equation*}
Notice that the following inequalities hold
\begin{equation*}
\begin{split}
\left\Vert h\left( s\right) \right\Vert _{\infty } &\leq C_{w}\left(
g,f,b,T\right) +\left\Vert b\right\Vert _{\infty } \doteq C_{h}\left(
g,f,b,T\right)\\
\left\Vert \Phi \left( p\right) \left( s\right) h\left( s\right) \right\Vert
_{\infty } &\leq C_{2}\left( b,f,g,p_{0},T\right) C_{h}\left( g,f,b,T\right) \doteq C_{3}\left( b,f,g,p_{0},T\right) 
\end{split}
\end{equation*}
From Eq.~\eqref{eq for Phi p} we have
\begin{eqnarray*}
\left\Vert \Phi \left( p\right) \left( t\right) \right\Vert _{\alpha } &\leq
&C_{T,\alpha }\left\Vert p_{0}\right\Vert _{\alpha }+\int_{0}^{t}\frac{%
C_{T,\alpha }}{\left( t-s\right) ^{\frac{1}{2}+\alpha }}\left\Vert \Phi
\left( p\right) \left( s\right) h\left( s\right) \right\Vert _{\infty }ds \\
&\leq &C_{T,\alpha }\left\Vert p_{0}\right\Vert _{\alpha }+C_{T,\alpha }T^{%
\frac{1}{2}-\alpha }C_{3}\left( b,f,g,p_{0},T\right),
\end{eqnarray*}%
were we have used a gradient estimate in H\"older norm similar to those of Lemma C.3 below, but easier. Therefore, $(H1.1)$ in Theorem \ref{th:AscoliArzela_1} is satisfied, even uniformly with respect to $R$. Let us see $(H2.1)$ in Theorem \ref{th:AscoliArzela_1} with $t>t^{\prime }$:
\begin{equation*}
\left\Vert \Phi \left( p\right) \left( t\right) -\Phi \left( p\right) \left(
t^{\prime }\right) \right\Vert _{\infty ,R}\leq I_{1}+I_{2}+I_{3}
\end{equation*}%
\begin{equation*}
I_{1}:=\left\Vert \mathcal{P}_{t}p_{0}-\mathcal{P}_{t^{\prime
}}p_{0}\right\Vert _{\infty ,R}
\end{equation*}%
\begin{equation*}
I_{2}:=\int_{t^{\prime }}^{t}\left\Vert \nabla \mathcal{P}_{t-s}\left( \Phi
\left( p\right) \left( s\right) h\left( s\right) \right) \right\Vert
_{\infty ,R}ds
\end{equation*}%
\begin{equation*}
I_{3}:=\int_{0}^{t^{\prime }}\left\Vert \left( \nabla \mathcal{P}%
_{t-s}-\nabla \mathcal{P}_{t^{\prime }-s}\right) \left( \Phi \left( p\right)
\left( s\right) h\left( s\right) \right) \right\Vert _{\infty ,R}ds.
\end{equation*}%
We use the following property: for small $t$,%
\begin{equation*}
k\in \text{C}_{b}^{\alpha }\left( B_{N}\right) \Rightarrow \left\Vert \mathcal{P}%
_{t}k-k\right\Vert _{\infty }\leq C_{\alpha }t^{\alpha }\left\Vert
k\right\Vert _{\alpha }.
\end{equation*}%
Hence%
\begin{eqnarray*}
I_{1} &=&\left\Vert \mathcal{P}_{t-t^{\prime }}\mathcal{P}_{t^{\prime
}}p_{0}-\mathcal{P}_{t^{\prime }}p_{0}\right\Vert _{\infty } \\
&\leq &C_{\alpha }\left( t-t^{\prime }\right) ^{\alpha }\left\Vert \mathcal{P%
}_{t^{\prime }}h\right\Vert _{\alpha }\leq C\left( t-t^{\prime }\right)
^{\alpha }\left\Vert h\right\Vert _{\alpha }
\end{eqnarray*}%
\begin{equation*}
I_{2}\leq \int_{t^{\prime }}^{t}\frac{C_{T}}{\left( t-s\right) ^{\frac{1}{2}}%
}\left\Vert \Phi \left( p\right) \left( s\right) h\left( s\right)
\right\Vert _{\infty }ds\leq 2C_{T}\sqrt{t-t^{\prime }}C_{3}\left(
g,f,b,p_{0},T\right) 
\end{equation*}
\begin{eqnarray*}
I_{3} &=&\int_{0}^{t^{\prime }}\left\Vert \left( \mathcal{P}_{t-t^{\prime
}}-Id\right) \nabla \mathcal{P}_{t^{\prime }-s}\left( \Phi \left( p\right)
\left( s\right) h\left( s\right) \right) \right\Vert _{\infty }ds \\
&&\leq \int_{0}^{t^{\prime }}C_{\alpha }\left( t-t^{\prime }\right) ^{\alpha
}\left\Vert \nabla \mathcal{P}_{t^{\prime }-s}\left( \Phi \left( p\right)
\left( s\right) h\left( s\right) \right) \right\Vert _{\alpha }ds \\
&\leq &C_{\alpha }\left( t-t^{\prime }\right) ^{\alpha }\int_{0}^{t^{\prime
}}\frac{C_{T,\alpha }C_{3}\left( g,f,b,p_{0},T\right) }{\left( t^{\prime
}-s\right) ^{\frac{1}{2}+\alpha }}ds\leq C\left( t-t^{\prime }\right)
^{\alpha }
\end{eqnarray*}%
Therefore, also the second
condition in the definition of $\Xi _{\alpha ,C_{1},\beta ,C_{2},\rho ,C_{3}}
$ is satisfied, even uniformly with respect to $R$.
The difficult property is 
\begin{equation*}
\left\vert \Phi \left( p\right) \left( t,x\right) \right\vert \leq C_{3}\rho
\left( x\right) 
\end{equation*}%
for every $x\in \mathbb{R}^{d}$, $t\in \left[ 0,T\right] ,p\in K$, for a
suitable constant $C_{3}>0$. The idea is to write an equation for $\pi _{p}\left( t,x\right) :=\rho ^{-1}\left( x\right) \Phi \left( p\right)\left( t,x\right)$ and deduce that $\left\Vert \pi _{p}\left( t\right) \right\Vert _{\infty }\leq C_{3}$ for every $t\in \left[ 0,T\right] ,p\in K$.
We use the weak formulation
\begin{equation*}
\left\langle \Phi \left( p\right) \left( t\right) ,\varphi \right\rangle 
 =\left\langle p_{0}, \varphi  \right\rangle +\frac{1}{2}\int_{0}^{t}\left%
\langle \Phi \left( p\right) \left( s\right) ,\Delta \varphi  \right\rangle ds +\int_{0}^{t}\left\langle \Phi \left( p\right) \left( s\right) h\left(
s\right) ,\nabla \varphi  \right\rangle ds
\end{equation*}
with a test function $\varphi $ of the form $\rho ^{-1}\psi $ with $\psi \in
C_{c}^{\infty }\left( \mathbb{R}^{d}\right) $. Then%
\begin{eqnarray*}
\left\langle \pi _{p}\left( t\right) ,\psi \right\rangle  &=&\left\langle
\rho ^{-1}p_{0},\psi \right\rangle +\frac{1}{2}\int_{0}^{t}\left\langle \pi
_{p}\left( s\right) ,\Delta \psi \right\rangle ds \\
&&+\frac{1}{2}\int_{0}^{t}\left\langle \Phi \left( p\right) \left( s\right)
,\psi \Delta \rho ^{-1}+2\nabla \psi \cdot \nabla \rho ^{-1}\right\rangle ds
\\
&&+\int_{0}^{t}\left\langle \Phi \left( p\right) \left( s\right) h\left(
s\right) ,\nabla \left( \rho ^{-1}\varphi  \right) \right\rangle ds
\end{eqnarray*}%
namely, formally speaking, 
\begin{eqnarray*}
\pi _{p}\left( t\right)  &=&\rho ^{-1}p_{0}+\frac{1}{2}\int_{0}^{t}\Delta
\left( \rho ^{-1}\Phi \left( p\right) \left( s\right) \right) ds \\
&&+\frac{1}{2}\int_{0}^{t}\left( \Delta \rho ^{-1}\right) \Phi \left(
p\right) \left( s\right) ds-\int_{0}^{t}\text{div}\left( \Phi \left(
p\right) \left( s\right) \nabla \rho ^{-1}\right) ds \\
&&-\int_{0}^{t}\rho ^{-1}\text{div}\left( \Phi \left( p\right) \left(
s\right) h\left( s\right) \right) ds.
\end{eqnarray*}
Using 
\begin{equation*}
\rho ^{-1}\text{div}\left( \Phi \left( p\right) \left( s\right) h\left(
s\right) \right) =\text{div}\left( \pi _{p}\left( s\right) h\left( s\right)
\right) -\Phi \left( p\right) \left( s\right) h\left( s\right) \cdot \nabla
\rho ^{-1}
\end{equation*}%
this leads to  
\begin{eqnarray*}
\pi _{p}\left( t\right)  &=&\mathcal{P}_{t}\left( \rho ^{-1}p_{0}\right) +%
\frac{1}{2}\int_{0}^{t}\mathcal{P}_{t-s}\left( \left( \Delta \rho
^{-1}\right) \Phi \left( p\right) \left( s\right) \right) ds \\
&&+\int_{0}^{t}\nabla \mathcal{P}_{t-s}\left( \Phi \left( p\right) \left(
s\right) \nabla \rho ^{-1}\right) ds \\
&&+\int_{0}^{t}\nabla \mathcal{P}_{t-s}\left( \pi _{p}\left( s\right)
h\left( s\right) \right) ds \\
&&+\int_{0}^{t}\mathcal{P}_{t-s}\left( \nabla \rho ^{-1}\cdot \Phi \left(
p\right) \left( s\right) h\left( s\right) \right) ds.
\end{eqnarray*}
Therefore%
\begin{eqnarray*}
\left\Vert \pi _{p}\left( t\right) \right\Vert _{\infty } &\leq &\left\Vert
\rho ^{-1}p_{0}\right\Vert _{\infty }+\frac{1}{2}\int_{0}^{t}\left\Vert
\left( \Delta \rho ^{-1}\right) \Phi \left( p\right) \left( s\right)
\right\Vert _{\infty }ds \\
&&+\int_{0}^{t}\frac{C_{T}}{\sqrt{t-s}}\left\Vert \Phi \left( p\right)
\left( s\right) \nabla \rho ^{-1}\right\Vert _{\infty }ds \\
&&+\int_{0}^{t}\frac{C_{T}}{\sqrt{t-s}}\left\Vert \pi _{p}\left( s\right)
h\left( s\right) \right\Vert _{\infty }ds \\
&&+\int_{0}^{t}\left\Vert \nabla \rho ^{-1}\cdot \Phi \left( p\right) \left(
s\right) h\left( s\right) \right\Vert _{\infty }ds
\end{eqnarray*}%
\begin{eqnarray*}
&\leq &\left\Vert \rho ^{-1}p_{0}\right\Vert _{\infty }+\frac{\left\Vert
\Delta \rho ^{-1}\right\Vert _{\infty }\left\Vert \Phi \left( p\right)
\right\Vert _{\infty }}{2}T \\
&&+2C_{T}T^{1/2}\left\Vert \nabla \rho ^{-1}\right\Vert _{\infty }\left\Vert
\Phi \left( p\right) \right\Vert _{\infty }+C_{h}\left( g,f,b,T\right)
\int_{0}^{t}\frac{C_{T}}{\sqrt{t-s}}\left\Vert \pi _{p}\left( s\right)
\right\Vert _{\infty }ds \\
&&+T\left\Vert \nabla \rho ^{-1}\right\Vert _{\infty }C_{3}\left(
g,f,b,p_{0},T\right) .
\end{eqnarray*}%
Recall that $\left\Vert \Phi \left( p\right) \right\Vert _{\infty }\leq
C_{2}\left( g,f,b,p_{0},T\right) $ independently of $p\in K$. Moreover
recall that $\left\Vert \Delta \rho ^{-1}\right\Vert _{\infty }+\left\Vert
\nabla \rho ^{-1}\right\Vert _{\infty }<\infty $. From a generalized form of
Gronwall lemma we deduce a uniform bound for $\left\Vert \pi _{p}\left(
t\right) \right\Vert _{\infty }$.
\end{proof} 
\noindent At this point, we can apply Brouwer-Schauder fixed point theorem and have existence of a weak solution $\left( w,p\right) $. The proof that $\left(u, p\right) :=\left( -\log w,p\right) $ satisfies the original system can then be done by means of mollifiers.}
\subsection{Proof of Theorem \ref{th:localExistenceUniqueness}}
\label{subsec:local_existence_uniqueness}
Throughout this section, we assume that $p_0, b, f, g$ satisfy the hypotheses (H1)-(H2) and (H4) in Section \ref{sec:preliminaries}. 
\begin{proof}
We are going to apply the contraction principle to the system in Eqs.~ \eqref{eq:mfgsystemmild_eq1}-\eqref{eq:mfgsystemmild_eq2}. Setting $\theta \doteq \nabla u$, for $T$ small enough, it reads as 
\begin{equation*}
\begin{split}
p\left( t\right) & =\mathcal{P}_{t}p_{0}-\int_{0}^{t}\nabla \mathcal{P}%
_{t-s}\left( p\left( s\right) \left( \theta \left( s\right) -b\left(\,\cdot\,,p(s)\right) \right) \right)\,ds \\
\theta \left( t\right) & =\nabla \mathcal{P}_{T-t}g-\int_{t}^{T}\nabla 
\mathcal{P}_{r-t}\left( b\left(\,\cdot\,,p(s)\right) \cdot \theta \left(
r\right) -\frac{1}{2}\left\vert \theta \left( r\right) \right\vert
^{2}+f\left(\,\cdot\,, p(r) \right) \right)\,dr.\\
\end{split}
\end{equation*}
\indent Now, consider the following Banach space: 
\begin{equation*}
\textcolor{black}{X_{T}=\text{C}_b( \left[ 0,T\right] \times \mathbb{R}^{d})
\times \text{C}_b( \left[ 0,T\right] \times \mathbb{R}^{d})},
\end{equation*}
and  by $\left\Vert\,\cdot\,\right\Vert _{T,\infty}$ the norm in each space $\textcolor{black}{\text{C}_b( \left[ 0,T\right] \times \mathbb{R}^{d})}$. On the product space $X_{T}$ consider the norm
\begin{equation*}
\left\Vert \left( a,b\right) \right\Vert _{T,\infty } \doteq \left\Vert
 a \right\Vert _{T,\infty }+\left\Vert  b \right\Vert _{T,\infty }.
\end{equation*}
Define the map $\Gamma : X_{T}\rightarrow X_{T}$ as
\begin{equation}\label{eq:Gamma_map}
\Gamma \left( p,\theta \right)  = \left( \Gamma _{1}\left( p,\theta \right)
,\Gamma _{2}\left( p,\theta \right) \right)
\end{equation}
whose marginals are given by
\begin{equation*}
\begin{split}
\Gamma _{1} \left( p,\theta \right) \left( t\right) & \doteq\mathcal{P}%
_{t}p_{0}-\int_{0}^{t}\nabla \mathcal{P}_{t-s}\left( p\left( s\right) \left(
\theta \left( s\right) - b\left(\,\cdot\,,p(s)\right) \right)\right)\\
\Gamma _{2}\left( p,\theta \right) \left( t\right) & \doteq\nabla \mathcal{P}%
_{T-t}g-\int_{t}^{T}\nabla \mathcal{P}_{r-t}\left( b\left(\,\cdot\,,p(r)\right)\cdot \theta \left( r\right) -\frac{1}{2}\left\vert \theta \left(
r\right) \right\vert ^{2}+f\left(\,\cdot\, p(r) \right) \right) dr.
\end{split}
\end{equation*}
Notice that the fact that $\Gamma \left( p,\theta \right) \in X_{T}$ when $\left(
p,\theta \right) \in X_{T}$ is implicit in the following computations and
thus it will not be explained a priori. It is based on the following estimates of the heat semi-group's gradient (cfr. also the proof of Theorem \ref{th:globalExistence} and the reference therein): $\left\Vert \nabla \mathcal{P}_{t}F\right\Vert _{\infty }\leq  C_{0}t^{-1/2}\left\Vert F\right\Vert _{\infty }$ for some constant $C_0$ and every $F\in \text{C}_{b}\left( \mathbb{R}^{d}\right) $ and $\left\Vert \nabla \mathcal{P}_{t}F\right\Vert _{\infty }\leq C_{0}\left\Vert \nabla F\right\Vert _{\infty }$ for every $F\in \text{C}_{b}\left( \mathbb{R}^{d}\right) $ such that $\nabla F\in \text{C}_{b}\left( \mathbb{R}^{d}\right) $.\\
\indent Now, let us investigate when $\Gamma $ is a contraction. We have
\begin{equation*}
\begin{split}
& \left\Vert \Gamma _{1}\left( p,\theta \right) \left( t\right) -\Gamma
_{1}\left( p^{\prime },\theta ^{\prime }\right) \left( t\right) \right\Vert
_{\infty } \\
& \leq \int_{0}^{t}\frac{C_{0}}{\left( t-s\right) ^{1/2}}\left( \left\Vert
p\left( s\right) \right\Vert _{\infty }\left( \left\Vert \theta \left(
s\right) -\theta ^{\prime }\left( s\right) \right\Vert _{\infty }+\left\Vert
b\left(\,\cdot\,, p\left( s\right)\right) -b\left(\,\cdot\,, p^{\prime}\left( s\right)\right)
\right\Vert _{\infty }\right) \right) ds \\
&+\int_{0}^{t}\frac{C_{0}}{\left( t-s\right) ^{1/2}}\left\Vert p\left(
s\right) -p^{\prime }\left( s\right) \right\Vert _{\infty }\left( \left\Vert
\theta ^{\prime }\left( s\right) \right\Vert _{\infty }+ \left\Vert
b\left(\,\cdot\,, p^{\prime}\left( s\right)\right)
\right\Vert _{\infty }\right) ds\\
& \leq \int_{0}^{t}\frac{C_{0}}{\left( t-s\right) ^{1/2}}\left( \left\Vert
p\left( s\right) \right\Vert _{\infty }\left( \left\Vert \theta \left(
s\right) -\theta ^{\prime }\left( s\right) \right\Vert _{\infty
}+L_{b}\left\Vert p\left( s\right) -p^{\prime }\left( s\right) \right\Vert
_{\infty }\right) \right) ds \\
&+\int_{0}^{t}\frac{C_{0}}{\left( t-s\right) ^{1/2}}\left\Vert p\left(
s\right) -p^{\prime }\left( s\right) \right\Vert _{\infty }\left( \left\Vert
\theta ^{\prime }\left( s\right) \right\Vert _{\infty }+\text{C}_{b}\left(
1+\left\Vert p^{\prime }\left( s\right) \right\Vert _{\infty }\right)
\right) ds\\
& \leq \int_{0}^{t}\frac{C_{0}}{\left( t-s\right) ^{1/2}}ds\cdot \left(
\left\Vert p\right\Vert _{T,\infty }\left( \left\Vert \theta -\theta
^{\prime }\right\Vert _{T,\infty }+L\left\Vert p-p^{\prime }\right\Vert
_{T,\infty }\right) \right) \\
& +\int_{0}^{t}\frac{C_{0}}{\left( t-s\right) ^{1/2}}ds\cdot \left\Vert
p-p^{\prime }\right\Vert _{T,\infty }\left( \left\Vert \theta ^{\prime
}\right\Vert _{T,\infty }+C\left( 1+\left\Vert p^{\prime }\right\Vert
_{T,\infty }\right) \right)\\
&\leq 2C_{0}\sqrt{T}\left[ \left\Vert p\right\Vert _{T,\infty }\left\Vert
\theta -\theta ^{\prime }\right\Vert _{T,\infty }+\left( C+\left\Vert \theta
^{\prime }\right\Vert _{T,\infty }+C\left\Vert p^{\prime }\right\Vert
_{T,\infty }+L\left\Vert p\right\Vert _{T,\infty }\right) \left\Vert
p-p^{\prime }\right\Vert _{T,\infty }\right] 
\end{split}
\end{equation*}
and 
\begin{equation*}
\resizebox{1.0 \textwidth}{!}{$
\begin{split}
&\left\Vert \Gamma_{2}\left( p,\theta\right) \left( t\right) -\Gamma
_{2}\left( p^{\prime},\theta^{\prime}\right) \left( t\right) \right\Vert
_{\infty}\\
& \leq2C_{0}\sqrt{T}\left[ \left( C+C\left\Vert p\right\Vert _{T,\infty}+\frac{%
\left\Vert \theta\right\Vert _{T,\infty}+\left\Vert \theta^{\prime
}\right\Vert _{T,\infty}}{2}\right) \left\Vert \theta-\theta^{\prime
}\right\Vert _{T,\infty}+\left( L+L\left\Vert \theta^{\prime}\right\Vert
_{T,\infty}\right) \left\Vert p-p^{\prime}\right\Vert _{T,\infty}\right],
\end{split}$}
\end{equation*}
respectively.\\
\noindent Summarizing, there exists a constant $\widetilde{C}>0$, depending only on $%
C_{0}$, $C$, $L$, such that 
\begin{equation*}
\left\Vert \Gamma\left( p,\theta\right) -\Gamma\left(
p^{\prime},\theta^{\prime}\right) \right\Vert _{T,\infty}\leq\widetilde{C}%
\sqrt {T}\left\Vert \left( p,\theta\right) -\left( p^{\prime},\theta^{\prime
}\right) \right\Vert _{T,\infty}\left( \left\Vert \left( p,\theta\right)
\right\Vert _{T,\infty}+\left\Vert \left( p^{\prime},\theta^{\prime}\right)
\right\Vert _{T,\infty}\right) .
\end{equation*}
Therefore, to have a contraction we need a bound on $\left\Vert \left(
p,\theta\right) \right\Vert _{T,\infty}+\left\Vert \left(
p^{\prime},\theta^{\prime}\right) \right\Vert _{T,\infty}$.
Proceeding as above we have
\begin{equation*}
\begin{split}
&\left\Vert \Gamma _{1}\left( p,\theta \right) \left( t\right) \right\Vert
_{\infty } \textcolor{black}{\leq} \left\Vert p_{0}\right\Vert _{\infty }+\int_{0}^{t}\frac{C_{0}}{
\left( t-s\right) ^{1/2}}\left( \left\Vert p\left( s\right) \right\Vert
_{\infty }\left( \left\Vert \theta \left( s\right) \right\Vert _{\infty
}+\left\Vert b(\,\cdot\,,p(s)) \right\Vert _{\infty }\right)
\right) ds\\
&\left\Vert \Gamma _{2} \left( p,\theta \right) \left( t\right) \right\Vert
_{\infty } \leq C_{0}\left\Vert \nabla g\right\Vert _{\infty }\\
&+\int_{t}^{T}%
\frac{C_{0}}{\left( r-t\right) ^{1/2}}\left( \left\Vert b(\,\cdot\,,p(r)) \right\Vert _{\infty }\left\Vert \theta \left( r\right)
\right\Vert _{\infty }+\frac{1}{2}\left\Vert \theta \left( r\right)
\right\Vert _{\infty }^{2}+\left\Vert f(\,\cdot\,,p(r))
\right\Vert _{\infty }\right) dr 
\end{split}
\end{equation*}
and 
\begin{equation*}
\begin{split}
\left\Vert \Gamma _{1}\left( p,\theta \right) \left( t\right) \right\Vert
_{\infty }& =\left\Vert p_{0}\right\Vert _{\infty }+2C_{0}\sqrt{T}\left(
\left\Vert p\right\Vert _{T,\infty }\left( \left\Vert \theta \right\Vert
_{T,\infty }+\left\Vert b\left( p\right) \right\Vert _{T,\infty }\right)
\right) \\
\left\Vert \Gamma _{2}\left( p,\theta \right) \left( t\right) \right\Vert
_{\infty }& \leq C_{0}\left\Vert \nabla g\right\Vert _{\infty }+2C_{0}\sqrt{T%
}\left( \left\Vert b\left( p\right) \right\Vert _{T,\infty }\left\Vert
\theta \right\Vert _{T,\infty }+\frac{1}{2}\left\Vert \theta \right\Vert
_{T,\infty }^{2}+\left\Vert f\left( p\right) \right\Vert _{T,\infty }\right)
\end{split}
\end{equation*}
\noindent Using the bound on $b$ and $f$, we get
\begin{equation*}
\begin{split}
\left\Vert \Gamma _{1}\left( p,\theta \right) \left( t\right) \right\Vert
_{T,\infty }& \textcolor{black}{\leq}\left\Vert p_{0}\right\Vert _{\infty }+2C_{0}\sqrt{T}\cdot
\left( \left\Vert p\right\Vert _{T,\infty }\left( \left\Vert \theta
\right\Vert _{T,\infty }+C\left( 1+\left\Vert p\right\Vert _{T,\infty
}\right) \right) \right) \\
\left\Vert \Gamma _{2}\left( p,\theta \right) \left( t\right) \right\Vert
_{T,\infty }& \leq C_{0}\left\Vert \nabla g\right\Vert _{\infty }\\
				&+2C_{0}%
\sqrt{T}\cdot \left( C\left( 1+\left\Vert p\right\Vert _{T,\infty }\right)
\left\Vert \theta \right\Vert _{T,\infty }+\frac{1}{2}\left\Vert \theta
\right\Vert _{T,\infty }^{2}+C\left( 1+\left\Vert p\right\Vert _{T,\infty
}\right) \right)
\end{split}
\end{equation*}
Therefore, we have proved:
\begin{equation*}
\left\Vert \Gamma \left( p,\theta \right) \right\Vert _{T,\infty }\leq
\left( C_{0}\left\Vert \nabla g\right\Vert _{\infty }+\left\Vert
p_{0}\right\Vert _{\infty }\right) +2C_{0}\sqrt{T}K\cdot \left( \left\Vert
\left( p,\theta \right) \right\Vert _{T,\infty }+\left\Vert \left( p,\theta
\right) \right\Vert _{T,\infty }^{2}\right)
\end{equation*}%
for some constant $K>0$. Hence setting%
\begin{equation*}
\Lambda _{T,R}=\left\{ \left( p,\theta \right) \in X_{T}:\left\Vert \left(
p,\theta \right) \right\Vert _{T,\infty }\leq R\right\}
\end{equation*}%
if we take $\left( p,\theta \right) \in \Lambda _{T,R}$ we get%
\begin{equation*}
\left\Vert \Gamma \left( p,\theta \right) \right\Vert _{T,\infty }\leq
\left( C_{0}\left\Vert \nabla g\right\Vert _{\infty }+\left\Vert
p_{0}\right\Vert _{\infty }\right) +2C_{0}\sqrt{T}K\left( R+R^{2}\right) .
\end{equation*}%
In particular, there exist $T_{0},R_{0}>0$ such that for every $0<T\leq T_{0}$ and $0<R\leq
R_{0}$ we have 
\begin{equation*}
\left( C_{0}\left\Vert \nabla g\right\Vert _{\infty }+\left\Vert
p_{0}\right\Vert _{\infty }\right) +2C_{0}\sqrt{T}K\left( R+R^{2}\right)
\leq R.
\end{equation*}%
With any such choice of $T,R>0$ we have 
\begin{equation*}
\Gamma \left( \Lambda _{T,R}\right) \subset \Lambda _{T,R}.
\end{equation*}%
If $\left( p,\theta \right) ,\left( p^{\prime },\theta ^{\prime }\right) \in
\Lambda _{T,R}$ we have proved above%
\begin{equation*}
\left\Vert \Gamma \left( p,\theta \right) -\Gamma \left( p^{\prime },\theta
^{\prime }\right) \right\Vert _{T,\infty }\leq 2 R \widetilde{C} \sqrt{T}%
\left\Vert \left( p,\theta \right) -\left( p^{\prime },\theta ^{\prime
}\right) \right\Vert _{T,\infty }.
\end{equation*}%
Hence, reducing $T$ if necessary, we see that $\Gamma $, as a map from the
metric space $\Lambda _{T,R}$ into itself, is a contraction. 
\end{proof}

\subsection{Proof of Theorem \ref{th:VerificationTheorem}-\textit{(i)}}\label{subsec:proofVerificationTheorem}
\begin{proof}
Let $\epsilon>0$ and let be $(\theta_{\epsilon})_{\epsilon > 0}$ be a family of mollifiers. Now, 
define the function $u_{\epsilon}:[0,T] \times \mathbb{R}^{d} \rightarrow \mathbb{R}$ by setting
\begin{equation*}
u_{\epsilon}(t, x) \doteq (\theta_{\epsilon} * u(t,\,\cdot\,))(x) = \int_{\mathbb{R}^{d}} \theta_{\epsilon}(x-y)\,u(t, y)\,dy.
\end{equation*}
In particular, taking the convolution of the Hamilton-Jacobi Bellman equation \eqref{eq:pdesystem} with $\theta_{\epsilon}$ it is not difficult to see that $u_{\epsilon}$ satisfies the following equation
\begin{equation*}
-\partial_{t} u_{\epsilon} - \frac{1}{2}\Delta u_{\epsilon} - \theta_{\epsilon} * (b(x, p(t, x)) \cdot \nabla u) + \frac{1}{2}\,\theta_{\epsilon} * |\nabla u|^2 = \theta_{\epsilon} * f(x, p(t, x))
\end{equation*}
on $(0, T) \times \mathbb{R}^{d}$. The smoothing properties of convolution (see Proposition \ref{prop:brezisConv}) guarantees that $D^2 u_{\epsilon}(t, x)$ is continuous; besides, 
from the Hamilton-Jacobi Bellman equation it follows that also $\partial_t u_{\epsilon}$ is continuous, and therefore that $u_{\epsilon} \in \text{C}^{1,2}((0,T) \times \mathbb{R}^{d})$. Applying It\^{o}'s formula we obtain
\begin{equation*}
\begin{split}
d u_{\epsilon}(t, X_t^{\alpha}) &=  \partial_t u_{\epsilon} dt + \nabla u_{\epsilon} \cdot (\alpha_t + b(X_t^{\alpha}, p(t, X_t^{\alpha})))\,dt + \nabla u_{\epsilon} \cdot dW_t + \frac{1}{2} \nabla u_{\epsilon}\,dt\\
											 &= \left(\partial_t u_{\epsilon} + \frac{1}{2} \nabla u_{\epsilon} + \theta_{\epsilon} * (b(\,\cdot\,,p) \cdot \nabla u_{\epsilon})(t, X_t^{\alpha})\right)\,dt \\
											 &+ \left((\alpha_t + b(X_t^{\alpha}, p(t, X_t^{\alpha}))\cdot \nabla u_{\epsilon}(t, X_t^{\alpha}) - \theta_{\epsilon} * (b(\,\cdot\,,p) \cdot \nabla u)(t, X_t^{\alpha})\right)\,dt\\
											 &+ \nabla u_{\epsilon}(t, X_t^{\alpha})\cdot dW_t\\
											 &= \left(\frac{1}{2}(\theta_{\epsilon} * |\nabla u|^2)(t, X_t^{\alpha}) - (\theta_{\epsilon} * f(\,\cdot\,,p))(t, X_t^{\alpha})\right)\,dt\\
											 &+ \left(\alpha_t \nabla u_{\epsilon}(t, X_t^{\alpha}) + r_{\epsilon}\right)\,dt + \nabla u_{\epsilon}(t, X_t^{\alpha}) \cdot dW_t,
\end{split}
\end{equation*}
where we defined
$$r_{\epsilon}(t) \doteq b(X_t^{\alpha}, p(t, X_t^{\alpha})) \cdot \nabla u_{\epsilon}(t, X_t^{\alpha}) - \theta_{\epsilon} * (b(\,\cdot\,,p) \cdot \nabla u)(t, X_t^{\alpha}).$$
Hence, 
\begin{equation*}
\begin{split}
\mathbb{E}[(\theta_{\epsilon}*  g)(X_T^{\alpha})] & - \mathbb{E}[u_{\epsilon}(0, X_0^{\alpha})]\\
											   &= \mathbb{E}\left[\int_{0}^{T}\left(\frac{1}{2}(\theta_{\epsilon} * |\nabla u_{\epsilon}|^{2})(t, X_t^{\alpha}) - (\theta_{\epsilon} * f(\,\cdot\,,p))(t, X_t^{\alpha}\right)\,dt\right]\\
											   &+ \mathbb{E}\left[\int_{0}^{T} (\alpha_t \cdot \nabla u_{\epsilon}(t, X_t^{\alpha}) + r_{\epsilon}(t))\,dt\right]
\end{split}
\end{equation*}
We claim that by taking the limit as $\epsilon \rightarrow 0$ in the previous equation we obtain the identity \eqref{eq:euristic} as in the heuristic argument.\\ 
\indent We first deal with terms that do not explicitly depend on time, then extend the argument to time-dependent terms. To this end, let $v \in \text{C}(\mathbb{R}^{d})$; then, $\theta_{\epsilon} * v \rightarrow v$ as $\epsilon \rightarrow 0$ uniformly on compact sets (see Proposition \ref{prop:brezisConv2}). Set now $v_{\epsilon} \doteq \theta_{\epsilon} * v$. If $v$ is bounded by a constant \textcolor{black}{$K$}, then the same holds for $v_{\epsilon}$ and the constant bounding $v_{\epsilon}$ is independent of $\epsilon$. For all $R > 0$ and for any probability measure $\mu \in \mathcal{P}(\mathbb{R}^{d})$ we have 
\begin{equation}\label{eq:convergence_zero_vepsilon_x}
\begin{split}
\Bigg|\int_{\mathbb{R}^{d}}(v_{\epsilon}(x) - v(x))\,\mu(d x)\Bigg| &\leq |\overline{B}_{R}(0)| \sup_{x \in \overline{B}_{R}(0)} |v_{\epsilon}(x) - v(x)| \\
																									&+ \int_{\mathbb{R}^{d}\setminus \overline{B}_{R}(0)} (\| v_{\epsilon} \|_{\infty} + \| v \|_{\infty})\,\mu(dx)\\
																									&\rightarrow 2 K \int_{\mathbb{R}^{d}\setminus \overline{B}_{R}(0)} \mu(dx)\quad\text{as}\quad\epsilon\rightarrow 0,
\end{split}
\end{equation}
where $\overline{B}_{R}(0) \subset \mathbb{R}^{d}$ denotes the closed ball of radius $R$ around the origin and $|\overline{B}_{R}(0)|$ its measure. In particular, the last term in \eqref{eq:convergence_zero_vepsilon_x} converges to zero as $R \rightarrow \infty$.\\
\noindent

Let now $u(t,\,\cdot\,)\in\text{C}_{b}(\mathbb{R}^{d})$, bounded by a constant $K$, with $u$ the first component of the solution of the PDE system in Eq.~\eqref{eq:pdesystem}. Moreover, let $\mu_t^{\alpha}$ the law of $X_t^{\alpha}$. Then 
\begin{equation*}
\begin{split}
\mathbb{E}[(\theta_{\epsilon} * u(t,\,\cdot\,))(X_{t}^{\alpha})] = \int_{\mathbb{R}^{d}} (\theta_{\epsilon} * u(t,\cdot))(x)\,\mu_t^{\alpha}(d x) \rightarrow \int_{\mathbb{R}^{d}} u(t,x) \mu_t^{\alpha}(t)(dx)\quad\text{as}\quad\epsilon\rightarrow 0
\end{split}
\end{equation*}
for all $t \in [0,T]$, so in particular for $t = T$ and $u(T) = g$.\\
\indent Now, we show that a similar argument holds also for terms that have an explicit, continuous, dependence on the time variable. Let  \textcolor{black}{$v \in \text{C}_{b}([0, T] \times \mathbb{R}^{d})$}; then for each fixed $t \in [0, T]$ we have that $\theta_{\epsilon} * v(t) \rightarrow v(t)$ as $\epsilon \rightarrow 0$ uniformly on compact sets (see, again, Proposition \ref{prop:brezisConv2}). In particular, for all $R > 0$ and for any probability measure $\mu \in \mathcal{P}(\mathbb{R}^{d})$ we have:
\begin{equation*}
\begin{split}
 \Bigg| \int_{\mathbb{R}^{d}}\int_{0}^{T}(v_{\epsilon}(t, x) - v(t, x))\,dt\,\mu(dx) \Bigg| &\leq \int_{\overline{B}_{R}(0)}\int_{0}^{T} |v_{\epsilon}(t, x) - v(t, x)|\,dt\,\mu(dx)\\
 &+\int_{\mathbb{R}^{d}\setminus \overline{B}_{R}(0)} \int_{0}^{T}  (\| v_{\epsilon} \|_{\infty} + \| v \|_{\infty})\,dt\,\mu(dx).
\end{split}
\end{equation*}
\noindent The first term converges to zero as $\epsilon \rightarrow 0$ provided that both $v_{\epsilon}$ and $v$ belongs to $\text{C}([0,T] \times \mathbb{R}^{d})$; indeed, in this case we can compute the maximum over $[0, T]$. The second term converges to zero by an argument similar to that used in Eq.~ \eqref{eq:convergence_zero_vepsilon_x}.\\
\noindent However, if \textcolor{black}{$v \in  \text{C}_{b}([0, T] \times \mathbb{R}^{d})$}, then $v(t,\,\cdot\,) \in \text{C}_b(\mathbb{R}^{d})$ and $v(\,\cdot\,,x) \in \text{C}([0,T])$; therefore, the compactness of $[0,T]$ implies the uniform continuity of $v(\,\cdot\,,x)$. Then, the fact that $v(t,\,\cdot\,) \in \text{C}_b(\mathbb{R}^{d})$ and the uniform continuity of $v(\,\cdot\,,x)$ imply the joint continuity of $v$. Indeed, let $(t, x) \in [0,T] \times \mathbb{R}^{d}$. For all $\epsilon > 0$ there exist $\delta > 0$ and $ \eta > 0$ such that
\begin{equation*}
|v(t',x')-v(t,x)|<\epsilon\quad\forall (t',x')\in [0,T]\times\mathbb{R}^d\quad \text{s.t.}\quad|x-x'|<\eta,\quad |t-t'|<\delta.
\end{equation*}
\noindent More precisely, let $\delta>0$ be the constant related to the uniform continuity in time associated to $\epsilon/2$ and $\eta>0$ be the constant related to the continuity in space associated to $\epsilon/2$. Then:
\begin{equation*}
\begin{split}
|v(t',x')-v(t,x)|&\leq |v(t',x')-v(t,x')|+|v(t,x')-v(t,x)|\\
					 &< \frac{\epsilon}{2}+\frac{\epsilon}{2}\quad\forall (t',x')\in [0,T]\times\mathbb{R}^d\quad \text{s.t.}\quad|x-x'|<\eta,\quad |t-t'|<\delta.
\end{split}
\end{equation*}
\noindent By the fact that all our terms satisfy the required continuity as $v$, by the boundedness of the  admissible controls and by choosing $\mu=\mu^{\alpha}$ law of $X^{\alpha}$ we conclude.
\end{proof}

\section{H\"{o}lder-type seminorm bounds-1}
\label{app:holder_heat_bound}
\noindent This section collects some results for H\"{o}lder-type seminorm (see definition in Eq.~ \eqref{eq:holder_seminorm}) used in the  proof of Theorem \ref{th:oelschlager85New}.\\
\textcolor{black}{We start by fixing the fractional exponent $s \in (0,1)$ and for any $p \in [1, +\infty)$, we define $W^{s,p}(\mathbb{R}^{d})$ as the space:
\begin{equation*}
W^{s,p}(\mathbb{R}^{d}) \doteq \left\{ f \in L^{p}(\mathbb{R}^{d})\,:\, \frac{
\left\vert f\left( x\right) -f\left( y\right) \right\vert}{\left\vert
x-y\right\vert ^{\frac{d}{p}+s}}\in L^{p}(\mathbb{R}^{d} \times \mathbb{R}^{d})\right\}
\end{equation*}
endowed with the following norm:
\begin{equation*}
\|f\|_{W^{s,p}(\mathbb{R}^{d})}^{p} \doteq \int_{\mathbb{R}^{d}} |f(x)|^p\,dx + \int_{\mathbb{R}^{d}}\int_{\mathbb{R}^{d}}\frac{%
\left\vert f\left( x\right) -f\left( y\right) \right\vert ^{p}}{\left\vert
x-y\right\vert ^{d+sp}}dx\,dy \doteq \|f \|_{L^{p}(\mathbb{R}^{d})}^{p} +  \left[ f\right] _{p,sp}^{p}.
\end{equation*}}
\textcolor{black}{Let $p\in[1,+\infty)$ and $s\in(0,1)$ be such that $sp > d$.} Then, there exists a constant $C > 0$, depending on $d, s, p$, such that
\begin{equation}\label{eq:embedding_revision}
\| f \|_{\infty} + [f]_{\gamma} \leq C\left(\|f\|_{L^p} + [f]_{p, sp}\right),
\end{equation}
where $\gamma \doteq (s p - d)/p$ \textcolor{black}{and $sp > d$}. We refer to \cite{di2012hitchhiker}, Theorem 8.2, for a proof of the previous result. We state the following lemma
\begin{lemma}\label{lem:lemma_bound_1}
Let $p \in [1, +\infty)$, $s \in (0,1)$ be such that $s p > d$, $d \in \mathbb{N}$. Then, 
\begin{equation*}
\left[ f\right] _{p,sp}^{p}\leq \int_{\mathbb{R}^{d}}\int_{\left\vert
h\right\vert \leq 1}\frac{\left\vert f\left( y+h\right) -f\left( y\right)
\right\vert ^{p}}{\left\vert h\right\vert ^{d+sp}}\,dh\,dy+2\,C_{p,d,s}\left\Vert
f\right\Vert _{L^{p}}^{p}.
\end{equation*}
\end{lemma}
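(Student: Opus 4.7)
The plan is to reduce the double integral defining $[f]_{p,sp}^p$ to an integral over a small-increment region plus a controllable remainder. Specifically, I would first perform the change of variables $h \doteq x - y$ in the inner integral of
\[
[f]_{p,sp}^p = \int_{\mathbb{R}^d}\int_{\mathbb{R}^d} \frac{|f(x)-f(y)|^p}{|x-y|^{d+sp}}\,dx\,dy,
\]
so that (since the Jacobian is $1$ and $h$ ranges over $\mathbb{R}^d$)
\[
[f]_{p,sp}^p = \int_{\mathbb{R}^d}\int_{\mathbb{R}^d} \frac{|f(y+h)-f(y)|^p}{|h|^{d+sp}}\,dh\,dy.
\]

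Next I would split the $h$-integral into the regions $\{|h|\le 1\}$ and $\{|h|>1\}$. The first region gives, verbatim, the first term on the right-hand side of the claimed inequality. So it remains to estimate the tail integral
\[
T \doteq \int_{\mathbb{R}^d}\int_{|h|>1} \frac{|f(y+h)-f(y)|^p}{|h|^{d+sp}}\,dh\,dy.
\]
For this I would apply the elementary convexity bound $|a-b|^p \le 2^{p-1}(|a|^p+|b|^p)$, exchange the order of integration by Fubini--Tonelli, and use translation invariance of Lebesgue measure to write $\int_{\mathbb{R}^d}|f(y+h)|^p\,dy = \|f\|_{L^p}^p$ for every $h$. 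This yields
\[
T \le 2^{p-1}\Bigl(\int_{|h|>1}\frac{dh}{|h|^{d+sp}}\Bigr)\,\bigl(\|f\|_{L^p}^p+\|f\|_{L^p}^p\bigr) = 2\,C_{p,d,s}\,\|f\|_{L^p}^p,
\]
after setting $C_{p,d,s} \doteq 2^{p-1}\int_{|h|>1}|h|^{-(d+sp)}\,dh$. A quick check in polar coordinates shows this constant is finite (indeed it equals $2^{p-1}\,\sigma_{d-1}/(sp)$), so the bound is meaningful; this uses only $sp>0$, while the stronger assumption $sp>d$ of the statement is inherited from the companion Morrey-type embedding in Eq.~\eqref{eq:embedding_revision} in which this lemma is used.

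There is no real obstacle here: the only point worth being careful about is the application of Fubini--Tonelli to justify swapping the $dh$ and $dy$ integrals in $T$, which is immediate since the integrand is non-negative. Combining the two pieces yields the claimed inequality.
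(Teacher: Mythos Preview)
Your proof is correct and follows essentially the same approach as the paper's: change variables to $h=x-y$, split the $h$-integral at $|h|=1$, and bound the tail via $|a-b|^p\le C_p(|a|^p+|b|^p)$ together with Fubini--Tonelli and translation invariance. Your added remarks (the explicit value of $C_{p,d,s}$ and the observation that only $sp>0$ is needed here) are fine but not required.
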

\begin{proof}
We write $[f]_{p, s p}^{p}$ as
\begin{equation*}
\begin{split}
& \left[ f\right] _{p,sp}^{p}=\int_{\mathbb{R}^{d}}\int_{\mathbb{R}^{d}}\frac{%
\left\vert f\left( y+h\right) -f\left( y\right) \right\vert ^{p}}{\left\vert
h\right\vert ^{d+sp}}dh\,dy = I_1 + I_2\quad\text{where}\\
& I_{1} = \int_{\mathbb{R}^{d}}\int_{\left\vert h\right\vert \leq 1}\frac{\left\vert f\left( y+h\right) -f\left( y\right) \right\vert ^{p}}{\left\vert
h\right\vert ^{d+sp}}dh\,dy\,\,\text{and}\,\,I_{2}=\int_{\mathbb{R}^{d}}\int_{\left\vert h\right\vert >1}\frac{\left\vert f\left( y+h\right) -f\left( y\right) \right\vert ^{p}}{\left\vert
h\right\vert ^{d+sp}}dh\,dy
\end{split}
\end{equation*}
Then, 
\begin{equation*}
\begin{split}
&\int_{\mathbb{R}^{d}}\int_{\left\vert h\right\vert >1}\frac{\left\vert
f\left( y+h\right) -f\left( y\right) \right\vert ^{p}}{\left\vert
h\right\vert ^{d+sp}}dh\,dy \\
&\leq C_{p}\int_{\mathbb{R}^{d}}\int_{\left\vert h\right\vert >1}\frac{%
\left\vert f\left( y+h\right) \right\vert ^{p}}{\left\vert h\right\vert
^{d+sp}}dh\,dy+C_{p}\left( \int_{\left\vert h\right\vert >1}\frac{1}{%
\left\vert h\right\vert ^{d+sp}}dh\right) \left( \int_{\mathbb{R}%
^{d}}\left\vert f\left( y\right) \right\vert ^{p}dy\right) \\
&= C_{p}\int_{\left\vert h\right\vert >1}\int_{\mathbb{R}^{d}}\frac{%
\left\vert f\left( y+h\right) \right\vert ^{p}}{\left\vert h\right\vert
^{d+sp}}dy\,dh+C_{p,d,s}\left\Vert f\right\Vert _{L^{p}}^{p} \\
&=C_{p}\int_{\left\vert h\right\vert >1}\int_{\mathbb{R}^{d}}\frac{
\left\vert f\left( y\right) \right\vert ^{p}}{\left\vert h\right\vert ^{d+sp}
}dy\,dh+C_{p,d,s}\left\Vert f\right\Vert _{L^{p}}^{p}=2\,C_{p,d,s}\left\Vert
f\right\Vert _{L^{p}}^{p},
\end{split}
\end{equation*}
which concludes the proof. 
\end{proof}

\begin{lemma}
\label{lem:suff_hoelder}Assume there exists a number $\epsilon >0$ with the following property. For every $p\geq 2$ there is a function $g_{p}>0$ such that 
\begin{eqnarray}
&& \mathbb{E}\left[ \left\vert M_{t}^{N}\left( x\right) \right\vert ^{p}\right] \leq g_{p}\left( x\right),\label{eq:cond_1}\\
&& \mathbb{E}\left[ \left\vert M_{t}^{N}\left( x\right) -M_{t}^{N}\left(
x+h\right) \right\vert ^{p}\right] \leq g_{p}\left( x\right) \left\vert
h\right\vert ^{\epsilon p},\label{eq:cond_2}\\
&& \int_{\mathbb{R}^{d}}g_{p}\left( x\right) dx  < \infty\label{eq:cond_3}
\end{eqnarray}
for all $\left\vert h\right\vert \leq 1$ and $x\in \mathbb{R}^{d}$. Then,
there is $\gamma >0$ such that, for every $p\geq 2$, there is a constant $%
C_{p}>0$ such that 
\begin{equation*}
\mathbb{E}\left[ \left\Vert M_{t}^{N}\right\Vert _{\gamma }^{p}\right] \leq
C_{p}.
\end{equation*}
\end{lemma}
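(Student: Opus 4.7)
The plan is to combine the Sobolev-type embedding in Eq.~\eqref{eq:embedding_revision} with the reformulation of the fractional seminorm provided by Lemma~\ref{lem:lemma_bound_1}. Fix once and for all some $\gamma\in(0,\epsilon)$ and an intermediate exponent $s\in(\gamma,\epsilon)$ (say $s=(\gamma+\epsilon)/2$); this $\gamma$ will be shown to satisfy the conclusion. The key is to tune $s$ and $p$ so that (i) the embedding applies, namely $sp>d$, (ii) the induced Hölder exponent $(sp-d)/p=s-d/p$ dominates $\gamma$, and (iii) the expectation of $[M^N_t]_{p,sp}^p$ stays finite, which forces $s<\epsilon$.

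First consider $p$ large enough that $s-d/p\geq\gamma$, that is $p\geq p_0\doteq d/(s-\gamma)$ (note $p_0s>d$ automatically). Applying the embedding pathwise to $f=M^N_t(\omega,\cdot)$, raising to the $p$-th power and taking expectations, yields
\begin{equation*}
\mathbb{E}\bigl[\|M^N_t\|_{s-d/p}^{p}\bigr]\;\leq\; C_{d,s,p}\bigl(\mathbb{E}\|M^N_t\|_{L^p}^{p}+\mathbb{E}[M^N_t]_{p,sp}^{p}\bigr).
\end{equation*}
By Fubini and hypothesis \eqref{eq:cond_1}, $\mathbb{E}\|M^N_t\|_{L^p}^{p}\leq G_p\doteq\int_{\mathbb{R}^d}g_p(x)\,dx<\infty$ by \eqref{eq:cond_3}. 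For the fractional seminorm, Lemma~\ref{lem:lemma_bound_1}, Fubini and hypothesis \eqref{eq:cond_2} give
\begin{equation*}
\mathbb{E}[M^N_t]_{p,sp}^{p}\;\leq\;G_p\int_{|h|\leq 1}|h|^{(\epsilon-s)p-d}\,dh\;+\;2\,C_{p,d,s}\,G_p,
\end{equation*}
and the integral is finite precisely because $s<\epsilon$; switching to polar coordinates shows it equals $|S^{d-1}|/((\epsilon-s)p)$. Therefore $\mathbb{E}\|M^N_t\|_{s-d/p}^{p}\leq C_p$ for every $p\geq p_0$.

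Since $s-d/p\geq\gamma$, the pathwise inequality $\|f\|_\gamma\leq 3\|f\|_{s-d/p}$ — obtained by splitting the supremum in $[f]_\gamma$ according to whether $|x-y|\leq 1$ or $>1$, and bounding $|f(x)-f(y)|\leq 2\|f\|_\infty$ in the latter case — gives $\mathbb{E}\|M^N_t\|_\gamma^{p}\leq 3^{p}C_p$ for all $p\geq p_0$. For $p\in[2,p_0)$, Jensen's inequality yields $\mathbb{E}\|M^N_t\|_\gamma^{p}\leq(\mathbb{E}\|M^N_t\|_\gamma^{p_0})^{p/p_0}$, which is finite by the previous step.

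The main point to check carefully is the joint constraint $\gamma<s<\epsilon$ together with $sp>d$ and $s-d/p\geq\gamma$; this is the only delicate balance and explains why the standard Kolmogorov–Chentsov criterion cannot be used directly, the latter producing constants that blow up when the domain is taken to be all of $\mathbb{R}^d$. Here the Sobolev embedding sidesteps that issue, the global integrability being supplied uniformly by condition~\eqref{eq:cond_3}.
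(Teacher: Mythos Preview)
Your proof is correct and follows essentially the same route as the paper's: choose $s\in(0,\epsilon)$, bound $\mathbb{E}\|M^N_t\|_{L^p}^p$ and $\mathbb{E}[M^N_t]_{p,sp}^p$ via Fubini, the hypotheses, and Lemma~\ref{lem:lemma_bound_1}, then invoke the fractional Sobolev embedding~\eqref{eq:embedding_revision} for large $p$ and descend to small $p$ by H\"older/Jensen. Your presentation is slightly more explicit in fixing $\gamma$ up front and in justifying the comparison $\|f\|_\gamma\leq 3\|f\|_{s-d/p}$, but the argument is the same.
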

\begin{proof}
\textcolor{black}{It is sufficient to prove the thesis for arbitrarily large $\bar{p}\geq 2$, since for smaller ones it follows from H\"older inequality. Choose $s\in(0,\varepsilon)$; then take any $\bar{p}\geq 2$ such that $s\bar{p} > d$. We have to find $\gamma> 0$ such that for every such $\bar{p}$ there is a constant $C_{\bar{p}}$ such that $\mathbb{E}\left[ \left\Vert M_{t}^{N}\right\Vert _{\gamma }^{\bar{p}}\right] \leq C_{\bar{p}}$ uniformly in $t\in[0,T]$ and $N\in\mathbb{N}$.}\\
Thanks to the assumptions, 
\begin{equation*}
\mathbb{E}\left[ \int_{\mathbb{R}^{d}}\left\vert M_{t}^{N}\left( x\right)
\right\vert ^{\bar{p}}dx\right] \leq C.
\end{equation*}
Moreover, thanks to Lemma \ref{lem:lemma_bound_1},
\begin{equation}
\begin{split}
\mathbb{E}\left[ \left[ M_{t}^{N}\right] _{\bar{p},s \bar{p}}^{\bar{p}}\right] &\leq  \int_{
\mathbb{R}^{d}}\int_{\left\vert h\right\vert \leq 1}\frac{\mathbb{E}\left[
\left\vert M_{t}^{N}\left( y+h\right) -M_{t}^{N}\left( y\right) \right\vert
^{\bar{p}}\right] }{\left\vert h\right\vert ^{d+s\bar{p}}}dh\,dy+2\,C_{\bar{p},d,s} \mathbb{E}\left[
\left\Vert M_{t}^{N}\right\Vert _{L^{\bar{p}}}^{\bar{p}}\right] \\
& \leq \int_{\mathbb{R}^{d}}\int_{\left\vert h\right\vert \leq 1}\frac{
g_{\bar{p}}\left( y\right) \left\vert h\right\vert ^{\epsilon \bar{p}}}{\left\vert
h\right\vert ^{d+s\bar{p}}}dh\,dy+C \\
& \leq  \left( \int_{\left\vert h\right\vert \leq 1}\frac{1}{\left\vert
h\right\vert ^{d-\left( \epsilon -s\right) \bar{p}}}dh\right) \int_{\mathbb{R}
^{d}}g_{\bar{p}}\left( y\right) dy+C\leq C.
\end{split}
\end{equation}
\textcolor{black}{Now, using again the fact that $\mathbb{E}\left[ \| M^N_t \|^{\bar{p}}_{L^{\bar{p}}} \right]\leq C$, we may apply inequality \eqref{eq:embedding_revision} and deduce the desired bound for $\bar{\gamma}=(s\bar{p}-d)/\bar{p}$. A-priori this value of $\gamma$ depends on the particular $\bar{p}$ chosen above. However, it is sufficient to choose first a value $\bar{p}_0$, such that $s\bar{p}_0>d$ and prove that $\mathbb{E}\left[ \| M^N_t \|^{\bar{p}}_{\bar{\gamma}_0} \right]\leq C_{\bar{p}_0}$; then for all $\bar{p}>\bar{p}_0$, we prove the inequality with $\bar{\gamma}=s-d/\bar{p}$ which is larger than $\bar{\gamma}_0$, hence it holds also with H\"older exponent $\bar{\gamma}_0$, which can be taken as the value of $\gamma$ in the statement of the lemma.}
\end{proof}

\begin{lemma}
\label{lem:heat_semigroup_estim}
Let $N, d \in \mathbb{N}$, let $\mathcal{P}_t$ be the semi-group associated  to the density $G(t, x)$ of $x + W_t$ where $W_t$ is a standard blackian motion, $x \in \mathbb{R}^{d}$ and $t \in \textcolor{black}{(} 0, T]$. Moreover, let $V \in \text{C}_c^1(\mathbb{R}^d) \cap \mathcal{P}(\mathbb{R}^{d})$. Then
\begin{equation*}
\left\Vert \mathcal{P}_{t}h\right\Vert _{\gamma } \leq  C_{\gamma
}\left\Vert h\right\Vert _{\gamma }.
\end{equation*}
Moreover, if $R>0$ denotes a number such that the support of $V$ is contained in $B_{R}(0)$, the open ball of radius $R$ around the origin, and we write $V^{N}\left( x\right) =\epsilon_{N}^{-d}V\left( \epsilon _{N}^{-1}x\right)$, then there exist two constants $C_{T,R,V}>0$ and $\lambda _{T,R,V}>0$ with the following property: for every $\delta
,\gamma \in \left( 0,1\right) $, $x\in \mathbb{R}^{d}$, $\left\vert
h\right\vert \leq 1$ and $t\in \left[ 0,T\right] $ 
\begin{equation}
\left\vert \left( \nabla \mathcal{P}_{t}V^{N}\right) \left( x\right)
\right\vert \leq \frac{C_{T,R,V}}{t^{\frac{1-\delta }{2}}}\epsilon
_{N}^{-d-\delta }e^{-\frac{\left\vert x\right\vert }{8T}} .
\label{eq:bound1_heat_kernel}
\end{equation}
\begin{equation}
\left\vert \left( \nabla \mathcal{P}_{t}V^{N}\right) \left( x\right) -\left(
\nabla \mathcal{P}_{t}V^{N}\right) \left( x+h\right) \right\vert \leq \frac{
C_{T,R,V}}{t^{\frac{1}{2}\left( 1+\gamma \right) -\frac{\delta }{2}\left(
1-\gamma \right) }}\left\vert h\right\vert ^{\gamma }\epsilon
_{N}^{-d-\delta \left( 1-\gamma \right) } e^{-\lambda
_{T,R,V}\left\vert x\right\vert}
\label{eq:bound2_heat_kernel}
\end{equation}
\end{lemma}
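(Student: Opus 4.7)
The first inequality is the standard contractivity of the heat semigroup on H\"older spaces. Writing $\mathcal{P}_t h(x) = \mathbb{E}[h(x+W_t)]$ we have $\|\mathcal{P}_t h\|_\infty \leq \|h\|_\infty$, while by translation invariance
\begin{equation*}
|\mathcal{P}_t h(x) - \mathcal{P}_t h(y)| \leq \mathbb{E}\bigl[|h(x+W_t) - h(y+W_t)|\bigr] \leq [h]_\gamma |x-y|^\gamma,
\end{equation*}
which gives the claim with $C_\gamma = 1$.

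For \eqref{eq:bound1_heat_kernel}, the plan is to combine two elementary sup-norm bounds via the inequality $\min(a,b) \leq a^{1-\delta} b^\delta$. Integrating by parts (allowed since $V^N \in \text{C}^1_c$) gives $\nabla \mathcal{P}_t V^N = \mathcal{P}_t(\nabla V^N)$, so by contractivity and the scaling $\|\nabla V^N\|_\infty = \epsilon_N^{-d-1}\|\nabla V\|_\infty$,
\begin{equation*}
\|\nabla \mathcal{P}_t V^N\|_\infty \leq C_V\, \epsilon_N^{-d-1}.
\end{equation*}
Using instead $\nabla G(t,z) = -(z/t)G(t,z)$ together with $\|\nabla G(t,\cdot)\|_1 \leq C t^{-1/2}$ yields
\begin{equation*}
\|\nabla \mathcal{P}_t V^N\|_\infty \leq C t^{-1/2} \|V^N\|_\infty \leq C_V\, t^{-1/2} \epsilon_N^{-d}.
\end{equation*}
Interpolating produces the sup-norm form of \eqref{eq:bound1_heat_kernel} without the exponential. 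The exponential decay in $|x|$ is then inserted pointwise: each of the two integrals above localizes on the support $\{|y| \leq \epsilon_N R\}$ of $V^N$, so $|x-y|\geq |x|/2$ once $|x| \geq 2\epsilon_N R$, after which the Gaussian $G(t,x-y)$ (or the gradient $|(x-y)/t|\,G(t,x-y)$, dominated by another Gaussian up to a $t^{-1/2}$ factor via the moment bound $|z|\,e^{-|z|^2/(4t)} \leq C t^{1/2} e^{-|z|^2/(8t)}$) contributes $e^{-|x|^2/(16 t)}$. The conversion $e^{-|x|^2/(16 t)} \leq C_T\, e^{-|x|/(8T)}$, valid for $t\leq T$ whenever $|x|\geq 2t/T$ (and trivial otherwise since then $e^{-|x|/(8T)}$ is bounded below), yields the linear-in-$|x|$ decay.

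For \eqref{eq:bound2_heat_kernel} the plan is a double interpolation, first in $|h|$ using a Lipschitz bound, then in $\delta$ between two ``extreme'' estimates. From $\nabla^2 \mathcal{P}_t V^N = \nabla \mathcal{P}_{t/2}(\nabla \mathcal{P}_{t/2} V^N) = \nabla \mathcal{P}_{t/2}(\mathcal{P}_{t/2}\nabla V^N)$ one obtains
\begin{equation*}
\|\nabla^2 \mathcal{P}_t V^N\|_\infty \leq C t^{-1} \epsilon_N^{-d}, \qquad \|\nabla^2 \mathcal{P}_t V^N\|_\infty \leq C t^{-1/2} \epsilon_N^{-d-1}.
\end{equation*}
For any Lipschitz bound $A$ on $\nabla^2 \mathcal{P}_t V^N$ and any sup bound $B$ on $\nabla \mathcal{P}_t V^N$,
\begin{equation*}
\min(|h| A,\, 2B) \leq 2^{1-\gamma}\, |h|^\gamma A^\gamma B^{1-\gamma}
\end{equation*}
gives H\"older-type control. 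The two extreme choices $(A,B) = (t^{-1}\epsilon_N^{-d},\, t^{-1/2}\epsilon_N^{-d})$ and $(A,B) = (t^{-1}\epsilon_N^{-d},\, \epsilon_N^{-d-1})$ yield, respectively, $C|h|^\gamma \epsilon_N^{-d} t^{-(1+\gamma)/2}$ and $C|h|^\gamma \epsilon_N^{-d-(1-\gamma)} t^{-\gamma}$. A final interpolation of these with weight $\delta$ (again by geometric mean) produces the $\epsilon_N$ exponent $-d-\delta(1-\gamma)$ and the $t$ exponent $-(1+\gamma)/2 + \delta(1-\gamma)/2$ of \eqref{eq:bound2_heat_kernel}; a short computation of the convex combination confirms both exponents. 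The exponential factor is inherited from the pointwise Gaussian decay of the elementary bounds just as above, possibly with a different constant $\lambda_{T,R,V}$.

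The main bookkeeping obstacle will be ensuring that the pointwise Gaussian decay $e^{-c|x|^2/t}$ attached to each elementary kernel estimate is propagated uniformly through every interpolation step with constants independent of $N$, $t\in(0,T]$ and $|h|\leq 1$; once this is done, the final conversion to the linear-in-$|x|$ form $e^{-\lambda|x|}$ is a single application of the pointwise inequality already used for \eqref{eq:bound1_heat_kernel}.
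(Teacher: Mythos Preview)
Your proposal is correct and follows essentially the same route as the paper: two elementary pointwise bounds on $\nabla\mathcal{P}_t V^N$ (one with the derivative on $G_t$, one on $V^N$) interpolated with weight $\delta$ for \eqref{eq:bound1_heat_kernel}, and a Lipschitz bound via $D^2$ interpolated with the sup bounds for \eqref{eq:bound2_heat_kernel}. The only organizational difference is that the paper carries the Gaussian spatial decay pointwise through every step and then does a single $(\gamma,1-\gamma)$ interpolation against the already-proved \eqref{eq:bound1_heat_kernel}, whereas you first interpolate in sup-norm and unfold this into two successive geometric-mean interpolations; algebraically the two orderings coincide.
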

\begin{proof}
\textcolor{black}{The first inequality is a well known properties of analytic semi-group (see, for instance, \cite{lunardi2012analytic})}. We give a detailed proof of the last two equalities.\\
\indent\textbf{\textit{Step 1.}}\quad We collect some preliminary fact. We recall that
\begin{equation*}
G_t(x) \doteq G(t, x) = \frac{1}{\left( 2\pi t\right) ^{d/2}}e^{-\frac{1}{2t}\left\vert x\right\vert ^{2}} \quad\text{and}\quad \left( \mathcal{P}_{t}f\right) \left( x\right) \textcolor{black}{=} \int_{\mathbb{R}^{d}}G_{t}\left( x-y\right) f\left( y\right) dy
\end{equation*}
and we find a bound for $\nabla G_{t}\left( x\right)$ and $|D^2 G_t(x)|$. Notice that
\begin{equation*}
\nabla G_{t}\left( x\right) =-\frac{x}{t}\frac{1}{\left( 2\pi t\right) ^{d/2}%
}e^{-\frac{1}{2t}\left\vert x\right\vert ^{2}} =-\frac{1}{%
\sqrt{t}}\frac{1}{\left( 2\pi t\right) ^{d/2}}\sqrt{\frac{\left\vert
x\right\vert ^{2}}{t}}e^{-\frac{1}{2t}\left\vert x\right\vert ^{2}}
\end{equation*}
hence, being $\sqrt{r}\exp \left( -\frac{1}{2}r\right) \leq \exp \left( -%
\frac{1}{4}r\right) $, 
\begin{equation*}
\left\vert \nabla G_{t}\left( x\right) \right\vert \leq \frac{1}{\sqrt{t}}%
\cdot \frac{1}{\left( 2\pi t\right) ^{d/2}} e^{-\frac{1}{4 t}\left\vert x\right\vert ^{2}}
=\frac{2^{d/2}}{\sqrt{t}}\cdot \frac{%
1}{\left( 2\pi \left( 2t\right) \right) ^{d/2}}e^{-\frac{1}{2}\frac{\left\vert x\right\vert ^{2}}{(2t)}}
\end{equation*}
Similarly, for suitable $\lambda ,C>0$,
\begin{equation*}
\left\vert D^{2}G_{t}\left( x\right) \right\vert \leq \frac{C}{t}\cdot \frac{%
1}{\left( 2\pi \left( \lambda t\right) \right) ^{d/2}}e^{-\frac{1}{2}\frac{\left\vert x\right\vert ^{2}}{(\lambda t)}}
\end{equation*}
\indent\textbf{\textit{Step 2.}} In this step we prove that 
\begin{equation*}
\left\vert \left( \nabla \mathcal{P}_{t}V^{N}\right) \left( x\right)
\right\vert \leq \frac{C_{T,R,V}}{\sqrt{t}}\epsilon _{N}^{-d} e^{-\frac{|x|}{8\textcolor{black}{T}}}
\end{equation*}
for all $x\in \mathbb{R}^{d}$ and $t \in \textcolor{black}{(} 0, T]$, for a suitable
constant $C_{T,R,V}>0$. From the bound for $\left\vert \nabla G_{t}\left( x\right) \right\vert$ in \textbf{\textit{Step 1}} we obtain
\begin{eqnarray*}
\left\vert \left( \nabla \mathcal{P}_{t}V^{N}\right) \left( x\right)
\right\vert &\leq &\int_{\mathbb{R}^{d}}\left\vert \nabla G_{t}\left(
x-y\right) \right\vert V^{N}\left( y\right) dy=\int_{B_{R}(0)
}\left\vert \nabla G_{t}\left( x-y\right) \right\vert \epsilon
_{N}^{-d}V\left( \epsilon _{N}^{-1}y\right) dy \\
&\leq &\frac{2^{d/2}}{\sqrt{t}}\epsilon _{N}^{-d}\left\Vert V\right\Vert
_{\infty }\int_{B_{R}(0)}\frac{1}{\left( 2\pi \left( 2t\right)
\right) ^{d/2}}e^{-\frac{1}{2}\frac{\left\vert x-y\right\vert ^{2}}{%
\left( 2t\right) }}\,dy.
\end{eqnarray*}%
If $\left\vert x\right\vert \leq R+1$, we bound the integral from above by
the integral on the full space, which is equal to one, and deduce%
\begin{equation*}
\sup_{\left\vert x\right\vert \leq R+1}\left\vert \left( \nabla \mathcal{P}%
_{t}V^{N}\right) \left( x\right) \right\vert \leq \frac{2^{d/2}}{\sqrt{t}}%
\epsilon _{N}^{-d}\left\Vert V\right\Vert _{\infty }.
\end{equation*}%
If $\left\vert x\right\vert >R+1$ and $\left\vert y\right\vert \leq R$, then
(we oversimplify to make expressions easier in the sequel) $\left\vert
x-y\right\vert ^{2}\geq \left\vert x-y\right\vert \geq \left\vert
x\right\vert -R$. Therefore, for $\left\vert x\right\vert >R+1$, 
\begin{equation*}
\left\vert \left( \nabla \mathcal{P}_{t}V^{N}\right) \left( x\right)
\right\vert \leq \frac{2^{d/2}}{\sqrt{t}}\epsilon _{N}^{-d}\left\Vert
V\right\Vert _{\infty }\left\vert B_{R}(0) \right\vert \frac{1}{%
\left( 2\pi \left( 2t\right) \right) ^{d/2}}e^{-\frac{1}{2}\frac{
\left\vert x\right\vert -R}{\left( 2t\right) }}
\end{equation*}
One show that there is $C_{T}>0$ such that for $t\in \left[ 0,T\right] $ and 
$\left\vert x\right\vert >R+1$, one has%
\begin{equation*}
\frac{1}{\left( 2\pi \left( 2t\right) \right) ^{d/2}}e^{-\frac{1}{2}\frac{
\left\vert x\right\vert -R}{\left( 2t\right) }} \leq C_{T}e^{-\frac{1}{8T}\left( \left\vert x\right\vert -R\right)}
\end{equation*}%
Indeed the left-hand-side is controlled (up to a constant)\ by $\left( \frac{%
\left\vert x\right\vert -R}{2t}\right) ^{d/2}e^{-\frac{1}{2}\frac{%
\left\vert x\right\vert -R}{2t}} $ (because $\left\vert x\right\vert
-R\geq 1$) and the function $r^{d/2}e^{-\frac{1}{2}r} $ is
bounded above by $e^{-\frac{1}{4}r}$, up to a constant;
finally, $e^{-\frac{1}{4}\frac{\left\vert x\right\vert -R}{2t}}
\leq e^{-\frac{1}{8T}\left( \left\vert x\right\vert
-R\right)}$.

Hence
\begin{equation*}
\left\vert \left( \nabla \mathcal{P}_{t}V^{N}\right) \left( x\right)
\right\vert \leq \frac{C_{T,R,V}}{\sqrt{t}}\epsilon _{N}^{-d} e^{-
\frac{\left\vert x\right\vert -R}{8T}}=\frac{C_{T,R,V}^{\prime }}{%
\sqrt{t}}\epsilon _{N}^{-d} e^{-\frac{\left\vert x\right\vert }{8T}}
\end{equation*}
Remaning the constant $C_{T,R,V}^{\prime }$, the same bound is true for $%
\left\vert x\right\vert \leq R+1$, hence it is true for all $x$ and all $t \in \textcolor{black}{(} 0, T]$.\\
\indent\textbf{\textit{Step 3.}} We complete the proof of \eqref{eq:bound1_heat_kernel}. In addition to the bound found in \textbf{\textit{Step 2}} we have
\begin{eqnarray*}
\left\vert \left( \nabla \mathcal{P}_{t}V^{N}\right) \left( x\right)
\right\vert &\leq &\int_{\mathbb{R}^{d}}G_{t}\left( x-y\right) \left\vert
\nabla V^{N}\left( y\right) \right\vert dy=\epsilon _{N}^{-d-1}\int_{\mathbb{%
R}^{d}}G_{t}\left( x-y\right) \left\vert \left( \nabla V\right) \left(
\epsilon _{N}^{-1}y\right) \right\vert dy \\
&\leq &\epsilon _{N}^{-d-1}\left\Vert \nabla V\right\Vert _{\infty
}\int_{B_{R}(0)}G_{t}\left( x-y\right) dy.
\end{eqnarray*}
Arguing as above we get,
\begin{equation*}
\left\vert \left( \nabla \mathcal{P}_{t}V^{N}\right) \left( x\right)
\right\vert \leq C_{T,R,V}\epsilon _{N}^{-d-1} e^{-\frac{\left\vert
x\right\vert }{8T}}
\end{equation*}
where if necessary we have renamed the constant $C_{T,R,V}$. Now, taken $%
\delta \in \left( 0,1\right) $, we use both inequalities for $\left\vert
\left( \nabla \mathcal{P}_{t}V^{N}\right) \left( x\right) \right\vert $ to
get 
\begin{equation*}
\left\vert \left( \nabla \mathcal{P}_{t}V^{N}\right) \left( x\right)
\right\vert =\left\vert \left( \nabla \mathcal{P}_{t}V^{N}\right) \left(
x\right) \right\vert ^{1-\delta }\left\vert \left( \nabla \mathcal{P}%
_{t}V^{N}\right) \left( x\right) \right\vert ^{\delta }\leq \frac{C_{T,R,V}}{%
t^{\frac{1-\delta }{2}}}\epsilon _{N}^{-d-\delta }e^{-\frac{\left\vert
x\right\vert }{8T}}.
\end{equation*}
\indent\textbf{\textit{Step 4.}}  Finally we prove \eqref{eq:bound2_heat_kernel}. We note first that 
\begin{eqnarray*}
\left\vert \left( \nabla \mathcal{P}_{t}V^{N}\right) \left( x\right) -\left(
\nabla \mathcal{P}_{t}V^{N}\right) \left( x+h\right) \right\vert &\leq
&\sup_{\left\vert \xi \right\vert \leq \left\vert h\right\vert }\left\vert
D^{2}\mathcal{P}_{t}V^{N}\left( x+\xi \right) \right\vert \left\vert
h\right\vert \\
&\leq &\sup_{\left\vert \xi \right\vert \leq \left\vert h\right\vert }\int_{%
\mathbb{R}^{d}}\left\vert D^{2}G_{t}\left( x+\xi -y\right) \right\vert
V^{N}\left( y\right) dy\left\vert h\right\vert \\
&\leq &\frac{C}{t}\frac{\left\vert h\right\vert \epsilon _{N}^{-d}\left\Vert
V\right\Vert _{\infty }}{\left( 2\pi \left( \lambda t\right) \right) ^{d/2}}%
\sup_{\left\vert \xi \right\vert \leq \left\vert h\right\vert }\int_{B_R(0) } e^{-\frac{1}{2}\frac{\left\vert x+\xi -y\right\vert ^{2}%
}{\left( \lambda t\right) }}\,dy \\
&\leq &\frac{C_{T,R,V}}{t}\left\vert h\right\vert \epsilon
_{N}^{-d}\sup_{\left\vert \xi \right\vert \leq \left\vert h\right\vert }e^{-\frac{\left\vert x+\xi \right\vert }{4\lambda T}} \\
&\leq &\frac{C_{T,R,V}}{t}\left\vert h\right\vert \epsilon
_{N}^{-d}\sup_{\left\vert \xi \right\vert \leq \left\vert h\right\vert }e^{-\frac{\left\vert x\right\vert -\left\vert \xi \right\vert }{4\lambda
T}}\\
&=&\frac{C_{T,R,V}}{t}\left\vert h\right\vert \epsilon _{N}^{-d} e^{
\frac{1}{4\lambda T}} e^{-\frac{\left\vert x\right\vert }{%
4\lambda T}} \\
&=&\frac{C_{T,R,V}^{\prime }}{t}\left\vert h\right\vert \epsilon
_{N}^{-d} e^{-\frac{\left\vert x\right\vert }{4\lambda T}} .
\end{eqnarray*}
On the other hand, it holds:
\begin{eqnarray*}
\left\vert \left( \nabla \mathcal{P}_{t}V^{N}\right) \left( x\right) -\left(
\nabla \mathcal{P}_{t}V^{N}\right) \left( x+h\right) \right\vert &\leq
&\left\vert \left( \nabla \mathcal{P}_{t}V^{N}\right) \left( x\right)
\right\vert +\left\vert \left( \nabla \mathcal{P}_{t}V^{N}\right) \left(
x+h\right) \right\vert \\
&\leq &\frac{C_{T,R,V}}{t^{\frac{1-\delta }{2}}}\epsilon _{N}^{-d-\delta
} e^{-\frac{\left\vert x\right\vert }{8T}} +\frac{C_{T,R,V}}{%
t^{\frac{1-\delta }{2}}}\epsilon _{N}^{-d-\delta }e^{-\frac{%
\left\vert x+h\right\vert }{8T}} \\
&\leq &\frac{C_{T,R,V}^{\prime }}{t^{\frac{1-\delta }{2}}}\epsilon
_{N}^{-d-\delta } e^{-\frac{\left\vert x\right\vert }{8T}}
\end{eqnarray*}%
because $\left\vert h\right\vert \leq 1$. Therefore, for every
(small) $\gamma \in \left( 0,1\right) $, 
\begin{eqnarray*}
\left\vert \left( \nabla \mathcal{P}_{t}V^{N}\right) \left( x\right) -\left(
\nabla \mathcal{P}_{t}V^{N}\right) \left( x+h\right) \right\vert &\leq &%
\frac{C_{T,R,V}^{\prime }}{t^{\frac{1-\delta }{2}\left( 1-\gamma \right) }}%
\frac{1}{t^{\gamma }}\left\vert h\right\vert ^{\gamma }\epsilon _{N}^{\left(
-d-\delta \right) \left( 1-\gamma \right) }\epsilon _{N}^{-d\gamma }e^{-\frac{\left\vert x\right\vert }{4\left( 2\wedge \lambda \right) T}} \\
&=&\frac{C_{T,R,V}^{\prime }}{t^{\frac{1}{2}\left( 1+\gamma \right) -\frac{%
\delta }{2}\left( 1-\gamma \right) }}\left\vert h\right\vert ^{\gamma
}\epsilon _{N}^{-d-\delta \left( 1-\gamma \right) }e^{-\frac{
\left\vert x\right\vert }{4\left( 2\wedge \lambda \right) T}},
\end{eqnarray*}
which completes the proof.
\end{proof}

\section{H\"{o}lder-type seminorm bounds-2}

\label{app:holder_heat_bound_1} \noindent Let $N\in\mathbb{N}$. This section
collects some results on H\"{o}lder type semi-norm for convolution of the type
$V^{N}\ast\mu_{N}$, where $V^{N}$ satisfies to hypothesis \text{(H3)}, i.e.
$V^{N}(x)=\epsilon_{N}^{-d}V(\epsilon_{N}^{-1}x)$ with $\epsilon_{N}>0$,
$\lim_{N\rightarrow\infty}\epsilon_{N}=0$, $V\in\text{C}^{1}(\mathbb{R}%
^{d})\cap\mathcal{P}(\mathbb{R}^{d})$. In addition, $\mu_{N}\in\mathcal{P}%
(\mathbb{R}^{d})$. In what follows, for pedagogical reasons, we first treat
the case in which the probability measure $\mu_{N}$ is deterministic, then we
analyse the case in which $\mu_{N}$ is stochastic; the results' proofs in the
latter case are less elementary.\newline\indent We make the following
\textbf{remark}. If $\mu\in\mathcal{P}(\mathbb{R}^{d})$, then $V^{N}\ast\mu
\in\text{C}^{1}(\mathbb{R}^{d})$. Moreover, if $(\mu_{N})_{N\in\mathbb{N}%
}\subset\mathcal{P}(\mathbb{R}^{d}) $ converges weakly to $\mu\in
\mathcal{P}(\mathbb{R}^{d})$ as $N\rightarrow\infty$, then
\[
\lim_{N\rightarrow\infty}\left\langle V^{N}\ast\mu_{N},\varphi\right\rangle
=\left\langle \mu,\varphi\right\rangle \quad\text{for all}\quad\varphi
\in\text{C}_{c}(\mathbb{R}^{d}).
\]
\noindent Indeed, $\left\langle V^{N}\ast\mu_{N},\varphi\right\rangle
=\left\langle \mu_{N},V^{N,-}\ast\varphi\right\rangle $ where $V^{N,-}\left(
x\right)  =V^{N}\left(  -x\right)  $; then $V^{N,-}\ast\varphi\rightarrow\varphi$
uniformly on $\mathbb{R}^{d}$ as $N\rightarrow\infty$ and thus $\left\langle
\mu_{N},V^{N,-}\ast\varphi\right\rangle $ converges to $\left\langle
\mu,\varphi\right\rangle $.\newline\indent Let, as usual, $\overline{B}%
_{R}(0)$ be the closed ball of radius $R$ centred around zero. Spaces like
$\text{C}_{\ell oc}^{\gamma}(\mathbb{R}^{d})$, namely with the $\ell oc$
specification, are Polish spaces; the convergence in this spaces is the
convergence in the corresponding topologies over $\overline{B}_{R}(0)$ for
each $R>0$. In addition, let
\[
\text{C}_{\ell oc}^{\gamma-}(\mathbb{R}^{d})\doteq\cap_{\substack{\gamma
^{^{\prime}}<\gamma}}\text{C}_{\ell oc}^{\gamma^{^{\prime}}}(\mathbb{R}^{d})
\]
and endow it with the natural metric which yields convergence in each
$\text{C}_{\ell oc}^{\gamma^{^{\prime}}}(\mathbb{R}^{d})$. Recall that, by
$\left\Vert f\right\Vert _{\gamma}$ we mean the sum of the supremum norm
$\left\Vert f\right\Vert _{\infty}$ on full space $\mathbb{R}^{d}$ plus the
$\gamma$-H\"{o}lder seminorm on $\mathbb{R}^{d}$.

\begin{lemma}
Let $\left(  \mu_{N}\right)  _{N\in\mathbb{N}}\subset\mathcal{P}\left(
\mathbb{R}^{d}\right)  $ be a sequence converging weakly to $\mu\in
\mathcal{P}(\mathbb{R}^{d})$. Set $p_{N}=V^{N}\ast\mu_{N}$. Let $\gamma
\in(0,1)$ be such that there exists $K>0$ for which
\[
\left\Vert p_{N}\right\Vert _{\gamma}\leq K
\]
for all $N\in\mathbb{N}$. Then $\mu$ is absolutely continuous w.r.t. Lebesgue
measure with density $p\in\text{C}_{\ell oc}^{\gamma-}\left(  \mathbb{R}%
^{d}\right)  $ and $\left\Vert p\right\Vert _{\infty}\leq K$. Moreover,
$p_{N}\rightarrow p$ in $\text{C}_{loc}^{\gamma-}\left(  \mathbb{R}%
^{d}\right)  $.
\end{lemma}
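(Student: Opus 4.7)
The plan is to combine a diagonal Ascoli--Arzel\`{a} extraction with the preliminary remark of this appendix to identify the limit of $(p_{N})$ as the density of $\mu$, and then to upgrade the locally uniform convergence to convergence in $\text{C}_{\ell oc}^{\gamma-}(\mathbb{R}^{d})$ by a standard interpolation between H\"{o}lder and supremum norms.

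First, the assumption $\|p_{N}\|_{\gamma}\leq K$ gives simultaneously a uniform $L^{\infty}$ bound and uniform equicontinuity of $(p_{N})$ on every closed ball $\overline{B}_{R}(0)$. The Ascoli--Arzel\`{a} theorem applied ball by ball, together with a diagonal extraction over $R\in\mathbb{N}$, shows that every subsequence of $(p_{N})$ contains a further subsequence $(p_{N_{k}})$ converging locally uniformly on $\mathbb{R}^{d}$ to some nonnegative continuous limit $p$. Lower semicontinuity of the H\"{o}lder seminorm under pointwise convergence yields $[p]_{\gamma}\leq\liminf_{k\to\infty}[p_{N_{k}}]_{\gamma}\leq K$ and $\|p\|_{\infty}\leq K$, so $p\in\text{C}_{\ell oc}^{\gamma}(\mathbb{R}^{d})$ with the claimed bound.

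Second, I identify $p$ as the density of $\mu$. Fix $\varphi\in\text{C}_{c}(\mathbb{R}^{d})$. On the one hand, local uniform convergence combined with the compact support of $\varphi$ gives
\begin{equation*}
\int_{\mathbb{R}^{d}} p_{N_{k}}(x)\varphi(x)\,dx\longrightarrow\int_{\mathbb{R}^{d}} p(x)\varphi(x)\,dx.
\end{equation*}
On the other hand, the preliminary remark of this appendix yields
\begin{equation*}
\int_{\mathbb{R}^{d}} p_{N_{k}}(x)\varphi(x)\,dx=\langle V^{N_{k}}\ast\mu_{N_{k}},\varphi\rangle\longrightarrow\langle\mu,\varphi\rangle.
\end{equation*}
Hence $\int p\,\varphi\,dx=\langle\mu,\varphi\rangle$ for every $\varphi\in\text{C}_{c}(\mathbb{R}^{d})$, and therefore $\mu$ is absolutely continuous with respect to Lebesgue measure with density $p$. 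Since the limit $p$ is uniquely determined by $\mu$, every convergent subsequence of $(p_{N})$ has the same limit, so the full sequence $p_{N}$ converges to $p$ locally uniformly on $\mathbb{R}^{d}$.

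Third, I upgrade the convergence to $\text{C}_{\ell oc}^{\gamma-}(\mathbb{R}^{d})$. Fix $R>0$, $\gamma'\in(0,\gamma)$ and $\varepsilon>0$, and set $q_{N}\doteq p_{N}-p$. By construction $[q_{N}]_{\gamma}\leq 2K$, while $\|q_{N}\|_{\infty,\overline{B}_{R}(0)}\to 0$ by the previous step. For $x,y\in\overline{B}_{R}(0)$ with $|x-y|\leq\delta$,
\begin{equation*}
\frac{|q_{N}(x)-q_{N}(y)|}{|x-y|^{\gamma'}}\leq [q_{N}]_{\gamma}\,|x-y|^{\gamma-\gamma'}\leq 2K\,\delta^{\gamma-\gamma'},
\end{equation*}
and for $|x-y|>\delta$,
\begin{equation*}
\frac{|q_{N}(x)-q_{N}(y)|}{|x-y|^{\gamma'}}\leq \frac{2\,\|q_{N}\|_{\infty,\overline{B}_{R}(0)}}{\delta^{\gamma'}}.
\end{equation*}
Choosing first $\delta$ so that $2K\,\delta^{\gamma-\gamma'}<\varepsilon/2$ and then $N$ large enough that the second bound is less than $\varepsilon/2$, one obtains $[q_{N}]_{\gamma',\overline{B}_{R}(0)}\to 0$. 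Together with $\|q_{N}\|_{\infty,\overline{B}_{R}(0)}\to 0$, this gives $p_{N}\to p$ in $\text{C}^{\gamma'}(\overline{B}_{R}(0))$ for every $R>0$ and every $\gamma'<\gamma$, which is exactly convergence in $\text{C}_{\ell oc}^{\gamma-}(\mathbb{R}^{d})$.

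The only mildly delicate point is the identification step, which requires matching the weak convergence of $\mu_{N}$ tested against $\varphi$ with the locally uniform convergence of the smoothed densities $p_{N}=V^{N}\ast\mu_{N}$; the preliminary remark supplies precisely this compatibility (and is itself a consequence of the double-index lemma on weak convergence). Everything else reduces to Ascoli--Arzel\`{a} plus an interpolation estimate, which is entirely routine.
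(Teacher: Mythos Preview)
Your proof is correct and follows essentially the same approach as the paper: Ascoli--Arzel\`{a} extraction on balls, identification of the limit as the density of $\mu$ via the preliminary remark, and uniqueness of the limit to conclude convergence of the full sequence. The only cosmetic difference is that the paper invokes the compact embedding $C^{\gamma}(\overline{B}_{R}(0))\hookrightarrow C^{\gamma'}(\overline{B}_{R}(0))$ directly to extract in the $C^{\gamma'}$ topology, whereas you first extract in the locally uniform topology and then write out the interpolation estimate that underlies this embedding.
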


\begin{proof}
First, notice that for every $R>0$ and $\gamma^{^{\prime}}<\gamma$ the space
$C^{\gamma}(\overline{B}_{R}(0))$ is compactly embedded into $C^{\gamma
^{^{\prime}}}(\overline{B}_{R}(0))$. Take any subsequence $(p_{N_{k}}%
)_{k\in\mathbb{N}}$. Thanks to the previous compactness result, together with
a diagonal procedure on a subsequence of radius $(R_{i})_{i\in\mathbb{N}}$,
$R_{i}\rightarrow\infty$ as $i\rightarrow\infty$ and a sequence of exponents
$\gamma_{i}^{^{\prime}}<\gamma$ such that $\gamma_{i}^{^{\prime}}%
\rightarrow\gamma$ as $i\rightarrow\infty$, we may prove that there exists a
subsequence $\left(  p_{N_{k}^{^{\prime}}}\right)  _{k\in\mathbb{N}}$ which
converges in $\text{C}^{\gamma^{^{\prime}}}(\mathbb{R}^{d})$ for every
$\gamma^{\prime}<\gamma$, to a function $p\in\text{C}_{\ell oc}^{\gamma
^{^{\prime}}}(\mathbb{R}^{d})$; a priori, the function $p$ depends on the
subsequence. Therefore (see the \textbf{remark} above)
\[
\left\langle \mu,\varphi\right\rangle =\lim_{k\rightarrow\infty}\langle
p_{N_{k}^{^{\prime}}},\varphi\rangle=\left\langle p,\varphi\right\rangle
\]
for every $\varphi\in\text{C}_{c}\left(  \mathbb{R}^{d}\right)  $. Hence,
$\mu$ is absolutely continuous with respect the Lebesgue measure with density
$p$. Notice that the properties $p\geq0$ a.e. and $p\in L^{1}(\mathbb{R}^{d})$
follow from the identity $\left\langle \mu,\varphi\right\rangle =\left\langle
p,\varphi\right\rangle $ for every $\varphi\in\text{C}_{c}\left(
\mathbb{R}^{d}\right)  $. This identify uniquely $p$, independently of the
subsequence. Since the convergence in $\text{C}_{\ell oc}^{\gamma^{\prime}%
}\left(  \mathbb{R}^{d}\right)  $ is metric, we deduce that the whole sequence
$\left(  p_{N}\right)  $ converges to $p$ in $C_{\ell oc}^{\gamma^{\prime}%
}\left(  \mathbb{R}^{d}\right)  $.

Finally, the previous convergence implies pointwise convergence, hence%
\[
\left\vert p\left(  x\right)  \right\vert =\lim_{N\rightarrow\infty}\left\vert
p_{N}\left(  x\right)  \right\vert \leq K
\]
This proves $\left\Vert p\right\Vert _{\infty}\leq K$.
\end{proof}

Now, we state and prove the previous lemma in the case in which $(\mu_{N})_{N
\in\mathbb{N}} \subset\mathcal{P}(\mathbb{R}^{d})$ is a random sequence.
Recall that a random probability measure is a random variable from $(\Omega,
\mathcal{F}, \mathbb{P})$ to $\mathcal{P}(\mathbb{R}^{d})$, considered as a
Polish space with a metric inducing weak convergence of measures. Instead, a
random function $p$ of class $\text{C}_{\ell oc}^{\gamma}\left(
\mathbb{R}^{d}\right)  $ is a random variable from $\left(  \Omega
,\mathcal{F},\mathbb{P}\right)  $ to $\text{C}_{\ell oc}^{\gamma}\left(
\mathbb{R}^{d}\right)  $.

\begin{lemma}
\label{lem:upgrade} Let $\left(  \mu_{N}\right)  _{N\in\mathbb{N}}%
\subset\mathcal{P}\left(  \mathbb{R}^{d}\right)  $ be a sequence of
\textbf{random} probability measures converging in law, in the weak topology
of $\mathcal{P}(\mathbb{R}^{d})$, to a random $\mu\in\mathcal{P}%
(\mathbb{R}^{d})$. Introduce the random differentiable functions $p_{N}%
\doteq V^{N}\ast\mu_{N}$. Let $\gamma\in(0,1)$, $q\geq2$ be such that there exists
a constant $K>0$ for which
\begin{equation}
\mathbb{E}\left[  \left\Vert p_{N}\right\Vert _{\gamma}^{q}\right]  \leq
K\label{assumption appendix D q mean}%
\end{equation}
for all $N\in\mathbb{N}$. Then there exists a random function $p$ of class
$\text{C}_{\ell oc}^{\gamma-}\left(  \mathbb{R}^{d}\right)  $ such that, with
probability one, $\mu\left(  dx\right)  =p\left(  x\right)  dx$; and for every
$q^{\prime}<q$ we have%
\begin{equation}
\mathbb{E}\left[  \left\Vert p\right\Vert _{\infty}^{q^{\prime}}\right]
<K^{q^{\prime}/q}.\label{thesis appendix D q prime mean}%
\end{equation}
Moreover, $p_{N}$ converges to $p$ in law, in the topology of $\text{C}_{\ell
oc}^{\gamma-}\left(  \mathbb{R}^{d}\right)  $; and when $p$ is deterministic
(so that $p_{N}$ converges to $p$ also in probability) we have%
\begin{equation}
\lim_{N\rightarrow\infty}\mathbb{E}\left[  \left\Vert p_{N}-p\right\Vert
_{\text{C}\left(  \overline{B}_{R}(0)\right)  }^{q^{^{\prime}}}\right]
=0\label{thesis appendix D q prime mean 2}%
\end{equation}
for every $q^{\prime}<q$ and $R>0$.
\end{lemma}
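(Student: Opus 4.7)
The plan is to lift the previous (deterministic) lemma to the stochastic setting by combining tightness arguments, Skorokhod representation, and a pathwise application of that result. First, I would show that the laws of $(p_N)_{N\in\mathbb{N}}$ are tight in the Polish space $\text{C}_{\ell oc}^{\gamma-}(\mathbb{R}^d)$. Indeed, for every $R>0$ and $\gamma'<\gamma$ the embedding $\text{C}^{\gamma}(\overline{B}_R(0))\hookrightarrow \text{C}^{\gamma'}(\overline{B}_R(0))$ is compact, so $\{f:\|f\|_{\text{C}^{\gamma}(\overline{B}_R(0))}\le M\}$ is relatively compact in $\text{C}^{\gamma'}(\overline{B}_R(0))$. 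Markov's inequality applied to \eqref{assumption appendix D q mean} then gives $\mathbb{P}(\|p_N\|_{\text{C}^{\gamma}(\overline{B}_R(0))}>M)\le K/M^q$ uniformly in $N$, and a diagonal procedure over an exhausting sequence of balls $\overline{B}_{R_i}(0)$ and exponents $\gamma'_i\uparrow\gamma$ gives tightness in $\text{C}_{\ell oc}^{\gamma-}(\mathbb{R}^d)$. Jointly with the assumed convergence in law of $\mu_N$, the sequence $(\mu_N,p_N)$ is tight in $\mathcal{P}(\mathbb{R}^d)\times \text{C}_{\ell oc}^{\gamma-}(\mathbb{R}^d)$.

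Next, given any convergent subsequence $(\mu_{N_k},p_{N_k})\to(\mu,\widetilde p)$ in law, Skorokhod representation provides a new probability space $(\widetilde\Omega,\widetilde{\mathcal F},\widetilde{\mathbb P})$ on which copies $(\widetilde\mu_{N_k},\widetilde p_{N_k})$ converge $\widetilde{\mathbb P}$-a.s.\ to copies $(\widetilde\mu,\widetilde p)$ with the same joint law. Fatou's lemma combined with \eqref{assumption appendix D q mean} gives $\widetilde{\mathbb E}[\liminf_k\|\widetilde p_{N_k}\|_\gamma^q]\le K$, so on a full-measure set we have $\sup_k\|\widetilde p_{N_k}(\widetilde\omega)\|_\gamma<\infty$. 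On such $\widetilde\omega$ the deterministic version of this lemma applies pathwise: $\widetilde\mu(\widetilde\omega)$ is absolutely continuous with density $\widetilde p(\widetilde\omega)\in \text{C}_{\ell oc}^{\gamma-}(\mathbb{R}^d)$, and $\widetilde p_{N_k}(\widetilde\omega)\to \widetilde p(\widetilde\omega)$ in $\text{C}_{\ell oc}^{\gamma-}(\mathbb{R}^d)$. Since $\widetilde\mu$ has the same law as $\mu$ and the density of an absolutely continuous measure is uniquely determined (as a measurable functional of the measure), this identifies $\widetilde p$ in law with a well-defined random function $p$ depending only on $\mu$. Uniqueness of the limit forces the whole sequence $(\mu_N,p_N)$ to converge in law to $(\mu,p)$ in $\mathcal{P}(\mathbb{R}^d)\times \text{C}_{\ell oc}^{\gamma-}(\mathbb{R}^d)$.

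For the moment bound \eqref{thesis appendix D q prime mean}, on the Skorokhod space pointwise convergence and $\|\widetilde p_{N_k}\|_\infty\le \|\widetilde p_{N_k}\|_\gamma$ yield $\|\widetilde p\|_\infty\le\liminf_k\|\widetilde p_{N_k}\|_\gamma$ $\widetilde{\mathbb P}$-a.s., so by Fatou and Jensen, for $q'<q$,
\[
\widetilde{\mathbb E}\!\left[\|\widetilde p\|_\infty^{q'}\right]\le \liminf_k\widetilde{\mathbb E}\!\left[\|\widetilde p_{N_k}\|_\gamma^{q'}\right]\le \liminf_k\bigl(\widetilde{\mathbb E}[\|\widetilde p_{N_k}\|_\gamma^{q}]\bigr)^{q'/q}\le K^{q'/q},
\]
and \eqref{thesis appendix D q prime mean} follows by equality in law. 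In the deterministic case, convergence in law to a constant $p$ upgrades to convergence in probability in each $\text{C}(\overline{B}_R(0))$; then \eqref{thesis appendix D q prime mean 2} follows by uniform integrability, since $\|p_N-p\|_{\text{C}(\overline{B}_R(0))}^{q'}\le 2^{q'}(\|p_N\|_\infty^{q'}+\|p\|_\infty^{q'})$ and the family $(\|p_N\|_\infty^{q'})$ is uniformly integrable by de la Vallée Poussin, having uniformly bounded $q/q'$-th moments.

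The main obstacle is the pathwise identification step: to apply the deterministic lemma $\widetilde\omega$ by $\widetilde\omega$ I need the deterministic hypothesis $\sup_k\|\widetilde p_{N_k}\|_\gamma<\infty$ on a full-measure set, which I obtain from the $q$-th moment bound via Fatou, and I must show that the density $p=p(\mu)$ arising from different subsequential limits coincides—i.e.\ that the map associating to an absolutely continuous probability measure its density is a measurable functional into $\text{C}_{\ell oc}^{\gamma-}(\mathbb{R}^d)$. This is the genuinely non-elementary ingredient that makes the passage from the deterministic to the stochastic version work, and once it is in place the remaining estimates are routine.
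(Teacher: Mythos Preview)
Your overall strategy---tightness in $\text{C}_{\ell oc}^{\gamma-}(\mathbb{R}^d)$, Skorokhod representation of the joint law of $(\mu_N,p_N)$, identification of the density, uniqueness via the density functional, and uniform integrability for the final convergence---coincides with the paper's. There is, however, a genuine gap in your identification step. You write that Fatou gives $\widetilde{\mathbb E}[\liminf_k\|\widetilde p_{N_k}\|_\gamma^q]\le K$ and then conclude $\sup_k\|\widetilde p_{N_k}(\widetilde\omega)\|_\gamma<\infty$ on a full-measure set. Fatou only yields $\liminf_k<\infty$ a.s., which does not control $\sup_k$; it is easy to build sequences with uniformly bounded $q$-th moments whose pathwise supremum is a.s.\ infinite. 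So the pathwise application of the deterministic lemma, which requires a uniform H\"older bound along the whole sequence, is not justified as written.

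The paper sidesteps this by not invoking the deterministic lemma pathwise at all. After Skorokhod one already has $\widetilde p_{N_k}\to\widetilde p$ in $\text{C}_{\ell oc}^{\gamma-}(\mathbb{R}^d)$ and $\widetilde\mu_{N_k}\to\widetilde\mu$ weakly, $\widetilde{\mathbb P}$-a.s. One first checks that the link $\widetilde p_{N_k}=V^{N_k}\ast\widetilde\mu_{N_k}$ survives the change of probability space (by computing the law-invariant quantity $\widetilde{\mathbb E}\bigl[1\wedge\|V^{N_k}\ast\widetilde\mu_{N_k}-\widetilde p_{N_k}\|_{\text{C}(\overline B_R(0))}\bigr]$), a point you gloss over. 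Then the elementary remark $\langle V^{N_k}\ast\widetilde\mu_{N_k},\varphi\rangle\to\langle\widetilde\mu,\varphi\rangle$ for $\varphi\in\text{C}_c(\mathbb{R}^d)$, combined with $\langle\widetilde p_{N_k},\varphi\rangle\to\langle\widetilde p,\varphi\rangle$ from the $\text{C}_{\ell oc}^{\gamma-}$ convergence, gives $\langle\widetilde\mu,\varphi\rangle=\langle\widetilde p,\varphi\rangle$ directly---no uniform H\"older bound needed. The remainder of your argument (the bijection between density--measure pairs and measures-with-density to pin down the limit law, Fatou plus H\"older for \eqref{thesis appendix D q prime mean}, uniform integrability for \eqref{thesis appendix D q prime mean 2}) is correct and matches the paper; your moment-bound computation via Fatou and H\"older is in fact slightly slicker than the paper's Beppo--Levi/Vitali route.
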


\begin{proof}
Let us denote by $P_{N}$ the law of $p_{N}$ on Borel sets of $\text{C}%
^{\gamma}(\mathbb{R}^{d})$, by $\pi_{N}$ and $\pi$ the laws of $\mu_{N}$ and
$\mu$ on Borel sets $\mathcal{P}(\mathbb{R}^{d})$, respectively. We know that
$\pi_{N}$ converges weakly to $\pi$. Set
\[
\mathcal{K}_{R}\doteq\{f\in\text{C}^{\gamma}(\mathbb{R}^{d})\,:\,\Vert
f\Vert_{C^{\gamma}(\mathbb{R}^{d})}\leq R\}.
\]
$\mathcal{K}_{R}$ is pre-compact in $\text{C}_{\ell oc}^{\gamma-}%
(\mathbb{R}^{d})$. By assumption (\ref{assumption appendix D q mean}) and
Markov inequality,
\[
P_{N}(\mathcal{K}_{R}^{C})\leq\frac{K}{R^{q}}.
\]
Then the family $(P_{N})_{N\in\mathbb{N}}$ is tight in $C_{\ell oc}^{\gamma
-}(\mathbb{R}^{d})$. Let $(P_{N_{k}})_{k\in\mathbb{N}}$ be any subsequence
converging weakly in the topology of $\text{C}_{\ell oc}^{\gamma-}%
(\mathbb{R}^{d})$ to some measure $P$, which, in principle, depends a priori
on the subsequence. More precisely, denote by $Q_{N}$ the joint law of the
vector $\left(  p_{N},\mu_{N}\right)  $ on Borel sets of $C_{\ell oc}%
^{\gamma-}(\mathbb{R}^{d})\times\mathcal{P}(\mathbb{R}^{d})$. Since we already
know that $\mu_{N}$ converges weakly, hence it is precompact, we can extract
$\left(  n_{k}\right)  _{k\in\mathbb{N}}$ such that $Q_{N_{\textcolor{black}{k}}}$ converges
weakly to a probability measure $Q$ on Borel sets of $C_{\ell oc}^{\gamma
-}(\mathbb{R}^{d})\times\mathcal{P}(\mathbb{R}^{d})$. The second marginal of
$Q $ is $\pi$, the first marginal will be called $P$, as above. The first
marginal of $Q_{N_{\textcolor{black}{k}}}$ is $P_{N_{k}}$ and converges weakly to $P$; the second
marginal is $\pi_{N_{k}}$ and converges weakly to $\pi$. Notice that at this
stage we do not know yet $\mu$ has a density and that $P$ is the law of such
density. Concerning uniqueness, $\mu$ is the unique limit point (in law) of
$\mu_{N}$, but $P$ a priori is not the unique weak limit point of $P_{N}$.

By Skorohod representation theorem, there exists a probability space
$(\widetilde{\Omega},\widetilde{\mathcal{F}},\widetilde{\mathbb{P}})$, random
variables $\left(  \widetilde{p}_{N_{\textcolor{black}{k}}},\widetilde{\mu}_{N_{\textcolor{black}{k}}}\right)  $ and
$\left(  \widetilde{p},\widetilde{\mu}\right)  $ from $(\widetilde{\Omega
},\widetilde{\mathcal{F}},\widetilde{\mathbb{P}})$ to $C_{\ell oc}^{\gamma
-}\left(  \mathbb{R}^{d}\right)  \times\mathcal{P}\left(  \mathbb{R}%
^{d}\right)  $, with laws $Q_{N_{\textcolor{black}{k}}}$ and $Q$ respectively, such that $\left(
\widetilde{p}_{N_{k}},\widetilde{\mu}_{{N_{\textcolor{black}{k}}}}\right)  \rightarrow\left(
\widetilde{p},\widetilde{\mu}\right)  $ as $k\rightarrow\infty$ in
$\text{C}_{\ell oc}^{\gamma-}\left(  \mathbb{R}^{d}\right)  \times
\mathcal{P}\left(  \mathbb{R}^{d}\right)  $, $\widetilde{\mathbb{P}}$-a.s. The
link $p_{N_{k}}=V^{N_{k}}\ast\mu_{N_{k}}$ is preserved under this change of
basis: $\widetilde{p}_{N_{k}}=V^{N_{k}}\ast\widetilde{\mu}_{N_{k}}$ with
$\widetilde{\mathbb{P}}$ probability one. Indeed, denoting by
$\widetilde{\mathbb{E}}[\,\cdot\,]$ the mathematical expectation on
$(\widetilde{\Omega},\widetilde{\mathcal{F}},\widetilde{\mathbb{P}})$,
\[
\widetilde{\mathbb{E}}\left[  1\wedge\left\Vert V^{N_{k}}\ast\widetilde{\mu
}_{N_{k}}-\widetilde{p}_{N_{k}}\right\Vert _{\text{C}\left(  \overline{B}%
_{R}(0)\right)  }\right]  =\mathbb{E}\left[  1\wedge\left\Vert V^{N_{k}}%
\ast\mu_{N_{k}}-p_{N_{k}}\right\Vert _{\text{C}\left(  \overline{B}%
_{R}(0)\right)  }\right]  =0
\]
(the first identity is true because $\left(  \widetilde{p}_{N_{k}%
},\widetilde{\mu}_{N_{k}}\right)  $ and $\left(  p_{N_{k}},\mu_{N_{k}}\right)
$ have the same law; second identity is true because $p_{N_{k}}=V^{N_{k}}%
\ast\mu_{N_{k}}$). Hence $\widetilde{p}_{N_{k}}=V^{N_{k}}\ast\widetilde{\mu
}_{N_{k}}$, $\widetilde{\mathbb{P}}$-a.s.

The novelty on $(\widetilde{\Omega},\widetilde{\mathcal{F}}%
,\widetilde{\mathbb{P}})$ is that we have the random variable $\widetilde{p}$,
not only $\widetilde{\mu}$. Let us prove that the former is the density of the
latter. From the \textbf{remark} above, with $\widetilde{\mathbb{P}}$
probability one, since $\widetilde{\mu}_{N_{k}}$ converges weakly to
$\widetilde{\mu}$ we have
\[
\lim_{k\rightarrow\infty}\left\langle V^{N_{k}}\ast\widetilde{\mu}_{N_{k}%
},\varphi\right\rangle =\left\langle \widetilde{\mu},\varphi\right\rangle
\]
for all $\varphi\in\text{C}_{c}(\mathbb{R}^{d})$. But at the same time, being
$V^{N_{k}}\ast\widetilde{\mu}_{N_{k}}=\widetilde{p}_{N_{k}}$ and
$\widetilde{p}_{N_{k}}$ converges to $\widetilde{p}$ in C$_{\ell oc}^{\gamma
-}\left(  \mathbb{R}^{d}\right)  $, we have%
\[
\lim_{k\rightarrow\infty}\left\langle V^{N_{k}}\ast\widetilde{\mu}_{N_{k}%
},\varphi\right\rangle =\left\langle \widetilde{p},\varphi\right\rangle
\]
for all $\varphi\in\text{C}_{c}(\mathbb{R}^{d})$. Therefore,
\[
\left\langle \widetilde{p},\varphi\right\rangle =\left\langle \widetilde{\mu
},\varphi\right\rangle \text{ for all }\varphi\in\text{C}_{c}(\mathbb{R}^{d})
\]
with $\widetilde{\mathbb{P}}$ probability one. It implies that,
$\widetilde{\mathbb{P}}$-a.s., the measure $\widetilde{\mu}$ has density
$\widetilde{p}\in\text{C}_{\ell oc}^{\gamma-}\left(  \mathbb{R}^{d}\right)  $;
the property that $\widetilde{p}$ is a probability density follows from the
same identity, by suitable choice of $\varphi\in\text{C}_{c}\left(
\mathbb{R}^{d}\right)  $.

Call $\Lambda$ the subset of $\text{C}_{\ell oc}^{\gamma-}\left(
\mathbb{R}^{d}\right)  \times\mathcal{P}\left(  \mathbb{R}^{d}\right)  $ such
that the first element is the density of the second. Call $\Lambda_{2}$ the
set of elements of $\mathcal{P}\left(  \mathbb{R}^{d}\right)  $ that have a
density of class $\text{C}_{\ell oc}^{\gamma-}\left(  \mathbb{R}^{d}\right)
$. The sets $\Lambda$ and $\Lambda_{2}$ are in bijection. The two sets are
measurable in the corresponding spaces and the bijection is bi-measurable.
Therefore a probability measure on C$_{\ell oc}^{\gamma-}\left(
\mathbb{R}^{d}\right)  \times\mathcal{P}\left(  \mathbb{R}^{d}\right)  $,
concentrated on $\Lambda$, corresponds uniquely to a probability measure on
$\mathcal{P}\left(  \mathbb{R}^{d}\right)  $ concentrated on $\Lambda_{2}$, by
this bijection. It follows that $Q$ is uniquely determined by its second
marginal $\pi$, which is unique a priori. This proves that $Q$ is independent
of the subsequence $\left(  n_{k}\right)  _{k\in\mathbb{N}}$ and thus the full
sequence $(Q_{N})_{N\in\mathbb{N}}$ converges, to a single $Q$.

We can now prove that $\mu$ has a density, $\widetilde{\mathbb{P}}$-a.s. We
have proved that the law of $\widetilde{\mu}$ is concentrated on $\Lambda_{2}%
$; but, being $Q$ the law of $\left(  \widetilde{p},\widetilde{\mu}\right)  $
and having $Q$ second marginal $\pi$, the law of $\widetilde{\mu} $ is $\pi$.
Hence $\pi$, which is also the law of $\mu$, is concentrated on $\Lambda_{2}$.
Namely, $\mathbb{P}$-a.e. realization of $\mu$ has a density $p$, of class
$\text{C}_{\ell oc}^{\gamma-}(\mathbb{R}^{d})$. The random element $\left(
p,\mu\right)  $ is the image of $\mu$ under the bijection above, hence it has
law $Q$. It follows, from the weak convergence of $(Q_{N})_{N\in\mathbb{N}}$
to $Q$, that $p_{N}$ converges to $p$ in law.

It remains to prove (\ref{thesis appendix D q prime mean}) and
(\ref{thesis appendix D q prime mean 2}). Let us prove
(\ref{thesis appendix D q prime mean}). The sequence of r.v.'s $\left\{
\sup_{\left\vert x\right\vert \leq n}\left\vert p\left(  x\right)  \right\vert
^{q^{\prime}}\right\}  _{n\in\mathbb{N}}$ is non decreasing and non-negative,
and converges a.s. to $\sup_{x\in\mathbb{R}^{d}}\left\vert p\left(  x\right)
\right\vert ^{q^{\prime}}$, hence by Beppo-Levi theorem%
\[
\mathbb{E}\left[  \sup_{x\in\mathbb{R}^{d}}\left\vert p\left(  x\right)
\right\vert ^{q^{\prime}}\right]  =\lim_{n\rightarrow\infty}\mathbb{E}\left[
\sup_{\left\vert x\right\vert \leq n}\left\vert p\left(  x\right)  \right\vert
^{q^{\prime}}\right]  .
\]
Therefore (using also the fact that $\widetilde{p}$ and $p$ have the same law,
the first marginal of $Q$ above) it is sufficient to find a constant $C>0$,
independent of $R$, such that
\[
\mathbb{E}\left[  \sup_{\left\vert x\right\vert \leq R}\left\vert
\widetilde{p}\left(  x\right)  \right\vert ^{q^{\prime}}\right]  \leq C
\]
for every $R>0$. But we know that $\sup_{\left\vert x\right\vert \leq
R}\left\vert \widetilde{p}^{N_{K}}\left(  x\right)  \right\vert ^{q^{\prime}}$
converges a.s. to $\sup_{\left\vert x\right\vert \leq R}\left\vert
\widetilde{p}\left(  x\right)  \right\vert ^{q^{\prime}}$. Moreover, we know
that there exists $\gamma>1$ such that
\[
\mathbb{E}\left[  \left(  \sup_{\left\vert x\right\vert \leq R}\left\vert
\widetilde{p}^{N}\left(  x\right)  \right\vert ^{q^{\prime}}\right)  ^{\gamma
}\right]  \leq K
\]
(take $\gamma=q/q^{\prime}$ and use assumption
(\ref{assumption appendix D q mean})). Hence, by Vitali convergence theorem,
we get
\[
\mathbb{E}\left[  \sup_{\left\vert x\right\vert \leq R}\left\vert
\widetilde{p}\left(  x\right)  \right\vert ^{q^{\prime}}\right]
=\lim_{N\rightarrow\infty}\mathbb{E}\left[  \sup_{\left\vert x\right\vert \leq
R}\left\vert \widetilde{p}^{N}\left(  x\right)  \right\vert ^{q^{\prime}%
}\right]  \leq K^{1/\gamma}.
\]

Finally, (\ref{thesis appendix D q prime mean 2}) is proved similarly, under
the additional assumption that $p$ is deterministic. In this case $p^{N}$
converges to $p$ in probability, not only in law, in $\text{C}_{\ell
oc}^{\gamma-}(\mathbb{R}^{d})$. In particular, $\sup_{\left\vert x\right\vert
\leq R}\left\vert p^{N}\left(  x\right)  -p\left(  x\right)  \right\vert
^{q^{\prime}}$ converges to zero in probability. Since $\sup_{\left\vert
x\right\vert \leq R}\left\vert p^{N}\left(  x\right)  -p\left(  x\right)
\right\vert ^{q^{\prime}}$ is uniformly integrable, by Vitali theorem it
converges to zero in average.
\end{proof}

\section{Relaxed Controls}\label{app:relaxed_controls}
In the proof of Theorem \ref{th:Nash_equilibria_MFG} we use the concept of \textit{relaxed controls}. In this section we briefly recall the definition of relaxed controls are; for more details, see, for instance, \cite{karoui} and \cite{kushner1990numerical}. Let $\mathcal{S}$ be a Polish space and let $\mathcal{R}_{\mathcal{S}}$ be the space of all deterministic $\mathcal{S}$-valued relaxed controls over the time interval $[0, T]$, that is, 
$$\mathcal{R}_{\mathcal{S}} \doteq \{r\,:\,r\,\text{positive measure on $\mathcal{B}(\mathcal{S} \times [0,T])$\,:\,$r(\mathcal{S} \times [0,t]) = t$, $t \in [0,T]$}\}.$$
If $r \in \mathcal{R}_{\mathcal{S}}$, then the time derivative of $r$ exists almost everywhere as a measurable mapping $ \overset{\cdot}{r}_t : [0, T] \rightarrow \mathcal{P}(\mathcal{S})$ such that $r(dy, dt) = \overset{\cdot}{r}_t(dy)\,dt$. The topology of weak convergence of measure turns $\mathcal{R}_{\mathcal{S}}$ into a Polish space. In addition, the space $\mathcal{R}_{\mathcal{S}}$ is compact if $\mathcal{S}$ is compact. Finally, any $\mathcal{S}$-valued $(\mathcal{F}_t)$-adapted process $\alpha$ defined on some filtered probability space $(\Omega, \mathcal{F}, \mathbb{P})$ induces a $\mathcal{R}_{\mathcal{S}}$-valued random variable $\rho$, the corresponding stochastic relaxed control, according to:
$$\rho_{\omega}(B \times I) \doteq \int_{I} \delta_{\alpha(t,\omega)}(B)\,dt,$$
where $B \in \mathcal{B}(\Gamma)$ with $\Gamma$ the set of control actions, or action space, $I \in \mathcal{B}([0, T])$ and $\omega \in \Omega$. The random measure $\rho$ is $(\mathcal{F}_t)$-adapted in the sense that its restriction to $\mathcal{S} \times [0,t]$ is $\mathcal{F}_t$-measurable for every $t \in [0,T]$.
\newpage
{
\bibliographystyle{Chicago}
\bibliography{moderateBib}
}

\end{document}